\newtheorem{lemma}{Lemma}[section]
\newtheorem{theorem}[lemma]{Theorem}
\newtheorem{corollary}[lemma]{Corollary}
\newtheorem{proposition}[lemma]{Proposition}
\newtheorem{question}[lemma]{Question}
\theoremstyle{definition}
\newtheorem{definition}[lemma]{Definition}
\theoremstyle{remark}
\newtheorem{remark}[lemma]{Remark}
\theoremstyle{Theorem}
\newtheorem{thmx}{Theorem}
\pgfplotsset{compat=1.18}
\newcommand{\C}{\mathbb{C}}
\newcommand{\D}{\mathbb{D}}
\newcommand{\R}{\mathbb{R}}
\newcommand{\Z}{\mathbb{Z}}
\newcommand{\pslc}{{PSL_2 (\mathbb{C})}}
\newcommand\FF{{\mathcal F}}
\newcommand\LL{{\mathcal L}}
\newcommand\MM{{\mathcal M}}
\newcommand\PP{{\mathcal P}}
\newcommand\PMF{{\PP\kern-2pt\MM\FF}}
\newcommand\PML{{\PP\kern-2pt\MM\LL}}
\newcommand{\cC}{\mathcal{C}}
\newcommand{\cM}{\mathcal{M}}
\newcommand{\cS}{\mathcal{S}}
\newcommand{\cT}{\mathcal{T}}
\newcommand{\cK}{\mathcal{K}}
\DeclareMathOperator{\Int}{int}
\DeclareMathOperator{\Cl}{cl}
\renewcommand{\epsilon}{\varepsilon}
\renewcommand{\phi}{\varphi}
\newcommand{\mate}{\bot \!\! \! {\bot}}
\DeclareFontFamily{U}{tipa}{}
\DeclareFontShape{U}{tipa}{m}{n}{<->tipa10}{}
\newcommand{\arc@char}{{\usefont{U}{tipa}{m}{n}\symbol{62}}}
\newcommand{\arc}[1]{\mathpalette\arc@arc{#1}}
\newcommand{\arc@arc}[2]{
  \sbox0{$\m@th#1#2$}
  \vbox{
    \hbox{\resizebox{\wd0}{\height}{\arc@char}}
    \nointerlineskip
    \box0
  }
}
\numberwithin{equation}{section}
\date{\today}
\begin{document}

\title[Matings, correspondences, and a Bers slice]{Matings, holomorphic correspondences, and\\ a Bers slice}

\author{Mahan Mj}
\address{School of Mathematics, Tata Institute of Fundamental Research, Mumbai-400005, India}
\email{mahan@math.tifr.res.in, mahan.mj@gmail.com}

\author{Sabyasachi Mukherjee}
\address{School of Mathematics, Tata Institute of Fundamental Research, Mumbai-400005, India}
\email{sabya@math.tifr.res.in, mukherjee.sabya86@gmail.com}

\subjclass[2010]{30C10, 30C20, 30F60, 37F05, 37F10, 37F31, 37F32, 37F34, 37F44 (primary), 37C85, 57M50, 20F65 (secondary)}
\keywords{Fuchsian group, Bowen-Series map, rational map, simultaneous uniformization, conformal mating, algebraic correspondence, Teichm{\"u}ller space, Bers slice}

\thanks{Both authors were  supported by  the Department of Atomic Energy, Government of India, under project no.12-R\&D-TFR-5.01-0500 as also  by an endowment of the Infosys Foundation.
	MM was also supported in part by a DST JC Bose Fellowship. SM was supported in part by SERB research project grant MTR/2022/000248.}

\begin{abstract}
There are two frameworks for mating Kleinian groups with rational maps on the Riemann sphere: the algebraic correspondence framework due to Bullett-Penrose-Lomonaco \cite{BP94,BL20a} and the simultaneous uniformization mating framework of \cite{MM1}. The current paper unifies and generalizes these two frameworks. To achieve this, we extend the mating framework of \cite{MM1} to genus zero hyperbolic orbifolds with at most one orbifold point of order $\nu \geq 3$ and at most one orbifold point of order two. We give an explicit description of the resulting conformal matings in terms of uniformizing rational maps. Using these rational maps, we construct correspondences that are matings of such hyperbolic orbifold groups (including punctured spheres and Hecke groups) with polynomials in real-symmetric hyperbolic components. We also define an algebraic parameter space of correspondences and construct an analog of a Bers slice of the above orbifolds in this parameter space.
\end{abstract}

\maketitle

\setcounter{tocdepth}{1}
\tableofcontents

\section{Introduction}\label{intro_sec}
Fatou \cite{Fatou29} observed an empirical similarity between the behavior of two complex  one-dimensional dynamical systems: one coming from iteration of polynomials, the other from Kleinian groups. This was developed into a systematic dictionary by Sullivan \cite{sullivan-dict} (see also \cite{McM94,ctm-classification,ctm-renorm,MS98,pilgrim,lyubich-minsky} etc.). Fatou's original suggestion \cite{Fatou29} of developing a unified framework for treating these two  kinds of dynamical systems in terms of \emph{correspondences} (multi-valued maps with holomorphic local branches) was pursued by Bullett and his co-authors in \cite{BP94,B00,BH00,BH07,BL20a,BL20b,BL22}. A new \emph{conformal matings} framework based on orbit-equivalence  was developed by the authors recently 
\cite{MM1} adapting the theme of Bers' simultaneous uniformization (in the context of Kleinian groups, \cite{Ber60}) and mating (in the context of polynomial and rational dynamics, \cite{douady-mating,hubbard-mating}). The conformal matings framework of
\cite{MM1} (see \cite{sullivan_survey} for a brief account of this framework) furnished new examples of mateable groups; however, two fundamental questions remained unanswered:

\begin{question}\label{qn_main}
\noindent\begin{enumerate}
	\item Identify the class of analytic functions obtained via the mating process of 
	\cite{MM1}.
	\item Is there a relationship between the Bullett-Penrose-Lomonaco correspondences 
	of \cite{BP94,B00,BL20a,BL20b,BL22} and the matings in   \cite{MM1}?
	\end{enumerate}
\end{question}

A primary aim of this paper is to answer both these questions by 
\begin{enumerate}
	\item characterizing the class of analytic functions obtained via the mating process of 
	\cite{MM1}, and
	\item establishing an equivalence between the two notions of matings coming from correspondences and simultaneous uniformization.
\end{enumerate}

\subsection*{The class of orbifolds}
Before we state the main theorems of the paper, let us describe the general class of orbifolds (equivalently, Fuchsian groups) that are the principal players in the game.
The family of correspondences most extensively studied by Bullett and his collaborators exhibit matings of the modular group $\mathrm{PSL}_2(\Z)$ and quadratic polynomial/rational maps. On the other hand, the  conformal matings framework of \cite{MM1} applies to Bowen-Series maps of Fuchsian punctured sphere groups, possibly with an order two elliptic element. In this paper, we  work with the following collection 
 of finite volume hyperbolic orbifolds that includes both these as special cases:
 
 \smallskip
 
 \noindent $\cS:=$ hyperbolic orbifolds  of genus zero with
	\begin{enumerate}
	\item at least one puncture,
	\item at most one order two orbifold point,
	\item at most one order $\nu\geq 3$ orbifold point.
	\end{enumerate}

\subsection*{Going up/going down and conformal matings} 
It should be pointed out at the outset that the modular group does not fit into the conformal matings framework of \cite{MM1} as the existence of an order three orbifold point forces its Bowen-Series map to be discontinuous. To circumvent this obstacle, one can pass to a $\nu-$fold cyclic cover $\widetilde{\Sigma}$ of  $\Sigma\in\cS$ such that the Bowen-Series map $A_{\widetilde{\Sigma}}^{\mathrm{BS}}$ of the Fuchsian group uniformizing $\widetilde{\Sigma}$ (equipped with suitable fundamental domains) only has controlled discontinuities. Remarkably, all these points of discontinuity disappear when one passes to appropriate factors of these Bowen-Series maps. Heuristically, passing to a factor dynamical system (going down) can be thought of as the dual of passing to a cyclic cover of $\Sigma$ (going up). This gives rise to continuous \emph{factor Bowen-Series maps} $A_{\widetilde{\Sigma}}^{\mathrm{fBS}}$ (see Figures~\ref{going_up_down_fig} and~\ref{going_up_down_2_fig}). This construction is detailed in Section \ref{factor_bs_sec}.	
\begin{figure}[H]
\[
\begin{tikzcd}
\widetilde{\Sigma} \arrow{d}[swap]{\mathrm{cover}} \arrow[r,rightsquigarrow] & A_{\widetilde{\Sigma}}^{\mathrm{BS}} \arrow{d}{\mathrm{factor}} \\
\Sigma  &  A_{\widetilde{\Sigma}}^{\mathrm{fBS}}
\end{tikzcd}
\]
\caption{Going up and going down}
\label{going_up_down_fig}
\end{figure}
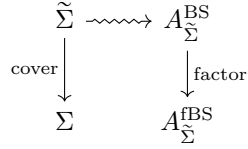

A key feature of a factor Bowen-Series map, one that lies at the heart of the construction of conformal matings, is that its restriction on the unit circle $\mathbb{S}^1$ is topologically conjugate to $z^d\vert_{\mathbb{S}^1}$, where
$$
	d\equiv d(\Sigma):=
   \begin{cases} 
   1-2\nu\cdot\chi_{\mathrm{orb}}(\Sigma)\quad \mathrm{if}\ \Sigma \textrm{ has an order } \nu\geq 3 \textrm{ orbifold point},\\
   1-2\chi_{\mathrm{orb}}(\Sigma)\quad \mathrm{if}\ \Sigma \textrm{ does not have an order } \nu\geq 3 \textrm{ orbifold point}.
   \end{cases}
  $$ 

Our first main theorem extends the conformal mating construction of \cite{MM1} to genus zero orbifolds in the above class.
We direct the reader to Section~\ref{conf_mating_sec} for the precise notion of a conformal mating. For now, it suffices to say that a conformal mating is a map $F: \overline{\Omega} \to \widehat{\mathbb{C}}$, where ${\Omega} \subset \widehat{\mathbb{C}}$ is open, and the dynamics of $F$ combines naturally the dynamics of a piecewise M\"obius map and a polynomial.

\begin{thmx}[Conformal matings of Factor Bowen-Series maps with polynomials]\label{conf_mating_intro_thm} $ $\\
\noindent Let $\Sigma\in\cS$, and $P$ be a complex polynomial in the principal hyperbolic component $\mathcal{H}_d$ of degree $d$ polynomials. Then the factor Bowen-Series map $A_{\widetilde{\Sigma}}^{\mathrm{fBS}}$ and  the polynomial $P$ are conformally mateable. Moreover, the conformal mating is unique up to M{\"o}bius conjugacy.
\end{thmx}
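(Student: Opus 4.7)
The plan is to follow the simultaneous-uniformization template of \cite{MM1}, with two new ingredients required by the class $\cS$: first, working with the factor Bowen--Series map $A_{\widetilde{\Sigma}}^{\mathrm{fBS}}$ rather than the Bowen--Series map itself (since the latter is discontinuous when an order $\nu \geq 3$ orbifold point is present), and second, extending the David surgery to accommodate the elliptic orbifold branch points. The argument splits naturally into three parts: topological mating, conformal promotion, and uniqueness.

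\textbf{Topological mating.} Since $P \in \HH_d$ lies in the principal hyperbolic component, $J(P)$ is a quasicircle and the B\"ottcher coordinate provides a topological conjugacy between $P|_{J(P)}$ and $z \mapsto z^d$ on $\mathbb{S}^1$. Composing with the conjugacy of $A_{\widetilde{\Sigma}}^{\mathrm{fBS}}|_{\mathbb{S}^1}$ with $z^d$ noted above yields a boundary homeomorphism $h : \mathbb{S}^1 \to J(P)$ intertwining the two circle dynamics. One then glues $\overline{\D}$ to $\widehat{\C} \setminus \mathrm{int}\, K(P)$ along $h$; the resulting topological sphere carries a continuous map $F_{\mathrm{top}}$ which restricts to $A_{\widetilde{\Sigma}}^{\mathrm{fBS}}$ on the disk side (extended to $\D$ as a piecewise-M\"obius Markov map via the Fuchsian uniformization of $\tss$, descended through the factor construction) and to $P$ on the outside.

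\textbf{Conformal mating via David surgery.} The map $F_{\mathrm{top}}$ is already holomorphic on each side; the task is to promote the topological gluing to a conformal one. The homeomorphism $h$ is quasisymmetric away from the parabolic fixed points of $A_{\widetilde{\Sigma}}^{\mathrm{fBS}}$ on $\mathbb{S}^1$, which correspond to the punctures of $\Sigma$; at these points $h$ conjugates parabolic boundary dynamics on one side to hyperbolic dynamics on the other, so one expects only David regularity in the sense of Petersen--Zakeri. Pulling back the standard complex structure on the polynomial side through $h$ produces a David Beltrami coefficient on the disk, and spreading it along the Markov branches of the factor Bowen--Series map yields an $F_{\mathrm{top}}$-invariant David structure on $\widehat{\C}$. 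The David Integrability Theorem then produces a David homeomorphism $\Phi$ of $\widehat{\C}$ conjugating $F_{\mathrm{top}}$ to a holomorphic map, which is the desired conformal mating. For uniqueness, two conformal matings would differ by a homeomorphism of $\widehat{\C}$ that is conformal on either side of the gluing curve and David across it; David removability forces such a homeomorphism to be globally M\"obius.

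\textbf{Main obstacle.} The central difficulty lies in the conformal promotion step in the presence of the order $\nu \geq 3$ orbifold point. The passage to the $\nu$-fold cyclic cover $\tss$ resolves the elliptic point, but the Bowen--Series dynamics on $\tss$ must then be pushed back down through the factor construction in a way compatible with the David surgery. One must verify that the David integrability estimate $\sigma\{K_h > t\} \leq C e^{-\alpha t}$ at each parabolic fixed point is preserved under this descent, and that the Markov structure of $A_{\widetilde{\Sigma}}^{\mathrm{fBS}}$ on $\D$ still exhibits bounded geometry near the images of elliptic orbifold points. Verifying these uniform estimates across the factor construction is the heart of the argument.
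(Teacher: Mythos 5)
Your outline follows the same route as the paper (David surgery in the spirit of \cite{MM1}: a David extension of the boundary conjugacy, an invariant David coefficient, the David Integrability Theorem, and removability for holomorphicity and uniqueness), but the step you yourself flag as ``the heart of the argument'' is exactly what the paper has to prove and what your proposal leaves unverified. The key input is that the circle homeomorphism $\mathfrak{h}$ conjugating $z^d\vert_{\mathbb{S}^1}$ to $A_{\widetilde{\Sigma}}^{\mathrm{fBS}}\vert_{\mathbb{S}^1}$ extends to a David homeomorphism of $\D$. This is not automatic from ``quasisymmetric away from parabolics'': the paper establishes it (Lemma~\ref{david_ext_lem}) by checking the hypotheses of \cite[Theorem~4.12]{LMMN} for the factor map, namely that in the intermediate coordinates on $\mathcal{Q}=\faktor{\overline{\D}}{\langle M_\omega\rangle}$ the map is piecewise M{\"o}bius, that after one pullback of the natural partition one gets a genuine Markov partition whose pieces extend to round-disk neighborhoods compatibly with the dynamics, and that every periodic breakpoint is symmetrically parabolic. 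Your concern about ``bounded geometry near the images of elliptic orbifold points'' is a red herring for this step: the critical points of $A_{\widetilde{\Sigma}}^{\mathrm{fBS}}$ created by the $\nu$-fold factor lie in the open disk, not on $\mathbb{S}^1$, so they do not enter the boundary extension at all; they are disposed of later by Riemann removability once holomorphicity is known off finitely many points. Without the verification of the LMMN hypotheses (or an equivalent estimate), your conformal-promotion step has no foundation.

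A second, smaller gap is in the promotion and uniqueness steps. After straightening the invariant David coefficient by $\Phi$, holomorphicity of $\Phi\circ F_{\mathrm{top}}\circ\Phi^{-1}$ is not immediate: one needs the uniqueness theorem for David solutions \cite[Theorem~20.4.19]{AIM09} to get conformality of the comparison maps off the welding curve, and then conformal removability of $\Phi(\mathcal{J}(P))$ to glue across it. Since $\Phi$ is only David, the fact that $\mathcal{J}(P)$ is a quasicircle does not by itself make its image removable; the paper uses $W^{1,1}$-removability of $\mathcal{J}(P)$ (via the John property of $\mathcal{B}_\infty(P)$, \cite{JS00}) together with \cite[Theorem~2.7]{LMMN} to conclude that $\Phi(\mathcal{J}(P))$ is conformally removable, and the same removability statement is what powers the uniqueness argument. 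Your phrase ``David removability'' needs to be replaced by this precise chain; as written, both the holomorphicity and the M{\"o}bius-rigidity conclusions are asserted rather than proved.
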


 See  Theorem~\ref{conf_mat_thm} for the proof  existence of conformal matings.
There are two special cases of Theorem~\ref{conf_mating_intro_thm} that need special mention:
\begin{enumerate}
\item The case where $\Sigma$ has no order $\nu (\geq 3)$ orbifold points. This is treated in Section~\ref{punc_sphere_bs_subsec}, and was dealt within the conformal matings framework of \cite{MM1}.
\item The case where   $\Sigma$ has exactly one cusp, i.e.\ it is the $(2,\nu,\infty)$ orbifold of genus zero. This is treated in Section~\ref{hecke_factor_bs_subsec}. The case $\nu=3$ was extensively studied within the correspondence framework by Bullett and his collaborators starting with \cite{BP94} and culminating in \cite{BL20a,BL20b,BL22}. The case $\nu=4$ was examined
in \cite{BF05}. Theorem~\ref{conf_mating_intro_thm} in combination with Theorem~\ref{corr_mating_intro_thm} unifies and generalizes these examples
to arbitrary $\nu\geq 3$.  A set of 
necessary conditions of a completely different flavor for general $\nu \geq 3$ was given in \cite{B00,BH00}.
 \end{enumerate}

\subsection*{Rational uniformization of conformal matings}
The next result (see Corollary~\ref{main_hyp_comp_mating_class_cor}), which plays the role of a bridge between conformal matings and algebraic correspondences, answers the first part of Question~\ref{qn_main}. The existence of the rational function $R$ in the proposition below is established via a new application of the relationship of anti-holomorphic maps with quadrature domains \cite{LLMM1}. A key fact that leads to this rational uniformization is a crucial property of the factor Bowen-Series map: $A_{\widetilde{\Sigma}}^{\mathrm{fBS}}$ acts via an involution on the  ideal polygon boundary of its domain of definition. See Lemmas~\ref{qd_lem} and \ref{real_sym_schwarz_lem} where this is exploited and made explicit. 

\begin{proposition}[Rational uniformization of conformal matings]\label{rat_unif_conf_mating_intro_prop}
Let $\Sigma\in\cS$, let $P$ be a complex polynomial in the principal hyperbolic component $\mathcal{H}_d$, let $F:\overline{\Omega}\to\widehat{\C}$ be the conformal mating of $A_{\widetilde{\Sigma}}^{\mathrm{fBS}}$ and $P$, and let  $\eta(z)=1/z$.
Then, there exist
\begin{itemize}
\item a Jordan domain $\mathfrak{D}$ with $\eta(\partial\mathfrak{D})=\partial\mathfrak{D}$, and

\item a degree $d+1$ rational map $R$ of $\widehat{\C}$ that maps $\overline{\mathfrak{D}}$ homeomorphically onto $\overline{\Omega}$,
\end{itemize}
such that $F\equiv R\circ\eta\circ (R\vert_{\overline{\mathfrak{D}}})^{-1}$. In particular, we have
\begin{equation}\label{semiconj_eqn}
	F\circ R = R \circ \eta.
\end{equation}
\end{proposition}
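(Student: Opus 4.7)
The plan is to identify $F|_{\overline{\Omega}}$ as the Schwarz function of the quadrature domain $\Omega$ and then invoke the rational uniformization of quadrature domains (in the spirit of Aharonov-Shapiro-Gustafsson) as used in \cite{LLMM1}. The model map $\eta(z)=1/z$ is precisely the Schwarz function of the unit disk, so the target formula $F\equiv R\circ\eta\circ (R|_{\overline{\mathfrak{D}}})^{-1}$ is simply the statement that the Schwarz function of $\Omega$ is the conjugate, via the rational uniformization $R$, of the Schwarz-function model of $\mathfrak{D}$.

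The first step is to unpack the construction of Theorem~\ref{conf_mating_intro_thm} to show (i) that $F$ is meromorphic on $\Omega$, and (ii) that its continuous boundary values satisfy the Schwarz-function identity $F(z)=\overline{z}$ on $\partial\Omega$ in an appropriate chart. Step (ii) uses the fact, highlighted in the introduction, that $A_{\widetilde{\Sigma}}^{\mathrm{fBS}}|_{\mathbb{S}^1}$ is topologically $z\mapsto z^d$ and that the mating glues the disk model to the polynomial model via an orientation-reversing boundary identification. Next, I would verify that this candidate Schwarz function has only finitely many poles on $\Omega$, so that $\Omega$ is indeed a quadrature domain of finite order; the hypothesis $P\in\mathcal{H}_d$ enters crucially here, because the principal hyperbolic component provides a clean B\"ottcher linearization of $P$ as $z\mapsto z^d$ near its superattracting cycle and thus confines the poles of $F|_{\Omega}$ to a single finite source.

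Once $\Omega$ is recognized as a quadrature domain, the rational uniformization theorem used in \cite{LLMM1} produces a Jordan model $\mathfrak{D}$ whose boundary is automatically $\eta$-invariant, together with a rational map $R$ that conjugates $\eta$ to $F$ and restricts to a homeomorphism $\overline{\mathfrak{D}}\to\overline{\Omega}$; the degree of $R$ equals the order of the quadrature domain, which I would compute to be $d+1$ by counting poles---the $d$ poles contributed by the B\"ottcher dynamics of $P$ together with one extra pole associated with the puncture of $\Sigma$ (equivalently, with $\infty$). The semiconjugacy \eqref{semiconj_eqn} then follows by direct substitution into the conjugation formula. The main obstacle lies in the first step: showing that the a priori only quasi-regular map produced by the David surgery underlying the conformal mating in \cite{MM1} is in fact genuinely meromorphic on $\Omega$, and pinning down the precise pole count of its Schwarz function. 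This is exactly where the \emph{new} application of quadrature domains advertised just before the statement must be deployed.
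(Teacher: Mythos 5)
Your central idea -- view the mating as a Schwarz function, so that $\Omega$ is a quadrature domain and the uniformization $R$ comes from the Aharonov--Shapiro/\cite{LLMM1} circle of ideas -- is exactly the paper's key lemma (Lemma~\ref{qd_lem}, proved by Schwarz reflection across $\partial\Omega$ plus conformal removability). But there is a genuine gap in how you propose to apply it. The boundary identity $F(z)=\overline{z}$ on $\partial\Omega$ holds only for the fully symmetric model, i.e.\ for the base group $\pmb{\Gamma_{n,p}}$ and a \emph{real-symmetric} polynomial $\pmb{P}$: it is derived (Lemma~\ref{real_sym_schwarz_lem}) from the fact that the base factor Bowen--Series map acts as $\iota$ on the boundary geodesics \emph{together with} the real-symmetry of the mating semi-conjugacies. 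For a general $\Sigma\in\mathrm{Teich}$ and a general $P\in\mathcal{H}_d$, the boundary action of $F$ is only a quasiconformally conjugated copy of $\iota$, which is not an anti-holomorphic involution, so $\Omega$ is \emph{not} literally a quadrature domain and the reflection argument cannot be run; "in an appropriate chart" cannot repair this, since a non-M\"obius chart change destroys rationality. The paper bridges this by a second step you omit entirely: quasiconformal deformation (Proposition~\ref{mating_class_prop}) -- pull back the Beltrami coefficient of the conjugacy by the symmetric-model map $\pmb{R}$, integrate it, and check that the straightened map is rational. This is also precisely where the $\eta$-symmetric Jordan domain $\mathfrak{D}$ of the statement comes from (it is the image of $\D$ under the straightening map); the quadrature-domain machinery itself only produces the unit disk as the model, not a general $\eta$-invariant Jordan domain as you assert.

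Two further points. First, your stated "main obstacle" -- upgrading the David-surgery output to a genuinely meromorphic map on $\Omega$ -- is not an issue here: holomorphy of $F$ on $\Int{\mathrm{Dom}(F)}$ is already part of the existence theorem for the conformal mating (Theorem~\ref{conf_mat_thm}). Second, your use of $P\in\mathcal{H}_d$ and your pole count are off-target. The hypothesis $P\in\mathcal{H}_d$ matters because (i) $P$ is quasiconformally conjugate to a real-symmetric hyperbolic polynomial, and (ii) the lamination $\mathcal{L}_P$ is empty, so $\mathrm{Dom}(F)$ is a single Jordan domain and one rational map suffices (for other hyperbolic components the domain splits into several pieces $\Omega_\alpha$ with several maps $R_\alpha$); it has nothing to do with confining poles via a B\"ottcher coordinate at a superattracting cycle (polynomials in $\mathcal{H}_d$ generally have only an attracting fixed point). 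The degree $d+1$ is obtained in the paper not by locating poles of a Schwarz function but from the covering degrees of $F$: degree $d_\alpha-1$ over $\Omega_{\kappa(\alpha)}$ and $d_\alpha$ over its complement, compared with the global degree $d$ of $F$ over $\mathrm{Dom}(F)$, giving $\sum_\alpha d_\alpha=d+1$ (Relation~\eqref{degree_formula} and Corollary~\ref{crit_pnt_r_cor}); your attribution of the extra $+1$ to "a pole at the puncture" is not how the count works, even though the total happens to be correct.
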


The construction of the rational uniformizing map $R$ above is detailed in Section~\ref{conf_mating_char_sec}, especially Sections~\ref{real_sym_mating_subsec} and~\ref{qc_real_sym_mating_subsec}.

\subsection*{From conformal matings to algebraic correspondences}
Thanks to the algebraic description of the conformal mating given in Relation~\eqref{semiconj_eqn} above, one can pull back such a conformal mating by the branches of $R^{-1}$ to obtain an algebraic correspondence $\mathfrak{C}$ on the Riemann sphere $\widehat{\C}$.
The next main theorem of the paper (see Theorem~\ref{corr_main_thm}) gives a positive answer to the second part of Question~\ref{qn_main}. Let  $d = d(\Sigma)$ be as above.

\begin{thmx}[Mating genus zero orbifolds with polynomials as correspondences]\label{corr_mating_intro_thm}
\noindent\item	Let $\Sigma\in\cS$, and $P$ be a complex polynomial in the principal hyperbolic component $\mathcal{H}_d$ of degree $d$ polynomials.   	
	Then, there exists a bi-degree $d$:$d$ algebraic correspondence $\mathfrak{C}$ on the Riemann sphere $\widehat{\C}$ defined by the equation
\begin{equation}
\frac{R(w)-R(1/z)}{w-1/z}=0,
\label{corr_eqn_intro}
\end{equation} 
and a $\mathfrak{C}-$invariant partition $\widehat{\C}=\widetilde{\cT}\sqcup\widetilde{\cK}$ such that the following hold.
	\begin{enumerate}
		\item On $\widetilde{\cT}$, the dynamics of $\mathfrak{C}$ is orbit-equivalent to the action of a  group of conformal automorphisms acting properly discontinuously. Further, $\faktor{\widetilde{\cT}}{\mathfrak{C}}$ is biholomorphic to $\Sigma.$
		\item $\widetilde{\cK}$ can be written as the union of two copies $\widetilde{\cK}_1, \widetilde{\cK}_2$ of $\cK(P)$ (where $\cK(P)$ is the filled Julia set of $P$), such that $\widetilde{\cK}_1$ and $\widetilde{\cK}_2$ intersect in finitely many points. Furthermore, $\mathfrak{C}$ has a forward (respectively, backward) branch carrying $\widetilde{\cK}_1$ (respectively, $\widetilde{\cK}_2$) onto itself with degree $d$, and this branch is conformally conjugate to $P:\mathcal{K}(P)\to \mathcal{K}(P)$. 
		\end{enumerate}
\end{thmx}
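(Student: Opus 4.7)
The plan is to transport the conformal mating structure of Theorem~\ref{conf_mating_intro_thm} to $\widehat{\C}$ via the rational uniformization of Proposition~\ref{rat_unif_conf_mating_intro_prop}. First I would verify that Equation~(\ref{corr_eqn_intro}) defines an algebraic correspondence $\mathfrak{C}$ of bidegree $(d,d)$: the numerator $R(w)-R(\eta(z))$ vanishes on the anti-diagonal $\{w=\eta(z)\}$, and since $R$ has degree $d+1$, dividing out by $w-\eta(z)$ yields a bidegree $(d,d)$ polynomial relation whose $w$-fiber over a generic $z$ consists of the $d$ preimages of $R(\eta(z))$ under $R$ other than $\eta(z)$ itself. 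The semi-conjugacy $F\circ R=R\circ\eta$ furnished by Proposition~\ref{rat_unif_conf_mating_intro_prop} is the key algebraic identity that links these preimages to the dynamics of the conformal mating $F$.

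Next I would introduce the partition. Let $\mathcal{K}(F)\subset\overline{\Omega}$ denote the locus where the conformal mating $F$ is conjugate to $P:\mathcal{K}(P)\to\mathcal{K}(P)$ (supplied by Theorem~\ref{conf_mating_intro_thm}), and set $\mathcal{T}(F):=\overline{\Omega}\setminus\mathcal{K}(F)$. Define $\widetilde{\mathcal{K}}_1:=(R|_{\overline{\mathfrak{D}}})^{-1}(\mathcal{K}(F))\subset\overline{\mathfrak{D}}$, $\widetilde{\mathcal{K}}_2:=\eta(\widetilde{\mathcal{K}}_1)$, $\widetilde{\mathcal{K}}:=\widetilde{\mathcal{K}}_1\cup\widetilde{\mathcal{K}}_2$, and $\widetilde{\mathcal{T}}:=\widehat{\C}\setminus\widetilde{\mathcal{K}}$. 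The $\mathfrak{C}$-invariance of this partition follows from $F$-invariance of $\mathcal{K}(F)$ together with (\ref{semiconj_eqn}) and the symmetry $\widetilde{\mathcal{K}}_2=\eta(\widetilde{\mathcal{K}}_1)$. For assertion~(2), among the $d$ preimages of $R(\eta(z))$ under $R$ other than $\eta(z)$, the one lying in $\overline{\mathfrak{D}}$ defines a distinguished forward branch of $\mathfrak{C}$ on $\widetilde{\mathcal{K}}_1$; by (\ref{semiconj_eqn}) this branch equals $(R|_{\overline{\mathfrak{D}}})^{-1}\circ F\circ R|_{\overline{\mathfrak{D}}}$ on $\widetilde{\mathcal{K}}_1$ and is therefore conformally conjugate to $P|_{\mathcal{K}(P)}$. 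The analogous statement for a backward branch on $\widetilde{\mathcal{K}}_2$ follows by conjugating with $\eta$. Since $\eta$ swaps $\overline{\mathfrak{D}}$ with its complement, the intersection $\widetilde{\mathcal{K}}_1\cap\widetilde{\mathcal{K}}_2$ lies in $\partial\mathfrak{D}$, and under $R|_{\overline{\mathfrak{D}}}$ it corresponds to the ``gluing points'' on $\partial\Omega$ where $\mathcal{K}(F)$ meets $\mathcal{T}(F)$; this is a finite set, coming from the finitely many cusps of $\Sigma$.

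For assertion~(1), I would lift the orbit-equivalence provided by Theorem~\ref{conf_mating_intro_thm}. On $\mathcal{T}(F)$, the conformal mating $F$ is orbit-equivalent to a group $G$ of conformal automorphisms acting properly discontinuously with quotient $\mathcal{T}(F)/G\cong\Sigma$; this is the content of Theorem~\ref{conf_mating_intro_thm} combined with the ``going down'' reduction to $\Sigma$ through the factor Bowen-Series construction. Pulling back by $R|_{\overline{\mathfrak{D}}}$ transports this structure to the portion of $\widetilde{\mathcal{T}}$ inside $\overline{\mathfrak{D}}$; the involution $\eta$ together with the remaining branches of $R^{-1}$ propagate it to the portion inside $\widehat{\C}\setminus\overline{\mathfrak{D}}$, producing a $\mathfrak{C}$-invariant tiling of $\widetilde{\mathcal{T}}$ by copies of a fundamental domain for $G$. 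The main obstacle I anticipate is making this orbit equivalence on $\widetilde{\mathcal{T}}$ completely explicit: one must exhibit generators of the ambient group (naturally built from $\eta$ composed with branches of $R^{-1}$), verify proper discontinuity of the resulting action, and check that the quotient $\widetilde{\mathcal{T}}/\mathfrak{C}$ inherits the biholomorphism with $\Sigma$ rather than with the cyclic cover $\widetilde{\Sigma}$ or any intermediate object. This is precisely where the interplay between the $\nu$-fold cover $\widetilde{\Sigma}$ and the factor operation defining $A_{\widetilde{\Sigma}}^{\mathrm{fBS}}$ must be unwound carefully, leveraging the fact that the factor identifications already encode the descent from $\widetilde{\Sigma}$ to $\Sigma$.
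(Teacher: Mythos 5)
Your construction of the correspondence from Relation~\eqref{semiconj_eqn}, your invariant partition, and your treatment of assertion~(2) track the paper's argument closely: the paper likewise sets $\widetilde{\cK}:=R^{-1}(\cK)$, notes $\widetilde{\cK}_2=\eta(\widetilde{\cK}_1)$ with $\widetilde{\cK}_1\cap\widetilde{\cK}_2=(R\vert_{\partial\mathfrak{D}})^{-1}(S_F)$ finite, and exhibits the forward branch $(R\vert_{\overline{\mathfrak{D}}})^{-1}\circ R\circ\eta$ on $\widetilde{\cK}_1$, conjugate to $F\vert_{\cK}$ and hence to $P\vert_{\cK(P)}$. That half of your proposal is essentially correct.

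The gap is in assertion~(1). Your mechanism is to ``lift the orbit-equivalence provided by Theorem~\ref{conf_mating_intro_thm}'' from $\cT(F)$ to $\widetilde{\cT}$, but Theorem~\ref{conf_mating_intro_thm} provides no such orbit equivalence, and when $\Sigma$ has an orbifold point of order $\nu\geq 3$ no such group can exist on $\cT(F)$: the restriction $F\vert_{\cT}$ is conjugate to the \emph{factor} Bowen--Series map, whose grand-orbit quotient map is locally injective at its critical value in $\cT$, whereas the quotient map of a properly discontinuous group of conformal automorphisms of $\cT(F)$ with quotient $\Sigma$ would have local degree $\nu$ over the order-$\nu$ orbifold point. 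So the group structure is simply not present in the $F$-plane; it only appears after pulling back by $R$, and producing it is where the real work lies. Concretely, the paper first shows (via the critical-point analysis of Section~\ref{unif_rat_crit_pnt_subsec}, Corollary~\ref{crit_pnt_r_cor}) that $R$ has exactly $p$ critical points of multiplicity $n-1$ in $R^{-1}(\cT)\setminus\overline{\mathfrak{D}}$, all over a single critical value; this forces $\widetilde{\cT}$ to be a chain of $p$ Jordan domains each mapping to $\cT$ with degree $n$ (Lemma~\ref{lifted_tiling_top_lem}) and, crucially, makes $R\vert_{\widetilde{\cT}}$ behave like a cyclic branched cover, yielding an order-$np$ automorphism $\tau$ with $R^{-1}(R(z))=\{z,\tau(z),\dots,\tau^{np-1}(z)\}$ on $\widetilde{\cT}$ (Proposition~\ref{deck_prop}). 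The branches of $\mathfrak{C}$ on $\widetilde{\cT}$ are then $\tau^{j}\circ\eta$, the free-product structure $\langle\eta\rangle\ast\langle\tau\rangle\cong\Z/2\Z\ast\Z/(np)\Z$ is proved by a ping-pong argument on the tiling of $\widetilde{\cT}$ (Proposition~\ref{grand_orbit_group_prop}), and proper discontinuity together with $\faktor{\widetilde{\cT}}{\mathfrak{C}}\cong\Sigma$ is obtained by lifting $\mathfrak{X}_\Gamma$ to a component $U_0$, cutting out a fundamental sector, and matching its side pairings with the generators of $\widehat{\Gamma}=\Gamma\rtimes\langle M_\omega\rangle$ (Proposition~\ref{lifted_tiling_quotient_prop}). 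You correctly anticipate that generators, proper discontinuity, and the identification of the quotient with $\Sigma$ rather than $\widetilde{\Sigma}$ must be checked, but your proposal supplies no argument for these points, and its intended input (a group orbit-equivalent to $F$ on $\cT(F)$) is unavailable precisely in the new cases $\nu\geq 3$ that the theorem is designed to cover.
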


We remark that the Relation~\eqref{semiconj_eqn} connects two dynamical planes: one corresponding to the
\emph{conformal mating} or $F-$plane, and one corresponding to the
\emph{correspondence} or $\mathfrak{C}-$plane. The rational map $R$ mediates the connection between these two planes. This is elaborated upon in Section \ref{corr_sec}.

The following diagram summarizes the discussion above in terms of interconnections among the objects that are mated, the resulting conformal matings, and the associated correspondences.

\begin{figure}[H]
	\begin{Large}
		\begin{tikzcd}[column sep=large,row sep=large]
			& \mathrm{Teich}(\Sigma)\times\mathcal{H}_{d} \arrow{dl}[swap]{\substack{\textrm{Combination via}\\ \textrm{orbit equivalence}}}  & \\
			\substack{\cM\\ \\ \textrm{Moduli space of}\\ \textrm{conformal matings}}  \arrow{rr}[swap]{\textrm{Uniformization}\ +\ \textrm{Pullback by rational map}} & & \substack{\cC\\ \\ \textrm{Moduli space of}\\ \textrm{correspondences}} \arrow{ul}[swap]{\substack{\textrm{Conformal class of dynamics}\\ \textrm{ on invariant subsets}}}
		\end{tikzcd}
	\end{Large}
\caption{Flow-chart of interconnections}
\label{intro_fig}
\end{figure}
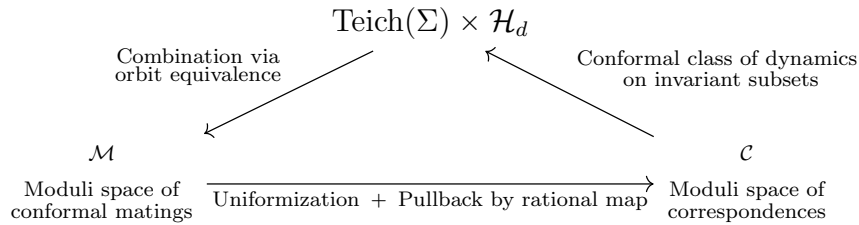

Theorem~\ref{corr_mating_intro_thm}  establishes an exact translation  between the Bullett-Penrose-Lomonaco correspondence framework 
\cite{BP94,B00,BL20a,BL20b,BL22} and the conformal matings framework of   \cite{MM1}. In particular, we obtain a different way of constructing the Bullett-Penrose-Lomonaco correspondences, starting from conformal matings (See Section \ref{bp_corr_sec} for details). The matings framework is complex analytic in nature, as opposed 
to the more algebraic flavor of the correspondence framework. The analytic setup has greater 
flexibility, giving new examples of correspondences that  combine Fuchsian punctured sphere groups (possibly with some elliptic elements) and polynomials.

\subsection*{Correspondences as character variety and a new Bers slice}

We turn now to the final theme of this paper. The existence of the rational map
$R$ allows us to look at the  space of matings algebraically parametrized by the coefficients of $R$. Extending the Sullivan dictionary to the present setup, we have the following:

\begin{itemize}
	\item The algebraic equation~\eqref{corr_eqn_intro} shows that correspondences
	are parametrized by the quasi-projective variety $\mathrm{Rat}_{d+1} (\C)$, the space of rational maps $R$ of degree exactly equal to
	$(d+1)$. 
	In the context of correspondences, $\mathrm{Rat}_{d+1} (\C)$ plays the role of the \emph{representation variety}. The quotient  $\ \faktor{\mathrm{Rat}_{d+1}(\C)}{\sim}$ by the equivalence relation
	$$
	R\sim M_2\circ R\circ M_1,
	$$
	where $R\in \mathrm{Rat}_{d+1}(\C), M_2\in\pslc$, and $M_1$ belongs to the centralizer of $\eta(z)=1/z$ in $\pslc$, plays the role of the \emph{character variety} (see Section~\ref{char_var_sec}).

	\item There is a complex-analytic realization of the Teichm{\"u}ller space of punctured spheres (more generally, genus zero orbifolds  as in Theorem \ref{corr_mating_intro_thm}) within the space  $\ \faktor{\mathrm{Rat}_{d+1}(\C)}{\sim}\ $.  This gives the analog of a \emph{Bers slice} (see Section~\ref{punc_sphere_bers_sec}).
\end{itemize}

Theorem~\ref{punc_sphere_teich_corr_intro_thm} below makes this  precise:

\begin{thmx}[Bers slices of genus zero orbifolds in spaces of correspondences]\label{punc_sphere_teich_corr_intro_thm}
	Let $\Sigma_0\in\cS$ and $d:=d(\Sigma_0)$. Then, the Teichm{\"u}ller space $\mathrm{Teich}(\Sigma_0)$ can be biholomorphically embedded in a space of bi-degree $d$:$d$ algebraic correspondences on $\widehat{\C}$ such that each resulting correspondence is a mating of some $\Sigma\in\mathrm{Teich}(\Sigma_0)$ and the polynomial $z^d$ (in the sense of Theorem~\ref{corr_mating_intro_thm}).
\end{thmx}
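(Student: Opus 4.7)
The plan is to construct a map $\Phi:\mathrm{Teich}(\Sigma_0)\to\faktor{\mathrm{Rat}_{d+1}(\C)}{\sim}$ by $\Phi(\Sigma):=[R_\Sigma]$, where $R_\Sigma$ is the rational uniformizer of Proposition~\ref{rat_unif_conf_mating_intro_prop} applied to the conformal mating (Theorem~\ref{conf_mating_intro_thm}) of the factor Bowen-Series map $A_{\widetilde{\Sigma}}^{\mathrm{fBS}}$ with the center polynomial $P(z)=z^d$ of $\cH_d$, and then to verify that $\Phi$ is a holomorphic injection. Since $z^d$ is the ``basepoint'' of $\cH_d$ analogous to the Fuchsian locus in Bers' original construction, this is precisely the Bers-slice geometry: one side is rigidly fixed while the other varies over $\mathrm{Teich}(\Sigma_0)$. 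The image lies in the mating locus by construction, since Theorem~\ref{corr_mating_intro_thm} shows that the correspondence~\eqref{corr_eqn_intro} cut out by $R_\Sigma$ is a mating of $\Sigma$ with $z^d$.

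For injectivity of $\Phi$, suppose $\Phi(\Sigma_1)=\Phi(\Sigma_2)$, so that $R_{\Sigma_2}=M_2\circ R_{\Sigma_1}\circ M_1$ for some $M_2\in\pslc$ and some $M_1$ in the centralizer of $\eta$. Equation~\eqref{corr_eqn_intro} shows that the corresponding correspondences are then conjugate by $M_2\in\pslc$. Part~(1) of Theorem~\ref{corr_mating_intro_thm} identifies the quotient of the invariant ``tiling'' piece $\widetilde{\cT}_i$ under the $\mathfrak{C}_{\Sigma_i}$-action with $\Sigma_i$, so $M_2$ descends to a marking-preserving biholomorphism $\Sigma_1\cong\Sigma_2$, forcing $\Sigma_1=\Sigma_2$ in $\mathrm{Teich}(\Sigma_0)$.

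Holomorphic dependence will be established via the measurable Riemann mapping theorem with parameters. A tangent vector at $\Sigma\in\mathrm{Teich}(\Sigma_0)$, represented by a Beltrami coefficient $\mu$ on $\Sigma$, lifts through the orbifold covering $\widetilde{\Sigma}\to\Sigma$ and transports via the mating construction to a $\mathfrak{C}_\Sigma$-invariant Beltrami coefficient $\widetilde{\mu}$ supported on $\widetilde{\cT}$, extended by zero across $\widetilde{\cK}$. The zero extension across $\widetilde{\cK}$ is natural because $\cH_d$ is parametrized by quasiconformal conjugacies of Blaschke products with the Julia set of $z^d$ as a rigid basepoint carrying no invariant deformation. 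Integrating the family $t\mapsto t\widetilde{\mu}$ yields a holomorphically varying family of qc homeomorphisms $\phi_t$ of $\widehat{\C}$, and the rational maps $R_{\Sigma_t}:=\phi_t\circ R_\Sigma\circ\phi_t^{-1}$ give a holomorphic family in $\mathrm{Rat}_{d+1}(\C)$ descending to a holomorphic map into the character variety.

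The main obstacle, I expect, will be the promotion of injectivity at the level of points to injectivity at the level of differentials, so that the image is a complex submanifold rather than a mere holomorphic image. This amounts to showing that a non-trivial infinitesimal Beltrami deformation of $\Sigma$ cannot be absorbed by an infinitesimal element of the equivalence $R\sim M_2\circ R\circ M_1$; it reduces to the rigidity statement that no non-trivial conformal automorphism of $F_\Sigma$ commutes with its factor Bowen-Series side, a consequence of the uniqueness clause in Theorem~\ref{conf_mating_intro_thm} applied in a parametrized form. A secondary technical point is to verify that $\phi_t\circ R_\Sigma\circ\phi_t^{-1}$ remains of degree exactly $d+1$ throughout the parameter space, which follows because the deformed correspondence stays of bi-degree $d$:$d$ under qc conjugacy.
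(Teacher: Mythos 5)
Your overall architecture coincides with the paper's: freeze the polynomial at $z^d$, uniformize the conformal mating by a rational map, and let the group side run over Teichm{\"u}ller space, proving holomorphy by solving Beltrami equations with parameters and injectivity by reconstructing the mating data from the correspondence. However, two steps as written do not go through. First, the deformation formula $R_{\Sigma_t}:=\phi_t\circ R_\Sigma\circ\phi_t^{-1}$ is incorrect: $R_\Sigma$ is a uniformizing map between two different dynamical planes (the correspondence plane and the mating plane), not a dynamical system, and conjugating by a single quasiconformal map produces a rational map only if the coefficient $\widetilde{\mu}_t=\phi_t^*\mu_0$ is invariant under pullback by $R_\Sigma$ itself, i.e.\ $R_\Sigma^*\widetilde{\mu}_t=\widetilde{\mu}_t$. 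What the construction actually provides is an $F$-invariant coefficient $\mu_{F,t}$ supported on $\cT$ and its pullback $R_\Sigma^*(\mu_{F,t})$ supported on $\widetilde{\cT}$, and these are different coefficients; one must integrate them separately, obtaining maps $\phi_t$ and $\widehat{\phi}_t$ on the two planes and setting $R_t=\phi_t\circ R_\Sigma\circ\widehat{\phi}_t^{-1}$, with $\widehat{\phi}_t$ normalized to commute with $\eta$ so that the Jordan domain $\mathfrak{D}$ retains its $\eta$-symmetry and the correspondence keeps the algebraic normal form~\eqref{corr_eqn_intro}. This is exactly the two-map scheme of Proposition~\ref{mating_class_prop}, and it is how Proposition~\ref{bers_embedding_corr_prop} obtains holomorphic dependence of the coefficients of $R_\rho$ on the Bers-slice parameter.

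Second, your target space is too large for your concluding step, and your fallback is unsubstantiated. The quotient $\faktor{\mathrm{Rat}_{d+1}(\C)}{\sim}$ has complex dimension $2d-1$, strictly larger than $L=\dim_\C\mathrm{Teich}(\Sigma_0)$, so mere injectivity of a holomorphic map does not yield an embedding there, and your proposed remedy (a ``parametrized uniqueness'' rigidity statement forcing injectivity of the differential) is only a heuristic. The paper avoids this entirely: using the critical-point structure of Corollary~\ref{crit_pnt_r_cor} and Vieta-type relations it normalizes $R$ into an explicit form with exactly $L$ independent coefficients, so the image sits in $\C^L$ and the injective holomorphic map between equidimensional complex manifolds is automatically a biholomorphism onto its image (\cite[Theorem~2.14]{range}); thus the ``main obstacle'' you anticipate dissolves once this dimension count is in place, and some such count (or a genuine infinitesimal rigidity argument) is indispensable. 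Finally, in your injectivity argument the claim that $M_2$ descends to a \emph{marking-preserving} isomorphism needs justification: one must recover the marking from the correspondence, which the paper does in Section~\ref{corr_mating_struct_intrinsic_subsec} via the normalization $R(1)=\mathfrak{X}_\Gamma(1)$, the reconstruction of $\mathfrak{D}$, the fundamental domain $\mathfrak{S}$, and its side-pairing transformations; without this, you only get a biholomorphism of the underlying orbifolds, not equality in $\mathrm{Teich}(\Sigma_0)$.
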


\subsection*{Further implications of unification of the mating frameworks}
As mentioned above, the explicit nature of factor Bowen-Series maps and the complex-analytic construction of correspondences via conformal matings facilitates the construction of such objects. In a recent work of the second author with Shaun Bullett, Luna Lomonaco, and Mikhail Lyubich, this strategy was employed to construct correspondences as matings of all parabolic rational maps and Hecke surfaces settling a parabolic version of a conjecture of Bullett and Freiberger (see \cite{BLLM}, \cite[\S 3, p. 3926]{BF03}).

The unification of the two mating frameworks also allows one to study parameter spaces of correspondences in terms of parameter spaces of conformal matings of polynomials and factor Bowen-Series maps. Often, these conformal matings behave like \emph{pinched polynomial-like maps}. In a joint work of the second author with Mikhail Lyubich and Yusheng Luo \cite{LLM24}, such spaces of pinched polynomial-like maps are investigated using puzzle techniques from one variable complex dynamics. This study reveals intimate topological relations between various spaces of correspondences and connectedness loci of complex polynomials.
\smallskip

\noindent\textbf{Notation and convention.} For the convenience of the reader, we set forth some basic notation that will be used throughout.
\begin{itemize}
	\item $\eta(z):= 1/z$, $\eta^{-}(z)=1/\overline{z}$, $\iota(z)=\overline{z}$. 
	\item The topological closure of a set $X\subset\widehat{\C}$ is denoted by $\overline{X}$ or $\Cl{X}$. 
	\item $\D^*:= \widehat{\C}\setminus\overline{\D}$.
	\item $m_d:\R/\Z\to\R/\Z,\ \theta\mapsto d\theta$.
	\item For a meromorphic map $f:U\to\widehat{\C}$, the set of critical points of $f$ is denoted by $\mathrm{crit}(f)$.
	\end{itemize}

\noindent Unless stated otherwise, we will draw non-escaping sets of the maps in consideration as bounded subsets of the plane.
\smallskip

\noindent {\bf Acknowledgments:} 
The authors thank the anonymous referee of the present paper for a diligent and careful reading, and for his/her numerous valuable suggestions that have improved the exposition. We are grateful to Tien-Cuong Dinh for posing to us the first part of Question~\ref{qn_main}. We thank an anonymous referee of \cite{sullivan_survey} for posing the second part of Question~\ref{qn_main}. We also thank Yusheng Luo for helpful conversations. 
This research was supported in part by the International Centre for Theoretical Sciences (ICTS) during the course of the program - ICTS Probabilistic Methods in Negative Curvature (code: ICTS/pmnc--2023/02).

\section{Factor Bowen-Series maps}\label{factor_bs_sec}

We will now study Bowen-Series maps associated with appropriate cyclic covers of genus zero orbifolds. These maps have mild discontinuities. However, one can pass to factors of these Bowen-Series maps such that the factors are continuous. The construction of factor Bowen-Series maps is the first key step in the proofs of our main theorems.

\subsection{Factor Bowen-Series map for a base group}\label{factor_bs_base_subsec}

Let $n, p$ be two positive integers with $np\geq 3$. For $r\in\{1,\cdots,n\}$, denote the counter-clockwise arc $\arc{e^{\frac{2i\pi(r-1)}{n}}, e^{\frac{2i\pi r}{n}}}\subset\mathbb{S}^1$ by $J_r$. Note that $J_1$ is the counter-clockwise arc of $\mathbb{S}^1$ connecting $1$ to $e^{\frac{2i\pi}{n}}$, and the various $J_r$ are obtained by rotating $J_1$ successively by angle $\frac{2\pi}{n}$ about the origin. We set $\omega:=e^{\frac{2i\pi}{n}}$, and $M_\omega:\overline{\D}\to\overline{\D}, z\mapsto \omega z$.  

Further, for $r\in\{1,\cdots,n\}$, consider the chain of $p$ bi-infinite hyperbolic geodesics 
$$
C_{r,s}:=\overline{e^{\frac{2i\pi(r-1)}{n}+\frac{2i\pi(s-1)}{np}}, e^{\frac{2i\pi (r-1)}{n}+\frac{2i\pi s}{np}}},\quad s\in\{1,\cdots,p\}.
$$
For any $r\in\{1,\cdots,n\}$, the geodesic $C_{r,1}$ has its endpoints at $e^{\frac{2i\pi(r-1)}{n}}$ and $e^{\frac{2i\pi (r-1)}{n}+\frac{2i\pi}{np}}$, and the other $C_{r,s}$ are obtained by rotating $C_{r,1}$ successively by angle $\frac{2\pi}{np}$ about the origin (see Figure~\ref{factor_bs_fig}). The geodesics $C_{r,s}$ induce a partition of the arc $J_r$ into $p$ arcs $J_{r,1},\cdots, J_{r,p}$, where $J_{r,s}$ is the arc of $\mathbb{S}^1$ of length $\frac{2\pi}{np}$ connecting the endpoints of $C_{r,s}$.

The bi-infinite geodesics $C_{r,s}$, $r\in\{1,\cdots,n\},\ s\in\{1,\cdots,p\}$, bound a closed ideal $np-$gon (in the topology of $\D$), which we call $\pmb{\Pi}$. We will now introduce M{\"o}bius maps of the disk that pair the sides of $\pmb{\Pi}$. To do so, we will exploit the symmetry $M_\omega$ of $\pmb{\Pi}$. Specifically, we will prescribe the side-pairings for $C_{1,1},\cdots, C_{1,p}$ explicitly, and conjugate these side-pairing transformations by powers of $M_\omega$ to define pairings for the other sides of $\pmb{\Pi}$. Let us denote the diameter of $\mathbb{S}^1$ with endpoints at $\pm e^{\frac{i\pi}{n}}$ by $\ell$. Now observe that the M{\"o}bius map $g_{1,s}$ obtained by post-composing the reflection in $C_{1,s}$ with the reflection in $\ell$ carries $C_{1,s}$ to $C_{1,p+1-s}$. In particular, $g_{1,p+1-s}=g_{1,s}^{-1}$. Note that when $p$ is odd, then $g_{1,\frac{p+1}{2}}$ is an involution with a fixed point on $C_{1,\frac{p+1}{2}}$.

By the Poincar{\'e} polygon theorem, the M{\"o}bius maps 
$$
g_{r,s}:=M_\omega^{r-1}\circ g_{1,s}\circ M_{\omega}^{-(r-1)},\quad r\in\{1,\cdots,n\},\ s\in\{1,\cdots,p\}
$$ 
generate a Fuchsian group $\pmb{\Gamma_{n,p}}$, and $\pmb{\Pi}$ is a closed fundamental domain for the $\pmb{\Gamma_{n,p}}-$action on $\D$. Moreover, $\faktor{\D}{\pmb{\Gamma_{n,p}}}$ is biholomorphic to
\smallskip

\noindent$\bullet$ a sphere with $\frac{np}{2}+1$ punctures for $p$ even, and
\smallskip

\noindent$\bullet$ a sphere with $\frac{n(p-1)}{2}+1$ punctures and $n$ order two orbifold points for $p$ odd.
\smallskip

\begin{remark}\label{reconcile_rem_1}
1) The integer $p$ can be thought of as the number of `pockets' in each sector of angular width $2\pi/n$. On the other hand, the integer $n$ plays the role of $\nu$ appearing in the definition of the class $\cS$ of genus zero orbifolds (see Section~\ref{intro_sec}).

2) When $n\geq 3$, the orbifold $\faktor{\D}{\pmb{\Gamma_{n,p}}}$ is an $n-$fold cyclic cover of a base genus zero orbifold $\pmb{\Sigma}\in\cS$ with $\lfloor p/2\rfloor +1$ punctures, zero/one order two orbifold point depending on the parity of $p$, and an order $n$ orbifold point.
\end{remark}

We now look at the Bowen-Series map $A_{\pmb{\Gamma_{n,p}}}^{\mathrm{BS}}$ equipped with the above fundamental domain $\pmb{\Pi}$ and side-pairing transformations (cf. \cite{bowen_series}). By definition, the map 
$$
A_{\pmb{\Gamma_{n,p}}}^{\mathrm{BS}}:\overline{\D}\setminus\Int{\pmb{\Pi}}\longrightarrow \overline{\D}
$$ 
acts as $g_{r,s}$ on the closure of the hyperbolic half-plane enclosed by the geodesic $C_{r,s}$ and the arc $J_{r,s}$ (see Figure~\ref{factor_bs_fig}).
It is now easily checked that $A_{\pmb{\Gamma_{n,p}}}^{\mathrm{BS}}$ is continuous on $\mathbb{S}^1\setminus\sqrt[n]{1}$, and the left and right-hand limits of $A_{\pmb{\Gamma_{n,p}}}^{\mathrm{BS}}$ at the points of $\sqrt[n]{1}$ lie in the set $\sqrt[n]{1}$. We will now use this fact to pass to a factor of $A_{\pmb{\Gamma_{n,p}}}^{\mathrm{BS}}$ that is continuous everywhere.

\begin{figure}[h!]
\captionsetup{width=0.98\linewidth}
\begin{tikzpicture}
\node[anchor=south west,inner sep=0] at (0.5,0) {\includegraphics[width=0.45\linewidth]{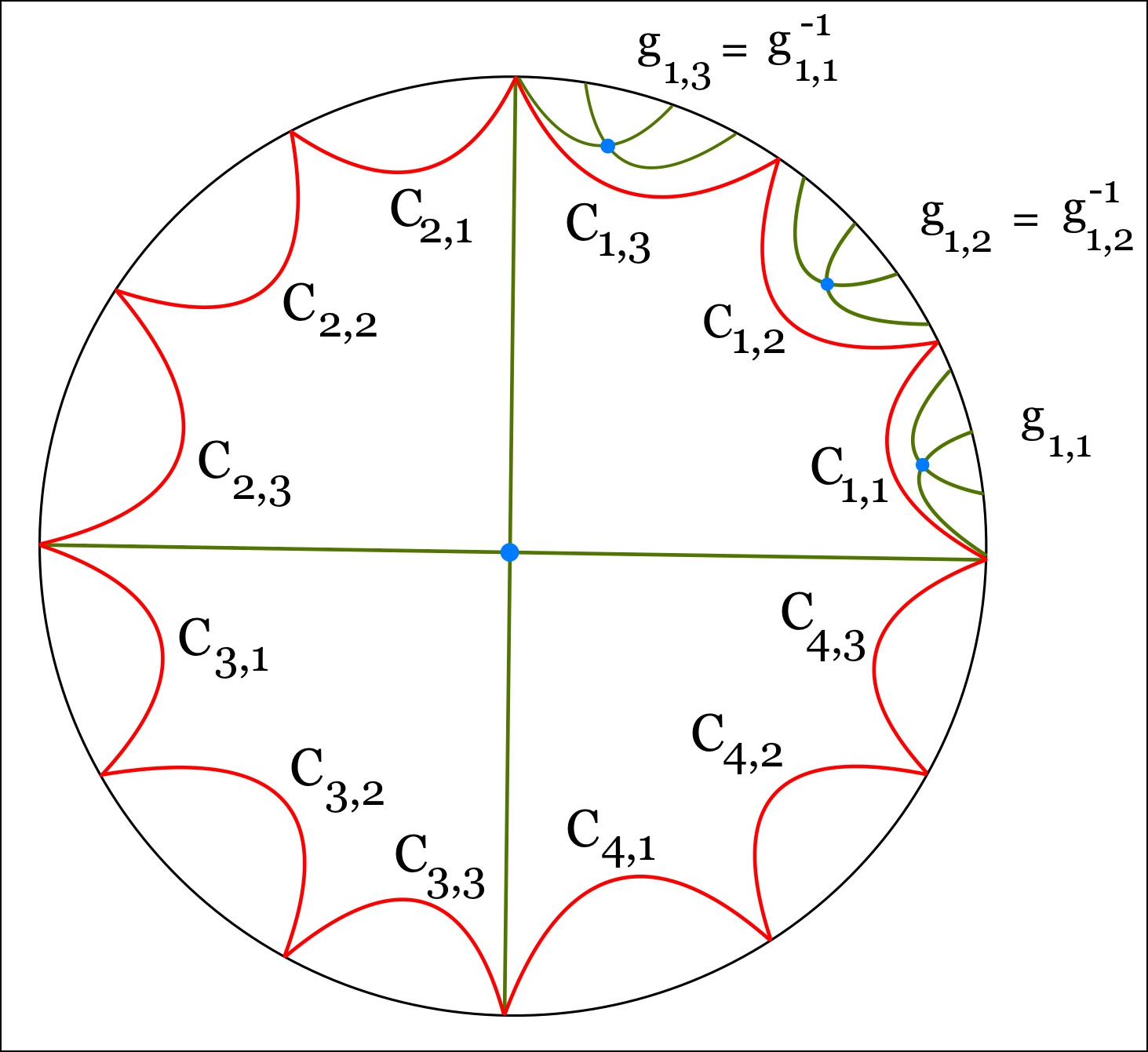}};
\node[anchor=south west,inner sep=0] at (7,0) {\includegraphics[width=0.42\linewidth]{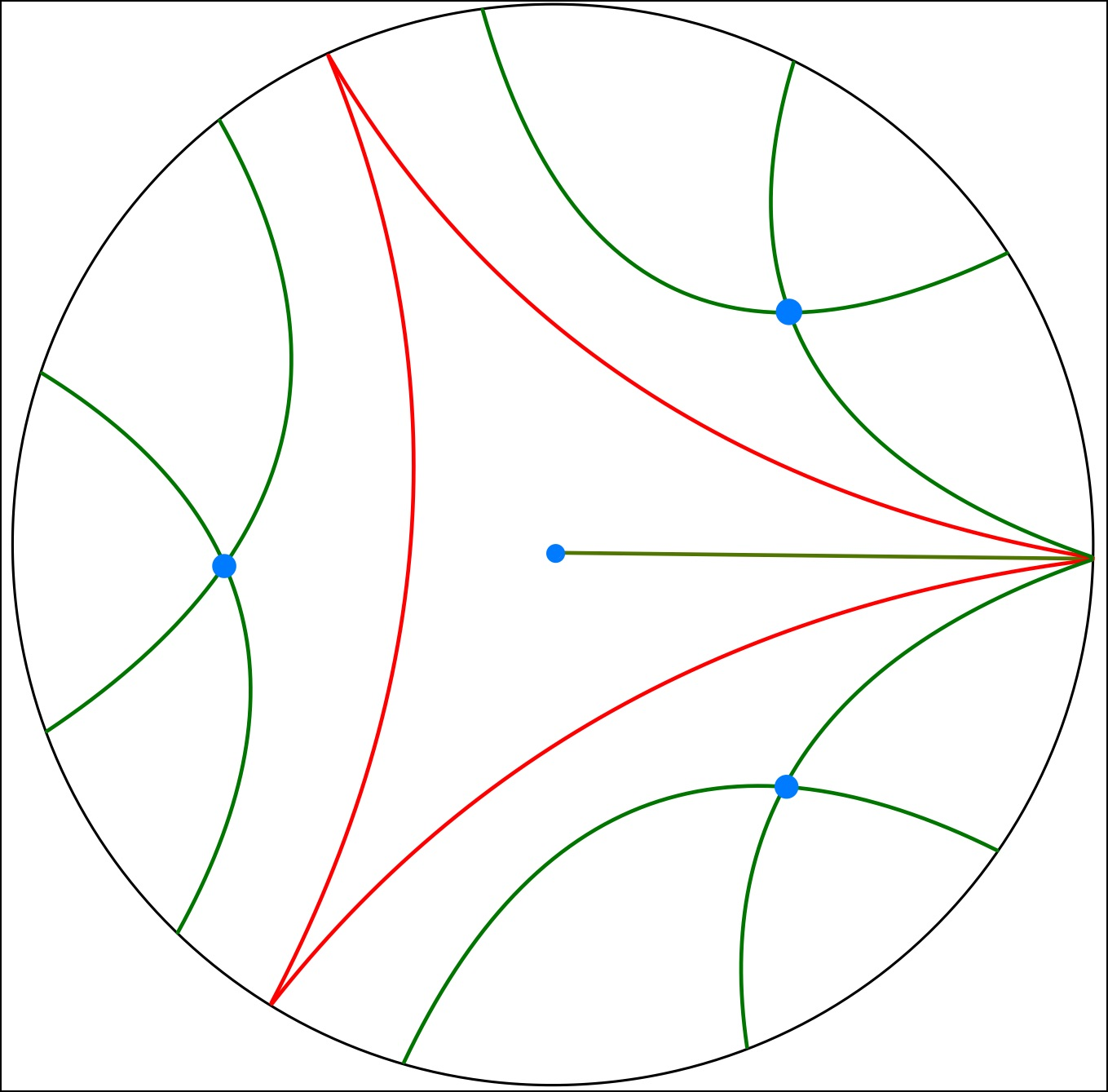}};
\end{tikzpicture}
\caption{Left: The fundamental domain $\pmb{\Pi}$ of $\pmb{\Gamma_{4,3}}$ is the polygon having the geodesics $C_{r,s}$, $r\in\{1,2,3,4\},\ s\in\{1,2,3\},$ as its edges. The Bowen-Series map $A_{\pmb{\Gamma_{4,3}}}^{\mathrm{BS}}$, which commutes with $M_i$, acts on the arcs $J_{1,1}, J_{1,2}, J_{1,3}$ as $g_{1,1}, g_{1,2}, g_{1,3}$. The map $A_{\pmb{\Gamma_{4,3}}}^{\mathrm{BS}}$ is continuous away from the fourth roots of unity. The pre-images of the vertical and horizontal radial lines under $g_{1,s}$ are displayed in green. Identifying the radial lines at angle $0, \pi/2$ under $M_i$ and uniformizing the resulting cone yields the factor Bowen-Series map $A_{\pmb{\Gamma_{4,3}}}^{\mathrm{fBS}}$. Right: The factor Bowen-Series map $A_{\pmb{\Gamma_{4,3}}}^{\mathrm{fBS}}$ is defined outside of the ideal triangle $\pmb{\Pi}/\langle M_\omega\rangle$ with vertices at the third roots of unity, and is a degree $11$ covering of $\mathbb{S}^1$. It maps all the green curves to the radial line at angle $0$, and hence has three critical points each of multiplicity three (at the valence four vertices of the green graph).}
\label{factor_bs_fig}
\end{figure}
	
Consider the bordered (orbifold) Riemann surfaces 
$$
\mathcal{Q}:=\faktor{\overline{\D}}{\langle M_\omega\rangle}\ ,\quad \mathcal{Q}_1:=\faktor{\left(\overline{\D}\setminus \Int{\pmb{\Pi}}\right)}{\langle M_\omega\rangle}\ ,
$$
and note that a closed fundamental domain for the action of $\langle M_\omega \rangle$ on $\overline{\D}$ is given by 
$$
\{z\in\overline{\D}:\ 0\leq\arg{z}\leq\frac{2\pi}{n}\}\cup\{0\}.
$$ 
Thus, $\mathcal{Q}$ is biholomorphic to the surface obtained from the above fundamental domain by identifying the radial line segments $\{r:0\leq r\leq 1\}$ and $\{re^{\frac{2\pi i}{n}}:0\leq r\leq 1\}$ by~$M_\omega$. 

By construction, the Bowen-Series map $A_{\pmb{\Gamma_{n,p}}}^{\mathrm{BS}}$ commutes with $M_\omega$, and hence it can be pushed forward via the quotient map $\mathfrak{q}:\overline{\D}\to\mathcal{Q}$ to define a map 
$$
\mathfrak{q}\circ A_{\pmb{\Gamma_{n,p}}}^{\mathrm{BS}}\circ\mathfrak{q}^{-1}: \mathcal{Q}_1\to\mathcal{Q}.
$$
Note that the map $z\mapsto z^{n}$ yields a conformal isomorphism $\xi$ between the (bordered) surfaces $\mathcal{Q}$ and $\overline{\D}$. Finally, we set 
$$
A_{\pmb{\Gamma_{n,p}}}^{\mathrm{fBS}}:= \xi\circ \left(\mathfrak{q}\circ A_{\pmb{\Gamma_{n,p}}}^{\mathrm{BS}}\circ\mathfrak{q}^{-1}\right) \circ \xi^{-1}: \mathcal{D}_{\pmb{\Gamma_{n,p}}}:=\xi(\mathcal{Q}_1)\to \overline{\D}.
$$
Note that $A_{\pmb{\Gamma_{n,p}}}^{\mathrm{fBS}}:\mathbb{S}^1\to\mathbb{S}^1$ is an orientation-preserving covering map of degree $np-1$ (see Figure~\ref{factor_bs_fig}). By \cite[Lemma~3.7]{LMMN}, the map $A_{\pmb{\Gamma_{n,p}}}^{\mathrm{fBS}}\vert_{\mathbb{S}^1}$ is expansive. Moreover, $A_{\pmb{\Gamma_{n,p}}}^{\mathrm{fBS}}$ has  $p$ critical points at $\xi(\mathfrak{q}(g_{1,s}(0)))$, $s\in\{1,\cdots, p\}$, and each of them has multiplicity $n-1$. All these critical points are mapped to $0$.

\begin{definition}\label{factor_bs_base_def}
We call the map $A_{\pmb{\Gamma_{n,p}}}^{\mathrm{fBS}}: \mathcal{D}_{\pmb{\Gamma_{n,p}}}\to \overline{\D}$ the \emph{factor Bowen-Series} map of $\pmb{\Gamma_{n,p}}$ equipped with the fundamental domain $\pmb{\Pi}$.
\end{definition}

\begin{remark}\label{rmk-Pi}
    It is worth noting that 
    $A_{\pmb{\Gamma_{n,p}}}^{\mathrm{fBS}}$ extends continuously to the boundary $\partial \, \pmb{\Pi}/\langle M_\omega\rangle$ of the ideal triangle $\pmb{\Pi}/\langle M_\omega\rangle$. Further, this action on $\partial \, \pmb{\Pi}/\langle M_\omega\rangle$ is by complex conjugation. In what follows, it will be important that the action of $A_{\pmb{\Gamma_{n,p}}}^{\mathrm{fBS}}$ on $\partial \, \pmb{\Pi}/\langle M_\omega\rangle$ is by an involution.
\end{remark}

\subsection{Deformations and moduli spaces of factor Bowen-Series maps}\label{factor_bs_gen_subsec}

The preferred fundamental domain $\pmb{\Pi}$ of $\pmb{\Gamma_{n,p}}$ can be used to equip every marked Fuchsian group in the Teichm{\"u}ller space $\mathrm{Teich}(\pmb{\Gamma_{n,p}})$ of $\pmb{\Gamma_{n,p}}$ with a preferred fundamental domain. This, in turn, will allow us to define factor Bowen-Series maps for all marked groups in a suitable symmetry locus of $\mathrm{Teich}(\pmb{\Gamma_{n,p}})$.

Recall that the Teichm{\"u}ller space $\mathrm{Teich}(\pmb{\Gamma_{n,p}})$ is the space of M{\"o}bius conjugacy classes of discrete, faithful, strongly type-preserving representations of $\pi_1\left(\faktor{\D}{\pmb{\Gamma_{n,p}}}\right)\cong \pmb{\Gamma_{n,p}}$ into $\mathrm{Aut}(\D)\cong \mathrm{PSL}_2(\R)$. Any such representation $\rho: \pmb{\Gamma_{n,p}}\longrightarrow\Gamma$ is given by $\rho(g)=\psi_\rho\circ g\circ\psi_\rho^{-1},\ g\in\pmb{\Gamma_{n,p}}$, where $\psi_\rho$ is a quasiconformal homeomorphism of $\widehat{\C}$ that preserves $\D$. 
The quasiconformal homeomorphism is not unique. However,
two such quasiconformal homeomorphisms inducing the same representation $\rho$ agree on $\mathbb{S}^1$. We can and will choose a quasiconformal homeomorphism $\psi_\rho$ such that $\psi_\rho(1)=1$, and $\psi_\rho(\pmb{\Pi})$ is a hyperbolic ideal polygon. 
The ideal polygon $\psi_\rho(\pmb{\Pi})$ is a preferred fundamental domain for the marked group $\Gamma$.

We denote by $\mathrm{Teich}^\omega(\pmb{\Gamma_{n,p}})$ the collection of $(\rho: \pmb{\Gamma_{n,p}}\longrightarrow\Gamma)\in\mathrm{Teich}(\pmb{\Gamma_{n,p}})$ that commute with conjugation by $M_\omega$; i.e., 
$$
\rho(M_\omega\circ g\circ M_\omega^{-1})=M_\omega\circ \rho(g)\circ M_\omega^{-1},\ g\in\pmb{\Gamma_{n,p}}.
$$ 
This is equivalent to requiring that the associated quasiconformal map $\psi_\rho$ commutes with $M_\omega$.  

For each $(\rho: \pmb{\Gamma_{n,p}}\longrightarrow\Gamma)\in \mathrm{Teich}^\omega(\pmb{\Gamma_{n,p}})$, consider the Bowen-Series map 
$$
A_{\Gamma}^{\mathrm{BS}} \equiv\psi_\rho\circ A_{\pmb{\Gamma_{n,p}}}^{\mathrm{BS}}\circ\psi_\rho^{-1}:\overline{\D}\setminus \Int{\psi_\rho(\pmb{\Pi})}\to\overline{\D}
$$
associated with the marked group $\Gamma$ equipped with the preferred fundamental domain $\psi_\rho(\pmb{\Pi})$. By definition, $A_\Gamma^{\mathrm{BS}}$ commutes with $M_\omega$, and thus can be pushed forward via the quotient map $\mathfrak{q}:\overline{\D}\to\mathcal{Q}$. As in the previous section, this gives rise to a map
$$
A_{\Gamma}^{\mathrm{fBS}}:= \xi\circ \left(\mathfrak{q}\circ A_{\Gamma}^{\mathrm{BS}} \circ\mathfrak{q}^{-1}\right) \circ \xi^{-1}: \mathcal{D}_{\Gamma}:= \xi(\mathfrak{q}(\overline{\D}\setminus \Int{\psi_\rho(\pmb{\Pi})}))\to \overline{\D},
$$
that is quasiconformally conjugate to $A_{\pmb{\Gamma_{n,p}}}^{\mathrm{fBS}}$.

\begin{definition}\label{factor_bs_gen_def}
For $(\rho: \pmb{\Gamma_{n,p}}\longrightarrow\Gamma)\in \mathrm{Teich}^\omega(\pmb{\Gamma_{n,p}})$ induced by the quasiconformal map $\psi_\rho$, the map $A_{\Gamma}^{\mathrm{fBS}}: \mathcal{D}_{\Gamma}\to \overline{\D}$ is called the \emph{factor Bowen-Series} map of $\Gamma$ equipped with the fundamental domain $\psi_\rho(\pmb{\Pi})$.
\end{definition}

\begin{remark}
For the symmetric base group $\pmb{\Gamma_{n,p}}$, the generators chosen in Section~\ref{factor_bs_base_subsec} are compositions of two anti-M{\"o}bius reflections. However, the (quasiconformally deformed) preferred generators for $(\rho: \pmb{\Gamma_{n,p}}\longrightarrow\Gamma)\in \mathrm{Teich}^\omega(\pmb{\Gamma_{n,p}})$ in general do not admit such symmetries.
\end{remark}
 
The group $\widehat{\pmb{\Gamma}}_{n,p}$ generated by $\pmb{\Gamma_{n,p}}$ and $M_\omega$ is an index $n$ extension of $\pmb{\Gamma_{n,p}}$. Clearly, the set
$$
\widehat{\pmb{\Pi}}:=\{ z\in\pmb{\Pi}:\ 0\leq\arg{z}\leq\frac{2\pi}{n}\}\cup\{0\}
$$
is a closed fundamental domain for the action of $\widehat{\pmb{\Gamma}}_{n,p}$ on $\D$. It follows that $\faktor{\D}{\pmb{\widehat{\Gamma}}_{n,p}}$ is biholomorphic to
\smallskip

\noindent$\bullet$ a sphere with $\frac{p}{2}+1$ punctures and an order $n$ orbifold point for $p$ even, and
\smallskip

\noindent$\bullet$ a sphere with $\frac{p+1}{2}$ punctures, an order two orbifold point and an order $n$ orbifold point for $p$ odd.
\smallskip

The space of factor Bowen-Series maps constructed above is parametrized by $\mathrm{Teich}^\omega(\pmb{\Gamma_{n,p}})$, which in turn can be identified with the Teichm{\"u}ller space $\mathrm{Teich}(\widehat{\pmb{\Gamma}}_{n,p})$. Specifically, for the representation $(\rho: \pmb{\Gamma_{n,p}}\longrightarrow\Gamma)\in \mathrm{Teich}^\omega(\pmb{\Gamma_{n,p}})$, we define 
$$
\widehat{\Gamma}:=\langle\Gamma, M_\omega\rangle,
$$ 
and associate with $\rho$ the representation
$$
(\widehat{\rho}: \widehat{\pmb{\Gamma}}_{n,p}\longrightarrow\widehat{\Gamma})\in \mathrm{Teich}(\widehat{\pmb{\Gamma}}_{n,p}),\quad \textrm{where}\quad \widehat{\rho}\vert_{\pmb{\Gamma_{n,p}}}\equiv\rho\vert_{\pmb{\Gamma_{n,p}}}\quad \textrm{and}\quad  \widehat{\rho}(M_\omega)=M_\omega.
$$ 
Thus,   $\mathrm{Teich}^\omega(\pmb{\Gamma_{n,p}})$ is the same as the Teichm\"uller space of the orbifold group
	$\widehat{\pmb{\Gamma}}_{n,p}=\pmb{\Gamma_{n,p}} \rtimes \langle M_\omega\rangle$.

\begin{remark}\label{reconcile_rem_2}
1) The orbifold $\faktor{\D}{\widehat{\pmb{\Gamma}}_{n,p}}$ is a base genus zero orbifold $\pmb{\Sigma}\in\cS$ with $\lfloor p/2\rfloor +1$ punctures, zero/one order two orbifold point depending on the parity of $p$, and an order $n$ orbifold point when $n\geq 3$ (cf. Remark~\ref{reconcile_rem_1}). Thus, any $\Sigma\in\mathrm{Teich}(\pmb{\Sigma})$ is uniformized by some Fuchsian group $\Gamma\in\mathrm{Teich}(\widehat{\pmb{\Gamma}}_{n,p})$.
\medskip

2) The fact that the chosen fundamental domain and side-pairings of the base group $\pmb{\Gamma_{n,p}}$ admit a $2\pi/n$ rotation symmetry and that the representations $(\rho: \pmb{\Gamma_{n,p}}\longrightarrow\Gamma)\in\mathrm{Teich}^\omega(\pmb{\Gamma_{n,p}})$ respect this symmetry, together imply that the orbifolds $\faktor{\D}{\Gamma}$ have an order $n$ isometry. Quotienting by this isometry yields an $n-$fold (branched) covering $\faktor{\D}{\Gamma}\longrightarrow\faktor{\D}{\widehat{\Gamma}}$.
\end{remark}

\begin{figure}[H]
\[
\begin{tikzcd}
\faktor{\D}{\Gamma} \arrow{d}[swap]{\mathrm{cover}} \arrow[r,rightsquigarrow] & A_{\Gamma}^{\mathrm{BS}} \arrow{d}{\mathrm{factor}} \\
\faktor{\D}{\widehat{\Gamma}}  &  A_{\Gamma}^{\mathrm{fBS}}
\end{tikzcd}
\]
\caption{Covering orbifolds and factor Bowen-Series maps}
\label{going_up_down_2_fig}
\end{figure}

We summarize the main properties of factor Bowen-Series maps below.

\begin{proposition}\label{factor_bs_prop}
\noindent \begin{enumerate}
\item $A_{\Gamma}^{\mathrm{fBS}}:\mathbb{S}^1\to\mathbb{S}^1$ is a piecewise analytic, orientation-preserving, expansive, covering map of degree $np-1$. In particular, it is topologically conjugate to $z^{np-1}\vert_{\mathbb{S}^1}$; i.e., there exists a homeomorphism $\mathfrak{h}_\Gamma: {\mathbb{S}^1} \to {\mathbb{S}^1}$ such that $\mathfrak{h}_\Gamma (z^{np-1}) = \big( A_{\Gamma}^{\mathrm{fBS}}(\mathfrak{h}_\Gamma (z))\big)$.

\item The restriction $A_{\Gamma}^{\mathrm{fBS}}: \left(A_{\Gamma}^{\mathrm{fBS}}\right)^{-1}(\mathcal{D}_{\Gamma})\to \mathcal{D}_{\Gamma}$ is a covering map of degree $np-1$. In particular, it maps each component of $\left(A_{\Gamma}^{\mathrm{fBS}}\right)^{-1}(\Int{\mathcal{D}_{\Gamma}})$ homeomorphically onto some component of $\Int{\mathcal{D}_{\Gamma}}$.

\item The restriction $A_{\Gamma}^{\mathrm{fBS}}: \left(A_{\Gamma}^{\mathrm{fBS}}\right)^{-1}(\overline{\D}\setminus\mathcal{D}_{\Gamma})\to \overline{\D}\setminus\mathcal{D}_{\Gamma}$ has degree $np$. If $n\geq 2$, there are $p$ critical points of $A_{\Gamma}^{\mathrm{fBS}}$, each of multiplicity $n-1$, in $\left(A_{\Gamma}^{\mathrm{fBS}}\right)^{-1}(\overline{\D}\setminus\mathcal{D}_{\Gamma})$.
All these critical points are mapped to the unique critical value $0$ of $A_{\Gamma}^{\mathrm{fBS}}$.

\item The set $\overline{\D}\setminus\mathcal{D}_\Gamma$ is (the interior of) a topological $p-$gon with its $p$ vertices on~$\mathbb{S}^1$. Each such vertex is a fixed point or a $2-$periodic point under $A_{\Gamma}^{\mathrm{fBS}}$.

\item For $\Gamma=\pmb{\Gamma_{n,p}}$, the factor Bowen-Series map acts as complex conjugation on $\partial\mathcal{D}_{\pmb{\Gamma_{n,p}}}\cap\D$, which is the boundary of the ideal polygon $\overline{\D}\setminus\mathcal{D}_{\pmb{\Gamma_{n,p}}}$.

\noindent For a general $\Gamma$, the factor Bowen-Series map $A_{\Gamma}^{\mathrm{fBS}}$ acts by an involution on $\partial\mathcal{D}_{\Gamma}\cap\D$. 
\end{enumerate}
\end{proposition}
\begin{figure}[h!]
\captionsetup{width=0.98\linewidth}
\begin{tikzpicture}
\node[anchor=south west,inner sep=0] at (0.5,0) {\includegraphics[width=0.5\linewidth]{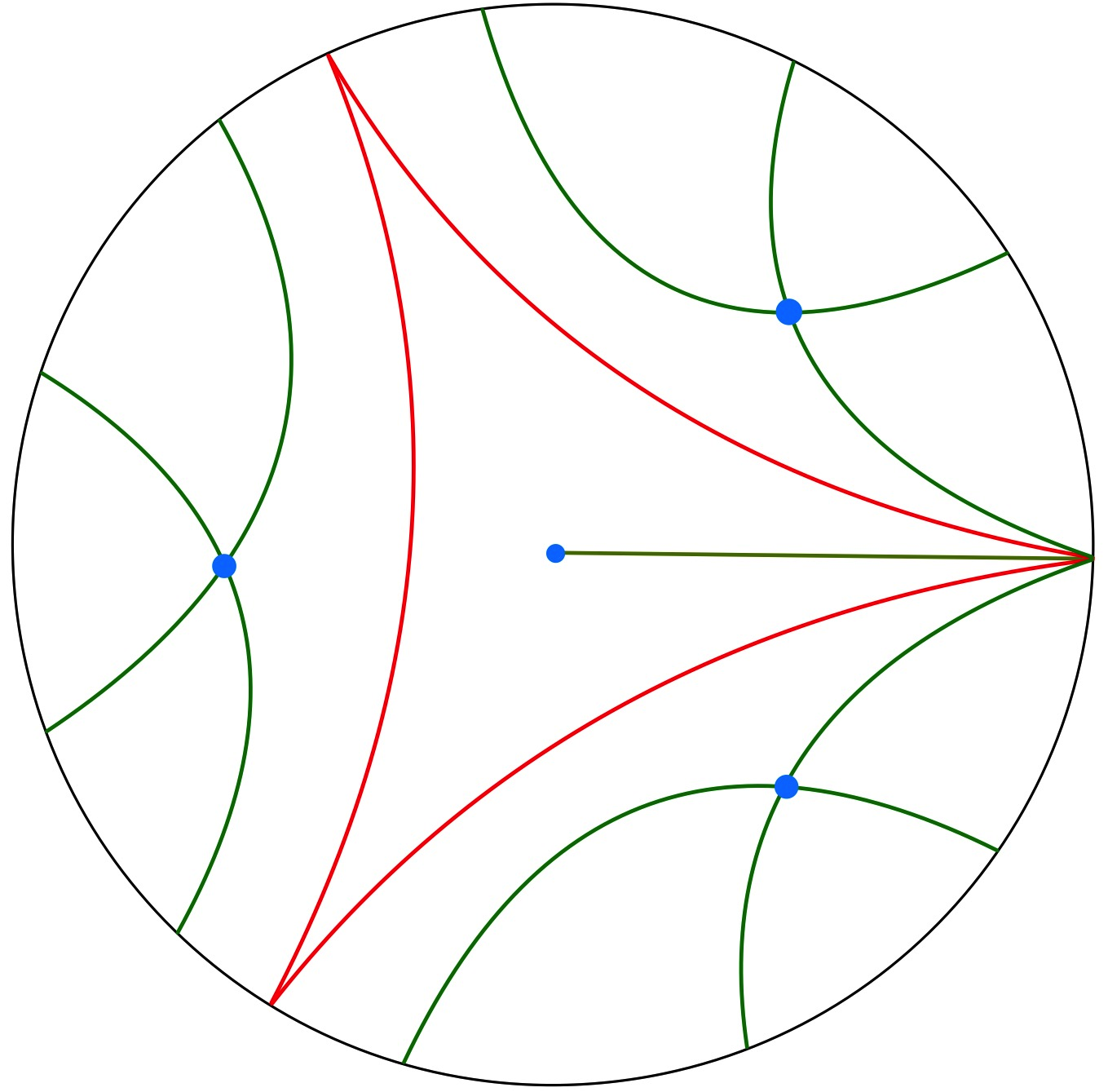}};
\node at (7.2,4) {$\mathfrak{J}_{1,1}^1$};
\node at (6.24,5.66) {$\mathfrak{J}_{1,1}^2$};
\node at (4.3,6.6) {$\mathfrak{J}_{1,1}^3$};
\node at (2.84,6.54) {$\mathfrak{J}_{1,1}^4$};
\node at (2,6.25) {$\mathfrak{J}_{1,2}^1$};
\node at (0.75,5.16) {$\mathfrak{J}_{1,2}^2$};
\node at (0.2,3.36) {$\mathfrak{J}_{1,2}^3$};
\node at (0.75,1.48) {$\mathfrak{J}_{1,2}^4$};
\node at (1.66,0.45) {$\mathfrak{J}_{1,2}^5$};
\node at (2.4,0.04) {$\mathfrak{J}_{1,3}^1$};
\node at (4,-0.28) {$\mathfrak{J}_{1,3}^2$};
\node at (5.88,0.32) {$\mathfrak{J}_{1,3}^3$};
\node at (7.08,1.88) {$\mathfrak{J}_{1,3}^4$};
\end{tikzpicture}
\caption{Displayed is the dynamical plane of the factor Bowen-Series map $A_{\pmb{\Gamma_{4,3}}}^{\mathrm{fBS}}$ and the partition of $\mathbb{S}^1$ given by the arcs $\mathfrak{J}_{1,s}^i:=\xi(\mathfrak{q}(J_{1,s}^i))$ (cf. Figure~\ref{factor_bs_fig}). Pulling this partition back by $A_{\pmb{\Gamma_{4,3}}}^{\mathrm{fBS}}$ yields a Markov partition for the map.}
\label{david_fig}
\end{figure}

\begin{proof}
It is enough to verify the assertions for the base map $A_{\pmb{\Gamma_{n,p}}}^{\mathrm{fBS}}$.
 We endow the bordered Riemann surface $\mathcal{\mathcal{Q}}$ with a preferred choice of complex coordinates via its identification with $\{z\in\overline{\D}:\ 0\leq\arg{z}\leq\frac{2\pi}{n}\}\cup\{0\}$ (with the boundary radial lines glued together).

1) The facts that $A_{\pmb{\Gamma_{n,p}}}^{\mathrm{BS}}$ is continuous on $\mathbb{S}^1\setminus\sqrt[n]{1}$ and the left and right-hand limits of $A_{\pmb{\Gamma_{n,p}}}^{\mathrm{BS}}$ at the points of $\sqrt[n]{1}$ lie in the set $\sqrt[n]{1}$ together imply that 
$$
\overline{A}_{\pmb{\Gamma_{n,p}}}^{\mathrm{BS}}:=\mathfrak{q}\circ A_{\pmb{\Gamma_{n,p}}}^{\mathrm{BS}}\circ\mathfrak{q}^{-1}
$$ 
is continuous.

Let us partition each arc $J_{1,s}\subset\mathbb{S}^1$, $s\in\{1,\cdots,p\}$, into sub-arcs $J_{1,s}^1,\cdots, J_{1,s}^{m(s)}$, where each $J_{1,s}^i$ is a connected component of some $g_{1,s}^{-1}(J_r)$, $i\in\{1,\cdots,m(s)\}$, $r\in\{1,\cdots,n\}$.  Then, with the above choice of coordinates on $\mathcal{Q}$,
\begin{itemize}
\item $\overline{A}_{\pmb{\Gamma_{n,p}}}^{\mathrm{BS}}$ acts as a M{\"o}bius map $h_{i,s}$ (called a \emph{piece} of $\overline{A}_{\pmb{\Gamma_{n,p}}}^{\mathrm{BS}}$) on $\mathfrak{q}(J_{1,s}^i)$; specifically, $h_{i,s}$ is a composition of $g_{1,s}$ with a power of~$M_\omega$, 

\item $h_{i,s}(\mathfrak{q}(J_{1,s}^i))$ is the union of finitely many sub-arcs from the collection $\{\mathfrak{q}(J_{1,s}):s\in\{1,\cdots,p\}\}$.
\end{itemize}
\noindent (See Figure~\ref{david_fig} for the $\xi-$images of the arcs $\mathfrak{q}(J_{1,s}^i)$ in $\mathbb{S}^1$, for $n=4, p=3$.) It follows that  with the above choice of coordinates on $\mathcal{Q}$, the map $\overline{A}_{\pmb{\Gamma_{n,p}}}^{\mathrm{BS}}$ is a piecewise M{\"o}bius, orientation-preserving, covering map of $\partial\mathcal{Q}$. The statement that $\overline{A}_{\pmb{\Gamma_{n,p}}}^{\mathrm{BS}}:\partial\mathcal{Q}\to\partial\mathcal{Q}$ has degree $np-1$ follows from the fact that all but finitely many points in $\mathbb{S}^1$ have $np-1$ preimages under the map $A_{\pmb{\Gamma_{n,p}}}^{\mathrm{BS}}$ (since  $A_{\pmb{\Gamma_{n,p}}}^{\mathrm{BS}}$ maps each arc $J_{r,s}$ to $\mathbb{S}^1\setminus\Int{J_{r,p+1-s}}$). Finally, expansivity of $\overline{A}_{\pmb{\Gamma_{n,p}}}^{\mathrm{BS}}\vert_{\partial\mathcal{Q}}$ is a consequence of the fact that each $g_{1,s}$ has derivative larger than one on $\Int{J_{1,s}}$, for $s\in\{1,\cdots,p\}$ (cf. \cite[Lemma~3.8]{LMMN}).

Since $\xi:\mathcal{Q}\to\overline{\D}$ is a biholomorphism, the properties of $\overline{A}_{\pmb{\Gamma_{n,p}}}^{\mathrm{BS}}$ listed in the previous paragraph imply that the map $A_{\pmb{\Gamma_{n,p}}}^{\mathrm{fBS}}\equiv \xi\circ \overline{A}_{\pmb{\Gamma_{n,p}}}^{\mathrm{BS}}\vert_{\partial\mathcal{Q}}\circ\xi^{-1}:\mathbb{S}^1\to\mathbb{S}^1$ is a piecewise analytic (\emph{not} piecewise M{\"o}bius when $n>1$), orientation-preserving, expansive, covering map of degree $np-1$.

The existence of the circle homeomorphism $\mathfrak{h}_\Gamma$ that conjugates $z^{np-1}$ to $A_\Gamma^{\mathrm{fBS}}$ now follows from the fact that two expansive circle coverings of the same degree are topologically conjugate (cf. \cite[Property~(2'), p. 99]{CR80}).

2) and 3) The Bowen-Series map $A_{\pmb{\Gamma_{n,p}}}^{\mathrm{BS}}$ sends the hyperbolic half-plane bounded by the geodesic $C_{1,s}$ and the arc $J_{1,s}\subset\mathbb{S}^1$ conformally to the complement of the hyperbolic half-plane bounded by the geodesic $C_{1,p+1-s}$ and the arc $J_{1,p+1-s}\subset\mathbb{S}^1$, $s\in\{1,\cdots,p\}$. Hence, the region $\overline{\D}\setminus\mathcal{D}_{\Gamma}$ is covered $np$ times by $A_{\pmb{\Gamma_{n,p}}}^{\mathrm{fBS}}$, while $\mathcal{D}_{\Gamma}$ is covered $np-1$ times. Clearly, $A_{\Gamma}^{\mathrm{fBS}}: \left(A_{\Gamma}^{\mathrm{fBS}}\right)^{-1}(\mathcal{D}_{\Gamma})\to \mathcal{D}_{\Gamma}$ is piecewise conformal, and hence a covering map.

To locate the critical points of $A_{\pmb{\Gamma_{n,p}}}^{\mathrm{fBS}}$, let us denote the union of the radial lines in $\overline{\D}$ at angles $\frac{2j\pi}{n}$, $j\in\{0,\cdots,n-1\}$, by $\mathcal{P}$. Note that $A_{\pmb{\Gamma_{n,p}}}^{\mathrm{fBS}}$ maps each $\xi(\mathfrak{q}(g_{1,s}^{-1}(\mathcal{P})))$ to the line segment $[0,1]$ and sends $\xi(\mathfrak{q}(g_{1,s}^{-1}(0)))$ to $0$, for $s\in\{1,\cdots,p\}$ (see Figures~\ref{factor_bs_fig} and~\ref{david_fig}). It follows that for each $s\in\{1,\cdots,p\}$, the point $\xi(\mathfrak{q}(g_{1,s}^{-1}(0)))$ is a critical point of multiplicity $n-1$ with associated critical value $0$.

4) and 5) These follow from Definitions~\ref{factor_bs_base_def} and~\ref{factor_bs_gen_def}, and the discussion in Section~\ref{factor_bs_base_subsec}.
\end{proof}

We denote the circle homeomorphism that conjugates $z^{np-1}$ to $A_{\pmb{\Gamma_{n,p}}}^{\mathrm{fBS}}$ by $\mathfrak{h}_0$, normalized such that $\mathfrak{h}_0$ sends the fixed point $1$ of $z^{np-1}$ to the fixed point $1$ of $A_{\pmb{\Gamma_{n,p}}}^{\mathrm{fBS}}$. Further, the quasiconformal conjugacy $\psi_\rho$ between $\pmb{\Gamma_{n,p}}$ and $\Gamma$ induces a quasiconformal conjugacy $\widehat{\psi}_\rho$ between $A_{\pmb{\Gamma_{n,p}}}^{\mathrm{fBS}}$ and $A_{\Gamma}^{\mathrm{fBS}}$. 
In the remainder of the paper, we will work with the normalized conjugacy $\mathfrak{h}_\Gamma=\widehat{\psi}_\rho\circ\mathfrak{h}_0:\mathbb{S}^1\to\mathbb{S}^1$ between $z^{np-1}\vert_{\mathbb{S}^1}$ and $A_{\Gamma}^{\mathrm{fBS}}\vert_{\mathbb{S}^1}$.

Let us denote the set of $p$ ideal boundary points of $\overline{\D}\setminus \mathcal{D}_{\Gamma}$ on $\mathbb{S}^1$ by $S_{\Gamma}$.
Under the circle homeomorphism $\mathfrak{h}_\Gamma$, the set $S_{\Gamma}$ is pulled back to the set 
$$
\displaystyle\mathcal{A}_p:=\{\frac{i}{p}:i\in\{0,\cdots,p-1\}\}
$$
(here we identify $\mathbb{S}^1$ with $\R/\Z$). See Proposition~\ref{factor_bs_prop} (parts 4, 5). More precisely, if $p$ is even (respectively, odd), the two points (respectively, one point) of $S_{\Gamma}$ that are (respectively, is) fixed by $A_{\Gamma}^{\mathrm{fBS}}$ correspond to $0,\frac{p}{2p}=\frac{1}{2}$ (respectively, corresponds to $0$), and the $2-$cycles of $A_{\Gamma}^{\mathrm{fBS}}$ in $S_{\Gamma}$ correspond to the $2-$cycles $\pm \frac{i}{p}$, $i\in\{1,\cdots,\lfloor \frac{p-1}{2}\rfloor\}$, of $m_{np-1}$.

\subsection{Special case I: continuous Bowen-Series maps}\label{punc_sphere_bs_subsec}

In \cite{MM1,sullivan_survey}, Bowen-Series maps of Fuchsian punctured sphere groups (possibly with an order two orbifold point) equipped with special fundamental domains were studied. These maps, which are covering maps of $\mathbb{S}^1$, are contained in the class of maps constructed in Section~\ref{factor_bs_gen_subsec}.

\subsubsection{Bowen-Series maps of Fuchsian punctured sphere groups}\label{bs_1_subsubsec}

Let $n=1$ and $p\geq 4$ be an even integer. Then $\faktor{\D}{\pmb{\Gamma_{n,p}}}$ is a sphere with $\frac{p}{2}+1$ punctures, and for $(\rho: \pmb{\Gamma_{n,p}}\longrightarrow\Gamma)\in \mathrm{Teich}^\omega(\pmb{\Gamma_{n,p}}) \equiv \mathrm{Teich}(\pmb{\Gamma_{n,p}})$, the map $A_{\Gamma}^{\mathrm{fBS}}$ agrees with the standard Bowen-Series map $A_{\Gamma}^{\mathrm{BS}}$ of $\Gamma$ equipped with the fundamental domain $\psi_\rho(\pmb{\Pi})$ (see \cite[\S 3]{MM1} and Figure~\ref{punc_sphere_orbifold_bs_fig}(left)). This map restricts to a piece-wise analytic $C^1$, expansive, degree $p-1$ covering of $\mathbb{S}^1$. Moreover, it has no critical points in its domain of definition $\mathcal{D}_{\Gamma}$.

\begin{figure}[h!]
\captionsetup{width=0.96\linewidth}
	\begin{tikzpicture}
		\node[anchor=south west,inner sep=0] at (0.5,0) {\includegraphics[width=0.44\linewidth]{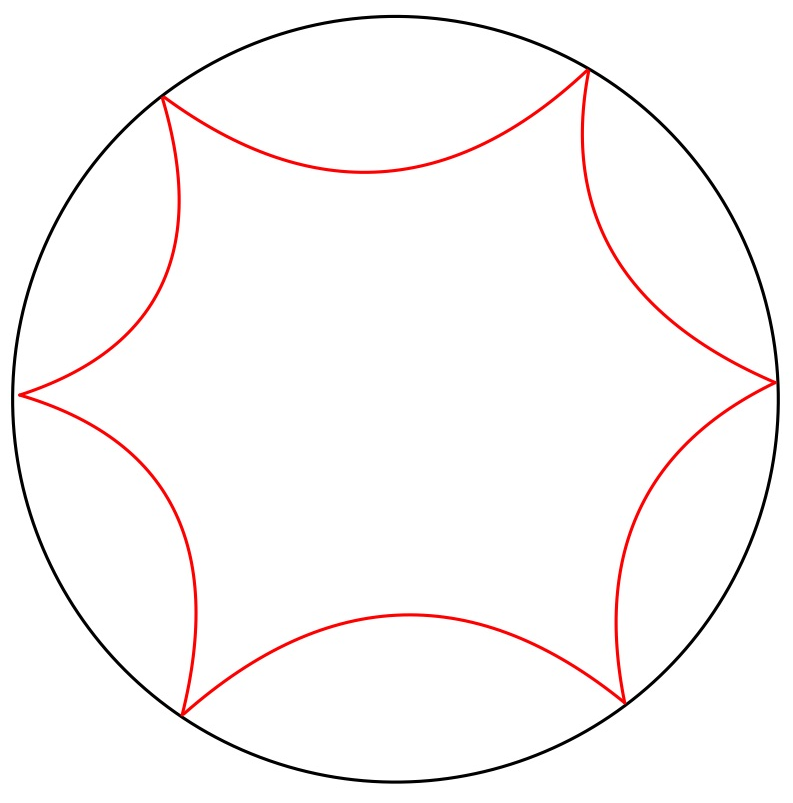}};
		\node[anchor=south west,inner sep=0] at (7,0) {\includegraphics[width=0.44\linewidth]{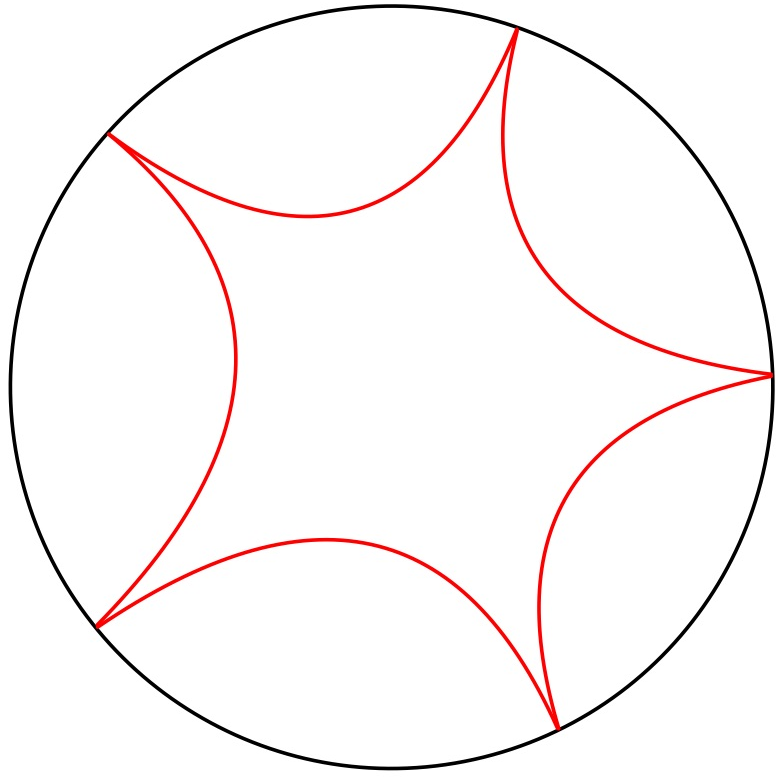}};
		\node at (3.36,2.8) {$\pmb{\Pi}(\pmb{\Gamma_{1,6}})$};
		\node at (4.6,3.66) {$C_{1,1}$};
		\node at (4.6,2) {$C_{1,6}$};
		\node at (3.5,4.2) {$C_{1,2}$};
		\node at (3.5,1.6) {$C_{1,5}$};
		\node at (2.1,3.6) {$C_{1,3}$};
		\node at (2.2,2.1) {$C_{1,4}$};
		\node at (6,4.6) {\begin{small}$g_{1,1}$\end{small}};
		\node at (6.1,1.56) {\begin{small}$g_{1,1}^{-1}$\end{small}};
		\node at (3.3,5.9) {\begin{small}$g_{1,2}$\end{small}};
		\node at (3.5,-0.2) {\begin{small}$g_{1,2}^{-1}$\end{small}};
		\node at (0.6,4.2) {\begin{small}$g_{1,3}$\end{small}};
		\node at (0.6,1.32) {\begin{small}$g_{1,3}^{-1}$\end{small}};	
			
		\node at (9.8,2.8) {$\pmb{\Pi}(\pmb{\Gamma_{1,5}})$};
		\node at (10.7,3.5) {$C_{1,1}$};
		\node at (10.75,2.25) {$C_{1,5}$};
		\node at (9.3,3.8) {$C_{1,2}$};
		\node at (9.3,2.05) {$C_{1,4}$};
		\node at (8.2,2.88) {$C_{1,3}$};
		\node at (12.5,4.5) {\begin{small}$g_{1,1}$\end{small}};
		\node at (12.5,1.28) {\begin{small}$g_{1,1}^{-1}$\end{small}};
		\node at (8.6,5.6) {\begin{small}$g_{1,2}$\end{small}};
		\node at (8.5,-0.08) {\begin{small}$g_{1,2}^{-1}$\end{small}};
		\node at (6.7,2.78) {\begin{small}$g_{1,3}$\end{small}};
		
	\end{tikzpicture}
	\caption{The preferred fundamental hexagon (respectively, pentagon) of $\pmb{\Gamma_{1,6}}$ (respectively, of $\pmb{\Gamma_{1,5}}$), which uniformizes a four times punctured sphere (respectively, a sphere with three punctures and an order two orbifold point), is shown. The action of the corresponding Bowen-Series maps on these arcs are also marked.}
	\label{punc_sphere_orbifold_bs_fig}
\end{figure}

\subsubsection{Bowen-Series maps of Fuchsian groups uniformizing punctured spheres with an order two orbifold point}\label{bs_2_subsubsec}

Let $n=1$ and $p\geq 3$ be an odd integer. Then $\faktor{\D}{\pmb{\Gamma_{n,p}}}$ is a sphere with $\frac{p+1}{2}$ punctures and an order two orbifold point, and for $(\rho: \pmb{\Gamma_{n,p}}\longrightarrow\Gamma)\in \mathrm{Teich}^\omega(\pmb{\Gamma_{n,p}}) \equiv \mathrm{Teich}(\pmb{\Gamma_{n,p}})$, the map $A_{\Gamma}^{\mathrm{fBS}}$ agrees with the standard Bowen-Series map $A_{\Gamma}^{\mathrm{BS}}$ of $\Gamma$ equipped with the fundamental domain $\psi_\rho(\pmb{\Pi})$ (see \cite[\S 3]{MM1} and Figure~\ref{punc_sphere_orbifold_bs_fig}(right)). This map restricts to a $C^1$, expansive, degree $p-1$ covering of $\mathbb{S}^1$. Moreover, it has no critical points in its domain of definition $\mathcal{D}_{\Gamma}$.
\begin{figure}[h!]
\captionsetup{width=0.98\linewidth}
\begin{tikzpicture}
\node[anchor=south west,inner sep=0] at (0.5,0) {\includegraphics[width=0.44\linewidth]{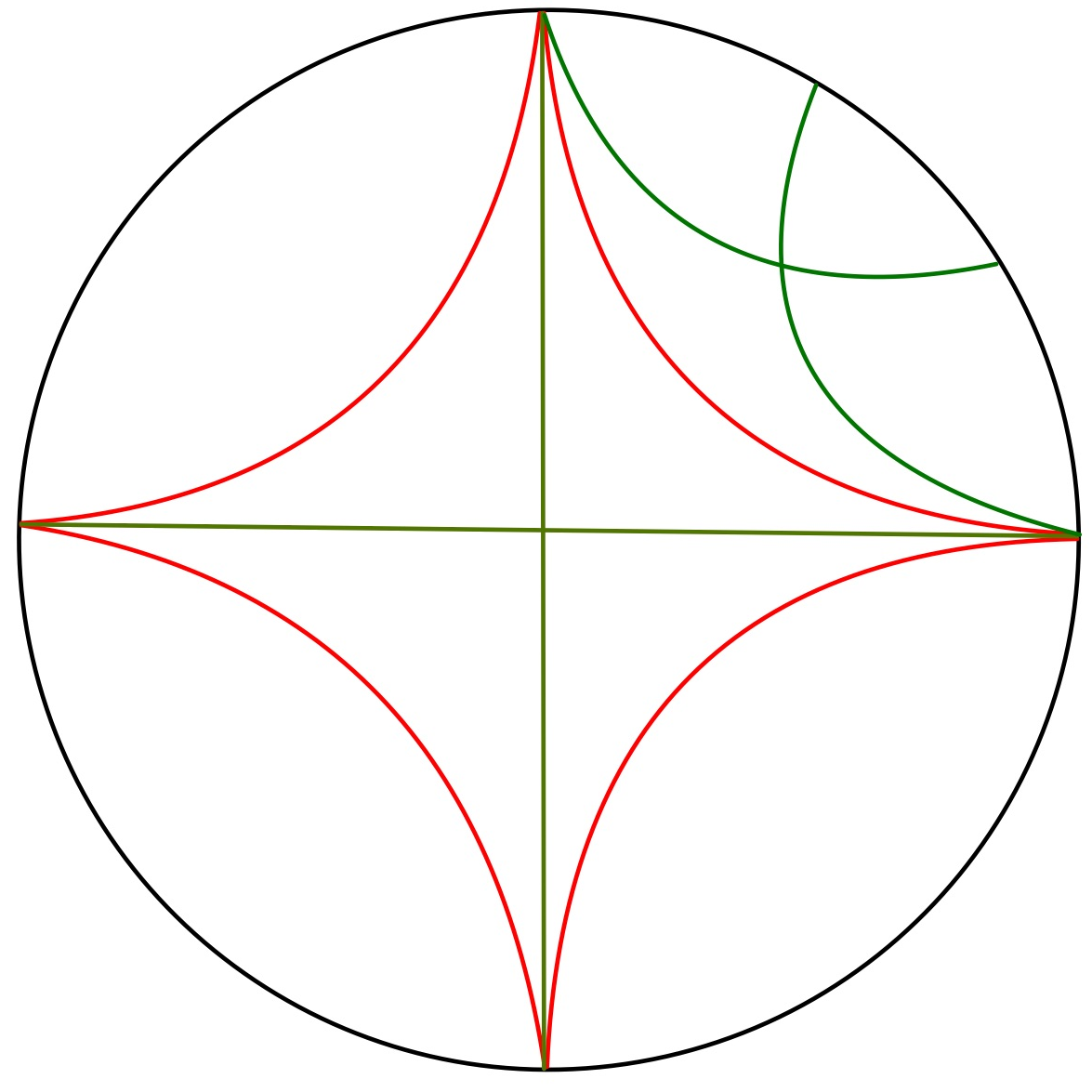}};
\node[anchor=south west,inner sep=0] at (7,0) {\includegraphics[width=0.42\linewidth]{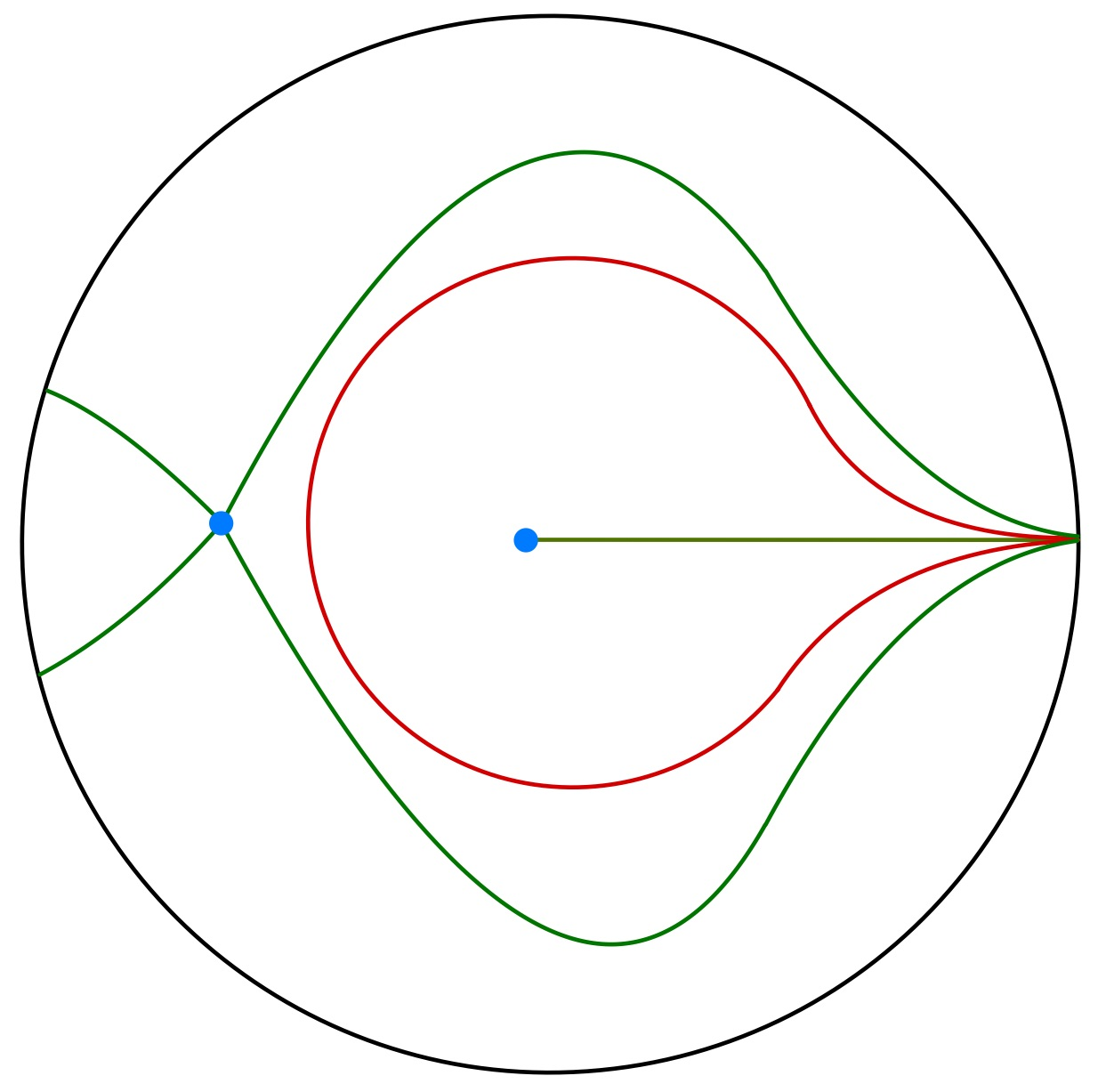}};
\node at (4.05,3.36) {\begin{small}$C_{1,1}$\end{small}};
\node at (4.48,1.88) {\begin{small}$C_{4,1}$\end{small}};
\node at (2.24,3.96) {\begin{small}$C_{2,1}$\end{small}};
\node at (2.24,1.88) {\begin{small}$C_{3,1}$\end{small}};
\node at (5.8,4.9) {\begin{small}$g_{1,1}$\end{small}};
\end{tikzpicture}
\caption{Left: The fundamental domain $\Pi$ of $\pmb{\Gamma_{4,1}}$ is the polygon having the geodesics $C_{r,1}$, $r\in\{1,2,3,4\},$ as its edges. The Bowen-Series map $A_{\pmb{\Gamma_{4,1}}}^{\mathrm{BS}}$, which commutes with $M_i$, acts as $g_{1,1}$ on the arc $J_{1,1}$. 
The pre-images of the vertical and horizontal radial lines under $g_{1,1}$ are displayed in green. Right: The factor Bowen-Series map $A_{\pmb{\Gamma_{4,1}}}^{\mathrm{fBS}}$ is defined outside of an ideal monogon with its vertex at $1$, and is a degree three covering of $\mathbb{S}^1$. The map $A_{\pmb{\Gamma_{4,1}}}^{\mathrm{fBS}}$ extends continuously to the boundary of the ideal monogon and acts on it by complex conjugation. It has a unique critical point of multiplicity three at the valence four vertex of the green graph.}
\label{fully_ramified_factor_bs_fig}
\end{figure}

\subsection{Special case II: fully ramified factor Bowen-Series maps}\label{hecke_factor_bs_subsec}

We now look at the case when $p=1$ and $n\geq 3$ is any integer. By construction, for $r\in\{1,\cdots,n\}$, the map $g_{r,1}$ is an involution with an elliptic fixed point on $C_{r,1}$. Moreover, $\faktor{\D}{\pmb{\Gamma_{n,p}}}$ is a sphere with one puncture and $n$ order two orbifold points. The corresponding index $n$ extension $\widehat{\pmb{\Gamma}}_{n,p}$ is a classical Hecke group, which uniformizes a genus zero orbifold with exactly one puncture, exactly one order two orbifold point and exactly one order $n$ orbifold point. In particular, $\mathrm{Teich}(\widehat{\pmb{\Gamma}}_{n,p})$ is a singleton, and hence the factor Bowen-Series map associated with $\pmb{\Gamma_{n,p}}$ (equipped with the fundamental $n-$gon $\pmb{\Pi}$) is rigid.

The map $A_{\pmb{\Gamma_{n,p}}}^{\mathrm{fBS}}$ restricts to a $C^1$, expansive, degree $n-1$ covering of $\mathbb{S}^1$. Further, this map has a unique critical point of multiplicity $n-1$ (see Figure~\ref{fully_ramified_factor_bs_fig}).

\section{Conformal matings of factor Bowen-Series maps with polynomials}\label{conf_mating_sec}

The goal of this section is to prove Theorem~\ref{conf_mating_intro_thm}. In fact, we will prove a more general statement that allows the polynomials to lie in arbitrary hyperbolic components in the connectedness locus.

\subsection{The notion of conformal mating}\label{conf_mating_def_subsec}

Let $n, p$ be two positive integers with $np\geq 3$, and $P$ be a monic, centered complex polynomial of degree $d:=np-1$ with a connected and locally connected Julia set $\mathcal{J}(P)$. Let $\mathcal{K}(P)$ denote the filled Julia set.
Recall that there exists a unique conformal map 
$$
\psi_P:\widehat{\C}\setminus\overline{\D}\to\mathcal{B}_\infty(P):=\widehat{\C}\setminus\mathcal{K}(P)
$$ 
(called the \emph{B{\"o}ttcher coordinate} of $P$) that conjugates $z^{d}$ to $P$, and is tangent to the identity map near infinity (cf. \cite[Theorem~9.1]{Mil06}). As $\mathcal{J}(P)$ is locally connected, $\psi_P$ extends continuously to $\mathbb{S}^1$ to yield a semi-conjugacy between $z^d\vert_{\mathbb{S}^1}$ and $P\vert_{\mathcal{J}(P)}$. 

We now define the notion of topological/conformal mating of $P$ and $A_{\Gamma}^{\mathrm{fBS}}$, where $\Gamma\in\mathrm{Teich}^\omega(\pmb{\Gamma_{n,p}})$.
Recall from Section~\ref{factor_bs_gen_subsec} that there exists a normalized homeomorphism $\mathfrak{h}_\Gamma:\mathbb{S}^1\to\mathbb{S}^1$ that conjugates $z^d$ to $A_{\Gamma}^{\mathrm{fBS}}$.
Let us now consider the disjoint union $\mathcal{K}(P)\sqcup \overline{\D}$ and the map 
\begin{center}
	$P\sqcup A_{\Gamma}^{\mathrm{fBS}}: \mathcal{K}(P)\sqcup \mathcal{D}_{\Gamma}\to \mathcal{K}(P)\sqcup \overline{\D},$\\
	$\left(P\sqcup A_{\Gamma}^{\mathrm{fBS}}\right)\vert_{\mathcal{K}(P)}=P,\quad  \left(P\sqcup A_{\Gamma}^{\mathrm{fBS}}\right)\vert_{\mathcal{D}_{\Gamma}}=A_{\Gamma}^{\mathrm{fBS}}.$
\end{center}
Let $\sim_{\mathrm{m}}$ (here `m' stands for mating) be the equivalence relation on $\mathcal{K}(P)\bigsqcup \overline{\D}$ generated by 
\begin{equation}
\psi_{P}(z)\sim_{\mathrm{m}} \mathfrak{h}_\Gamma(\overline{z})),\ \textrm{for all}\ z\in\mathbb{S}^1.
\label{conf_mat_equiv_rel}
\end{equation} 
The map $P\sqcup A_{\Gamma}^{\mathrm{fBS}}$ descends to a continuous map $P\mate A_{\Gamma}^{\mathrm{fBS}}$ to the quotient 
$$
\mathcal{K}(P)\ \mate\ \overline{\D}\ :=\ \left(\mathcal{K}(P)\sqcup \overline{\D}\right)/\sim_{\mathrm{m}}\ \cong\ \mathbb{S}^2.
$$ 
The map $P\mate A_{\Gamma}^{\mathrm{fBS}}$ is called the \emph{topological mating} of $P$ and $A_{\Gamma}^{\mathrm{fBS}}$.
We say that $P$ and $A_{\Gamma}^{\mathrm{fBS}}$ are \emph{conformally mateable} if the topological $2-$sphere $\mathcal{K}(P)\mate \overline{\D}$ admits a complex structure that turns the topological mating $P\mate A_{\Gamma}^{\mathrm{fBS}}$ into a holomorphic~map.

Here is an equivalent formulation (cf. \cite[Definition~4.14]{petersen-meyer-mating}). 
\begin{definition}\label{conf_mat_def}
The maps $P$ and $A_{\Gamma}^{\mathrm{fBS}}$ are conformally mateable if there exist a continuous map $F\colon \mathrm{Dom}(F)\subsetneq\widehat{\C}\to\widehat{\C}$ (called a \emph{conformal mating} of $A_{\Gamma}^{\mathrm{fBS}}$ and $P$) that is complex-analytic in the interior of $\mathrm{Dom}(F)$ and continuous maps 
	$$
	\mathfrak{X}_P:\mathcal{K}(P)\to\widehat{\C}\ \textrm{and}\ \mathfrak{X}_\Gamma: \overline{\D}\to\widehat{\C},
	$$
	conformal on $\Int{\mathcal{K}(P)}$ and $\D$ (respectively), satisfying
	\begin{enumerate}
		\item\label{topo_cond} $\mathfrak{X}_P\left(\mathcal{K}(P)\right)\cup \mathfrak{X}_\Gamma\left(\overline{\D}\right) = \widehat{\C}$,
		
		\item\label{dom_cond} $\mathrm{Dom}(F)= \mathfrak{X}_P(\mathcal{K}(P))\cup\mathfrak{X}_\Gamma(\mathcal{D}_{\Gamma})$,
		
		\item $\mathfrak{X}_P\circ P(z) = F\circ \mathfrak{X}_P(z),\quad \mathrm{for}\ z\in\mathcal{K}(P)$,
		
		\item $\mathfrak{X}_\Gamma\circ A_{\Gamma}^{\mathrm{fBS}}(w) = F\circ \mathfrak{X}_\Gamma(w),\quad \mathrm{for}\ w\in
		\mathcal{D}_{\Gamma}$,\quad and

		\item\label{identifications} $\mathfrak{X}_P(z)=\mathfrak{X}_\Gamma(w)$ if and only if $z\sim_{\mathrm{m}} w$ where $\sim_{\mathrm{m}}$ is the equivalence relation on $\mathcal{K}(P)\sqcup \overline{\D}$ defined by Relation~\eqref{conf_mat_equiv_rel}.
	\end{enumerate}
The semi-conjugacies $\mathfrak{X}_P, \mathfrak{X}_\Gamma$ are called the \emph{mating semi-conjugacies} associated with the conformal mating $F$ of $P$ and $A_{\Gamma}^{\mathrm{fBS}}$. When mating semi-conjugacies are injective, they are simply referred to as \emph{mating conjugacies}.
\end{definition}

\subsection{Existence of conformal matings}\label{conf_mat_exists_subsec}

Let $(\rho: \pmb{\Gamma_{n,p}}\longrightarrow\Gamma)\in\mathrm{Teich}^\omega(\pmb{\Gamma_{n,p}})$ and $P$ a monic, centered, hyperbolic complex polynomial of degree $d=np-1$ with a connected Julia set. We now state and prove a generalization of \cite[Theorem~3.6]{MM1}.

\begin{theorem}\label{conf_mat_thm}
There exists a conformal mating $F$ of $P:\mathcal{K}(P)\to\mathcal{K}(P)$ and $A_{\Gamma}^{\mathrm{fBS}}:\mathcal{D}_{\Gamma}\to \overline{\D}$. Moreover, $F$ is unique up to M{\"o}bius conjugacy.
\end{theorem}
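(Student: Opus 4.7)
The plan is to adapt the David surgery strategy of \cite[Theorem~3.6]{MM1}, which treated the case $n=1$ (where the factor Bowen-Series map coincides with the continuous Bowen-Series map and has no interior critical points), to the general case. Since $P$ is hyperbolic with connected Julia set, $\mathcal{J}(P)$ is locally connected, so the B{\"o}ttcher coordinate $\psi_P$ extends continuously to $\overline{\D^*}$ and conjugates $z^d$ on $\overline{\D^*}$ to $P$ on $\widehat{\C}\setminus\Int\mathcal{K}(P)$. One hemisphere of the prospective mating sphere is thereby already uniformized; the task is to supply a compatible conformal structure on the complementary hemisphere and verify that the two pieces glue holomorphically.

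By Proposition~\ref{factor_bs_prop} there is a circle homeomorphism $\mathfrak{h}$ conjugating $z^d\vert_{\mathbb{S}^1}$ to $A_{\pmb{\Gamma_{n,p}}}^{\mathrm{fBS}}\vert_{\mathbb{S}^1}$. Set $h:=\widehat{\psi}_\rho\circ\mathfrak{h}\circ\iota\vert_{\mathbb{S}^1}$, which encodes the boundary identification of Relation~\eqref{conf_mat_equiv_rel}. Because $\Gamma$ has parabolic elements (one for each puncture of $\faktor{\D}{\widehat{\Gamma}}$), the map $h$ fails to be quasisymmetric at the parabolic fixed points; using Fatou-coordinate estimates near these points one constructs a David homeomorphism $\Psi:\overline{\D}\to\overline{\D}$ whose boundary values agree with $h$, in the spirit of \cite{LLMM1} and \cite{MM1}. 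Define on the standard sphere
$$
\widetilde{F}(z):=\begin{cases} z^d & z\in\D^*,\\ \Psi^{-1}\circ A_{\Gamma}^{\mathrm{fBS}}\circ\Psi(z) & z\in\overline{\D}\cap\Psi^{-1}(\mathcal{D}_{\Gamma}).\end{cases}
$$
The two pieces agree continuously on $\mathbb{S}^1$ by the defining property of $h$, producing a topological model of the mating. Now pull back the standard complex structure on $\D^*$ under iterated preimages of $\widetilde{F}$ to obtain a measurable $\widetilde{F}$-invariant Beltrami form $\mu$ on $\widehat{\C}$, whose dilatation is supported in $\overline{\D}$ and whose initial distortion comes solely from $\Psi$. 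Integrating $\mu$ via David's generalization of the measurable Riemann mapping theorem yields a homeomorphism $\Phi:\widehat{\C}\to\widehat{\C}$ such that $F:=\Phi\circ\widetilde{F}\circ\Phi^{-1}$ is meromorphic where defined; setting $\mathfrak{X}_P:=\Phi\circ\psi_P$ and $\mathfrak{X}_\Gamma:=\Phi\circ\Psi^{-1}$ produces the data of Definition~\ref{conf_mat_def}. For uniqueness, given two conformal matings $F_1,F_2$ the compositions $\mathfrak{X}_{P,2}\circ\mathfrak{X}_{P,1}^{-1}$ and $\mathfrak{X}_{\Gamma,2}\circ\mathfrak{X}_{\Gamma,1}^{-1}$ are biholomorphisms of the respective interior images; condition~(5) of Definition~\ref{conf_mat_def} forces them to have matching boundary values along the common Jordan curve, so they glue to a biholomorphism of $\widehat{\C}$, i.e.\ a M\"obius transformation.

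The principal technical obstacle is verifying that $\mu$ remains in the David class under the full orbit of pullbacks: the pullbacks concentrate distortion near the parabolic fixed points of $\Gamma$ on $\mathbb{S}^1$ and their grand orbits, so one must show that the area of $\{|\mu|>1-\epsilon\}$ decays at the required exponential rate in $\epsilon$. This is handled by an argument parallel to that used in \cite{MM1}, based on parabolic Fatou-coordinate asymptotics combined with uniform expansion of $A_{\Gamma}^{\mathrm{fBS}}$ away from a neighbourhood of the parabolic fixed points. The interior critical points of $A_{\Gamma}^{\mathrm{fBS}}$ (absent when $n=1$, and of multiplicity $n-1$ when $n\geq 2$) pose no essential additional difficulty because, by Proposition~\ref{factor_bs_prop}(3), they all collapse to the single interior critical value $0$; consequently the critical-point combinatorics is finite and the local model of $\widetilde{F}$ at each such point is conjugate to $z\mapsto z^n$ up to a quasiconformal change of coordinates, which is absorbed harmlessly into the David class.
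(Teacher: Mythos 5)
Your topological model never contains the polynomial $P$: you put $z^d$ on $\D^*$ and a conjugated copy of $A_{\Gamma}^{\mathrm{fBS}}$ on $\overline{\D}$, whereas the object to be mated is $P\vert_{\mathcal{K}(P)}$. Since your Beltrami form is trivial on $\D^*$, the integrating map $\Phi$ is conformal there, so $F=\Phi\circ\widetilde{F}\circ\Phi^{-1}$ restricted to $\Phi(\D^*)$ is conformally conjugate to $z^d$; at best you would obtain a conformal mating of $z^d$ (not of an arbitrary hyperbolic $P$ with connected Julia set, which is what Theorem~\ref{conf_mat_thm} asserts) with the factor Bowen--Series map. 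Correspondingly, $\mathfrak{X}_P:=\Phi\circ\psi_P$ cannot play the role required by Definition~\ref{conf_mat_def}: it is defined on $\D^*$ rather than on $\mathcal{K}(P)$, and it does not conjugate $P\vert_{\mathcal{K}(P)}$ to $F$, so conditions (1)--(3) and (5) cannot even be formulated for it. The paper's construction instead takes the dynamical plane of $P$ as the model sphere: $\widetilde{F}=P$ on $\mathcal{K}(P)$, and $A_{\Gamma}^{\mathrm{fBS}}$ is transplanted into $\mathcal{B}_\infty(P)$ via $\psi_P\circ\eta\circ\mathfrak{h}_\Gamma^{-1}$ with $\mathfrak{h}_\Gamma=\widehat{\psi}_\rho\circ\mathfrak{h}$; the B{\"o}ttcher coordinate serves only as the gluing device, not as the polynomial hemisphere.

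Two further points are not cosmetic. First, David integration only yields a homeomorphism straightening $\mu$; holomorphicity of the conjugated map across the image of the Julia set is not automatic, and this is precisely where hyperbolicity is used: $\mathcal{B}_\infty(P)$ is a John domain, so $\mathcal{J}(P)$ is $W^{1,1}$-removable \cite{JS00}, whence its image is locally conformally removable by \cite{LMMN}, and one concludes with the uniqueness of David straightening on each piece (\cite[Theorem~20.4.19]{AIM09}). The same removability is what your uniqueness argument silently needs: the common boundary is the image of $\mathcal{J}(P)$, in general not a Jordan curve, and matching continuous boundary values along it do not by themselves make the glued homeomorphism M{\"o}bius. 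Second, your definition of $\mu$ by pulling back the standard structure on $\D^*$ under iterated preimages of $\widetilde{F}$ does not make sense: no point of $\D$ ever maps into $\D^*$, so nothing spreads inward. The invariant coefficient should be defined by a single pullback under the David conjugacy on the group side (as in the paper, $(\mathfrak{h}_\Gamma\circ\eta\circ\psi_P^{-1})^*\mu_0$ on $\mathcal{B}_\infty(P)$, standard on $\mathcal{K}(P)$); invariance is then immediate because the model map there is conjugate, via that single David map, to the holomorphic map $A_{\Gamma}^{\mathrm{fBS}}$, and your ``principal technical obstacle'' about orbit-wise decay of the David bound evaporates. Finally, the David extension of $\mathfrak{h}$ itself is obtained in the paper not by ad hoc Fatou-coordinate estimates but by verifying, for the piecewise-analytic factor map (nontrivial when $n\geq 2$), the Markov-partition and symmetric-parabolicity hypotheses of the extension theorem of \cite{LMMN}; your sketch of this step would need to be replaced by such a verification or citation.
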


We will start with a technical lemma.

\begin{definition}\label{david_def}
	An orientation-preserving homeomorphism $H: U\to V$ between domains in the Riemann sphere $\widehat{\C}$ is called a \textit{David homeomorphism} if it lies in the Sobolev class $W^{1,1}_{\textrm{loc}}(U)$ and there exist constants $C,\alpha,\varepsilon_0>0$ with
	\begin{align}\label{david_cond}
		\sigma(\{z\in U: |\mu_H(z)|\geq 1-\varepsilon\}) \leq Ce^{-\alpha/\varepsilon}, \quad \varepsilon\leq \varepsilon_0.
	\end{align}
\end{definition}
Here $\sigma$ is the spherical measure, and
$\mu_H= \frac{\partial H/ \partial\overline{z}}{\partial H/\partial z}$
is the Beltrami coefficient of $H$ (see \cite[Chapter 20]{AIM09} for more background on David homeomorphisms).

\begin{lemma}\label{david_ext_lem}
The circle homeomorphism $\mathfrak{h}_\Gamma$ of Proposition~\ref{factor_bs_prop} continuously extends to a David homeomorphism of $\D$. 
\end{lemma}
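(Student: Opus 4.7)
The plan is to first show that $\mathfrak{h}$ is quasi-symmetric on $\mathbb{S}^1$, then invoke a standard boundary-extension theorem to obtain a quasi-conformal extension to $\overline{\D}$, and finally observe that every quasi-conformal homeomorphism trivially satisfies the David condition~\eqref{david_cond}.

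By Proposition~\ref{factor_bs_prop}, both $z^d|_{\mathbb{S}^1}$ and $A^{\mathrm{fBS}}_{\pmb{\Gamma_{n,p}}}|_{\mathbb{S}^1}$ are orientation-preserving, expansive, piecewise analytic coverings of the common degree $d=np-1$. Crucially, by Proposition~\ref{factor_bs_prop}(3) all critical points of $A^{\mathrm{fBS}}_{\pmb{\Gamma_{n,p}}}$ lie in $\D\setminus \mathbb{S}^1$ (they all map to the interior critical value $0$), so $A^{\mathrm{fBS}}_{\pmb{\Gamma_{n,p}}}|_{\mathbb{S}^1}$ is a local analytic diffeomorphism with uniformly nonvanishing derivative. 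I would take the Markov partition $\mathcal{P}_0:=\{\mathfrak{J}_{1,s}^i\}$ of $\mathbb{S}^1$ from Figure~\ref{david_fig} and its refinements $\mathcal{P}_k:=(A^{\mathrm{fBS}}_{\pmb{\Gamma_{n,p}}})^{-k}(\mathcal{P}_0)$. Expansivity together with the analyticity (and finiteness) of the branches yields the standard Koebe-type bounded-distortion estimate for the inverse branches of $(A^{\mathrm{fBS}}_{\pmb{\Gamma_{n,p}}})^k$, uniformly in $k$; on the $z^d$ side, the corresponding partitions consist of arcs of exactly equal length. Comparing the length ratios of adjacent arcs of $\mathcal{P}_k$ with those of $\mathfrak{h}^{-1}(\mathcal{P}_k)$ yields the desired quasi-symmetry bound on $\mathfrak{h}$.

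Given quasi-symmetry, the Beurling--Ahlfors extension theorem furnishes a quasi-conformal self-homeomorphism $H$ of $\overline{\D}$ extending $\mathfrak{h}$. For such an $H$ one has $\|\mu_H\|_\infty=:k<1$, so with $\epsilon_0:=(1-k)/2$ the set $\{z\in\D:|\mu_H(z)|\geq 1-\epsilon\}$ is empty for every $\epsilon\in(0,\epsilon_0]$. Hence the inequality in~\eqref{david_cond} is satisfied trivially (for any choice of $C,\alpha>0$), and $H$ qualifies as a David homeomorphism in the sense of Definition~\ref{david_def}.

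The only non-routine ingredient is the bounded-distortion estimate that underlies the quasi-symmetry. Each branch of $A^{\mathrm{fBS}}_{\pmb{\Gamma_{n,p}}}$ has the local form $w\mapsto \bigl(g_{1,s}(w^{1/n})\bigr)^n$, so one must pick a single-valued branch of $w^{1/n}$ (which poses no difficulty since $\mathbb{S}^1$ is bounded away from $0$) and verify that the resulting analytic maps have uniformly bounded Schwarzian derivative on the relevant Markov pieces. Compactness of $\mathbb{S}^1$ together with the explicit form of the side-pairing M\"obius maps $g_{1,s}$ delivers the needed uniform bounds, which is the main technical point of the argument.
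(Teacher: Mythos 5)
Your proposal fails at its very first and most important step: $\mathfrak{h}$ is \emph{not} quasisymmetric, so the Beurling--Ahlfors route is unavailable. The map $A_{\pmb{\Gamma_{n,p}}}^{\mathrm{fBS}}\vert_{\mathbb{S}^1}$ is expansive but not uniformly expanding: its periodic breakpoints (the points of $S_{\pmb{\Gamma_{n,p}}}$, coming from the ideal vertices of $\pmb{\Pi}$, i.e.\ from the cusps of the orbifold) are \emph{parabolic} -- the return map there has derivative $1$, since the relevant group elements are parabolic. Your bounded-distortion estimate for inverse branches, uniform in the depth $k$, is exactly what breaks down near these points (the fact that each $g_{1,s}$ has derivative $>1$ on the \emph{open} arc $\Int{J_{1,s}}$ does not give a uniform lower bound up to the endpoints). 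More decisively, no quasisymmetric conjugacy between $z^d\vert_{\mathbb{S}^1}$ and $A_{\pmb{\Gamma_{n,p}}}^{\mathrm{fBS}}\vert_{\mathbb{S}^1}$ can exist at all: backward orbits of a nearby point accumulate at a repelling fixed point of $z^d$ at a geometric rate, while at a parabolic periodic point of $A_{\pmb{\Gamma_{n,p}}}^{\mathrm{fBS}}$ they accumulate only polynomially, and a quasisymmetric homeomorphism must carry sequences of comparable geometric scales to sequences of comparable geometric scales. This exponential-versus-polynomial discrepancy is precisely the reason the David class (strictly larger than the quasiconformal class) is needed here; if $\mathfrak{h}$ were quasisymmetric the whole David machinery of the paper would be superfluous.

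The paper's actual argument accepts this and works within the David framework: it refines the partition $\{\mathfrak{q}(J_{1,s}^i)\}$ by one pullback to obtain a genuine Markov partition, notes that each piece of $\overline{A}_{\pmb{\Gamma_{n,p}}}^{\mathrm{BS}}$ is M{\"o}bius in the preferred coordinates and extends conformally to round-disk neighborhoods compatible with the dynamics, checks conditions (4.1)--(4.2) of \cite[Theorem~4.12]{LMMN} together with the fact that every periodic breakpoint is \emph{symmetrically parabolic}, and then invokes that theorem to produce the David extension of $\mathfrak{h}$. If you want to salvage your plan, the quasisymmetry claim must be replaced by exactly this kind of quantitative control of the distortion degeneration at the parabolic breakpoints (which is what the exponential measure bound \eqref{david_cond} encodes); a soft Koebe/Schwarzian compactness argument cannot deliver it.
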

\begin{proof}
Recall from Section~\ref{factor_bs_gen_subsec} that $h_\Gamma=\widehat{\psi}_\rho\circ\mathfrak{h}_0$, where $\widehat{\psi}_\rho$ is the restriction of a global quasiconformal map and $\mathfrak{h}_0$ conjugates $z^{np-1}$ to $A_{\pmb{\Gamma_{n,p}}}^{\mathrm{fBS}}$. Hence, it suffices to check that $\mathfrak{h}_0$ continuously extends to a David homeomorphism of $\D$.

We will use the notation introduced in Proposition~\ref{factor_bs_prop}.  In particular, we endow $\mathcal{\mathcal{Q}}$ with a preferred choice of complex coordinates via its identification with the set $\{z\in\overline{\D}:\ 0\leq\arg{z}\leq\frac{2\pi}{n}\}\cup\{0\}$ (with the boundary radial lines glued together).

The partition of $\partial\mathcal{Q}$ into the arcs 
$$
\{\mathfrak{q}(J_{1,s}^i): s\in\{1,\cdots,p\}, i\in\{1,\cdots,m(s)\}\}
$$ 
does \emph{not} necessarily give a Markov partition for $\overline{A}_{\pmb{\Gamma_{n,p}}}^{\mathrm{BS}}$ since the map may send both endpoints of such a partition piece to $1$. However, we can refine the above partition by pulling it back under $\overline{A}_{\pmb{\Gamma_{n,p}}}^{\mathrm{BS}}$, and this produces a Markov partition~$\{I_k\}$.

Each piece $h_{i,s}\vert_{I_k}$ of $\overline{A}_{\pmb{\Gamma_{n,p}}}^{\mathrm{BS}}$ extends conformally as $h_{i,s}$ to a neighborhood of $I_k$ in $\widetilde{\mathcal{Q}}$, where $\widetilde{\mathcal{Q}}$ is the double of $\mathcal{Q}$ and $I_k\subset \mathfrak{q}(J_{1,s}^i)$. 
Finally, since the pieces of $\overline{A}_{\pmb{\Gamma_{n,p}}}^{\mathrm{BS}}$ are M{\"o}bius  (with respect to the preferred coordinates on $\mathcal{Q}$), which send round disks to round disks, we can choose round disk neighborhoods $U_k\subset \widetilde{\mathcal{Q}}$ of the interiors of the Markov partition pieces $\Int{I_k}$ (intersecting $\partial\mathcal{Q}$ orthogonally) such that if $\overline{A}_{\pmb{\Gamma_{n,p}}}^{\mathrm{BS}}(I_k)\supset I_{k'}$, then $\overline{A}_{\pmb{\Gamma_{n,p}}}^{\mathrm{BS}}(U_k)\supset U_{k'}$.

The properties of $\overline{A}_{\pmb{\Gamma_{n,p}}}^{\mathrm{BS}}$ listed in the previous paragraph imply that the map $A_{\pmb{\Gamma_{n,p}}}^{\mathrm{fBS}}\equiv \xi\circ \overline{A}_{\pmb{\Gamma_{n,p}}}^{\mathrm{BS}}\vert_{\partial\mathcal{Q}}\circ\xi^{-1}:\mathbb{S}^1\to\mathbb{S}^1$ is a piecewise analytic orientation-preserving expansive covering map of degree $d\geq2$ admitting a Markov partition $\{\xi(I_k)\}$ satisfying conditions (4.1) and (4.2) of \cite[Theorem~4.12]{LMMN}. Moreover, each periodic breakpoint of its piecewise analytic definition is symmetrically parabolic (cf. \cite[Definition~4.6, Remark~4.7]{LMMN}). By \cite[Theorem~4.12]{LMMN}, the orientation-preserving homeomorphism $\mathfrak{h}_0:\mathbb{S}^1\to\mathbb{S}^1$ that conjugates the map $z^d\vert_{\mathbb{S}^1}$ to $A_{\pmb{\Gamma_{n,p}}}^{\mathrm{fBS}}\vert_{\mathbb{S}^1}$ (and sends $1$ to $1$) extends continuously as a David homeomorphism of~$\D$. 
\end{proof}

\begin{proof}[Proof of Theorem~\ref{conf_mat_thm}]
As $\mathcal{J}(P)$ is locally connected, $\psi_P$ extends to a continuous surjection $\psi_P:\mathbb{S}^1\to\mathcal{J}(P)$ semi-conjugating $z^d$ to $P$. Also note that since $P$ is hyperbolic with connected Julia set, $\mathcal{B}_\infty(P)$ is a John domain and $\mathcal{J}(P)$ is removable for $W^{1,1}$ functions \cite[Theorem~4]{JS00}.

Let $\mathfrak{h}_\Gamma:\overline{\D}\to\overline{\D}$ be a continuous extension of $\mathfrak{h}_\Gamma:\mathbb{S}^1\to\mathbb{S}^1$ that is a David homeomorphism on $\D$. The existence of such an extension is guaranteed by Lemma~\ref{david_ext_lem}.
Also recall that $\mathfrak{h}_\Gamma$ conjugates $z^d\vert_{\mathbb{S}^1}$ to $A_{\Gamma}^{\mathrm{fBS}}\vert_{\mathbb{S}^1}$.
Consider the topological dynamical system
	\begin{equation*}
		\widetilde{F}(w):=\left\{\begin{array}{ll}
			P & \mbox{on}\ \mathcal{K}(P), \\
			\psi_P\circ\eta\circ\mathfrak{h}_\Gamma^{-1}\circ A_{\Gamma}^{\mathrm{fBS}}\circ\mathfrak{h}_\Gamma\circ\eta\circ\psi_P^{-1} & \mbox{on}\  \psi_P\left(\eta\left(\mathfrak{h}_\Gamma^{-1}\left(\mathcal{D}_{\Gamma}\right)\right)\right)\subset\mathcal{B}_\infty(P),
		\end{array}\right.
	\end{equation*}
where $\eta(z)=1/z$. By equivariance properties of $\mathfrak{h}_\Gamma:\mathbb{S}^1\to\mathbb{S}^1$ and $\psi_P:\mathbb{S}^1\to\mathcal{J}(P)$, the two definitions agree on $\mathcal{J}(P)$. We denote the domain of $\widetilde{F}$ by $\mathrm{Dom}(\widetilde{F})$
	
	We define a Beltrami coefficient $\mu$ on the sphere as follows. On $\mathcal{K}(P)$ we set $\mu$ to be
the standard complex structure. On $\mathcal{B}_\infty(P)$, we set $\mu$ to be the pullback of the standard complex structure (on $\D$) under the map $\mathfrak{h}_\Gamma\circ\eta\circ\psi_P^{-1}$. Since $\mathfrak{h}_\Gamma\circ\eta\circ\psi_P^{-1}$ is a David homeomorphism (by \cite[Proposition~2.5 (part iv)]{LMMN}), it follows that $\mu$ is a David coefficient on $\widehat{\C}$. It is easy to check that $\mu$ is $\widetilde{F}-$invariant.

The David Integrability Theorem (see \cite{David}, \cite[Theorem~20.6.2, p.~578]{AIM09}) provides us with a David homeomorphism $\mathfrak{H}:\widehat{\C}\to\widehat{\C}$ such that the pullback of the standard complex structure under $\mathfrak{H}$ is equal to $\mu$. Conjugating $\widetilde{F}$ by $\mathfrak{H}$, we obtain the map 
$$
F:=\mathfrak{H}\circ\widetilde{F}\circ\mathfrak{H}^{-1}:\mathfrak{H}(\mathrm{Dom}(\widetilde{F}))\to\widehat{\C}.
$$
We set $\mathrm{Dom}(F):= \mathfrak{H}(\mathrm{Dom}(\widetilde{F}))$. 

We proceed to show that $F$ is holomorphic on $\Int{\mathrm{Dom}(F)}$.
As $\mathcal{J}(P)$ is removable for functions in class $W^{1,1}$, it follows from \cite[Theorem~2.7]{LMMN} that $\mathfrak{H}(\mathcal{J}(P))$ is locally conformally removable. Hence, it suffices to show that $F$ is holomorphic on the interior of $\mathrm{Dom}(F)\setminus\mathfrak{H}(\mathcal{J}(P))$. Indeed, this would imply that the continuous map $F$ is holomorphic on $\Int{\mathrm{Dom}(F)}$ away from the finitely many critical points of $F$. One can then conclude that $F$ is holomorphic on $\Int{\mathrm{Dom}(F)}$ using the Riemann removability theorem.

To this end, first observe that both the maps $\mathfrak{h}_\Gamma\circ\eta\circ\psi_P^{-1}$ and $\mathfrak{H}$ are David homeomorphisms on $\mathcal{B}_\infty(P)$ straightening $\mu\vert_{\mathcal{B}_\infty(P)}$. By \cite[Theorem~20.4.19, p.~565]{AIM09}, the map $\mathfrak{h}_\Gamma\circ\eta\circ\psi_P^{-1}\circ\mathfrak{H}^{-1}$ is conformal on $\mathfrak{H}(\mathcal{B}_\infty(P))$. It now follows from the definitions of $\widetilde{F}$ and $F$ that $F$ is holomorphic on $\mathfrak{H}(\mathcal{B}_\infty(P))\cap\Int{\mathrm{Dom}(F)}$. Similarly, both the identity map and the map $\mathfrak{H}$ are David homeomorphisms on each component of $\Int{\mathcal{K}(P)}$ straightening $\mu$. Once again by \cite[Theorem~20.4.19, p.~565]{AIM09}, $\mathfrak{H}$ is conformal on each component of $\Int{\mathcal{K}(P)}$. By definition of $\widetilde{F}$ and $F$, it now follows that $F$ is holomorphic on each interior component of $\mathfrak{H}(\mathcal{K}(P))$.
This completes the proof of the fact that $F$ is holomorphic on the interior of $\mathrm{Dom}(F)$.
\begin{figure}[h!]
\captionsetup{width=0.98\linewidth}
\begin{tikzpicture}
\node[anchor=south west,inner sep=0] at (0,6.4) {\includegraphics[width=0.96\textwidth]{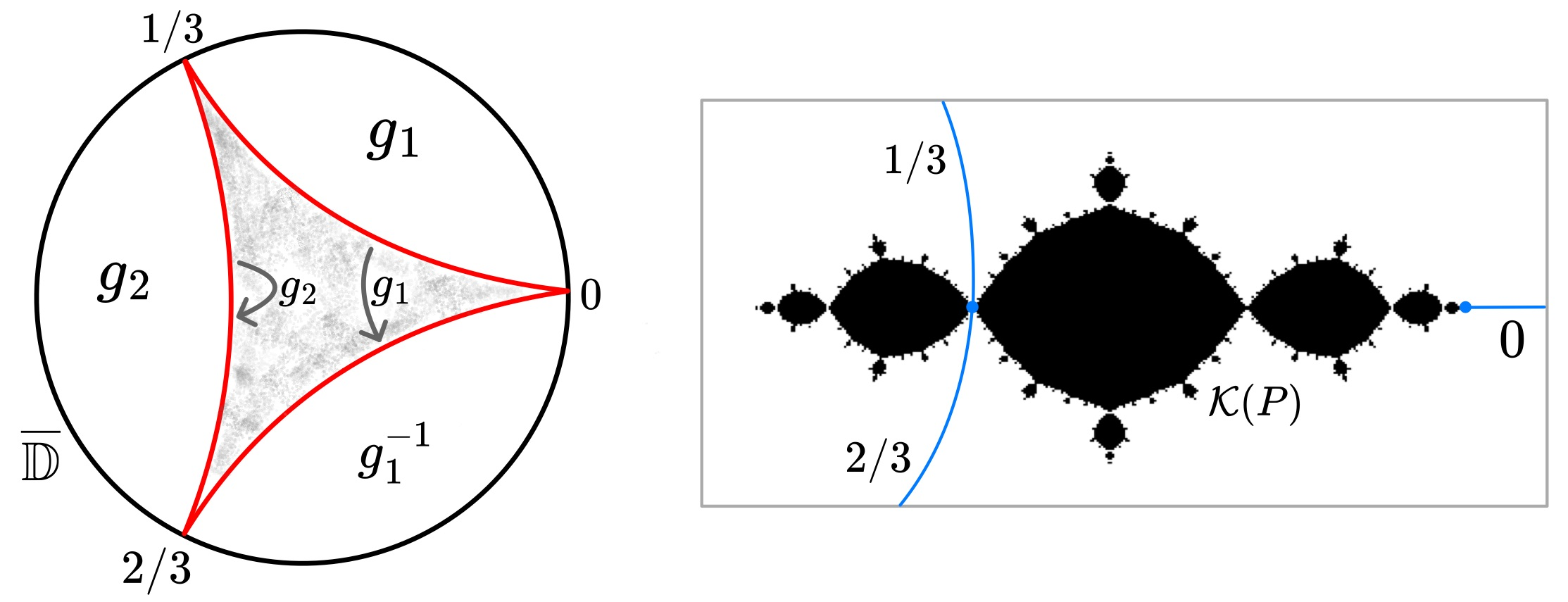}}; 
\node[anchor=south west,inner sep=0] at (1.6,0) {\includegraphics[width=0.72\textwidth]{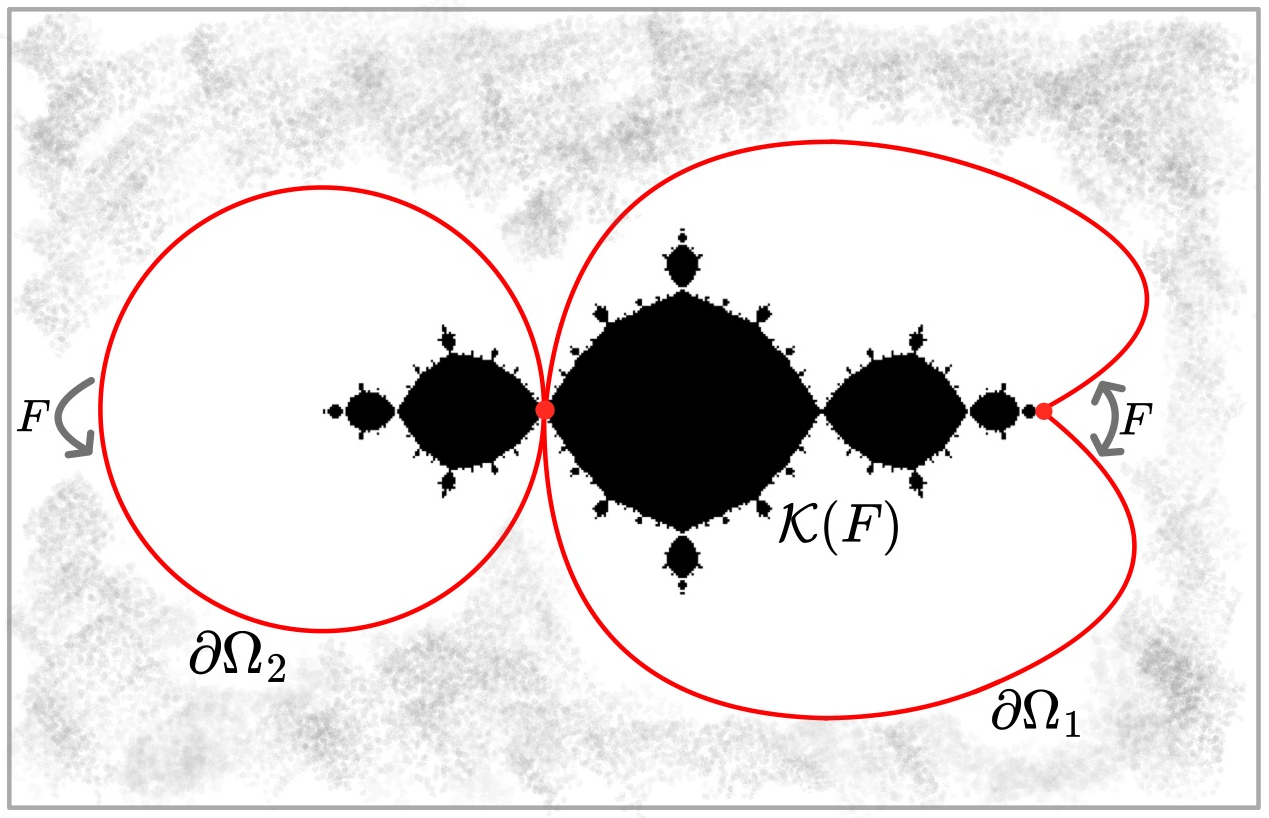}}; 
\end{tikzpicture}
\caption{Displayed are the Bowen-Series map of $\pmb{\Gamma_{1,3}}$, the filled Julia set of $P(z)=z^2-1$, and a schematic picture of the conformal mating $F$ of $A_{\pmb{\Gamma_{1,3}}}$ and~$P$.} 
\label{basilica_mating_fig}
\end{figure}

Finally, we set $\mathfrak{X}_P:=\mathfrak{H}:\mathcal{K}(P)\to\widehat{\C}$ and $\mathfrak{X}_\Gamma:=\mathfrak{H}\circ\psi_P\circ\eta\circ\mathfrak{h}_\Gamma^{-1}:\overline{\D}\to\widehat{\C}$. It is readily checked that these maps satisfy the requirements of Definition~\ref{conf_mat_def}. Thus, $F$ is a conformal mating of $P$ and $A_{\Gamma}^{\mathrm{fBS}}$.

Now suppose that there is another conformal mating $F_1$ of $P$ and $A_{\Gamma}^{\mathrm{fBS}}$. Then the respective mating semi-conjugacies paste together to yield a homeomorphism of $\widehat{\C}$ which is conformal away from $\mathfrak{H}(\mathcal{J}(P))$ and conjugates $F$ to $F_1$. Conformal removability of $\mathfrak{H}(\mathcal{J}(P))$ now implies that this homeomorphism is a M{\"o}bius map; i.e., $F$ and $F_1$ are M{\"o}bius conjugate. 
\end{proof}

\begin{remark}\label{mating_conj_rem}
  We note that since $\mathfrak{X}_P$ is the restriction of the homeomorphism $\mathfrak{H}$ to $\cK(P)$, the map $\mathfrak{X}_P$ is a homeomorphism.
\end{remark}

\subsection{Two examples}\label{examples_subsec}

We will now work out two concrete examples of conformal matings. The reader may keep these examples in mind while going through Sections~\ref{conf_mating_char_sec} and~\ref{gen_corr_subsec}.
\begin{figure}[h!]
\captionsetup{width=0.96\linewidth}
\begin{tikzpicture}
\node[anchor=south west,inner sep=0] at (0,8.8) {\includegraphics[width=0.96\textwidth]{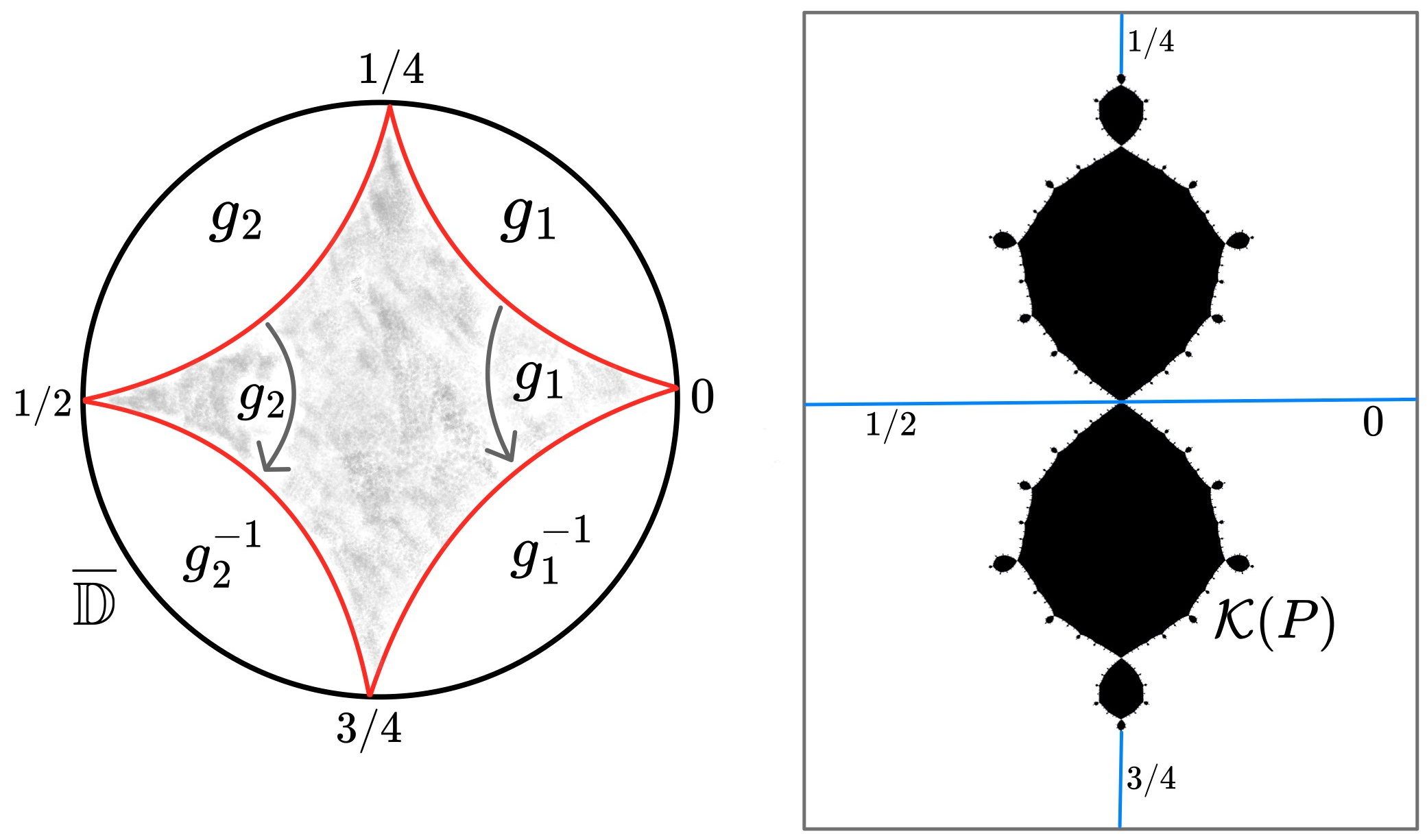}}; 
\node[anchor=south west,inner sep=0] at (3.2,0) {\includegraphics[width=0.45\textwidth]{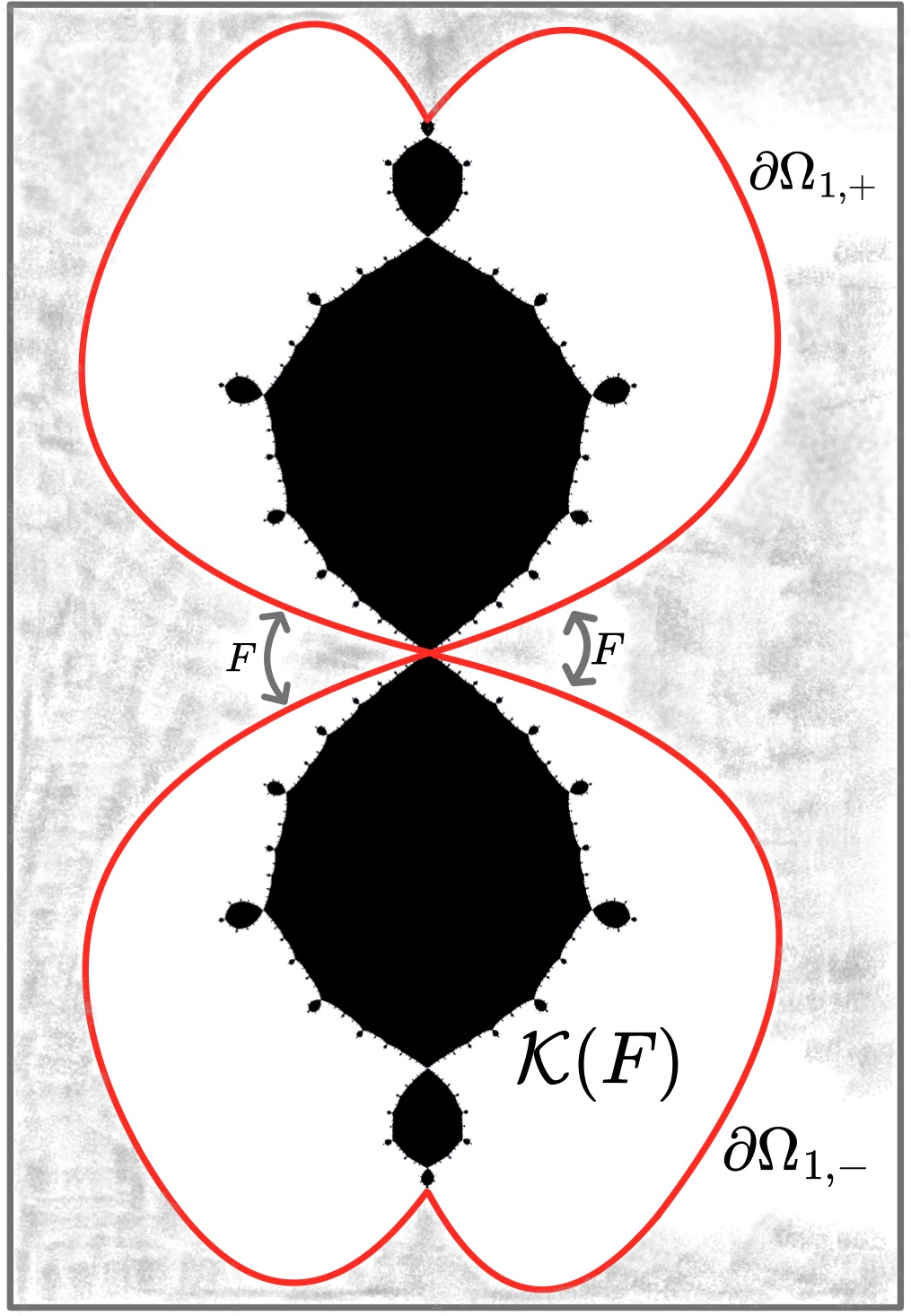}}; 
\end{tikzpicture}
\caption{Displayed are the Bowen-Series map of the Fuchsian thrice punctured sphere group, the filled Julia set of the critically fixed cubic polynomial $P(z)=z^3+\frac{3z}{2}$, and a cartoon of their conformal mating $F$.}
\label{cubic_crit_fixed_mating_fig}
\end{figure}

\subsubsection{A quadratic example}\label{quad_example_subsubsec}

Consider the `Basilica' polynomial $P(z)=z^2-1$ and the group $\pmb{\Gamma_{1,3}}$. The unique finite critical point of the polynomial $P$ forms a $2-$cycle $0\longleftrightarrow -1$. The filled Julia set $\cK(P)$, the fixed dynamical ray of $P$ at angle $0$, and the $2-$periodic dynamical rays at angles $1/3, 2/3$ are displayed in Figure~\ref{basilica_mating_fig} (top right). The group $\pmb{\Gamma_{1,3}}$ uniformizes a sphere with two punctures and an order two orbifold point. The Bowen-Series map $A_{\pmb{\Gamma_{1,3}}}:\mathcal{D}_{\pmb{\Gamma_{1,3}}}\to\overline{\D}$ is shown in Figure~\ref{basilica_mating_fig} (top left).

The construction of the conformal mating $F$ of $P$ and $A_{\pmb{\Gamma_{1,3}}}$ involves identification of the $2-$periodic points of $A_{\pmb{\Gamma_{1,3}}}$ in $S_{\pmb{\Gamma_{1,3}}}$ (see the last paragraph of Section~\ref{factor_bs_gen_subsec}) with the common landing point of the $1/3$ and $2/3-$dynamical rays of $P$. Consequently, $\mathrm{Dom}(F)$ is the union of two closed Jordan disks $\overline{\Omega_1}, \overline{\Omega_2}$ touching at a point, and $F$ restricts as an involution on the boundary of each of the two components $\Omega_1$, $\Omega_2$; see Figure~\ref{basilica_mating_fig} (bottom). 

\subsubsection{A cubic example}\label{cubic_example_subsubsec}

Let $P$ be the critically fixed cubic polynomial $P(z)=z^3+\frac{3z}{2}$. It fixes both  its finite critical points $\pm \frac{i}{\sqrt{2}}$. The filled Julia set $\cK(P)$, the fixed dynamical ray of $P$ at angles $0, 1/2$, and the $2-$periodic dynamical rays at angles $1/4, 3/4$ are shown in Figure~\ref{cubic_crit_fixed_mating_fig} (top right).

Further, let $\pmb{\Gamma_{1,4}}$ be the Fuchsian group uniformizing the thrice punctured sphere. The Bowen-Series map $A_{\pmb{\Gamma_{1,4}}}:\mathcal{D}_{\pmb{\Gamma_{1,4}}}\to\overline{\D}$ is depicted in Figure~\ref{cubic_crit_fixed_mating_fig} (top left).

In the dynamical plane of the conformal mating $F$ of $P$ and $A_{\pmb{\Gamma_{1,4}}}$, the fixed points of $A_{\pmb{\Gamma_{1,4}}}$ in $S_{\pmb{\Gamma_{1,4}}}$ are identified with the common landing point of the $0$ and $1/2-$dynamical rays of $P$. Thus, $\mathrm{Dom}(F)$ is the union of two closed Jordan disks $\overline{\Omega_{1,\pm}}$ touching at a point. However, unlike in the previous example, the map $F$ sends $\partial\Omega_{1,\pm}$ onto $\partial\Omega_{1,\mp}$ with $F^{\circ 2}\vert_{\partial\Omega_{1,+}\cup\partial\Omega_{1,-}}=\mathrm{id}$; see Figure~\ref{cubic_crit_fixed_mating_fig} (bottom). 

\section{Conformal matings and rational uniformization}\label{conf_mating_char_sec}

With the conformal matings of factor Bowen-Series maps and polynomials at our disposal (Theorem~\ref{conf_mat_thm}), we now take up the task of recognizing the class of holomorphic maps 
$F$ that arise in this process and thus answering the first part of Question~\ref{qn_main}.
	 We carry this out in Subsections~\ref{real_sym_mating_subsec} and~\ref{qc_real_sym_mating_subsec}, where a generalization of Proposition~\ref{rat_unif_conf_mating_intro_prop} is established. The resulting algebraic description of  matings in terms of uniformizing rational maps serves as a connecting link between conformal matings (Section~\ref{conf_mating_sec})
	 and correspondences (to be dealt with in Section~\ref{corr_sec}).
Finally in Section~\ref{unif_rat_crit_pnt_subsec}, we investigate the structure of the critical points of the uniformizing rational maps. This structure will play a crucial role in studying the dynamics of the associated correspondences in Section~\ref{corr_sec}.

Throughout this section, we will work with a representation $(\rho:\pmb{\Gamma_{n,p}}\longrightarrow\Gamma)\in\mathrm{Teich}^\omega(\pmb{\Gamma_{n,p}})$ and a monic, centered, hyperbolic complex polynomial $P$ of degree $d=np-1$ with a connected Julia set. As in Theorem~\ref{conf_mat_thm}, the unique conformal mating of $P$ and $A_{\Gamma}^{\mathrm{fBS}}$ will be denoted by $F$. The associated mating semi-conjugacies are denoted by $\mathfrak{X}_P$ and $\mathfrak{X}_\Gamma$ (see Definition~\ref{conf_mat_def}). Moreover, $\psi_P, \psi_\rho$ denote the B{\"o}ttcher coordinate for $P$, and the quasiconformal homeomorphism that defines the representation $\rho$, respectively.

\subsection{Lamination model of domain of conformal matings}\label{lami_model_subsec} For the purposes of this section, a \emph{lamination} will be a closed set consisting of a collection of non-intersecting hyperbolic geodesics in $\overline{\D}$. If the number of geodesics in the collection is finite, the lamination is said to be finite.

\begin{proposition}\label{mat_dom_prop}
$\mathrm{Dom}(F)$ is homeomorphic to the quotient of $\overline{\D}$ under an equivalence relation given by a finite lamination. In particular,
\noindent\begin{enumerate}
\item $\mathrm{Dom}(F)$ is connected, and
\item $\Int{\mathrm{Dom}(F)}$ has finitely many connected components and each of them is a Jordan domain.
\end{enumerate}
\end{proposition}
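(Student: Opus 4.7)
The plan is to analyze $\mathrm{Dom}(F)$ via its complement
$$V\ :=\ \widehat{\C} \setminus \mathrm{Dom}(F)\ =\ \mathfrak{X}_\Gamma(\Pi_0),$$
where $\Pi_0 := \Int(\overline{\D} \setminus \mathcal{D}_\Gamma) = \xi(\mathfrak{q}(\Int \pmb{\Pi}))$. By the construction of the factor Bowen-Series map (Section~\ref{factor_bs_gen_subsec}), $\Pi_0$ is an open subset of $\D$ whose closure in $\overline{\D}$ is a closed Jordan region bounded by $p$ arcs in $\D$ and $p$ ideal vertices lying on $\mathbb{S}^1$. Since $\mathfrak{X}_\Gamma$ is conformal on $\D$, it restricts to a homeomorphism $\Pi_0 \to V$, so $V$ is an open topological disk in $\widehat{\C}$.

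Next, I would study how $\mathfrak{X}_\Gamma$ extends to $\overline{\Pi_0}$. By conformality on $\D$, the map is injective on $\overline{\Pi_0} \cap \D$, so any non-injectivity on $\overline{\Pi_0}$ must occur among the $p$ vertices in $\overline{\Pi_0} \cap \mathbb{S}^1$, where the mating equivalence~\eqref{conf_mat_equiv_rel} may identify distinct vertices. By the Carath\'eodory--Osgood theorem, a Riemann map $\phi : \D \to \Pi_0$ extends to a homeomorphism $\overline{\D} \to \overline{\Pi_0}$; let $v_1, \dots, v_p \in \mathbb{S}^1$ be the $\phi$-preimages of the vertices. Then $\mathfrak{X}_\Gamma \circ \phi : \overline{\D} \to \overline V$ is a continuous surjection, conformal on $\D$, and its only non-injectivity on $\mathbb{S}^1$ is among the finite set $\{v_1, \dots, v_p\}$, with identifications pulled back from the fibers of $\psi_P$ via $\eta \circ \mathfrak{h}_\Gamma^{-1}$. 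Because the Carath\'eodory lamination of $P$ on $\mathbb{S}^1$ is non-crossing, so is its pullback to $\{v_1, \dots, v_p\}$ via the orientation-reversing homeomorphism $\eta \circ \mathfrak{h}_\Gamma^{-1} \circ \phi$. Thus the identifications can be realized by pairwise disjoint chords in $\D$, producing a finite lamination $\lambda_V$ on $\overline{\D}$ with at most $p-1$ leaves and a homeomorphism $\overline V \cong \overline{\D}/\lambda_V$.

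To transfer this description to $\mathrm{Dom}(F) = \widehat{\C} \setminus V$, view $\widehat{\C}$ as $\overline{\D} \cup_{\mathbb{S}^1} \overline{\D^*}$, with $\overline{\D^*}$ supplying the uniformization of $\overline V$ (so that $\lambda_V$ becomes a lamination in $\D^*$). The boundary identifications on $\mathbb{S}^1$ encoded by $\lambda_V$ are identifications of actual points of $\partial V$ inside $\widehat{\C}$, and therefore apply symmetrically on the opposite $\overline{\D}$-side. Joining the corresponding pairs by disjoint chords in $\D$ gives a finite lamination $\lambda$ on $\overline{\D}$ realizing the same circle-identifications, and hence $\mathrm{Dom}(F) \cong \overline{\D}/\lambda$.

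The two corollaries then follow at once. Statement (1) holds because $\overline{\D}/\lambda$ is connected as a continuous image of the connected space $\overline{\D}$. For (2), the interior of $\mathrm{Dom}(F)$ in $\widehat{\C}$ corresponds to $\D$ with the chord-leaves of $\lambda$ removed (the pinch points lie on the boundary in $\widehat{\C}$), hence is the disjoint union of the finitely many connected components of $\D \setminus \lambda$; each such component is bounded in $\overline{\D}$ by a Jordan curve formed from chord-arcs and subarcs of $\mathbb{S}^1$, so is a Jordan domain. The main technical hurdle I anticipate is confirming that the boundary identifications of $\overline V$ are concentrated at the $p$ vertices and can be realized non-crossingly---this relies on the conformality of $\mathfrak{X}_\Gamma$ on $\D$ (ruling out interior or arc-to-arc identifications) together with tracing the mating identifications back through $\mathfrak{X}_\Gamma\vert_{\mathbb{S}^1} = \mathfrak{X}_P \circ \psi_P \circ \eta \circ \mathfrak{h}_\Gamma^{-1}$ to the (non-crossing) Carath\'eodory lamination of $P$.
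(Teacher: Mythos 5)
Your proposal is correct and follows essentially the same route as the paper: identify the complement of $\mathrm{Dom}(F)$ with the conformal image $\mathfrak{X}_\Gamma\left(\Int\left(\overline{\D}\setminus\mathcal{D}_{\Gamma}\right)\right)$, an open topological disk, observe that the only boundary identifications occur at the $p$ ideal vertices of the polygon, and conclude the finite-lamination (pinched disk) model of $\mathrm{Dom}(F)$, from which connectedness and the Jordan-domain structure of the interior components follow. The additional details you supply --- tracing the identifications back through $\mathfrak{X}_\Gamma\vert_{\mathbb{S}^1}=\mathfrak{X}_P\circ\psi_P\circ\eta\circ\mathfrak{h}_\Gamma^{-1}$ to the non-crossing Carath\'eodory lamination of $P$, and the explicit transfer of the resulting chords to the $\overline{\D}$-side --- are precisely what the paper leaves implicit in its ``it follows'' step.
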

\begin{proof}
Recall that $\overline{\D}\setminus \mathcal{D}_{\Gamma}$ is the interior of a topological ideal $p-$gon. Since $\mathfrak{X}_\Gamma$ is conformal on $\D$ and extends continuously to $\mathbb{S}^1$, it follows that the complement of $\mathrm{Dom}(F)$ is a topological disk and hence $\mathrm{Dom}(F)$ is a full continuum. Moreover, $\mathfrak{X}_\Gamma$ can introduce at most finitely many identifications on the boundary of $\overline{\D}\setminus \mathcal{D}_{\Gamma}$ (namely, at the $p$ ideal boundary points of $\overline{\D}\setminus \mathcal{D}_{\Gamma}$). It follows that $\mathrm{Dom}(F)$ is homeomorphic to the quotient of $\overline{\D}$ under a finite lamination.
\end{proof}

We will now give an explicit description of the finite lamination appearing in the statement of Proposition~\ref{mat_dom_prop}. This will be useful in determining the topology of $\mathrm{Dom}(F)$. Recall the set $\mathcal{A}_p$ from Section~\ref{factor_bs_gen_subsec}.

\begin{definition}\label{lamination_def}
We define the equivalence relation $\mathcal{L}_P$ on $\mathcal{A}_p$ as: $\theta_1\sim_{\mathrm{P}}\theta_2$ if and only if the external dynamical rays of $P$ at angles $\theta_1, \theta_2$ land at the same point of~$\mathcal{J}(P)$. 
\end{definition}
\begin{figure}[h!]
\captionsetup{width=0.96\linewidth}
\begin{tikzpicture}
\node[anchor=south west,inner sep=0] at (0,0) {\includegraphics[width=0.8\textwidth]{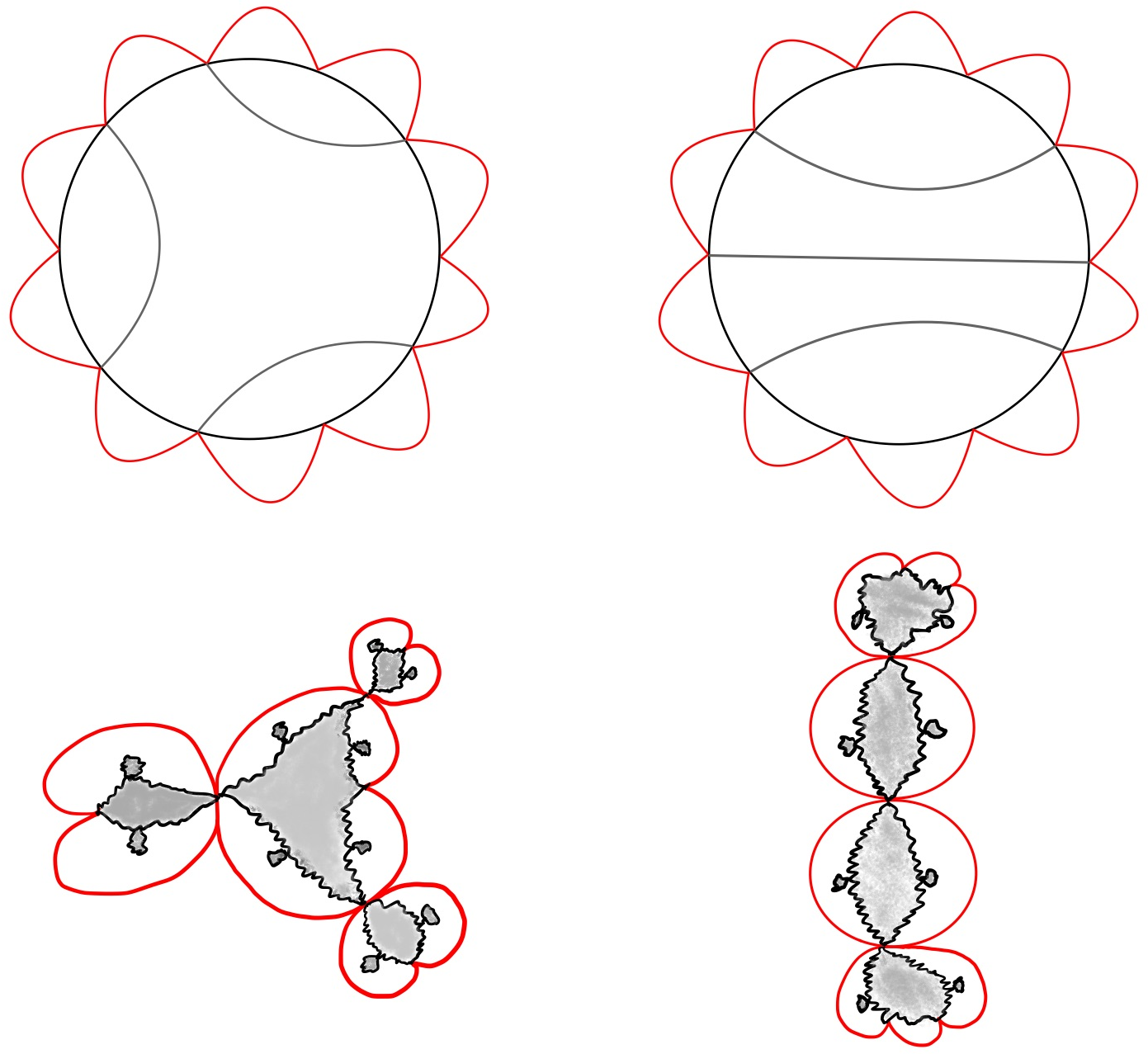}}; 
\end{tikzpicture}
\caption{Top: Depicted are various laminations $\mathcal{L}_P$ with the domain of definition of a factor Bowen-Series map superimposed outside the disk, for $p=10$. The domain of definition of the conformal mating is obtained by pinching the leaves of the lamination. Bottom: The corresponding topological models of $\mathrm{Dom}(F)$ and cartoons of the non-escaping sets are shown. The bottom left figure has three corners and three pinched points with rotational symmetry. The bottom right figure has two corners each at the top and bottom, and three pinched~points.}
\label{pinched_dom_fig}
\end{figure}
We remark that the above equivalence relation is \emph{unlinked}. One can view $\mathcal{L}_P$ as a finite lamination on $\D$ by joining the two points of an equivalence class by a bi-infinite hyperbolic geodesic. Note that \emph{leaves}, \emph{gaps} of this lamination can be defined in the usual way. In what follows, the word `gap' will refer to gaps of infinite hyperbolic area, while gaps of finite hyperbolic area will be called \emph{polygons}. 
By the construction of $F$, the landing point of the external dynamical ray of $P$ at angle $\theta$ is identified with $\mathfrak{h}_\Gamma(-\theta)$. Hence, the landing points of the external dynamical rays of $P$ at angles in $\mathcal{A}_p$ are identified with points in $S_{\Gamma}$. 

The following result easily follows from the above discussion.

\begin{lemma}\label{comp_count_lem}
The connected components of $\Int{\mathrm{Dom}(F)}$ correspond bijectively to the gaps of the lamination $\mathcal{L}_P$. Moreover, two components of $\Int{\mathrm{Dom}(F)}$ touch at a point if and only if the corresponding gaps of $\mathcal{L}_P$ either share a common boundary leaf or have boundary leaves corresponding to the sides of a common polygon.
\end{lemma}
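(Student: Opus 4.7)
My approach is to realize $\Int(\mathrm{Dom}(F))$ as the complement in $\widehat{\C}$ of a closed ``pinched disk'' and match its components with the gaps of $\mathcal{L}_P$ via planar Euler's formula together with a cyclic-order matching at each pinch point. From the identity $\mathrm{Dom}(F)=\mathfrak{X}_P(\mathcal{K}(P))\cup \mathfrak{X}_\Gamma(\mathcal{D}_\Gamma)$ and the injectivity of $\mathfrak{X}_\Gamma$ on $\D$, one has $\mathrm{Dom}(F)=\widehat{\C}\setminus \mathcal{U}$, where $\mathcal{U}:=\mathfrak{X}_\Gamma(\overline{\D}\setminus\mathcal{D}_\Gamma)$ is the image of the open topological $p$-gon; hence $\Int(\mathrm{Dom}(F))=\widehat{\C}\setminus\mathcal{V}$, where $\mathcal{V}:=\mathfrak{X}_\Gamma(\Cl(\overline{\D}\setminus\mathcal{D}_\Gamma))$ is the image of the closed $p$-gon. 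Since $\mathfrak{X}_\Gamma$ is injective on $\D$, all identifications defining $\mathcal{V}$ occur among the $p$ ideal vertices $S_\Gamma$. Using the factorization $\mathfrak{X}_\Gamma = \mathfrak{H}\circ\psi_P\circ\eta\circ\mathfrak{h}_\Gamma^{-1}$ from the proof of Theorem~\ref{conf_mat_thm}, two vertices $v_1, v_2 \in S_\Gamma$ are $\mathfrak{X}_\Gamma$-identified iff the external rays of $P$ at angles $\eta(\mathfrak{h}_\Gamma^{-1}(v_i))$ co-land on $\mathcal{J}(P)$; via the bijection $\mathfrak{h}_\Gamma\colon \mathcal{A}_p \to S_\Gamma$ this pinching pattern is exactly $\mathcal{L}_P$.

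Next I would match counts via Euler's formula. Let $p'$ be the number of $\mathcal{L}_P$-equivalence classes in $\mathcal{A}_p$. The boundary $\partial\mathcal{V}$ is a planar graph in $\widehat{\C}$ with $p'$ pinch-point vertices and $p$ edges (the images of the $p$ bounding geodesics of the $p$-gon). It is connected because it is the continuous image of a cycle graph. Euler's formula then yields $p - p' + 2$ faces: one is $\mathcal{U}$ itself, and the remaining $p - p' + 1$ are the connected components of $\Int(\mathrm{Dom}(F))$. Independently, a direct combinatorial count shows that $\mathcal{L}_P$ has $1 + \sum_E(|E|-1) = 1 + p - p'$ gaps, since starting from a single gap (empty lamination) each non-trivial class $E$ of size $|E|$ contributes $|E|-1$ new gaps (a leaf splits one gap into two; a $k$-polygon replaces one gap by $k$ outside regions). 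The counts agree.

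Finally I would promote this count match to a natural bijection and derive the touching condition. Each edge of $\partial\mathcal{V}$ has $\mathcal{U}$ on one side and a single component of $\Int(\mathrm{Dom}(F))$ on the other, so distinct components of $\Int(\mathrm{Dom}(F))$ can meet only at the pinch points $\mathfrak{X}_\Gamma(v)$, $v \in S_\Gamma$. Around a pinch point $p_E$ corresponding to a class $E$, the $2|E|$ incident half-edges carve out $|E|$ local sectors disjoint from $\mathcal{U}$; by the orientation-preserving nature of $\mathfrak{h}_\Gamma$, the cyclic order of these sectors at $p_E$ matches the cyclic order of the $|E|$ gaps of $\mathcal{L}_P$ bordering the leaf or polygon of $E$. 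Gluing these local bijections yields the global bijection between components and gaps. The touching condition is then immediate: two components touch iff they share a pinch point $p_E$, iff the corresponding gaps both border the leaf or polygon of $E$, iff they are cobordant on that leaf or polygon.

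The delicate point, which I expect to be the main obstacle, is the global consistency of the local sector-to-gap bijections at different pinch points. I anticipate this can be handled by an inductive argument: refining $\mathcal{L}_P$ by adding one equivalence class at a time subdivides exactly one existing gap into several, in parallel with the way that introducing the corresponding pinch into $\mathcal{V}$ subdivides exactly one existing component of $\Int(\mathrm{Dom}(F))$ into several, so coherent matchings at the newly formed pinches propagate without ambiguity.
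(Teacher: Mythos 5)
Your reduction to the pinched-disk picture is sound and is in fact the same starting point the paper uses: $\Int{\mathrm{Dom}(F)}=\widehat{\C}\setminus\mathcal{V}$ with $\mathcal{V}=\mathfrak{X}_\Gamma\bigl(\Cl(\overline{\D}\setminus\mathcal{D}_\Gamma)\bigr)$, identifications occurring only among the $p$ ideal vertices, and the identification pattern equal to $\mathcal{L}_P$ via $\mathfrak{X}_\Gamma=\mathfrak{H}\circ\psi_P\circ\eta\circ\mathfrak{h}_\Gamma^{-1}$ (this is precisely the discussion surrounding Proposition~\ref{mat_dom_prop}, after which the paper simply declares the lemma immediate). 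Your Euler-characteristic count is correct and is a pleasant supplement the paper does not give: the boundary graph has $p'$ vertices and $p$ edges and is connected, so there are $p-p'+2$ faces, one of which is $\mathcal{U}$, matching the gap count $1+p-p'$; likewise your observation that each open edge has $\mathcal{U}$ on one side and a single component of $\Int{\mathrm{Dom}(F)}$ on the other (hence components can only meet at vertex images) can be justified using conformality of $\mathfrak{X}_\Gamma$ on $\D$ and compactness.

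The genuine gap is in the last step, which is exactly where the content of the lemma lies. The Euler count only equates cardinalities; the statement (and its later use, where $\Omega_\alpha$ is labeled by the gap $\mathcal{G}_\alpha$ and, e.g., $d_\alpha=nq_\alpha$ is computed from $\partial\mathcal{G}_\alpha$) requires the \emph{natural} correspondence together with the touching criterion, and these you assert rather than prove. Concretely: (i) the claim that the $2|E|$ half-edges at a pinch point $p_E$ cut a small disk into exactly $|E|$ complementary sectors needs an argument (at $p_E$ the edges are merely continuous arcs; one can get the sector picture from tameness of embedded arcs in $\mathbb{S}^2$ plus your edge-side alternation, but this is not said); (ii) more seriously, the assertion that these sectors correspond to the $|E|$ gaps bordering the hull of $E$ — i.e., that the vertex identifications are realized planarly, compatibly with the circular order — is exactly the ``Moreover'' clause, and invoking ``the orientation-preserving nature of $\mathfrak{h}_\Gamma$'' does not establish it (note, incidentally, that the relevant boundary correspondence $\theta\mapsto\mathfrak{h}_\Gamma(-\theta)$ coming from Relation~\eqref{conf_mat_equiv_rel} is orientation-reversing); and (iii) the proposed inductive repair cannot be run as described, because there is no intermediate object ``$\mathcal{V}$ with only some of its pinches'' inside the dynamical sphere — making the induction meaningful would force you to work with abstract partially-pinched models and then prove that $(\widehat{\C},\mathcal{V})$ is homeomorphic to the fully pinched model respecting the boundary labeling, which is essentially the statement being proved. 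A cleaner completion of your own scheme: define the map from gaps to components edge-wise (each arc of $\mathbb{S}^1\setminus\mathcal{A}_p$ on $\partial\mathcal{G}$ determines the component adjacent to the corresponding edge of $\partial\mathcal{V}$), prove well-definedness by a local analysis at each vertex of $\partial\mathcal{G}$, and then use your count to upgrade the resulting surjection to a bijection; but the well-definedness at an endpoint of a boundary leaf is again item (ii), so an honest local argument there — for instance reading the cyclic order of accesses to $p_E$ off the mating quotient $\mathcal{K}(P)\sqcup\overline{\D}/\sim_{\mathrm{m}}$, or off the prime ends of the face $\mathcal{U}$ — cannot be avoided.
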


\begin{remark}
The topology of $\mathrm{Dom}(F)$ depends on the equivalence relation $\mathcal{L}_P$ and the integer $p$, but not on the integer $n$.
\end{remark}

\begin{definition}\label{gap_cusp_def}
For a gap $\mathcal{G}$ of the lamination $\mathcal{L}_P$, the set of points of $\mathcal{A}_p$ that lie on $\overline{\mathcal{G}}$ but are not endpoints of any leaf of $\mathcal{L}$ is denoted by $\mathrm{cusps}(\mathcal{G})$.
\end{definition}

Recall from \cite[\S 18]{Mil06} that the periods of the external dynamical rays landing at a periodic point of $\mathcal{J}(P)$ are equal. Thus, if $0$ or $1/2$ belongs to a non-trivial equivalence class of $\mathcal{L}_P$, then this class must be $\{0,1/2\}$. On the other hand, if $\frac{i}{p}$ and $\frac{j}{p}$ (where $i,j\in\{1,\cdots,\lfloor \frac{p-1}{2}\rfloor\}$) lie in the same equivalence class, then $m_d(\frac{i}{p})=-\frac{i}{p}$ and $m_d(\frac{j}{p})=-\frac{j}{p}$ must do so as well. It follows that the gaps of the lamination $\mathcal{L}_P$ either intersect the real line, or come in complex conjugate pairs (see Figure~\ref{pinched_dom_fig}). We enumerate the gaps of $\mathcal{L}_P$ as $\mathcal{G}_1,\cdots,\mathcal{G}_l,\mathcal{G}_{1,\pm},\cdots,\mathcal{G}_{m,\pm}$, and label the corresponding components of $\Int{\mathrm{Dom}(F)}$ as $\Omega_1,\cdots,\Omega_l,\Omega_{1,\pm},\cdots,\Omega_{m,\pm}$, where $\mathcal{G}_i$ are the real-symmetric gaps and $\mathcal{G}_{j,\pm}$ are the complex-conjugate gaps of $\mathcal{L}_P$.
Thus, 
$$
\displaystyle\mathrm{Dom}(F)=\left(\bigcup_{i=1}^l\overline{\Omega_i}\right)\bigcup\left(\bigcup_{j=1}^m \overline{\Omega_{j,+}}\cup \overline{\Omega_{j,-}}\right).
$$

\begin{remark}\label{examples_lamination_rem}
For the conformal mating considered in Section~\ref{quad_example_subsubsec}, the lamination $\mathcal{L}_P$ consists of a unique leaf connecting $1/3$ and $2/3$. Similarly, for the conformal mating considered in Section~\ref{cubic_example_subsubsec}, the lamination $\mathcal{L}_P$ consists of a unique leaf connecting $0$ and $1/2$. In both cases, $\mathcal{L}_P$ has two gaps. This is in consonance with the fact that the domains of definition of the associated conformal matings have two interior components.
\end{remark}

\subsection{Explicit description of real-symmetric matings via rational uniformizations}\label{real_sym_mating_subsec}

In this subsection, we will give a concrete description of the mating between $A_{\pmb{\Gamma_{n,p}}}^{\mathrm{fBS}}$ and $\pmb{P}$, where $\pmb{P}$ is a real-symmetric hyperbolic polynomial of degree $d=np-1$ with a connected Julia set (assuming that the mating exists).
The characterization of such matings will be based on the following lemma. We recall the notation $\iota(z)=\overline{z}$, $\eta^{-}(z)=1/\overline{z}$, and $\D^*= \widehat{\C}\setminus\overline{\D}$.

\begin{lemma}\label{qd_lem}
Let $\Omega\subset\widehat{\C}$ be a simply connected domain with locally connected boundary. Suppose further that $f:\overline{\Omega}\to\widehat{\C}$ is a continuous function such that
\begin{enumerate}
\item $f$ is meromorphic on $\Omega$, and
\item $f\equiv \iota$ on $\partial\Omega$.
\end{enumerate}
Then there exists a rational map $R:\widehat{\C}\to\widehat{\C}$ that carries $\D$ univalently onto $\Omega$ and $\left(\iota\circ f\right)\vert_{\Omega}\equiv R\circ\eta^-\circ\left(R\vert_{\D}\right)^{-1}$.
\end{lemma}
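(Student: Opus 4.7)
The plan is to view the hypothesis as the classical characterization of a simply connected quadrature domain, with $f$ playing the role of the Schwarz function of $\Omega$, and then to obtain the rational map $R$ by performing Schwarz reflection of a Riemann map across $\partial\D$. Since $\partial\Omega$ is locally connected, Carath\'eodory's theorem guarantees that any Riemann map $R\colon \D \to \Omega$ extends to a continuous surjection $R\colon \overline{\D}\to\overline{\Omega}$ with $R(\partial\D)=\partial\Omega$; I would fix such an $R$.

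Next I would define a candidate meromorphic extension to the exterior disk by setting
$$
\hat R(z) \;:=\; \iota\bigl(f(R(\eta^-(z)))\bigr) \;=\; \overline{f\bigl(R(1/\bar z)\bigr)}, \qquad z \in \overline{\D^*}.
$$
The two anti-holomorphic factors $\eta^-$ and $\iota$ flank the meromorphic composition $f \circ R$, so $\hat R$ is meromorphic on $\D^*$ and continuous on $\overline{\D^*}$ (as a map to $\widehat{\C}$). For $z \in \partial\D$ one has $\eta^-(z)=z$ and $R(z) \in \partial\Omega$, so the boundary hypothesis $f=\iota$ on $\partial\Omega$ gives $f(R(z))=\iota(R(z))$ and therefore $\hat R(z)=\iota(\iota(R(z)))=R(z)$. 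Thus $R$ on $\overline{\D}$ and $\hat R$ on $\overline{\D^*}$ paste continuously across $\partial\D$ into a map $\widehat{\C}\to\widehat{\C}$ that is continuous and meromorphic off the real-analytic arc $\partial\D$; by Painlev\'e/Morera-type removability of $\partial\D$ for continuous meromorphic functions, the glued map is meromorphic on all of $\widehat{\C}$, hence rational. Denoting this rational map again by $R$, the restriction $R|_{\D}$ is univalent with image $\Omega$ by construction.

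Finally, the desired identity drops out by rearranging the definition of $\hat R$. For $z \in \D^*$, the equality $R(z)=\hat R(z)$ rewrites as $\iota(R(z))=f(R(\eta^-(z)))$. Given $u \in \Omega$, set $w=(R|_{\D})^{-1}(u) \in \D$ and $z=\eta^-(w) \in \D^*$; then $\eta^-(z)=w$ and $R(\eta^-(z))=R(w)=u$, so the identity becomes
$$
(\iota \circ f)(u) \;=\; R(\eta^-(w)) \;=\; R \circ \eta^- \circ (R|_{\D})^{-1}(u),
$$
which is the claim on $\Omega$. I expect the principal subtlety to be the global gluing step across $\partial\D$: one must verify that the meromorphic germs from inside and outside the unit disk genuinely combine into a single rational function, which I would handle via removability of a smooth Jordan curve for functions that are already meromorphic on each side and continuous across it; everything else is a direct unwinding of the Schwarz reflection construction.
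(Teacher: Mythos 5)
Your proof is correct and follows essentially the same route as the paper: extend the Riemann map to $\D^*$ by $\iota\circ f\circ R\circ\eta^-$, use the boundary condition $f\equiv\iota$ on $\partial\Omega$ to paste continuously across $\mathbb{S}^1$, invoke removability of the analytic circle to get a global rational map, and then unwind the definition to obtain $\left(\iota\circ f\right)\vert_{\Omega}\equiv R\circ\eta^-\circ\left(R\vert_{\D}\right)^{-1}$. No gaps to report.
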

\begin{proof}
As $\Omega$ is simply connected with locally connected boundary, there exists a conformal isomorphism $\phi:\D\to\Omega$ that extends to a continuous surjection $\phi:\mathbb{S}^1\to\partial\Omega$. We will show that $\phi$ extends to a meromorphic map of the Riemann sphere, and hence is a rational map. To this end, we define
$$
R:\widehat{\C}\to\widehat{\C},\quad R\equiv
\begin{cases}
\phi,\quad \textrm{on}\ \overline{\D},\\
\left(\iota \circ f\right) \circ \phi \circ \eta^-,\quad \textrm{on}\ \D^*.
\end{cases}
$$
By our assumption, $\iota\circ f\equiv \mathrm{id}$ on $\partial\Omega$. This fact, combined with continuity of $\phi$ and $f$, implies that $R$ is continuous on $\widehat{\C}$. Moreover, $R$ is meromorphic away from $\mathbb{S}^1$. It follows from the conformal removability of analytic arcs that $R$ is a global meromorphic function. Therefore, $R$ is a rational map that takes $\D$ injectively onto $\Omega$. Finally, by construction of $R$, we have that $(\iota\circ f)\circ R\circ \eta^-\equiv R$ on $\D^*$, and hence, $\left(\iota\circ f\right)\equiv R\circ\eta^-\circ\left(R\vert_{\D}\right)^{-1}$ on $\Omega=R(\D)$.
\end{proof}

\begin{remark}
Domains $\Omega$ satisfying the conditions of Lemma~\ref{qd_lem} are examples of so-called \emph{quadrature domains}, and the associated maps $f$ are called \emph{Schwarz functions}. The characterization of such domains given in Lemma~\ref{qd_lem} is a special case of \cite[Theorem~1]{AS76}, where a similar result is proven without the local connectedness assumption. However, we will not need this more general statement in this paper.
\end{remark}

With the above preparatory lemma at our disposal, we now proceed to prove the main result of this subsection.
Let us denote the conformal mating of $\pmb{P}:\mathcal{K}(\pmb{P})\to\mathcal{K}(\pmb{P})$ and $A_{\pmb{\Gamma_{n,p}}}^{\mathrm{fBS}}:\mathcal{D}_{\pmb{\Gamma_{n,p}}}\to~\overline{\D}$ by $\pmb{F}$. Following the convention of Section~\ref{lami_model_subsec}, we will label the components of $\Int{\mathrm{Dom}(\pmb{F})}$ as 
$$
\{\pmb{\Omega}_\alpha:\alpha\in\mathcal{I}\},
$$
where
$$
\mathcal{I}:=\mathcal{I}_1\sqcup\mathcal{I}_2,\quad  \textrm{with}\quad \mathcal{I}_1:=\{1,\cdots, l\},\quad \mathcal{I}_2:=\{(1,+),(1,-),\cdots,(m,+),(m,-)\}. 
$$
We  also define an involution
$$
\kappa:\mathcal{I}\to\mathcal{I},\quad 
\begin{cases}
i\mapsto i,\ \mathrm{for}\ i\in\mathcal{I}_1,\\
(j,\pm)\mapsto (j,\mp),\ \mathrm{for}\ (j,\pm)\in\mathcal{I}_2.
\end{cases} 
$$

\begin{lemma}\label{real_sym_schwarz_lem}
There exist rational maps $\pmb{R}_\alpha$, $\alpha\in\mathcal{I}$, of $\widehat{\C}$ such that for each $\alpha$
\begin{enumerate}
\item $\iota(\pmb{\Omega}_\alpha)=\pmb{\Omega}_{\kappa(\alpha)}$,

\item $\pmb{R}_\alpha$ maps $\overline{\D}$ injectively onto $\overline{\pmb{\Omega}_\alpha}$,

\item  $\iota\circ\pmb{R}_\alpha=\pmb{R}_{\kappa(\alpha)}\circ\iota$, and

\item $\pmb{F}\vert_{\overline{\pmb{\Omega}_\alpha}}\equiv \pmb{R}_{\kappa(\alpha)}\circ\eta\circ(\pmb{R}_\alpha\vert_{\overline{\D}})^{-1}$.
\end{enumerate}
\end{lemma}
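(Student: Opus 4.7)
The plan is to split the proof into two stages. First I establish the real symmetry of $\pmb{F}$, yielding part (1); then, after verifying a boundary identity for $\pmb{F}$, parts (2)--(4) drop out of a direct application of Lemma~\ref{qd_lem}.

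For (1), I appeal to the uniqueness statement in Theorem~\ref{conf_mat_thm}. The polynomial $\pmb{P}$ is real-symmetric by assumption; meanwhile, by inspection of the construction in Section~\ref{factor_bs_base_subsec}, the fundamental polygon $\pmb{\Pi}$ (whose bounding geodesics $C_{r,s}$ come in $\iota$-conjugate pairs), the side-pairings $g_{r,s}$, and the factoring maps $\mathfrak{q}$ and $\xi(z)=z^n$ all commute with $\iota$ in the appropriate sense, so $A_{\pmb{\Gamma_{n,p}}}^{\mathrm{fBS}}\circ\iota=\iota\circ A_{\pmb{\Gamma_{n,p}}}^{\mathrm{fBS}}$. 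Hence $\iota\circ\pmb{F}\circ\iota$ is again a conformal mating of the same data, and by uniqueness it is M\"obius-conjugate to $\pmb{F}$. Absorbing this M\"obius factor into the normalization of $\pmb{F}$ forces $\iota\circ\pmb{F}\circ\iota=\pmb{F}$; thus $\iota$ permutes the components of $\Int\mathrm{Dom}(\pmb{F})$, and the labeling convention gives $\iota(\pmb{\Omega}_\alpha)=\pmb{\Omega}_{\kappa(\alpha)}$.

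The crucial input for (2)--(4) is the boundary identity
\[
\pmb{F}|_{\partial\pmb{\Omega}_\alpha}=\iota|_{\partial\pmb{\Omega}_\alpha}, \qquad \alpha\in\mathcal{I}.
\]
To verify this, I exploit that $g_{r,s}$ is the composition of the reflection in $C_{r,s}$ (which fixes $C_{r,s}$ pointwise) with the reflection $r_\ell(z)=\omega\bar z$ in the diameter $\ell$. The identity $\mathfrak{q}(\omega\bar w)=\mathfrak{q}(\bar w)$ combined with $\xi\circ\tilde\iota=\iota\circ\xi$ (where $\tilde\iota$ is the involution on $\mathcal{Q}$ descended from $\iota$) shows that after the quotient and uniformization, $r_\ell$ pushes forward to $\iota$ on the target $\overline{\D}$. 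Consequently, each edge of $\partial\pmb{\Omega}_\alpha$ being the $\mathfrak{X}_\Gamma$-image of (the descended form of) some $C_{r,s}$, is acted on by $\pmb{F}=\mathfrak{X}_\Gamma\circ A_{\pmb{\Gamma_{n,p}}}^{\mathrm{fBS}}\circ\mathfrak{X}_\Gamma^{-1}$ precisely as $\mathfrak{X}_\Gamma\circ\iota\circ\mathfrak{X}_\Gamma^{-1}=\iota$, using the real-symmetry of $\mathfrak{X}_\Gamma$ secured in the previous step. Continuity then extends the identity across the finitely many pinch points coming from $\mathcal{L}_P$. With this in hand, each Jordan domain $\pmb{\Omega}_\alpha$ (Proposition~\ref{mat_dom_prop}) equips $f_\alpha:=\pmb{F}|_{\overline{\pmb{\Omega}_\alpha}}$ with the Schwarz-type structure required by Lemma~\ref{qd_lem}, producing a rational map $\pmb{R}_\alpha$ that carries $\overline{\D}$ bijectively onto $\overline{\pmb{\Omega}_\alpha}$ (part (2)) with $\iota\circ f_\alpha=\pmb{R}_\alpha\circ\eta^-\circ\pmb{R}_\alpha^{-1}$. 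For $\alpha\in\mathcal{I}_1$ I take $\pmb{R}_\alpha$ real-symmetric, available by the uniqueness of the uniformization together with the $\iota$-invariance of the data; for a pair in $\mathcal{I}_2$ I set $\pmb{R}_{\kappa(\alpha)}:=\iota\circ\pmb{R}_\alpha\circ\iota$, which is automatically rational and uniformizes $\pmb{\Omega}_{\kappa(\alpha)}$. Either way (3) is built in, and the identity $\iota\circ\eta^-=\eta$ converts the output of Lemma~\ref{qd_lem} into
\[
\pmb{F}|_{\pmb{\Omega}_\alpha}=\iota\circ\pmb{R}_\alpha\circ\eta^-\circ\pmb{R}_\alpha^{-1}=(\iota\circ\pmb{R}_\alpha\circ\iota)\circ\eta\circ\pmb{R}_\alpha^{-1}=\pmb{R}_{\kappa(\alpha)}\circ\eta\circ\pmb{R}_\alpha^{-1},
\]
proving (4).

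The principal obstacle I anticipate is the clean verification of $\pmb{F}=\iota$ on $\partial\pmb{\Omega}_\alpha$, particularly at the pinch points inherited from $\mathcal{L}_P$ and at the critical points of the factor Bowen-Series map that emerge once $n\geq 2$; once this identity is pinned down, the remaining steps are formal consequences of Lemma~\ref{qd_lem} and the real symmetry secured in Stage~1.
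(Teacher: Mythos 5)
Your stage 2 is essentially the paper's own argument (boundary identity $\pmb{F}=\iota$ on $\partial\mathrm{Dom}(\pmb{F})$, then Lemma~\ref{qd_lem}, then a symmetric choice of the Riemann maps and the identity $\iota\circ\eta^-=\eta$), but stage 1 has a genuine gap. The sentence ``$\iota\circ\pmb{F}\circ\iota$ is again a conformal mating of the same data, and by uniqueness it is M\"obius-conjugate to $\pmb{F}$; absorbing this M\"obius factor into the normalization forces $\iota\circ\pmb{F}\circ\iota=\pmb{F}$'' hides two nontrivial steps. First, to see that $\iota\circ\pmb{F}\circ\iota$ is a conformal mating with respect to the \emph{same} relation $\sim_{\mathrm{m}}$ of Definition~\ref{conf_mat_def}, you need $\sim_{\mathrm{m}}$ to be $\iota\times\iota$-invariant, and this requires the real-symmetry of the normalized circle conjugacy $\mathfrak{h}$ between $z^d$ and $A_{\pmb{\Gamma_{n,p}}}^{\mathrm{fBS}}$ -- not merely the (true) fact that $A_{\pmb{\Gamma_{n,p}}}^{\mathrm{fBS}}$ commutes with $\iota$. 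This is exactly the point the paper proves by a rigidity argument: $(\iota\circ\mathfrak{h}\circ\iota)^{-1}\circ\mathfrak{h}$ commutes with $z^d$, fixes $1$, hence is the identity on the dense set of iterated $z^d$-preimages of $1$, so $\mathfrak{h}$ is symmetric. Second, even granting this, ``M\"obius-conjugate to its own complex conjugate'' does not formally yield a normalization in which the symmetry is exactly $\iota$: writing $\iota\circ\pmb{F}\circ\iota=M\circ\pmb{F}\circ M^{-1}$, you must show that the antiholomorphic map $\tau=M^{-1}\circ\iota$ commuting with $\pmb{F}$ is an involution (not guaranteed unless you control the M\"obius centralizer of $\pmb{F}$, or track the mating semi-conjugacies through the uniqueness proof) and that it has fixed points (a fixed-point-free anti-M\"obius involution cannot be conjugated to $\iota$). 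Both points can be repaired -- e.g.\ by noting that $M$ is the pasting of $\iota\circ\mathfrak{X}_P\circ\iota\circ\mathfrak{X}_P^{-1}$ and $\iota\circ\mathfrak{X}_\Gamma\circ\iota\circ\mathfrak{X}_\Gamma^{-1}$, whence $\tau$ is an involution fixing $\mathfrak{X}_P(\mathcal{K}(P)\cap\R)\neq\emptyset$ -- but none of this is in your write-up, and it is precisely the content your appeal to ``uniqueness plus normalization'' is meant to replace.

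By contrast, the paper sidesteps this entirely by building the symmetry into the construction of $\pmb{F}$: symmetry of $\psi_{\pmb{P}}$ and of $\mathfrak{h}$ (the density argument above), hence of the David extension and of the David coefficient $\mu$, and then the uniqueness part of the David Integrability Theorem gives a real-symmetric integrating map $\mathfrak{H}$, so $\pmb{F}$ \emph{and its mating semi-conjugacies} are symmetric on the nose. Note that you also use the symmetry of $\mathfrak{X}_\Gamma$ (in the step $\mathfrak{X}_\Gamma\circ\iota\circ\mathfrak{X}_\Gamma^{-1}=\iota$), which symmetry of $\pmb{F}$ alone does not immediately provide; if you repair stage 1 via the semi-conjugacies as indicated, this comes along for free, and the remainder of your argument then goes through as in the paper.
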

\begin{proof}
We first claim that $\mathrm{Dom}(F)$ is real-symmetric and $F\circ \iota=\iota\circ F$.
\begin{proof}[Proof of claim]
The real-symmetry property of $\pmb{P}$ implies that $\iota\circ\psi_{\pmb{P}}\circ\iota$ conjugates $z^{d}$ to $\pmb{P}$ and is tangent to the identity map near infinity. By uniqueness of B{\"o}ttcher coordinates, we have that $\iota\circ\psi_{\pmb{P}}\circ\iota=\psi_{\pmb{P}}$; i.e., $\psi_{\pmb{P}}$ commutes with $\iota$. 

The orientation-preserving topological conjugacy $\mathfrak{h}_0:\mathbb{S}^1\to\mathbb{S}^1$ between $z^{d}$ and $A_{\pmb{\Gamma_{n,p}}}^{\mathrm{fBS}}$ sends the fixed point $1$ of $z^{d}$ to the fixed point $1$ of $A_{\pmb{\Gamma_{n,p}}}^{\mathrm{fBS}}$. Since both $z^d$ and $A_{\pmb{\Gamma_{n,p}}}^{\mathrm{fBS}}$ are real-symmetric maps (i.e., they commute with $\iota$), it follows that $\iota\circ\mathfrak{h}_0\circ\iota:\mathbb{S}^1\to\mathbb{S}^1$ is also an orientation-preserving topological conjugacy between $z^d$ and $A_{\pmb{\Gamma_{n,p}}}^{\mathrm{fBS}}$ sending $1$ to $1$. Thus, $\mathfrak{g}:=(\iota\circ\mathfrak{h}_0\circ\iota)^{-1}\circ \mathfrak{h}_0:\mathbb{S}^1\to\mathbb{S}^1$ commutes with $z^{d}$, and carries the fixed point $1$ of $z^{d}$ to itself. Due to this commutation property, the orientation-preserving circle homeomorphism $\mathfrak{g}$ acts as the identity on the $m-$th pre-images of $1$ under $z^{d}$, for $m\geq 1$. Since the iterated pre-images of $1$ under $z^{d}$ are dense in $\mathbb{S}^1$, it follows that $\mathfrak{g}$ is the identity map on $\mathbb{S}^1$. Therefore, $\mathfrak{h}_0$ is real-symmetric. By \cite[Remark~2.4]{LMMN}, the David extension of $\mathfrak{h}_0$ to $\D$ is real-symmetric. Moreover, as $\psi_{\pmb{P}}$ is real-symmetric, it follows that the David coefficient $\mu$ appearing in the construction of Theorem~\ref{conf_mat_thm} is also real-symmetric. The uniqueness part of David Integrability Theorem (see \cite{David}, \cite[Theorem~20.6.2, p.~578]{AIM09}) then implies that the David homeomorphism $\mathfrak{H}$ solving the Beltrami equation with coefficient $\mu$ is real-symmetric, from which real-symmetry of $\pmb{F}$ follows.
\end{proof}

Due to real-symmetry of $\pmb{F}$, the escaping and non-escaping sets of $\pmb{F}$ and the associated mating semi-conjugacies are real-symmetric. It follows that if $\pmb{\Omega}_\alpha$ is a component of $\Int{\mathrm{Dom}(\pmb{F})}$, then $\iota(\pmb{\Omega}_\alpha)=\pmb{\Omega}_{\kappa(\alpha)}$.

Note that by the description of the M{\"o}bius maps $g_{1,s}$ given in Subsection~\ref{factor_bs_base_subsec}, the factor Bowen-Series map $A_{\pmb{\Gamma_{n,p}}}^{\mathrm{fBS}}$ acts as the complex conjugation map $\iota$ on the boundary of $\overline{\D}\setminus\mathcal{D}_{\pmb{\Gamma_{n,p}}}$. By the real-symmetry property of the mating semi-conjugacies, it follows that $F$ also acts as $\iota$ on the boundary of $\mathrm{Dom}(\pmb{F})$. \emph{This is one of the key facts that 
leads to the uniformization maps $\pmb{R}_\alpha$ in Equation~\ref{schwarz_real_eqn} below.}

Also observe that since $\partial\mathrm{Dom}(F)$ is the image of the boundary of $\overline{\D}\setminus \mathcal{D}_{\Gamma}$ under the mating semi-conjugacy $\mathfrak{X}_\Gamma$ (see Definition~\ref{conf_mat_def}), it is locally connected. Thus, the restriction of $\pmb{F}$ to the closure of each component of $\Int{\mathrm{Dom}(\pmb{F})}$ satisfies the hypothesis of Lemma~\ref{qd_lem}. Therefore, for each $\alpha\in\mathcal{I}$, there exists a rational map $\pmb{R}_\alpha$ of $\widehat{\C}$ such that $\pmb{R}_\alpha:\overline{\D}\to\overline{\pmb{\Omega}_\alpha}$ is a conformal isomorphism. 
Moreover, by Lemma~\ref{qd_lem}, we have 
\begin{equation}
\left(\iota\circ \pmb{F}\right)\vert_{\pmb{\Omega}_\alpha}\equiv \pmb{R}_\alpha\circ\eta^-\circ\left(\pmb{R}_\alpha\vert_{\D}\right)^{-1}.
\label{schwarz_real_eqn}
\end{equation}

Since $\iota(\pmb{\Omega}_\alpha)=\pmb{\Omega}_{\kappa(\alpha)}$, the uniqueness of Riemann maps implies that the uniformizing rational maps $\pmb{R}_\alpha$ can be chosen so that $\iota\circ\pmb{R}_\alpha=\pmb{R}_{\kappa(\alpha)}\circ\iota$.
\[
\begin{tikzcd}
\D \arrow{d}[swap]{\eta} \arrow{r}{\pmb{R}_\alpha} & \pmb{\Omega}_\alpha \arrow{d}{\pmb{F}} \\
\widehat{\C} \arrow{r}{\pmb{R}_{\kappa(\alpha)}}  & \widehat{\mathbb{C}}
\end{tikzcd}
\]
These relations, combined with those in \eqref{schwarz_real_eqn}, imply that $
\pmb{F}\vert_{\overline{\pmb{\Omega}_\alpha}}\equiv \pmb{R}_{\kappa(\alpha)}\circ\eta\circ(\pmb{R}_{\alpha}\vert_{\overline{\D}})^{-1}.$
\end{proof}

As $\pmb{P}:\mathcal{K}(\pmb{P})\to\mathcal{K}(\pmb{P})$ and $A_{\pmb{\Gamma_{n,p}}}^{\mathrm{fBS}}: \left(A_{\pmb{\Gamma_{n,p}}}^{\mathrm{fBS}}\right)^{-1}(\mathcal{D}_{\pmb{\Gamma_{n,p}}})\to \mathcal{D}_{\pmb{\Gamma_{n,p}}}$ are degree $d$ maps (see Proposition~\ref{factor_bs_prop}), it follows that
$\pmb{F}:\pmb{F}^{-1}(\Int{\mathrm{Dom}(\pmb{F})})\longrightarrow \Int{\mathrm{Dom}(\pmb{F})}$ is a branched covering of degree $d$. On the other hand, as $A_{\pmb{\Gamma_{n,p}}}^{\mathrm{fBS}}: \left(A_{\pmb{\Gamma_{n,p}}}^{\mathrm{fBS}}\right)^{-1}(\overline{\D}\setminus\mathcal{D}_{\pmb{\Gamma_{n,p}}})\to \overline{\D}\setminus\mathcal{D}_{\pmb{\Gamma_{n,p}}}$ has degree $d+1$, it follows that $\pmb{F}:\pmb{F}^{-1}(\widehat{\C}\setminus\mathrm{Dom}(\pmb{F}))\longrightarrow \widehat{\C}\setminus\mathrm{Dom}(\pmb{F})$ is a branched covering of degree $d+1$.

Suppose that the global degree of the rational map $\pmb{R}_\alpha$, $\alpha\in\mathcal{I}$, is $d_\alpha$. The next result relates these degrees to the degree $d$ of the polynomial $\pmb{P}$.
\begin{lemma}\label{deg_formula_lem}
We have the following relation:
\begin{equation}
\sum_{\alpha\in\mathcal{I}} d_\alpha=\sum_{i=1}^l d_i + 2\cdot \sum_{j=1}^m d_{j,+} = d+1.
\label{degree_formula}
\end{equation}
\end{lemma}
\begin{proof}
By Lemma~\ref{real_sym_schwarz_lem}, we have $d_{\alpha}=d_{\kappa(\alpha)}$. The first equality now follows from the definition of $d_\alpha$.

The relation $\pmb{F}\vert_{\overline{\pmb{\Omega}_\alpha}}\equiv \pmb{R}_{\kappa(\alpha)}\circ\eta\circ(\pmb{R}_\alpha\vert_{\overline{\D}})^{-1}$ implies that $\pmb{F}:\pmb{F}^{-1}(\pmb{\Omega}_{\kappa(\alpha)})\cap\pmb{\Omega}_\alpha\longrightarrow \pmb{\Omega}_{\kappa(\alpha)}$ is a branched covering of degree $d_\alpha-1$, and
$\pmb{F}:\pmb{F}^{-1}(\Int{\pmb{\Omega}_{\kappa(\alpha)}^\complement})\cap\pmb{\Omega}_\alpha\longrightarrow \Int{\pmb{\Omega}_{\kappa(\alpha)}^\complement}$ is a branched covering of degree $d_\alpha$. Consequently, points in $\widehat{\C}\setminus\mathrm{Dom}(\pmb{F})=\Int{\left(\bigcap_{\alpha\in\mathcal{I}} \pmb{\Omega}_\alpha^\complement\right)}$ have $\sum_{\alpha\in\mathcal{I}} d_\alpha$ many preimages under $\pmb{F}$ (counted with multiplicity); i.e., $\pmb{F}:\pmb{F}^{-1}(\widehat{\C}\setminus\mathrm{Dom}(\pmb{F}))\longrightarrow \widehat{\C}\setminus\mathrm{Dom}(\pmb{F})$ has degree $\sum_{\alpha\in\mathcal{I}} d_\alpha$. In light of the discussion preceding this proposition, we conclude that $\sum_{\alpha\in\mathcal{I}} d_\alpha=d+1.$
\end{proof}

\subsection{Quasiconformal conjugations of real-symmetric matings}\label{qc_real_sym_mating_subsec}

We now look at real-symmetric hyperbolic components in connectedness loci of polynomials. Any polynomial $P$ in such a hyperbolic component is quasiconformally conjugate to a real-symmetric hyperbolic polynomial $\pmb{P}$ with a connected Julia set (cf. \cite{MS98}).

\begin{proposition}\label{mating_class_prop}
Let $(\rho:\pmb{\Gamma_{n,p}}\longrightarrow\Gamma)\in\mathrm{Teich}^\omega(\pmb{\Gamma_{n,p}})$ and $P$ be a polynomial lying in a real-symmetric hyperbolic component in the connectedness locus of degree $d$ polynomials. Further let $F$ be the conformal mating of $P$ and $A_{\Gamma}^{\mathrm{fBS}}$, and $\Omega_\alpha$, $\alpha\in\mathcal{I}$, be the components of $\Int{\mathrm{Dom}(F)}$ (where the labeling follows the convention of Section~\ref{lami_model_subsec}).

Then, for all $\alpha\in\mathcal{I}$, there exist Jordan domains $\mathfrak{D}_\alpha$ and rational maps $R_\alpha$ of degree $d_\alpha$ (with $d_\alpha=d_{\kappa(\alpha)}$) of $\widehat{\C}$ such that 
\begin{enumerate}
\item $\eta(\partial\mathfrak{D}_\alpha)=\partial\mathfrak{D}_{\kappa(\alpha)}$, 

\item $R_\alpha$ maps $\overline{\mathfrak{D}_\alpha}$ injectively onto $\overline{\Omega_\alpha}$,

\item $F\vert_{\overline{\Omega_\alpha}}\equiv R_{\kappa(\alpha)}\circ\eta\circ(R_\alpha\vert_{\overline{\mathfrak{D}_\alpha}})^{-1}$, and

\item $\sum_{\alpha\in\mathcal{I}} d_\alpha = d+1$. 
\end{enumerate}
\end{proposition}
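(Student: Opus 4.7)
The plan is to reduce the proposition to its real-symmetric counterpart Lemma~\ref{real_sym_schwarz_lem} via quasiconformal surgery. Since the hyperbolic component containing $P$ is real-symmetric, by \cite{MS98} there exists a real-symmetric hyperbolic polynomial $\pmb{P}$ of degree $d$ in the same component, along with a quasiconformal conjugacy $\Phi:\widehat{\C}\to\widehat{\C}$ between $P$ and $\pmb{P}$. By a standard modification on the Fatou set if necessary, I may take $\Phi$ to be conformal on $\mathcal{B}_\infty(P)$, coinciding there with $\psi_{\pmb{P}}\circ\psi_P^{-1}$. I let $\pmb{F}$ denote the conformal mating of $\pmb{P}$ and $A_\Gamma^{\mathrm{fBS}}$ supplied by Theorem~\ref{conf_mat_thm}, and invoke Lemma~\ref{real_sym_schwarz_lem} to obtain rational maps $\pmb{R}_\alpha$ of degrees $d_\alpha$ with $\sum_\alpha d_\alpha=d+1$ satisfying $\iota\circ\pmb{R}_\alpha=\pmb{R}_{\kappa(\alpha)}\circ\iota$ and $\pmb{F}|_{\overline{\pmb{\Omega}_\alpha}}=\pmb{R}_{\kappa(\alpha)}\circ\eta\circ(\pmb{R}_\alpha|_{\overline{\D}})^{-1}$.

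Next, threading $\Phi$ through the David-integrability construction of Theorem~\ref{conf_mat_thm} applied to both $P$ and $\pmb{P}$ yields a quasiconformal homeomorphism $\Psi:\widehat{\C}\to\widehat{\C}$ with $F\circ\Psi=\Psi\circ\pmb{F}$ and $\Psi(\overline{\pmb{\Omega}_\alpha})=\overline{\Omega_\alpha}$ for every $\alpha$. The conformality of $\Phi$ on the basin of infinity together with the fact that the David straightenings $\mathfrak{X}_P,\mathfrak{X}_{\pmb{P}}$ are conformal on the interior Fatou components forces $\Psi$ to be conformal off the image $\mathfrak{X}_P(\Int\mathcal{K}(P))$, so its Beltrami coefficient $\mu_\Psi$ is supported there and satisfies $\|\mu_\Psi\|_\infty<1$. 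For each $\alpha$, I apply the measurable Riemann mapping theorem to the Beltrami coefficient $\pmb{R}_\alpha^*\mu_\Psi$ (the pullback under the rational map $\pmb{R}_\alpha$) to produce a qc homeomorphism $\varphi_\alpha:\widehat{\C}\to\widehat{\C}$ with that same coefficient. Setting $R_\alpha:=\Psi\circ\pmb{R}_\alpha\circ\varphi_\alpha^{-1}$ then produces a globally holomorphic self-map of the sphere, hence a rational map of degree $d_\alpha$, while $\mathfrak{D}_\alpha:=\varphi_\alpha(\D)$ is a Jordan domain on which $R_\alpha$ restricts to a homeomorphism onto $\overline{\Omega_\alpha}$. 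This immediately yields condition (2), and condition (4) is inherited from Lemma~\ref{real_sym_schwarz_lem} by degree preservation.

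The main obstacle is arranging the $\eta$-equivariance (1) together with the intertwining (3); these together are equivalent to the single boundary identity $\varphi_{\kappa(\alpha)}\circ\eta=\eta\circ\varphi_\alpha$ on $\overline{\D}$. Once this identity holds, (1) follows from $\eta(\mathbb{S}^1)=\mathbb{S}^1$, and (3) follows by the chain $F\circ R_\alpha=\Psi\circ\pmb{F}\circ\pmb{R}_\alpha\circ\varphi_\alpha^{-1}=\Psi\circ\pmb{R}_{\kappa(\alpha)}\circ\eta\circ\varphi_\alpha^{-1}=\Psi\circ\pmb{R}_{\kappa(\alpha)}\circ\varphi_{\kappa(\alpha)}^{-1}\circ\eta=R_{\kappa(\alpha)}\circ\eta$ on $\overline{\mathfrak{D}_\alpha}$, using the identity from Lemma~\ref{real_sym_schwarz_lem} in the second equality and the equivariance in the third. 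Because $F$ is not in general real-symmetric, $\Psi$ cannot be chosen $\iota$-equivariant, so the naive choice $\varphi_{\kappa(\alpha)}=\iota\circ\varphi_\alpha\circ\iota$ does not produce the required identity. The plan is to coordinate the MRMT normalizations within each pair $(\alpha,\kappa(\alpha))$, leveraging the $\iota$-symmetry $\pmb{R}_{\kappa(\alpha)}=\iota\circ\pmb{R}_\alpha\circ\iota$ from Lemma~\ref{real_sym_schwarz_lem}(3) and the restricted support of $\mu_\Psi$ on the $\mathcal{K}(P)$-side to force the boundary values of $\varphi_\alpha$ and $\varphi_{\kappa(\alpha)}$ to be $\eta$-conjugate; I expect this normalization step to be the most delicate part of the argument.
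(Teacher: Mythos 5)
Your overall architecture is the same as the paper's: quasiconformally conjugate to the real-symmetric model $\pmb{F}$, invoke Lemma~\ref{real_sym_schwarz_lem}, pull the Beltrami coefficient $\mu_\Psi=\Psi^*(\mu_0)$ back by the $\pmb{R}_\alpha$, straighten with the measurable Riemann mapping theorem, and set $R_\alpha=\Psi\circ\pmb{R}_\alpha\circ\varphi_\alpha^{-1}$, $\mathfrak{D}_\alpha=\varphi_\alpha(\D)$. Your derivation of (2) and (4), and your reduction of (1) and (3) to the single identity $\varphi_{\kappa(\alpha)}\circ\eta=\eta\circ\varphi_\alpha$, are correct. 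But that identity is exactly the content of items (1) and (3), and you leave it as a ``plan,'' so the proof is incomplete at its crux. Moreover, the lever you propose for it is off-target: neither the $\iota$-symmetry $\pmb{R}_{\kappa(\alpha)}=\iota\circ\pmb{R}_\alpha\circ\iota$ nor the restricted support of $\mu_\Psi$ does the work, precisely because (as you note) $\mu_\Psi$ need not be $\iota$-symmetric, so these facts give you no relation between $\mu_\alpha=\pmb{R}_\alpha^*\mu_\Psi$ and $\mu_{\kappa(\alpha)}$.

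The mechanism that actually closes the gap is the $\pmb{F}$-invariance of $\mu_\Psi$ (which you have, from $F\circ\Psi=\Psi\circ\pmb{F}$) combined with the functional equation $\pmb{F}\circ\pmb{R}_\alpha=\pmb{R}_{\kappa(\alpha)}\circ\eta$ on $\overline{\D}$ from Lemma~\ref{real_sym_schwarz_lem}: these yield $\eta^*\mu_{\kappa(\alpha)}=\mu_\alpha$ (a.e.\ on $\widehat{\C}$, using the relation on $\D$ and its reflected form on $\D^*$). Consequently $\varphi_{\kappa(\alpha)}\circ\eta$ and $\varphi_\alpha$ straighten the same coefficient, so $\varphi_{\kappa(\alpha)}\circ\eta\circ\varphi_\alpha^{-1}$ is a M{\"o}bius map, and it only remains to normalize it to be $\eta$. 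For $\alpha\in\mathcal{I}_1$ (where $\kappa(\alpha)=\alpha$) this map is a M{\"o}bius involution, which becomes $\eta$ after post-composing $\varphi_\alpha$ with a suitable M{\"o}bius map; for a conjugate pair $(j,+),(j,-)$, normalizing $\varphi_{j,\pm}$ to fix $0,1,\infty$ forces $\varphi_{j,\mp}\circ\eta\circ\varphi_{j,\pm}^{-1}$ to send $0\mapsto\infty$, $\infty\mapsto 0$, $1\mapsto 1$, hence to equal $\eta$. This is exactly how the paper argues; with this step supplied, your chain for (1) and (3) goes through verbatim.
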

\begin{proof}
Note that $P$ is quasiconformally conjugate to some real-symmetric hyperbolic polynomial $\pmb{P}$ of degree $d$, and $A_{\Gamma}^{\mathrm{fBS}}$ is quasiconformally conjugate to $A_{\pmb{\Gamma_{n,p}}}^{\mathrm{fBS}}$. It follows that the conformal mating $F$ of $P$ and $A_{\Gamma}^{\mathrm{fBS}}$, which is unique up to M{\"o}bius conjugacy, is quasiconformally conjugate to the conformal mating $\pmb{F}$ of $\pmb{P}$ and $A_{\pmb{\Gamma_{n,p}}}^{\mathrm{fBS}}$. Let $\pmb{\Omega}_\alpha, \pmb{R}_\alpha$ be as in Lemma~\ref{real_sym_schwarz_lem}.

Suppose that $\Psi$ is a quasiconformal homeomorphism of the Riemann sphere to itself such that $F=\Psi\circ \pmb{F}\circ \Psi^{-1}$. We set $\mu:=\Psi^*(\mu_0)=\partial_{\overline{z}}\Psi/\partial_z\Psi$ (here, $\mu_0$ is the trivial Beltrami coefficient on the Riemann sphere). As the holomorphic map $F$ preserves $\mu_0$, we have that $\mu$ is an $\pmb{F}-$invariant Beltrami coefficient. We pull $\mu$ back by $\pmb{R}_\alpha$ to obtain Beltrami coefficients $\mu_\alpha:=\pmb{R}_\alpha^*(\mu)$. 

Let us first work with $\alpha=i\in\mathcal{I}_1$. Since $\pmb{F}(\pmb{R}_i(z))=\pmb{R}_i(\eta(z))$ for $z\in\D$, the invariance of $\mu$ under $\pmb{F}$ translates to the $\eta-$invariance of $\mu_i$. Let $\Psi_i$ be a quasiconformal map solving the Beltrami equation with coefficient $\mu_i$.
By construction, the quasiregular map $R_i:=\Psi\circ \pmb{R}_i\circ \Psi_i^{-1}:\widehat{\C}\to\widehat{\C}$ preserves the standard complex structure, and is thus a rational map. Also, $\Psi_i\circ\eta\circ\Psi_i^{-1}$ is a M{\"o}bius involution.
Since any M{\"o}bius involution is conjugate to $\eta$, the map $\Psi_i\circ\eta\circ\Psi_i^{-1}$ can be chosen to be $\eta$ after possibly post-composing $\Psi_i$ with a M{\"o}bius map. 
Set $\mathfrak{D}_i:=\Psi_i(\D)$, and $\Omega_i:=\Psi(\pmb{\Omega}_i)= R_i(\mathfrak{D}_i)$. Since $\D$ is mapped inside out by $\eta$, it follows that $\mathfrak{D}_i$ is also mapped inside out by $\eta$. In particular, $\mathfrak{D}_i$ is a Jordan domain such that $\partial\mathfrak{D}_i$ is $\eta-$invariant (see Figure~\ref{deform_fig} and the commutative diagram that follows).
\begin{figure}[h!]
\captionsetup{width=0.96\linewidth}
\begin{tikzpicture}
\node[anchor=south west,inner sep=0] at (0,0) {\includegraphics[width=0.775\textwidth]{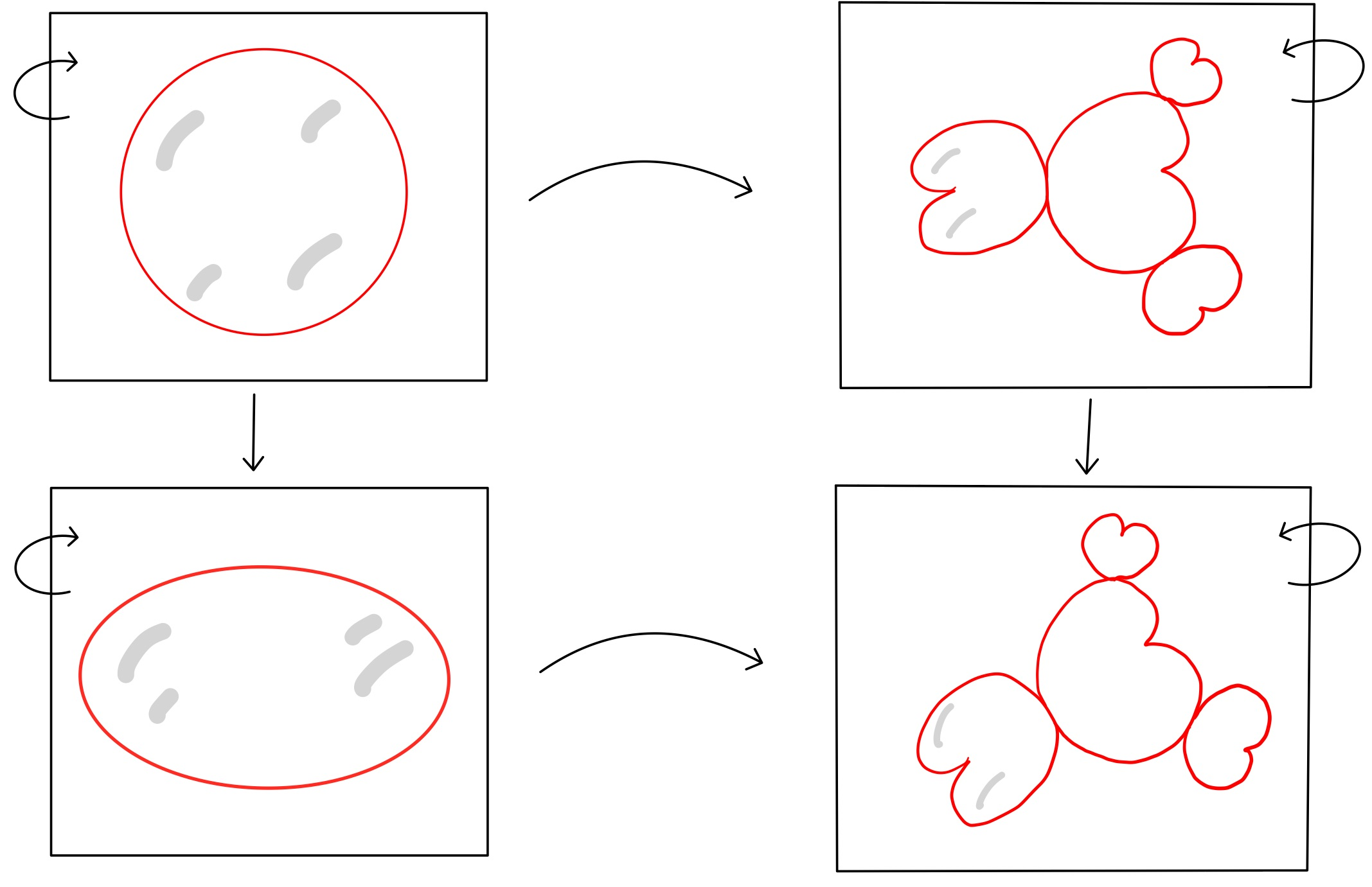}}; 
\node at (2,5.06) {\begin{large}$\D$\end{large}};
\node at (2,1.6) {\begin{large}$\mathfrak{D}_i$\end{large}};
\node at (-0.1,2.32) {$\eta$};
\node at (-0.1,5.84) {$\eta$};
\node at (1.6,3.36) {$\Psi_i$};
\node at (8.32,3.36) {$\Psi$};
\node at (4.75,5.64) {\begin{large}$\pmb{R}_i$\end{large}};
\node at (4.75,2.16) {\begin{large}$R_i$\end{large}};
\node at (10.32,6) {$\pmb{F}$};
\node at (10.32,2.5) {$F$};
\node at (7.4,1.12) {$\Omega_i$};
\node at (7.36,5.16) {$\pmb{\Omega}_i$};
\end{tikzpicture}
\caption{Illustrated is the proof of Proposition~\ref{mating_class_prop}. See Figure~\ref{pinched_dom_fig} (left) for a cartoon of the non-escaping set of $\pmb{F}$.}
\label{deform_fig}
\end{figure}
\[
  \begin{tikzcd}
    (\mathfrak{D}_i,\mu_0) \arrow{d}{\eta} \arrow{r}{\Psi_i^{-1}} & (\D,\mu_i) \arrow{d}{\eta} \arrow{r}{\pmb{R}_i} & (\pmb{\Omega}_i,\mu) \arrow{d}{\pmb{F}} \arrow{r}{\Psi} & (\Omega_i,\mu_0) \arrow{d}{F} \\
     (\widehat{\C},\mu_0) \arrow{r}{\Psi_i^{-1}}  & (\widehat{\mathbb{C}},\mu_i) \arrow{r}{\pmb{R}_i} & (\widehat{\mathbb{C}},\mu) \arrow{r}{\Psi} &  (\widehat{\mathbb{C}},\mu_0)
       \end{tikzcd}
\] 
As $\pmb{R}_i$ is injective on $\overline{\D}$, we conclude that $R_i$ is injective on the closed Jordan disk $\overline{\mathfrak{D}}_i$ (which is mapped inside out by $\eta$), and $F$ can be written as $R_i\circ\eta\circ (R_i\vert_{\overline{\mathfrak{D}_i}})^{-1}$ on $\overline{\Omega_i}$.

Now we turn our attention to $\alpha=(j,\pm)\in\mathcal{I}_2$. Let $\Psi_{j,\pm}$ be quasiconformal maps solving the Beltrami equations with coefficient $\mu_{j,\pm}$.
As before, it follows from the construction that the quasiregular maps $R_{j,\pm}:=\Psi\circ \pmb{R}_{j,\pm}\circ \Psi_{j,\pm}^{-1}:\widehat{\C}\to\widehat{\C}$ preserve the standard complex structure, and hence they are rational maps of $\widehat{\C}$.

The relation $\pmb{F}\circ\pmb{R}_{j,\pm}=\pmb{R}_{j,\mp}\circ\circ\eta$ (on $\overline{\D}$) and the definition of $\Psi_{j,\pm}$ imply that $\Psi_{j,\mp}\circ\eta\circ\Psi_{j,\pm}^{-1}$ preserve the standard complex structure, and hence they are M{\"o}bius maps. After possibly post-composing $\Psi_{j,\pm}$ with M{\"o}bius maps, we can assume that $\Psi_{j,\pm}$ fix $0, 1$, and $\infty$. Then, the M{\"o}bius maps $\Psi_{j,\mp}\circ\eta\circ\Psi_{j,\pm}^{-1}$ send $0$ to $\infty$, $\infty$ to $0$, and $1$ to $1$. It follows that $\Psi_{j,\mp}\circ\eta\circ\Psi_{j,\pm}^{-1}\equiv \eta$.
We set $\mathfrak{D}_{j,\pm}:=\Psi_{j,\pm}(\D)$, and $\Omega_{j,\pm}:=\Psi(\pmb{\Omega}_{j,\pm})= R_{j,\pm}(\mathfrak{D}_{j,\pm})$. Since $\D$ is mapped inside out by $\eta$, it follows that $\mathfrak{D}_{j,\pm}$ is mapped onto $\widehat{\C}\setminus\overline{\mathfrak{D}_{j,\mp}}$ by $\eta$. In particular, $\mathfrak{D}_{j,\pm}$ are Jordan domains such that $\eta(\partial\mathfrak{D}_{j,\pm})=\partial\mathfrak{D}_{j,\mp}$.

As $\pmb{R}_{j,\pm}$ are injective on $\overline{\D}$, we conclude that $R_{j,\pm}$ are injective on the closed Jordan disk $\overline{\mathfrak{D}_{j,\pm}}$. Moreover, we have
\begin{align*}
F &= \Psi\circ\pmb{F}\circ\Psi^{-1}\\
&=\Psi\circ(\pmb{R}_{j,\mp}\circ\eta\circ(\pmb{R}_{j,\pm}\vert_{\overline{\D}})^{-1})\circ\Psi^{-1}\\
&= (\Psi\circ\pmb{R}_{j,\mp}\circ\Psi_{j,\mp}^{-1})\circ(\Psi_{j,\mp}\circ\eta\circ\Psi_{j,\pm}^{-1})\circ(\Psi_{j,\pm}\circ(\pmb{R}_{j,\pm}\vert_{\overline{\D}})^{-1}\circ\Psi^{-1})\\
&=R_{j,\mp}\circ\eta\circ (R_{j,\pm}\vert_{\overline{\mathfrak{D}_{j,\pm}}})^{-1}
\end{align*} 
on $\overline{\Omega_{j,\pm}}$.

Finally, the last item follows from Lemma~\ref{deg_formula_lem}.
\end{proof}

Note that any polynomial in the principal hyperbolic component $\mathcal{H}_d$ is quasiconformally conjugate to a real-symmetric hyperbolic polynomial.

\begin{corollary}\label{main_hyp_comp_mating_class_cor}
Let $P\in\mathcal{H}_{d}$ and $\Gamma\in\mathrm{Teich}^\omega(\pmb{\Gamma_{n,p}})$. Then, there exist
\begin{enumerate}
\item a Jordan domain $\mathfrak{D}$ with $\eta(\partial\mathfrak{D})=\partial\mathfrak{D}$, and

\item a degree $d+1$ rational map $R$ of $\widehat{\C}$ that is injective on $\overline{\mathfrak{D}}$,
\end{enumerate}
such that the conformal mating $F$ of $P$ and $A_{\Gamma}^{\mathrm{fBS}}$ is given by 
$$
R\circ\eta\circ (R\vert_{\overline{\mathfrak{D}}})^{-1}:R(\overline{\mathfrak{D}})\to~\widehat{\C}.
$$
\end{corollary}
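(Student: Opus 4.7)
The plan is to derive the corollary as a direct specialization of Proposition~\ref{mating_class_prop} to the case where the conformal mating has only one escaping component. First I would observe that the principal hyperbolic component $\mathcal{H}_d$ contains $z^d$ and is in particular a real-symmetric hyperbolic component in the connectedness locus of degree $d$ polynomials. Consequently, the hypotheses of Proposition~\ref{mating_class_prop} are met, and we obtain Jordan domains $\mathfrak{D}_\alpha$ and rational maps $R_\alpha$ (of degrees $d_\alpha$, with $\sum_\alpha d_\alpha = d+1$) indexed by the connected components $\Omega_\alpha$, $\alpha \in \mathcal{I}$, of $\Int\mathrm{Dom}(F)$.

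The heart of the argument is to show that $\mathcal{I}$ is a singleton; after that, the statement of the corollary drops out of Proposition~\ref{mating_class_prop}. For this, I would use the classical fact that a polynomial in the principal hyperbolic component is quasiconformally conjugate to $z^d$, so $\mathcal{J}(P)$ is a quasicircle. In particular, every point of $\mathcal{J}(P)$ is the landing point of a unique external dynamical ray, so the equivalence relation $\mathcal{L}_P$ of Definition~\ref{lamination_def} is trivial on $\mathcal{A}_p$. The associated finite lamination thus has only one gap, namely all of $\D$. By Lemma~\ref{comp_count_lem}, $\Int\mathrm{Dom}(F)$ has exactly one connected component $\Omega$, and since components are permuted by the symmetries appearing in Proposition~\ref{mating_class_prop}, the unique component $\Omega$ must be self-paired, i.e.\ we are in the regime $\mathcal{I} = \mathcal{I}_1 = \{1\}$, $\kappa = \mathrm{id}$.

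Specializing Proposition~\ref{mating_class_prop} to this situation yields a Jordan domain $\mathfrak{D} := \mathfrak{D}_1$ with $\eta(\partial\mathfrak{D}) = \partial\mathfrak{D}$ and a rational map $R := R_1$, injective on $\overline{\mathfrak{D}}$, satisfying $F\vert_{\overline{\Omega}} \equiv R\circ\eta\circ(R\vert_{\overline{\mathfrak{D}}})^{-1}$. Since the lamination is trivial, $\mathrm{Dom}(F) = \overline{\Omega} = R(\overline{\mathfrak{D}})$ by Proposition~\ref{mat_dom_prop}, and the degree identity $\sum_{\alpha\in\mathcal{I}} d_\alpha = d+1$ collapses to $d_1 = d+1$. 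The only nontrivial input beyond bookkeeping is the Jordan-curve property of $\mathcal{J}(P)$ for $P\in\mathcal{H}_d$, which is standard; everything else is a formal specialization of results already established in this section.
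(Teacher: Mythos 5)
Your proposal is correct and follows essentially the same route as the paper: the paper derives the corollary from Proposition~\ref{mating_class_prop} via the remark that every $P\in\mathcal{H}_d$ is quasiconformally conjugate to a real-symmetric hyperbolic polynomial, with the triviality of $\mathcal{L}_P$ (Julia set a quasicircle, hence one gap, one component, and $d_1=d+1$) being exactly the point the paper records in Corollary~\ref{crit_pnt_r_cor}. Your write-up simply makes this specialization explicit.
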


\subsection{Critical points of uniformizing rational maps}\label{unif_rat_crit_pnt_subsec}

We continue to use the notation of Proposition~\ref{mating_class_prop}. Our aim in this subsection is to give a complete description of the critical points of the uniformizing rational maps $R_\alpha$, $\alpha\in\mathcal{I}$, given by Proposition~\ref{mating_class_prop}. 

We recall the notation $\mathfrak{X}_P$ and $\mathfrak{X}_\Gamma$ from Definition~\ref{conf_mat_def}, and note that the dynamical plane of $F$ splits into the following invariant subsets:
$$
\mathcal{K}\equiv\mathcal{K}(F):=\mathfrak{X}_P(\cK(P))\quad \mathrm{and}\quad \cT\equiv\cT(F):=\mathfrak{X}_\Gamma(\D),
$$ 
which we term the \emph{non-escaping set} and the \emph{escaping/tiling set} of $F$ (respectively).
By definition, the action of $F$ on $\mathcal{K}$ (respectively, on $\cT$) is conformally conjugate to $P\vert_{\mathcal{K}(P)}$ (respectively, to $A_{\Gamma}^{\mathrm{fBS}}:\mathcal{D}_{\Gamma}\to\overline{\D}$). We also denote the common boundary of $\mathcal{K}$ and $\cT$ by $\Lambda\equiv\Lambda(F)$, and call it the \emph{limit set} of $F$. By construction, we have that $\Lambda=\mathfrak{X}_P(\mathcal{J}(P))$. Since $\mathfrak{X}_P$ is a homeomorphism, it follows that the limit set of $F$ is homeomorphic to the Julia set of $P$ (see Remark~\ref{mating_conj_rem} and Figures~\ref{basilica_mating_fig},~\ref{cubic_crit_fixed_mating_fig}).

As we shall see in Proposition~\ref{crit_pnt_r_sharp_prop}, the critical points of $R_\alpha$ can be organized into three categories:
\begin{itemize}
\item the critical points of $R_\alpha$ on $\partial\mathfrak{D}_\alpha$, which come from cusps of the group $\widehat{\Gamma}=\Gamma\rtimes \langle M_\omega\rangle$,
\item the critical points of $R_\alpha$ in $R_\alpha^{-1}(\cT)$, which are associated with the order $n$ elliptic element of $\Gamma$, when $n\geq 3$, and
\item the critical points of $R_\alpha$ in $R_\alpha^{-1}(\cK)$, which correspond to the critical points of $P$ in $\cK(P)$.
\end{itemize}

To make book-keeping easier, we denote the domain of $R_\alpha$ (this is a copy of the Riemann sphere) by $\widehat{\C}_\alpha$, and denote points in $\widehat{\C}_\alpha$ by $(z,\alpha)$. Note that $\mathfrak{D}_\alpha \subset\widehat{\C}_\alpha$.

Let us now consider the disjoint union 
$$
\mathfrak{U}\ :=\ \bigsqcup_{\alpha\in\mathcal{I}} \widehat{\C}_{\alpha}\ \cong\ \widehat{\C}\times\mathcal{I},
$$
and define the maps
$$
R:\ \mathfrak{U}\longrightarrow \widehat{\C},\quad (z,\alpha)\mapsto R_{\alpha}(z),
$$
and
$$
\eta_\ast\ : \mathfrak{U}\longrightarrow \mathfrak{U},\quad (z,\alpha)\mapsto (\eta(z),\kappa(\alpha)).
$$
Note that by Proposition~\ref{mating_class_prop}(part (4), the map $R$ is a branched covering of degree $np$, and $\eta_\ast$ is a homeomorphism. We also set 
$$
\mathfrak{D}:= \bigsqcup_{\alpha\in\mathcal{I}} \mathfrak{D}_\alpha.
$$

Note that the boundary of 
$$
\mathrm{Dom}(F)=\bigcup_{\alpha\in\mathcal{I}} R_\alpha(\overline{\mathfrak{D}_\alpha}) = R(\overline{\mathfrak{D}})
$$ 
meets $\Lambda$ at finitely many points, each of which is either fixed or $2-$periodic under $F$. We denote the set of these points by $S_F$, and note that 
$$
S_F=\mathfrak{X}_\Gamma(S_{\Gamma})
$$ 
(see Section~\ref{lami_model_subsec} for the definition of $S_{\Gamma}$). We denote the set of points in $S_F$ that do not disconnect $\Lambda(F)$ (or equivalently, are not cut-points of $\partial\mathrm{Dom}(F)$) by $S_F^{\mathrm{cusp}}$.
Finally, we set 
$$
\widetilde{S}_\alpha\ := (R_\alpha\vert_{\partial\mathfrak{D}_\alpha})^{-1}\left(S_F \cap \partial\Omega_\alpha\right).
$$ 
It is easy to see that $\partial\mathrm{Dom}(F)\setminus S_F$ is a union of finitely many non-singular analytic arcs. Indeed, $\partial\mathrm{Dom}(F)\setminus S_F$ is the image of finitely many hyperbolic geodesics of $\D$ under $\mathfrak{X}_\Gamma\circ\xi$ (where $\xi(w)=w^n$). Moreover, since $A_{\Gamma}^{\mathrm{fBS}}$ admits an analytic continuation to a neighborhood of $\mathcal{D}_{\Gamma}\setminus S_{\Gamma}$, it follows that $F$ admits an analytic continuation to a neighborhood of $\mathrm{Dom}(F)\setminus S_F$.

Recall that the global degree of the rational map $\pmb{R}_\alpha$, $\alpha\in\mathcal{I}$, is denoted by $d_\alpha$.

\begin{lemma}
\noindent\begin{enumerate}
\item $F(S_F)=S_F$.

\item Each point of $S_F^{\mathrm{cusp}}$ is a critical value of some $R_\alpha$ with an associated critical point on $\partial\mathfrak{D}_\alpha$.

\item $\eta(\widetilde{S}_\alpha)=\widetilde{S}_{\kappa(\alpha)}$.

\item $d_\alpha=n q_\alpha$, where $q_\alpha$ is the number of components of $\mathbb{S}^1\setminus\mathcal{A}_p$ on $\partial\mathcal{G}_\alpha\cap\mathbb{S}^1$ (boundary taken in $\overline{\D}$).
\end{enumerate}
\label{S_F_lem}
\end{lemma}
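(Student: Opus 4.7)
The plan is to address the four assertions in order: parts (1)--(3) follow from the semiconjugacy structure and the defining relations of Proposition~\ref{mating_class_prop}, while part (4) requires more substantive analysis and contains the main obstacle.

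For part (1), I use the explicit construction of the side-pairing transformations $g_{r,s}$ in Section~\ref{factor_bs_base_subsec}: since each is a composition of a reflection in the geodesic $C_{r,s}$ with a reflection in a diameter, each of the $p$ ideal vertices comprising $S_\Gamma$ (after passing to the $\langle M_\omega\rangle$-quotient) is either fixed or belongs to a $2$-cycle of $A_\Gamma^{\mathrm{fBS}}\vert_{\mathbb{S}^1}$. The conclusion $F(S_F)=S_F$ follows via the semiconjugacy $F\circ\mathfrak{X}_\Gamma=\mathfrak{X}_\Gamma\circ A_\Gamma^{\mathrm{fBS}}$. For part (2), fix $p\in S_F^{\mathrm{cusp}}$; since $p$ is not a cut-point of $\partial\mathrm{Dom}(F)$, there is a unique $\alpha$ with $p\in\partial\Omega_\alpha$, and locally $\partial\Omega_\alpha$ is the $\mathfrak{X}_\Gamma$-image of two hyperbolic geodesic sides of the fundamental polygon sharing a common ideal endpoint. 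Because such geodesics meet $\mathbb{S}^1$ tangentially at their common endpoint, the two analytic arcs of $\partial\Omega_\alpha$ emanating from $p$ meet tangentially, producing a cusp. Let $w$ be the unique preimage of $p$ in $\partial\mathfrak{D}_\alpha$ under $R_\alpha$; if $R_\alpha'(w)\ne 0$, a local biholomorphism of $R_\alpha$ at $w$ would send the smooth curve $\partial\mathfrak{D}_\alpha$ to a smooth curve through $p$, contradicting the cusp at $p$. Hence $w\in\mathrm{crit}(R_\alpha)\cap\partial\mathfrak{D}_\alpha$ with critical value $p$.

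For part (3), I apply $F\circ R_\alpha=R_{\kappa(\alpha)}\circ\eta$ from Proposition~\ref{mating_class_prop}(3) together with $\eta(\partial\mathfrak{D}_\alpha)=\partial\mathfrak{D}_{\kappa(\alpha)}$ from Proposition~\ref{mating_class_prop}(1). For $w\in\widetilde{S}_\alpha$, part (1) yields $R_{\kappa(\alpha)}(\eta(w))=F(R_\alpha(w))\in S_F$; combined with $\eta(w)\in\partial\mathfrak{D}_{\kappa(\alpha)}$ and the boundary bijectivity of $R_{\kappa(\alpha)}:\partial\mathfrak{D}_{\kappa(\alpha)}\to\partial\Omega_{\kappa(\alpha)}$, this forces $\eta(w)\in\widetilde{S}_{\kappa(\alpha)}$. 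The reverse inclusion is symmetric, using $\eta^2=\mathrm{id}=\kappa^2$.

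For part (4), the cardinality $|\widetilde{S}_\alpha|=q_\alpha$ comes from the lamination correspondence: $\mathfrak{X}_\Gamma\circ\mathfrak{h}_\Gamma$ sends the $q_\alpha$ arcs of $\mathbb{S}^1\setminus\mathcal{A}_p$ bordering $\mathcal{G}_\alpha$ bijectively to the analytic arcs of $\partial\Omega_\alpha\cap\partial\mathrm{Dom}(F)$, which are separated by $q_\alpha$ points of $S_F\cap\partial\Omega_\alpha$; boundary bijectivity of $R_\alpha$ then gives $|\widetilde{S}_\alpha|=q_\alpha$. The degree identity $d_\alpha=nq_\alpha$ is the main obstacle. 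My plan is to apply Riemann-Hurwitz $2d_\alpha-2=\mathrm{ram}(R_\alpha)$ to each $R_\alpha$, catalogue the ramification from the three classes of critical points outlined at the start of Section~\ref{unif_rat_crit_pnt_subsec} (cusp-type on $\partial\mathfrak{D}_\alpha$, orbifold-type in $R_\alpha^{-1}(\cT)$ each of multiplicity $n$, and polynomial-type in $R_\alpha^{-1}(\cK)$ inherited from $P$), and match these against the lamination combinatorics together with the global identity $\sum_\alpha d_\alpha=d+1=np$. The delicate step is ensuring that the $n$-fold symmetry intrinsic to the factor Bowen-Series construction propagates cleanly through the per-lobe counting to produce the multiplicative factor of $n$; reducing to the real-symmetric case via the quasiconformal conjugation from the proof of Proposition~\ref{mating_class_prop} should help, as should tracking the distribution of $F$'s $p$ orbifold-type critical values (each of multiplicity $n$) among the lobes $\Omega_\alpha$.
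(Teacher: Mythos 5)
Parts (1) and (3) are correct and essentially the paper's arguments. Part (2), however, has a gap: you deduce the boundary critical point from a claimed cusp of $\partial\Omega_\alpha$ at $x\in S_F^{\mathrm{cusp}}$, arguing that the two geodesic sides of the fundamental polygon meet $\mathbb{S}^1$ tangentially and that their images under $\mathfrak{X}_\Gamma$ therefore meet tangentially. But $\mathfrak{X}_\Gamma$ is conformal only on $\D$ and merely continuous up to $\mathbb{S}^1$ (it is built from a David extension), and tangency at a boundary point is not preserved by a map with no regularity at that point; the cusp at points of $S_F^{\mathrm{cusp}}$ is essentially equivalent to the boundary critical point you are trying to produce, and in the paper it is only obtained a posteriori from quadrature-domain regularity (see the remark after Proposition~\ref{crit_pnt_r_sharp_prop}). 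Moreover, in the general (non-real-symmetric) situation $\partial\mathfrak{D}_\alpha$ is a priori only a quasicircle, so the phrase ``smooth curve $\partial\mathfrak{D}_\alpha$'' already presupposes a reduction to the model. The paper's argument avoids geometry altogether: since $A_{\Gamma}^{\mathrm{fBS}}$ admits no analytic continuation near any point of $S_\Gamma$, the map $F$ admits none near any point of $S_F$; as $F\vert_{\overline{\Omega_\alpha}}= R_{\kappa(\alpha)}\circ\eta\circ (R_\alpha\vert_{\overline{\mathfrak{D}_\alpha}})^{-1}$, the inverse branch $(R_\alpha\vert_{\overline{\mathfrak{D}_\alpha}})^{-1}$ cannot extend holomorphically past $x$, which forces $x$ to be a critical value of $R_\alpha$ with critical point on $\partial\mathfrak{D}_\alpha$.

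Part (4) is not proved: what you give is a plan whose ``delicate step'' you acknowledge is unresolved, and as outlined it does not close. A per-lobe Riemann--Hurwitz count for $R_\alpha$ requires knowing (i) how many critical points of $F$ (equivalently of $P$) lie in $\cK\cap\Omega_{\kappa(\alpha)}$, which depends on $P$ and is not determined by the lamination combinatorics, and (ii) that $R_\alpha$ has no critical points on $\partial\mathfrak{D}_\alpha$ other than those over $S_F^{\mathrm{cusp}}$ --- but the latter is exactly Proposition~\ref{crit_pnt_r_sharp_prop}, whose proof (via Lemma~\ref{lami_no_poly_lem}) already uses $d_\alpha=nq_\alpha$; so this route is circular, and summing Riemann--Hurwitz over $\alpha$ together with $\sum_\alpha d_\alpha=d+1$ recovers only the global identity, not the per-$\alpha$ one. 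The paper's proof is a direct boundary degree computation: under $m_d$ with $d=np-1$, each arc of $\mathbb{S}^1\setminus\mathcal{A}_p$ wraps onto the whole circle $n-1$ times and once more onto the complement of the closure of its complex-conjugate arc, so $\partial\mathcal{G}_\alpha\cap\mathbb{S}^1$ covers $\partial\mathcal{G}_{\kappa(\alpha)}\cap\mathbb{S}^1$ with degree $(n-1)q_\alpha+(q_\alpha-1)=nq_\alpha-1$; transporting this by the mating semiconjugacy and using complete invariance of $\Lambda(F)$ shows that $F:F^{-1}(\Omega_{\kappa(\alpha)})\cap\Omega_\alpha\to\Omega_{\kappa(\alpha)}$ has degree $nq_\alpha-1$, while the relation $F\vert_{\Omega_\alpha}=R_{\kappa(\alpha)}\circ\eta\circ(R_\alpha\vert_{\mathfrak{D}_\alpha})^{-1}$ shows this degree equals $d_\alpha-1$, whence $d_\alpha=nq_\alpha$. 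Your observation that $\widetilde{S}_\alpha$ has $q_\alpha$ points is correct but does not by itself produce the degree identity.
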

\begin{proof}
1) Note that $A_{\Gamma}^{\mathrm{fBS}}$ carries the set $S_{\Gamma}$ onto itself. Thanks to the semi-conjugacy between $A_{\Gamma}^{\mathrm{fBS}}$ and $F$ (via $\mathfrak{X}_\Gamma$), we conclude that $F(S_F)=S_F$.

2) Since $A_{\Gamma}^{\mathrm{fBS}}$ does not admit an analytic continuation in a neighborhood of any point of $S_{\Gamma}$, the map $F$ does not admit an analytic continuation in a neighborhood of any point of $S_F$. Suppose that $x\in S_F^{\mathrm{cusp}}$ lies on the boundary of $\Omega_\alpha$ only.
The fact that $F\vert_{\overline{\Omega_\alpha}}$ is given by $R_{\kappa(\alpha)}\circ\eta\circ (R_\alpha\vert_{\overline{\mathfrak{D}_\alpha}})^{-1}$ implies that $(R_\alpha\vert_{\overline{\mathfrak{D}_\alpha}})^{-1}$ does not extend complex-analytically to a neighborhood of $x$. This forces $x$ to be a critical value of $R_\alpha$ with a corresponding critical point on $\partial\mathfrak{D}_\alpha$. 

3) This follows from item (1) and the relation $F\vert_{\partial\Omega_{\alpha}}\equiv R_{\kappa(\alpha)}\circ\eta\circ(R_{\alpha}\vert_{\partial\mathfrak{D}_{\alpha}})^{-1}$.

4) The relation $F\vert_{\Omega_{\alpha}}\equiv R_{\kappa(\alpha)}\circ\eta\circ(R_{\alpha}\vert_{\mathfrak{D}_{\alpha}})^{-1}$ implies that 
$$
F:F^{-1}(\Omega_{\kappa(\alpha)})\cap\Omega_\alpha\longrightarrow \Omega_{\kappa(\alpha)}
$$ 
is a branched covering of degree $d_\alpha-1$.

Let $q_\alpha$ be the number of components of $\mathbb{S}^1\setminus\mathcal{A}_p$ on $\partial\mathcal{G}_\alpha\cap\mathbb{S}^1$. Note that under the map $m_{d}$ (which models the dynamics of $P$ on $\mathcal{J}(P)$) each arc of $\mathbb{S}^1\setminus\mathcal{A}_p$ is wrapped onto the whole circle $(n-1)$ times and onto the complement of the closure of its complex conjugate arc once. Hence, $m_{d}(\partial\mathcal{G}_\alpha\cap\mathbb{S}^1)$ covers $\partial\mathcal{G}_{\kappa(\alpha)}\cap\mathbb{S}^1$ exactly 
$$
(n-1)q_\alpha+(q_\alpha-1)=n q_\alpha-1
$$ 
times. It follows that $\Lambda(F)\cap\Omega_\alpha$ covers $\Lambda(F)\cap\Omega_{\kappa(\alpha)}$ exactly $(n q_\alpha-1)$ times under the map $F$. Since the limit set of $F$ is completely invariant, we conclude that $F:F^{-1}(\Omega_{\kappa(\alpha)})\cap\Omega_\alpha\longrightarrow \Omega_{\kappa(\alpha)}$ is a degree $(nq_\alpha-1)$ branched covering. Therefore, $d_\alpha=n q_\alpha$.
\end{proof}

\begin{lemma}\label{crit_pnt_r_lem}
\noindent\begin{enumerate}
\item We have $\mathrm{crit}(R\vert_{\mathfrak{D}})=\emptyset$, and $(R\vert_{\partial\mathfrak{D}})^{-1}(S_F^{\mathrm{cusp}})\subset \mathrm{crit}(R)\cap \partial\mathfrak{D}$. Further, the points of $(R_\alpha\vert_{\partial\mathfrak{D}_\alpha})^{-1}(S_F^{\mathrm{cusp}})$ correspond bijectively to the points of $\mathrm{cusps}(\mathcal{G}_\alpha)$. 

\item $F(S_F^{\mathrm{cusp}}\cap\partial\Omega_\alpha)=S_F^{\mathrm{cusp}}\cap\partial\Omega_{\kappa(\alpha)}$. The involution $\eta$ carries $(R_\alpha\vert_{\partial\mathfrak{D}_\alpha})^{-1}(S_F^{\mathrm{cusp}})$ onto $(R_{\kappa(\alpha)}\vert_{\partial\mathfrak{D}_{\kappa(\alpha)}})^{-1}(S_F^{\mathrm{cusp}})$.

\item The critical points of $R_\alpha$ in $\widehat{\C}\setminus\overline{\mathfrak{D}_\alpha}$ correspond bijectively to the critical points of $F$ in $\Omega_{\kappa(\alpha)}$ (counted with multiplicities). In particular, $R$ has $p$ distinct critical points, each of multiplicity $n-1$, in $R^{-1}(\cT)\setminus\overline{\mathfrak{D}}$, and all these critical points are mapped by $R$ to the same point in $\cT$. On the other hand, $R$ has $d-1=np-2$ critical points in $R^{-1}(\cK)\setminus\overline{\mathfrak{D}}$.
\end{enumerate}
\end{lemma}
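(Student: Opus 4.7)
My plan is to exploit the uniformization formula $F\vert_{\overline{\Omega_\alpha}}\equiv R_{\kappa(\alpha)}\circ\eta\circ(R_\alpha\vert_{\overline{\mathfrak{D}_\alpha}})^{-1}$ from Proposition~\ref{mating_class_prop}(3), together with Lemma~\ref{S_F_lem}, Proposition~\ref{factor_bs_prop}(3), and hyperbolicity of $P$, to transfer critical-point information between $F$ and the rational maps $R_\alpha$.

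For item~(1), the injective homeomorphism $R_\alpha\colon\overline{\mathfrak{D}_\alpha}\to\overline{\Omega_\alpha}$ is biholomorphic on the open disk, so $\mathrm{crit}(R_\alpha)\cap\mathfrak{D}_\alpha=\emptyset$; the inclusion $(R_\alpha\vert_{\partial\mathfrak{D}_\alpha})^{-1}(S_F^{\mathrm{cusp}})\subset\mathrm{crit}(R_\alpha)\cap\partial\mathfrak{D}_\alpha$ is exactly Lemma~\ref{S_F_lem}(2). The bijection with $\mathrm{cusp}(\mathcal{G}_\alpha)$ uses the dictionary from Section~\ref{lami_model_subsec}: $\mathcal{A}_p$-angles on $\overline{\mathcal{G}_\alpha}\cap\mathbb{S}^1$ correspond via $\mathfrak{h}_\Gamma$ and $\mathfrak{X}_\Gamma$ to points of $S_F$ on $\partial\Omega_\alpha$, and the singleton-$\mathcal{L}_P$-class condition defining $\mathrm{cusp}(\mathcal{G}_\alpha)$ is precisely the non-cut-point condition defining $S_F^{\mathrm{cusp}}$. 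Since $R_\alpha\vert_{\partial\mathfrak{D}_\alpha}$ is a homeomorphism, lifting yields the bijection.

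For item~(2), I will argue the first assertion by a combinatorial count on the lamination. Because $d=np-1\equiv-1\pmod p$, the map $m_d$ restricts to a bijection of the finite set $\mathcal{A}_p$; combined with forward invariance of $\mathcal{L}_P$ and cardinality counting, the $m_d$-image of each $\mathcal{L}_P$-class on $\mathcal{A}_p$ must equal another class, so $m_d$ permutes the classes and in particular sends singleton classes to singleton classes. Translating via the mating semiconjugacy yields $F(S_F^{\mathrm{cusp}}\cap\partial\Omega_\alpha)=S_F^{\mathrm{cusp}}\cap\partial\Omega_{\kappa(\alpha)}$. The second assertion then follows from $F\circ R_\alpha=R_{\kappa(\alpha)}\circ\eta$ on $\partial\mathfrak{D}_\alpha$: if $R_\alpha(w)\in S_F^{\mathrm{cusp}}$, then $R_{\kappa(\alpha)}(\eta(w))=F(R_\alpha(w))\in S_F^{\mathrm{cusp}}$ by the first assertion, so $\eta(w)\in(R_{\kappa(\alpha)}\vert_{\partial\mathfrak{D}_{\kappa(\alpha)}})^{-1}(S_F^{\mathrm{cusp}})$; bijectivity follows because $\eta$ is an involution.

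For item~(3), inverting the uniformization formula yields $F\vert_{\Omega_{\kappa(\alpha)}}=R_\alpha\circ\eta\circ R_{\kappa(\alpha)}^{-1}$. Since $R_{\kappa(\alpha)}^{-1}\colon\Omega_{\kappa(\alpha)}\to\mathfrak{D}_{\kappa(\alpha)}$ and $\eta\colon\mathfrak{D}_{\kappa(\alpha)}\to\widehat{\C}\setminus\overline{\mathfrak{D}_\alpha}$ are biholomorphisms, the chain rule shows that critical points of $F$ in $\Omega_{\kappa(\alpha)}$ are in bijection with those of $R_\alpha$ in $\widehat{\C}\setminus\overline{\mathfrak{D}_\alpha}$ via $c\mapsto\eta(R_{\kappa(\alpha)}^{-1}(c))$, with multiplicities preserved and $R_\alpha$-image equal to $F(c)$. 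Summing over $\alpha\in\mathcal{I}$ gives the global bijection between $\mathrm{crit}(R)\cap(\mathfrak{U}\setminus\overline{\mathfrak{D}})$ and $\mathrm{crit}(F)\cap\Int{\mathrm{Dom}(F)}$, respecting the $\cT$/$\cK$ dichotomy since both are $F$-invariant. Through the biholomorphic semiconjugacies $\mathfrak{X}_\Gamma\colon\D\to\cT$ and $\mathfrak{X}_P\colon\Int{\mathcal{K}(P)}\to\Int\cK$, the $F$-critical points in $\cT$ lift to the $p$ critical points of $A_\Gamma^{\mathrm{fBS}}$ of multiplicity $n-1$ all mapping to $0$ supplied by Proposition~\ref{factor_bs_prop}(3), while hyperbolicity of $P$ with connected $\mathcal{J}(P)$ places all $d-1$ polynomial critical points (with multiplicity) in $\Int{\mathcal{K}(P)}$. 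Transporting, the $p$ critical points of $R$ in $R^{-1}(\cT)\setminus\overline{\mathfrak{D}}$ share the common image $\mathfrak{X}_\Gamma(0)\in\cT$, and the remaining $d-1=np-2$ critical points of $R$ lie in $R^{-1}(\cK)\setminus\overline{\mathfrak{D}}$.

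The main obstacle will be the counting argument in item~(2), upgrading forward $m_d$-invariance of $\mathcal{L}_P$ on $\mathcal{A}_p$ to full $m_d$-equivariance of the class partition; this relies on $m_d\vert_{\mathcal{A}_p}$ being a bijection, which in turn uses the arithmetic $d\equiv-1\pmod p$. A secondary subtlety is that item~(1) asserts only an inclusion, not an equality, between $(R_\alpha\vert_{\partial\mathfrak{D}_\alpha})^{-1}(S_F^{\mathrm{cusp}})$ and $\mathrm{crit}(R_\alpha)\cap\partial\mathfrak{D}_\alpha$, so the argument for item~(2) must be carried out directly at the level of cusp preimages rather than at the level of all boundary critical points.
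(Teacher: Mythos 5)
Your proposal is correct and follows essentially the same route as the paper: part (1) via injectivity of $R_\alpha\vert_{\overline{\mathfrak{D}_\alpha}}$, Lemma~\ref{S_F_lem}(2) and Definition~\ref{gap_cusp_def}; part (2) by transporting the conjugation-symmetry of $\mathcal{L}_P$ on $\mathcal{A}_p$ through $\mathfrak{X}_P$ and the relation $F\circ R_\alpha=R_{\kappa(\alpha)}\circ\eta$; part (3) from $F\vert_{\Omega_{\kappa(\alpha)}}=R_\alpha\circ\eta\circ(R_{\kappa(\alpha)}\vert_{\mathfrak{D}_{\kappa(\alpha)}})^{-1}$ together with the critical-point counts for $A_\Gamma^{\mathrm{fBS}}$ and $P$. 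Your permutation-of-classes counting in (2) is just a slightly more explicit derivation of the real-symmetry of $\mathcal{L}_P$ on $\mathcal{A}_p$ that the paper cites from Section~\ref{lami_model_subsec}, so there is no substantive difference.
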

\begin{proof}
1) The first statement follows from injectivity of $R_\alpha\vert_{\mathfrak{D}_\alpha}$.
The proof of part (2) of Lemma~\ref{S_F_lem} shows that the pre-images of the points of $S_F^{\mathrm{cusp}}\cap\partial\Omega_\alpha$ under $R_\alpha\vert_{\partial\mathfrak{D}_\alpha}$ are critical points of $R_\alpha$. The statement that the points of $(R_\alpha\vert_{\partial\mathfrak{D}_\alpha})^{-1}(S_F^{\mathrm{cusp}})$ correspond bijectively to the points of $\mathrm{cusps}(\mathcal{G}_\alpha)$ is a trivial consequence of Definition~\ref{gap_cusp_def}.

2) The real-symmetry property of the lamination $\mathcal{L}_P$ implies that if $\theta\in\mathrm{cusps}(\mathcal{G}_\alpha)$, then $m_d(\theta)=-\theta\in\mathrm{cusps}(\mathcal{G}_{\kappa(\alpha)})$ (see Section~\ref{lami_model_subsec}). Under the mating semi-conjugacy $\mathfrak{X}_P$, this translates to the fact that if $x\in S_F^{\mathrm{cusp}}\cap\partial\Omega_\alpha$, then $F(x)\in S_F^{\mathrm{cusp}}\cap\partial\Omega_{\kappa(\alpha)}$. In light of the relation
$F\vert_{\partial\Omega_\alpha}\equiv R_{\kappa(\alpha)}\circ\eta\circ (R_\alpha\vert_{\partial\mathfrak{D}_\alpha})^{-1}$, we conclude that $\eta$ sends $(R_\alpha\vert_{\partial\mathfrak{D}_\alpha})^{-1}(S_F^{\mathrm{cusp}}\cap\partial\Omega_\alpha)$ onto $(R_{\kappa(\alpha)}\vert_{\partial\mathfrak{D}_{\kappa(\alpha)}})^{-1}(S_F^{\mathrm{cusp}}\cap\partial\Omega_{\kappa(\alpha)})$.

3) The first statement follows from the relation $F\vert_{\Omega_{\kappa(\alpha)}}\equiv R_{\alpha}\circ\eta\circ(R_{\kappa(\alpha)}\vert_{\mathfrak{D}_{\kappa(\alpha)}})^{-1}$ (recall that $\eta$ carries $\mathfrak{D}_{\kappa(\alpha)}$ onto $\widehat{\C}\setminus\overline{\mathfrak{D}_\alpha}$). The remaining claims are consequences of the facts that $F$ has $d-1=np-2$ critical points in $\mathcal{K}$ (coming from the $d-1$ critical points of $P$ in $\cK(P)$) and $p$ critical points, each of multiplicity $n-1$, in $\cT$ (coming from the $p$ critical points of $A_{\Gamma}^{\mathrm{fBS}}$ in $\mathcal{D}_{\Gamma}$). Moreover, $F$ maps all the $p(n-1)$ critical points in $\cT$ to the same critical value since $A_{\Gamma}^{\mathrm{fBS}}$ sends all of its $p(n-1)$ critical points in $\mathcal{D}_{\Gamma}$ to the origin.
\end{proof}

We will conclude this section with a refined version of part (1) of Lemma~\ref{crit_pnt_r_lem}. The proof will go through an intermediate lemma about the structure of the lamination $\mathcal{L}_P$.

\begin{lemma}\label{lami_no_poly_lem}
The lamination $\mathcal{L}_P$ contains no polygon; i.e., each equivalence class of $\mathcal{L}_P$ contains at most two elements.
\end{lemma}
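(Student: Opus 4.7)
The plan is a direct orbit-counting argument based on the dynamics of $m_d$ on $\mathcal{A}_p$ combined with the standard theory of ray portraits for hyperbolic polynomials.

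First I would compute the $m_d$-action on $\mathcal{A}_p$. Since $d=np-1$, for each $i \in \{0,\dots,p-1\}$ one has $m_d(i/p) = (np-1)\cdot i/p \equiv -i/p \pmod 1$, so $m_d\vert_{\mathcal{A}_p}$ is the involution $i/p\mapsto (p-i)/p$. Its fixed points in $\mathcal{A}_p$ are exactly $0$ (always) and $1/2$ (when $p$ is even), and all remaining elements form $2$-cycles $\{i/p,\,(p-i)/p\}$. In particular, every $\theta\in\mathcal{A}_p$ is periodic under $m_d$ of period $1$ or $2$.

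Next, I would assume for contradiction that some equivalence class of $\mathcal{L}_P$ contains three distinct angles $\theta_1,\theta_2,\theta_3\in\mathcal{A}_p$, so that the external dynamical rays of $P$ at these angles co-land at a point $z_0\in\mathcal{J}(P)$. Because $P$ is hyperbolic, $\mathcal{J}(P)$ is locally connected; and since each $\theta_i$ has $m_d$-period dividing $2$, the landing point $z_0$ is a repelling periodic point of $P$ whose period $k$ divides $2$. By the Douady--Hubbard theory of ray portraits for polynomials with locally connected Julia sets (see e.g.\ \cite[Ch.~18]{Mil06}), the collection of external rays landing at $z_0$ forms a single cyclic $P^k$-orbit, on which $P^k$ acts as a rotation with primitive combinatorial rotation number $r/q$, where $q$ is the total number of rays at $z_0$ and $\gcd(r,q)=1$. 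Consequently every such ray has $m_d$-period exactly $kq$. But $\{\theta_1,\theta_2,\theta_3\}$ lies in this portrait, so $q\geq 3$ and hence $kq\geq 3$, contradicting the fact that each $\theta_i\in\mathcal{A}_p$ has $m_d$-period at most $2$.

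The one technical ingredient to be invoked is that the rays landing at a repelling periodic point of a polynomial with locally connected Julia set form a single orbit with primitive rotation number under the appropriate return map. This is a standard fact, so I do not expect any genuine difficulty; the main work is simply organizing the period-and-rotation arithmetic cleanly and confirming that $z_0$ is necessarily repelling (which follows from hyperbolicity of $P$ ruling out parabolic cycles).
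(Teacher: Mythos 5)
Your argument stands or falls on the claim that the external rays landing at a repelling periodic point form a single cycle under the first return map, so that their common period is $k$ times the total number of rays landing there. That claim is a theorem for quadratic polynomials (Goldberg--Milnor orbit portraits), but it is false in degree at least $3$, and it fails for hyperbolic polynomials of exactly the type considered here. What \cite[\S 18]{Mil06} actually gives is only that the return map permutes the co-landing rays preserving their cyclic order; hence all of them have the same period $kq$, where $q$ is the denominator of the combinatorial rotation number, but the rays may split into several cycles, so $q$ need not equal the number of rays. Concretely, $P(z)=z^{4}+\tfrac{4}{3}z$ is a real hyperbolic polynomial with connected Julia set (its three critical points are superattracting fixed points), $0$ is a repelling fixed point, and the three fixed rays at angles $0$, $1/3$, $2/3$ all land at $0$, each of them invariant under $P$ (rotation number $0$, three cycles of length one). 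With the corrected statement your contradiction evaporates: three rays of $m_d$-period at most $2$ landing at a point of period at most $2$ is perfectly consistent with period-and-rotation arithmetic, as this example shows. So the proposal, as written, has a genuine gap at its key step.

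The strategy can be repaired, but the contradiction must come from orientation rather than from periods. If a class of $\mathcal{L}_P$ had three elements $\theta_1,\theta_2,\theta_3$, they would all have $m_d$-period $2$ (the only fixed angles in $\mathcal{A}_p$ are $0$ and, for $p$ even, $1/2$, and co-landing rays have equal periods), so $m_d$ acts on them by $\theta\mapsto-\theta$. The common landing point $z_0$ is a repelling (in particular non-critical) periodic point, so $P$ is injective and orientation-preserving near $z_0$ and maps the rays at $z_0$ to rays at $P(z_0)$ preserving their cyclic order; since the cyclic order of co-landing rays agrees with the cyclic order of their angles, $(\theta_1,\theta_2,\theta_3)$ and $(-\theta_1,-\theta_2,-\theta_3)$ would have the same cyclic orientation, whereas $\theta\mapsto-\theta$ reverses the cyclic order of any three distinct angles --- a contradiction. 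Note also that this circle-dynamics route is entirely different from the paper's proof, which is a critical-point count for the uniformizing rational maps: summing $\#\,\mathrm{crit}(R_\alpha)=2d_\alpha-2\geq \#\,\mathrm{cusp}(\mathcal{G}_\alpha)+\#\,\mathrm{crit}(F\vert_{\Omega_{\kappa(\alpha)}})$ over the gaps and using $d_\alpha=nq_\alpha$ yields Inequality~\eqref{lami_ineq}, which the insertion of any $k$-gon with $k>2$ would violate (with an alternative quadrature-domain cusp argument sketched in a remark).
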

\begin{proof}
By Lemma~\ref{crit_pnt_r_lem}, we have that 
$$
2d_\alpha-2=\#\ \mathrm{crit}(R_\alpha)\geq \#\ \mathrm{cusps}(\mathcal{G}_\alpha) + \#\ \mathrm{crit}(F\vert_{\Omega_{\kappa(\alpha)}}),
$$
for each $\alpha\in\mathcal{I}$. Summing this inequality over $\alpha\in\mathcal{I}$, we get that
$$
\sum_{\alpha\in\mathcal{I}} (2d_\alpha-2)=\sum_{\alpha\in\mathcal{I}} \#\  \mathrm{crit}(R_\alpha)\geq \sum_{\alpha\in\mathcal{I}} \left( \#\ \mathrm{cusps}(\mathcal{G}_\alpha) + \#\ \mathrm{crit}(F\vert_{\Omega_{\kappa(\alpha)}})\right).
$$
Taking into account the relation $d_\alpha=n q_\alpha$ (where $q_\alpha$ is the number of components of $\mathbb{S}^1\setminus\mathcal{A}_p$ on $\partial\mathcal{G}_\alpha\cap\mathbb{S}^1$) and the facts that $\mathbb{S}^1\setminus \mathcal{A}_p$ has $p$ components, $\mathcal{L}_P$ has $\#\ \mathcal{I}$ many gaps, and $F$ has a total of 
$$
(np-2)+p(n-1)=2np-p-2
$$ 
critical points in $\Int{\mathrm{Dom}(F)}$ (since $P$ has $np-2$ critical points in $\mathcal{K}(P)$ and $A_{\Gamma}^{\mathrm{fBS}}$ has $p(n-1)$ critical points in $\mathcal{D}_{\Gamma}$), we can rewrite the above inequality as
$$
2np - 2\cdot \#\ \mathcal{I} \geq \sum_{\alpha\in\mathcal{I}} \#\ \mathrm{cusps}(\mathcal{G}_\alpha) + (2np-p-2).
$$
Thus,
\begin{equation}
p+2 \ \geq \ 2\cdot \#\ \mathcal{I} + \sum_{\alpha\in\mathcal{I}} \#\ \mathrm{cusps}(\mathcal{G}_\alpha).
\label{lami_ineq}
\end{equation}
We claim that Inequality~\eqref{lami_ineq} is satisfied only if the lamination $\mathcal{L}_P$ contains no polygon. To see this, first note that if $\#\ \mathcal{I}=1$; i.e., when the lamination is empty, then $\mathrm{cusps}(\mathcal{G}_1)$ consists of $p$ points, and hence the two sides of the inequality coincide. Now, the introduction of a $k-$gon in the lamination (or a leaf, when $k=2$) adds $k-1$ gaps and kills $k$ cusps. For $k>2$, this procedure increases the right side of Inequality~\eqref{lami_ineq} by $2(k-1)-k=k-2>0$. Clearly, this violates the inequality, which proves that  each equivalence class of $\mathcal{L}_P$ contains at most two elements.
\end{proof}

In the next proposition, we record the locations of all the critical points of $R_\alpha$.

\begin{proposition}\label{crit_pnt_r_sharp_prop}
\noindent\begin{enumerate}
\item $R$ has no critical points in $\mathfrak{D}$.

\item $\mathrm{crit}(R)\cap \partial\mathfrak{D}=(R\vert_{\partial\mathfrak{D}})^{-1}(S_F^{\mathrm{cusp}})$.
Consequently, $\eta$ maps $\mathrm{crit}(R_\alpha)\cap \partial\mathfrak{D}_\alpha$ bijectively to $\mathrm{crit}(R_{\kappa(\alpha)})\cap \partial\mathfrak{D}_{\kappa(\alpha)}$.

\item $R$ has $p$ distinct critical points, each of multiplicity $n-1$, in $R^{-1}(\cT)\setminus\overline{\mathfrak{D}}$, and all these critical points are mapped by $R$ to the same point in $\cT$. On the other hand, $R$ has $d-1=np-2$ critical points in $R^{-1}(\cK)\setminus\overline{\mathfrak{D}}$.
\end{enumerate}
\end{proposition}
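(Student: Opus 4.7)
The proof plan is as follows. Parts (1) and (3) of the proposition are direct restatements of parts (1) and (3) of Lemma~\ref{crit_pnt_r_lem}: the first already says $\mathrm{crit}(R\vert_{\mathfrak{D}})=\emptyset$, and the third identifies the critical points of $R$ in the complement of $\overline{\mathfrak{D}}$ via the bijection with critical points of $F$ in the various $\Omega_{\kappa(\alpha)}$, giving the $p$ critical points over $\cT$ of multiplicity $n-1$ (from the critical points of $A_{\Gamma}^{\mathrm{fBS}}$) and the $np-2$ critical points over $\cK$ (from the critical points of $P$ in $\cK(P)$).

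The substantive content is the reverse inclusion in part (2), namely $\mathrm{crit}(R)\cap\partial\mathfrak{D}\subset (R\vert_{\partial\mathfrak{D}})^{-1}(S_F^{\mathrm{cusp}})$. The strategy is a rigidity argument from Riemann-Hurwitz combined with Lemma~\ref{lami_no_poly_lem}. For each $\alpha\in\mathcal{I}$, Riemann-Hurwitz applied to $R_\alpha$ gives
\[
2d_\alpha-2 \;=\; \sum_{c\in\mathrm{crit}(R_\alpha)}\bigl(\mathrm{mult}_c(R_\alpha)-1\bigr).
\]
Splitting $\mathrm{crit}(R_\alpha)$ into its parts in $\mathfrak{D}_\alpha$ (empty, by (1)), on $\partial\mathfrak{D}_\alpha$, and in $\widehat{\C}_\alpha\setminus\overline{\mathfrak{D}_\alpha}$, and using the known lower bound $(R_\alpha\vert_{\partial\mathfrak{D}_\alpha})^{-1}(S_F^{\mathrm{cusp}})\subset\mathrm{crit}(R_\alpha)\cap\partial\mathfrak{D}_\alpha$ (of cardinality $\#\mathrm{cusp}(\mathcal{G}_\alpha)$) from Lemma~\ref{crit_pnt_r_lem}(1) together with the bijection with multiplicities provided by Lemma~\ref{crit_pnt_r_lem}(3), yields the inequality
\[
2d_\alpha-2 \;\geq\; \#\mathrm{cusp}(\mathcal{G}_\alpha) \;+\; \sum_{c\in\mathrm{crit}(F)\cap\Omega_{\kappa(\alpha)}}\bigl(\mathrm{mult}_c(F)-1\bigr).
\]

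I would then sum this over $\alpha\in\mathcal{I}$. Using $\sum_\alpha d_\alpha=d+1=np$ from Proposition~\ref{mating_class_prop}(4), the fact that $\kappa$ is a bijection on $\mathcal{I}$ so that the $\Omega_{\kappa(\alpha)}$ exhaust $\Int\mathrm{Dom}(F)$, and the Riemann-Hurwitz totals for $F$ on its non-escaping set ($d-1=np-2$, inherited from $P$) and its escaping set ($p(n-1)$, inherited from $A_{\Gamma}^{\mathrm{fBS}}$), the summed inequality reduces exactly to the Inequality~\eqref{lami_ineq} from the proof of Lemma~\ref{lami_no_poly_lem}. By that lemma the lamination $\mathcal{L}_P$ contains only leaves, and the proof of Lemma~\ref{lami_no_poly_lem} shows that under this restriction Inequality~\eqref{lami_ineq} is actually an equality. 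Hence each individual local inequality above must be an equality, which forces
\[
\mathrm{crit}(R_\alpha)\cap\partial\mathfrak{D}_\alpha \;=\; (R_\alpha\vert_{\partial\mathfrak{D}_\alpha})^{-1}(S_F^{\mathrm{cusp}}),
\]
with each such critical point having multiplicity exactly $2$. Finally, the consequence about $\eta$ is obtained by combining this equality with the $\eta$-bijection statement already established in Lemma~\ref{crit_pnt_r_lem}(2).

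The only place where any real work happens is in the rigidity step, and it was already carried out in Lemma~\ref{lami_no_poly_lem}; once that lemma is in hand the present proposition is a clean accounting argument. The conceptual subtlety to bear in mind is that the Riemann-Hurwitz inequality for each individual $R_\alpha$ is not a priori sharp, and it is only the global constraint coming from the absence of higher polygons in $\mathcal{L}_P$ that forces sharpness on every gap simultaneously.
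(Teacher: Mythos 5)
Your proposal is correct and follows essentially the same route as the paper: parts (1) and (3) are quoted from Lemma~\ref{crit_pnt_r_lem}, and part (2) is obtained by observing that, since $\mathcal{L}_P$ has only leaves (Lemma~\ref{lami_no_poly_lem}), the summed Riemann--Hurwitz inequality becomes an equality, forcing each slack term $\#\left(\mathrm{crit}(R_\alpha)\cap\partial\mathfrak{D}_\alpha\right)-\#\,\mathrm{cusp}(\mathcal{G}_\alpha)$ to vanish, with the $\eta$-statement then coming from Lemma~\ref{crit_pnt_r_lem}(2). This is exactly the paper's argument, merely written out with the Riemann--Hurwitz bookkeeping made explicit.
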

\begin{proof}
The first and third items follow from Lemma~\ref{crit_pnt_r_lem}. It remains to prove the second part.

Since each equivalence class of $\mathcal{L}_P$ is a point or a leaf, it is easy to see that 
$$
2\cdot \#\ \mathcal{I} + \sum_{\alpha\in\mathcal{I}} \#\ \mathrm{cusps}(\mathcal{G}_\alpha)= p+2.
$$
This implies, by the proof of Lemma~\ref{lami_no_poly_lem}, that
\begin{equation}
\sum_{\alpha\in\mathcal{I}} \left(\#\ \left(\mathrm{crit}\left(R_\alpha\right)\cap \partial\mathfrak{D}_\alpha\right) - \#\ \mathrm{cusps}\left(\mathcal{G}_\alpha\right)\right)=0.
\label{crit_pnt_r_eqn}
\end{equation}
By part (1) of Lemma~\ref{crit_pnt_r_lem}, we have that $\#\ (\mathrm{crit}(R_\alpha)\cap \partial\mathfrak{D}_\alpha) - \#\ \mathrm{cusps}(\mathcal{G}_\alpha)\geq 0$ for each $\alpha\in\mathcal{I}$, and hence by Relation~\eqref{crit_pnt_r_eqn}, $\#\ (\mathrm{crit}(R_\alpha)\cap \partial\mathfrak{D}_\alpha) = \#\ \mathrm{cusps}(\mathcal{G}_\alpha)$ for each $\alpha\in\mathcal{I}$. Since the points of $(R_\alpha\vert_{\partial\mathfrak{D}_\alpha})^{-1}(S_F^{\mathrm{cusp}})$ correspond bijectively to the points of $\mathrm{cusps}(\mathcal{G}_\alpha)$, we conclude that $(R_\alpha\vert_{\partial\mathfrak{D}_\alpha})^{-1}(S_F^{\mathrm{cusp}}\cap\partial\Omega_\alpha)= \mathrm{crit}(R_\alpha)\cap \partial\mathfrak{D}_\alpha$.
The second statement now follows from part (2) of Lemma~\ref{crit_pnt_r_lem}.
\end{proof}

\begin{remark}\label{examples_deg_crit_pnt_rem}
For the conformal mating $F$ considered in Section~\ref{quad_example_subsubsec}, the structure of the lamination $\mathcal{L}(P)$ was discussed in Remark~\ref{examples_lamination_rem}. In particular, the lamination has two gaps. By Lemma~\ref{S_F_lem}, the rational uniformizing map $R_1$ for one of the components of $\Int{\mathrm{Dom}(F)}$ is quadratic, while the other uniformizing rational map $R_2$ is a M{\"o}bius map. The critical points of $R_1$ correspond to the unique point of $S_F^{\mathrm{cusp}}$ (see Figure~\ref{basilica_mating_fig}) and the unique critical point of $F$ in $\cK$.

The domain of definition of the conformal mating $F$ of Section~\ref{cubic_example_subsubsec} also has two interior components (see Remark~\ref{examples_lamination_rem} for a description of the corresponding lamination $\mathcal{L}(P)$). By Lemma~\ref{S_F_lem}, the rational uniformizing maps $R_{1,\pm}$ for the two components of $\Int{\mathrm{Dom}(F)}$ are quadratic. The critical points of $R_{1,\pm}$ correspond to the two points of $S_F^{\mathrm{cusp}}$ (see Figure~\ref{cubic_crit_fixed_mating_fig}) and the two critical points of $F$ in $\cK$.
\end{remark}

The next corollary follows from Proposition~\ref{crit_pnt_r_sharp_prop} and the observation that if $\mathcal{L}_P=\emptyset$, then $\Int{\mathrm{Dom}(F)}$ is a Jordan domain and hence no point of $S_F$ disconnects $\partial\mathrm{Dom}(F)$.

\begin{corollary}\label{crit_pnt_r_cor}
If $\mathcal{L}_P=\emptyset$, then $\Omega:=\Int{\mathrm{Dom}(F)}$ is connected, the degree of $R:=R_1$ is $d+1=np$, and the set of critical points of $R$ on $\partial\mathfrak{D}$ is given by $\widetilde{S}:=\widetilde{S}_1=(R\vert_{\partial\mathfrak{D}})^{-1}(S_F)$. In particular, $R$ has $p$ critical points on $\partial\mathfrak{D}$, $d-1=np-2$ critical points in $R^{-1}(\mathcal{K})\setminus\overline{\mathfrak{D}}$, and $p$ distinct critical points, each of multiplicity $n-1$, in $R^{-1}(\cT)\setminus\overline{\mathfrak{D}}$. All the $p(n-1)$ critical points of $R$ in $R^{-1}(\cT)\setminus\overline{\mathfrak{D}}$ are mapped to the same critical value. 

Moreover, depending on whether $p$ is even/odd, exactly two/one points of $\widetilde{S}$ are fixed by $\eta$ and the others form $2-$cycles under $\eta$. 
\end{corollary}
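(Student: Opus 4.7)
The plan is to unpack Proposition~\ref{crit_pnt_r_sharp_prop} in the degenerate case $\mathcal{L}_P=\emptyset$, where the lamination has no non-trivial equivalence classes, and then use the description of $S_\Gamma$ from Section~\ref{lami_model_subsec} to count fixed points of $\eta$ on $\widetilde{S}$.

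First, the assumption $\mathcal{L}_P=\emptyset$ means that no two points of $\mathcal{A}_p$ are identified by $\mathcal{L}_P$, so by Lemma~\ref{comp_count_lem} the interior $\Omega:=\Int{\mathrm{Dom}(F)}$ has a single gap $\mathcal{G}_1$ as its model; thus $\mathcal{I}=\{1\}$ and $\Omega$ is a Jordan domain. I would then appeal to Proposition~\ref{mating_class_prop}(4) (or, equivalently, Relation~\eqref{degree_formula}) to get $d_1=d+1=np$, giving the degree of $R:=R_1$. Because $\partial\Omega$ is a Jordan curve, no point of $S_F$ separates $\partial\mathrm{Dom}(F)$, so $S_F^{\mathrm{cusp}}=S_F$, and $\mathrm{cusp}(\mathcal{G}_1)$ consists of all $p$ points of $\mathcal{A}_p$.

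Next, applying Proposition~\ref{crit_pnt_r_sharp_prop}(2) in this setting identifies $\mathrm{crit}(R)\cap\partial\mathfrak{D}$ with $(R\vert_{\partial\mathfrak{D}})^{-1}(S_F^{\mathrm{cusp}})=(R\vert_{\partial\mathfrak{D}})^{-1}(S_F)=\widetilde{S}$. Since $R\vert_{\overline{\mathfrak{D}}}$ is injective by Proposition~\ref{mating_class_prop}(2), $\#\widetilde{S}=\#S_F=\#S_\Gamma=p$, yielding the asserted $p$ critical points on $\partial\mathfrak{D}$. The counts of critical points in $R^{-1}(\mathcal{K})\setminus\overline{\mathfrak{D}}$ and $R^{-1}(\cT)\setminus\overline{\mathfrak{D}}$, together with the fact that all $p$ critical points in $R^{-1}(\cT)\setminus\overline{\mathfrak{D}}$ (each of multiplicity $n-1$) are mapped to the same value, are quoted verbatim from Proposition~\ref{crit_pnt_r_sharp_prop}(3), recalling that $A_{\Gamma}^{\mathrm{fBS}}$ sends all its critical points in $\mathcal{D}_\Gamma$ to the origin.

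For the final assertion about $\eta$-action on $\widetilde{S}$, I would trace through the identifications. From Section~\ref{lami_model_subsec}, the set $S_\Gamma$ corresponds under $\mathfrak{h}_\Gamma$ to $\mathcal{A}_p=\{i/p:i\in\{0,\ldots,p-1\}\}\subset\mathbb{R}/\mathbb{Z}$, with fixed points of $A_\Gamma^{\mathrm{fBS}}$ corresponding to $\{0,1/2\}$ when $p$ is even and to $\{0\}$ when $p$ is odd, and with remaining elements forming $2$-cycles $\pm i/p$ under $m_d$. Under the mating semi-conjugacy $\mathfrak{X}_\Gamma$, these translate into the corresponding fixed/$2$-periodic structure of $F$ on $S_F$. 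Finally, the relation $F\vert_{\partial\Omega}\equiv R\circ\eta\circ(R\vert_{\partial\mathfrak{D}})^{-1}$ (Proposition~\ref{mating_class_prop}(3), with $\kappa(1)=1$) transports this structure to $\widetilde{S}$: $F$-fixed points of $S_F$ correspond to $\eta$-fixed points of $\widetilde{S}$, and $F$-$2$-cycles in $S_F$ correspond to $\eta$-$2$-cycles in $\widetilde{S}$, giving the stated count. The only point requiring care is checking that $R\vert_{\partial\mathfrak{D}}$ intertwines $F$ with $\eta$ (not with $\eta^{-1}$), but since $\eta$ is an involution this is automatic.
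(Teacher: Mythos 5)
Your proposal is correct and follows essentially the same route as the paper: the emptiness of $\mathcal{L}_P$ forces a single gap, so $\Omega$ is a Jordan domain, $S_F^{\mathrm{cusp}}=S_F$, and Proposition~\ref{crit_pnt_r_sharp_prop} (with the degree count from Proposition~\ref{mating_class_prop}) delivers all critical-point statements; for the last assertion, the paper likewise transports the fixed-point/$2$-cycle structure of $F$ on $S_F$ to $\eta$ on $\widetilde{S}$ via $F\equiv R\circ\eta\circ(R\vert_{\overline{\mathfrak{D}}})^{-1}$, using injectivity of $R$ on $\partial\mathfrak{D}$ exactly as you do.
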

\begin{proof}
We only need to justify the last statement. To this end, observe that depending on whether $p$ is even/odd, exactly two/one points of $S_F$ are fixed by $F$, and the others form $2-$cycles under $F$. Since $F\equiv R\circ\eta\circ (R\vert_{\overline{\mathfrak{D}}})^{-1}$, it follows that depending on whether $p$ is even/odd, exactly two/one points of $\widetilde{S}$ are fixed by $\eta$ and the others form $2-$cycles under $\eta$. 
\end{proof}

\begin{remark}
Lemma~\ref{lami_no_poly_lem} and Proposition~\ref{crit_pnt_r_sharp_prop} can also be proved by looking at the real-symmetric map $\pmb{F}$ that $F$ is quasiconformally conjugate to. For Lemma~\ref{lami_no_poly_lem}, note that the only singularities on the boundaries of the Jordan quadrature domains $\pmb{\Omega}_\alpha$ are (inward) conformal cusps (cf. \cite{Sak91}), and hence more than two such quadrature domains cannot touch at a point. 

For Proposition~\ref{crit_pnt_r_sharp_prop}, first observe that if $y\in\partial\mathfrak{D}_\alpha\setminus (R_\alpha\vert_{\partial\mathfrak{D}_\alpha})^{-1}(S_F\cap\partial\Omega_\alpha)$ were a critical point of $R_\alpha$, then $(R_\alpha\vert_{\overline{\mathfrak{D}_\alpha}})^{-1}$ would not extend analytically to a neighborhood of $R_\alpha(y)\in\overline{\Omega_\alpha}\setminus S_F$, which contradicts the fact that $R_\alpha$ admits an analytic continuation to a neighborhood of $\overline{\Omega_\alpha}\setminus S_F$. Finally, if $R_\alpha$ had a critical point $y\in(R_\alpha\vert_{\partial\mathfrak{D}_\alpha})^{-1}(S_F \setminus S_F^{\mathrm{cusp}})$, then $\pmb{R}_\alpha(\psi_\alpha^{-1}(y))$ would be a conformal cusp of $\partial\pmb{\Omega}_\alpha$, while $\pmb{R}_\alpha(\psi_\alpha^{-1}(y))$ would also be a touching point of $\partial\pmb{\Omega}_\alpha$ and $\partial\pmb{\Omega}_\beta$ for some $\beta\neq\alpha$; which is impossible.
\end{remark}

\section{Correspondences associated with conformal matings}\label{corr_sec}

We will now use the algebraic representation of conformal matings $F$ in terms of uniformizing rational maps $R$ to define algebraic correspondences $\mathfrak{C}$, and study their dynamical properties to complete the proof of Theorem~\ref{corr_mating_intro_thm}, thus answering the second part of Question~\ref{qn_main}. It turns out that the dynamics of $\mathfrak{C}$ is intimately related to that of $F$: the mating structure in the $F-$plane can be lifted via $R$ to obtain the desired mating structure in the $\mathfrak{C}-$plane.

\subsection{The case of principal hyperbolic components}\label{special_corr_subsec}

Let $n,p$ be positive integers with $np\geq 3$. Set $d:=np-1$. Suppose that $(\rho:\pmb{\Gamma_{n,p}}\to\Gamma)\in\mathrm{Teich}^\omega(\pmb{\Gamma_{n,p}})$, $P\in\mathcal{H}_{np-1}$, and $F:\overline{\Omega}\to\widehat{\C}$ be the conformal mating of $A_{\Gamma}^{\mathrm{fBS}}$ and $P$. Further, let $R, \mathfrak{D}$ be as in Corollary~\ref{main_hyp_comp_mating_class_cor}. Finally, we set $\widetilde{S}:=\widetilde{S}_1=(R\vert_{\partial\mathfrak{D}})^{-1}(S_F)$.

We will define a holomorphic correspondence $\mathfrak{C}\subset\widehat{\C}\times\widehat{\C}$ of bi-degree $d$:$d$ as (cf. \cite{DS06}):
\begin{equation}
(z,w)\in\mathfrak{C}\iff \frac{R(w)-R(\eta(z))}{w-\eta(z)}=0.
\label{corr_eqn}
\end{equation}
The correspondence $\mathfrak{C}$ can be regarded as a  multi-valued map with holomorphic local branches. The forward branches of this multi-valued map are given by $z\mapsto w=R^{-1}(R(\eta(z)))$, $w\neq \eta(z)$, and the backward branches are given by $w\mapsto z=\eta(R^{-1}(R(w)))$, $z\neq \eta(w)$. Note also that the correspondence $\mathfrak{C}$ is \emph{reversible}; i.e., its forward branches are conjugate to its backward branches via $\eta$.

The following observations show that $\mathfrak{C}$ is obtained by lifting $F$ and its appropriate backward branches via the rational map $R$. 
\begin{itemize}
\item Fix $z\in\overline{\mathfrak{D}}$. Then, $F(R(z))=R(\eta(z))$, and hence,
\begin{equation}
(z,w)\in\mathfrak{C} \iff R(w)=R(\eta(z))=F(R(z)).
\label{f_lift_eqn}
\end{equation}

\item Now fix $z\in\widehat{\C}\setminus\overline{\mathfrak{D}}$. Then, $F(R(\eta(z)))=R(z)$; i.e.,
\begin{equation}
(z,w)\in\mathfrak{C} \iff R(w)=R(\eta(z))=F^{-1}(R(z)),
\label{f_inv_lift_eqn}
\end{equation}
where $F^{-1}$ is a suitable backward branch of $F$. 
\end{itemize}

\subsubsection{Dynamical partition for $\mathfrak{C}$}\label{inv_partition_corr_1_subsubsec}

The invariant partition of the dynamical plane of $F$, given by $\mathcal{K}$ and $\cT$, can be pulled back by $R$ to produce an invariant partition of the dynamical plane of the correspondence $\mathfrak{C}$. More precisely, we set
$$
\widetilde{\mathcal{K}}:=R^{-1}(\mathcal{K}),\quad \widetilde{\cT}:= R^{-1}(\cT).
$$
We call these sets the \emph{non-escaping set} and the \emph{tiling set} of the correspondence $\mathfrak{C}$.
Note that the common boundary of $\widetilde{\mathcal{K}}$ and $\widetilde{\mathcal{T}}$ is given by 
$$
\widetilde{\Lambda}:=R^{-1}(\Lambda).
$$
We call $\widetilde{\Lambda}$ the \emph{limit set} of $\mathfrak{C}$. (See Figure~\ref{corr_fig} for an illustration.)

\begin{proposition}\label{corr_partition_1_prop}
\noindent\begin{enumerate}
\item $\eta(\widetilde{\cT})=\widetilde{\cT}$, and $\eta(\widetilde{\cK})=\widetilde{\cK}$.

\item Let $(z,w)\in\mathfrak{C}$. Then $z\in\widetilde{\cT}$ (respectively, $z\in\widetilde{\mathcal{K}}$) if and only if $w\in\widetilde{\cT}$ (respectively, $w\in\widetilde{\mathcal{K}}$).
\end{enumerate}
\end{proposition}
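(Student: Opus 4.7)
The plan is to exploit two ingredients: the semi-conjugacy $F(R(z)) = R(\eta(z))$ for $z\in\overline{\mathfrak{D}}$ coming from Corollary~\ref{main_hyp_comp_mating_class_cor}, and the fact that $F$ preserves the partition $\widehat{\C} = \cK \sqcup \cT$ in the preimage sense, i.e., $F^{-1}(\cT)\subset\cT$ and $F^{-1}(\cK)\subset\cK$. The latter holds because $F$ restricts to a self-map of $\cK$ (being conjugate to $P:\cK(P)\to\cK(P)$), while on $\cT\cap\overline{\Omega}$ it is conjugate via $\mathfrak{X}_\Gamma$ to $A_\Gamma^{\mathrm{fBS}}$, which by the open mapping theorem sends interior points of $\mathcal{D}_\Gamma$ into the open disk $\D$. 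Together with the observation that $\eta$ swaps $\mathfrak{D}$ with $\widehat{\C}\setminus\overline{\mathfrak{D}}$ while fixing $\partial\mathfrak{D}$ setwise, this should directly yield Part~(1).

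For Part~(1), I would take $z\in\widetilde{\cT}=R^{-1}(\cT)$ and split into cases. If $z\in\overline{\mathfrak{D}}$, then $R(z)\in R(\overline{\mathfrak{D}})=\overline{\Omega}$, so $R(z)\in\cT\cap\overline{\Omega}$, and the semi-conjugacy gives $R(\eta(z))=F(R(z))\in\cT$; hence $\eta(z)\in\widetilde{\cT}$. If instead $z\in\widehat{\C}\setminus\overline{\mathfrak{D}}$, then $\eta(z)\in\overline{\mathfrak{D}}$, and applying the semi-conjugacy at $\eta(z)$ gives $F(R(\eta(z)))=R(\eta(\eta(z)))=R(z)\in\cT$, from which $R(\eta(z))\in F^{-1}(\cT)\subset\cT$, i.e., $\eta(z)\in\widetilde{\cT}$. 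Since $\eta$ is an involution, this proves $\eta(\widetilde{\cT})=\widetilde{\cT}$; the identical case analysis with $\cK$ replacing $\cT$ yields $\eta(\widetilde{\cK})=\widetilde{\cK}$.

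Part~(2) will then follow at once from the defining equation \eqref{corr_eqn}: any $(z,w)\in\mathfrak{C}$ satisfies $R(w)=R(\eta(z))$, so $w\in\widetilde{\cT}$ iff $R(w)\in\cT$ iff $R(\eta(z))\in\cT$ iff $\eta(z)\in\widetilde{\cT}$ iff $z\in\widetilde{\cT}$, where the final step is Part~(1). The same chain of equivalences proves the statement for $\widetilde{\cK}$.

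The only subtlety I anticipate is keeping track of where the limit set $\Lambda$ sits relative to the partition --- since $\Lambda\subset\cK$ while $\cT$ is open --- and of the fact that $F$ is only defined on $\overline{\Omega}$. Both are handled by the observation that $R(\overline{\mathfrak{D}})=\overline{\Omega}=\mathrm{Dom}(F)$, so the semi-conjugacy is automatically applied at points in the domain of $F$. Beyond this minor bookkeeping, the argument is essentially an immediate consequence of the algebraic relation $F\circ R = R\circ\eta$ and the $F$-invariance of the partition $\cK\sqcup\cT$.
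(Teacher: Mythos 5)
Your proof is correct and follows essentially the same route as the paper's: the identity $F\circ R=R\circ\eta$ on $\overline{\mathfrak{D}}$, a case split according to whether the point lies in $\overline{\mathfrak{D}}$ or its complement, and the forward/backward invariance of $\cK$ and $\cT$ under $F$. Your only (harmless) deviation is deducing part~(2) directly from part~(1) via the chain $R(w)=R(\eta(z))$, where the paper repeats the case analysis instead; this is a minor streamlining, not a different argument.
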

\begin{proof}
1) It suffices to show that $\eta(\widetilde{\mathcal{K}})=\widetilde{\mathcal{K}}$. 

Let us fist assume that $z\in\overline{\mathfrak{D}}\cap\widetilde{\mathcal{K}}$. Then, $R(z)\in\mathcal{K}$ and $R(\eta(z))=F(R(z))$. As $\mathcal{K}$ is invariant under $F$, it follows that $R(\eta(z))\in\mathcal{K}$. We conclude that $\eta(z)\in\widetilde{\mathcal{K}}$.

Next let $z\in\widetilde{\mathcal{K}}\setminus\overline{\mathfrak{D}}$. Then, $R(z)\in\mathcal{K}$ and $F(R(\eta(z)))=R(z)$. As $\mathcal{K}$ is backward invariant under $F$, it follows that $R(\eta(z))\in\mathcal{K}$. We conclude that $\eta(z)\in\widetilde{\mathcal{K}}$.

2) It suffices to show that if $z\in\widetilde{\cT}$ (respectively, if $z\in\widetilde{\mathcal{K}}$), then $w\in\widetilde{\cT}$ (respectively, $w\in\widetilde{\mathcal{K}}$). 

To this end, first suppose that $z\in\overline{\mathfrak{D}}$, which implies that $R(w)=R(\eta(z))=F(R(z))$. Now let $z\in\widetilde{\cT}$ (respectively, $z\in\widetilde{\mathcal{K}}$). The $F-$invariance of $\cT$ (respectively, $\mathcal{K}$) implies that $R(w)\in\cT$ (respectively, $R(w)\in\mathcal{K}$). Hence, $w\in\widetilde{\cT}$ (respectively, $w\in\widetilde{\mathcal{K}}$). 

Next suppose that $z\in\widehat{\C}\setminus\overline{\mathfrak{D}}$, which implies that $F(R(w))=R(z)$. Now let $z\in\widetilde{\cT}$ (respectively, $z\in\widetilde{\mathcal{K}}$). The backward invariance of $\cT$ (respectively, $\mathcal{K}$) under $F$ implies that $R(w)\in\cT$ (respectively, $R(w)\in\mathcal{K}$). Hence, $w\in\widetilde{\cT}$ (respectively, $w\in\widetilde{\mathcal{K}}$). 
\end{proof}

By Corollary~\ref{crit_pnt_r_cor}, there is a unique critical value of $R$ in $\cT$ when $n\geq 2$ and no critical value when $n=1$. Moreover, the fiber of this critical value (under $R$) consists of $p$ distinct points, each of which is a critical point of multiplicity $n-1$. As $\cT$ is simply connected, a routine application of the Riemann-Hurwitz formula on the branched covering $R:\widetilde{\cT}\to\cT$ shows that $\widetilde{\cT}$ is the union of $p$ disjoint open topological disks $U_0,\cdots,U_{p-1}$. We can enumerate these components so that $\eta(U_i)=U_{p-1-i}$, $i\in\Z/p\Z$.
Moreover, each $U_i$ contains a unique critical point (of multiplicity $n-1$) of $R$ and maps onto $\cT$ with degree $n$. We now study the topology of $\Cl{\widetilde{\cT}}$ (the topological closure of $\widetilde{\cT}$ in $\widehat{\C}$).

\begin{figure}[h!]
\captionsetup{width=0.96\linewidth}
\begin{tikzpicture}
\node[anchor=south west,inner sep=0] at (0,0) {\includegraphics[width=1\textwidth]{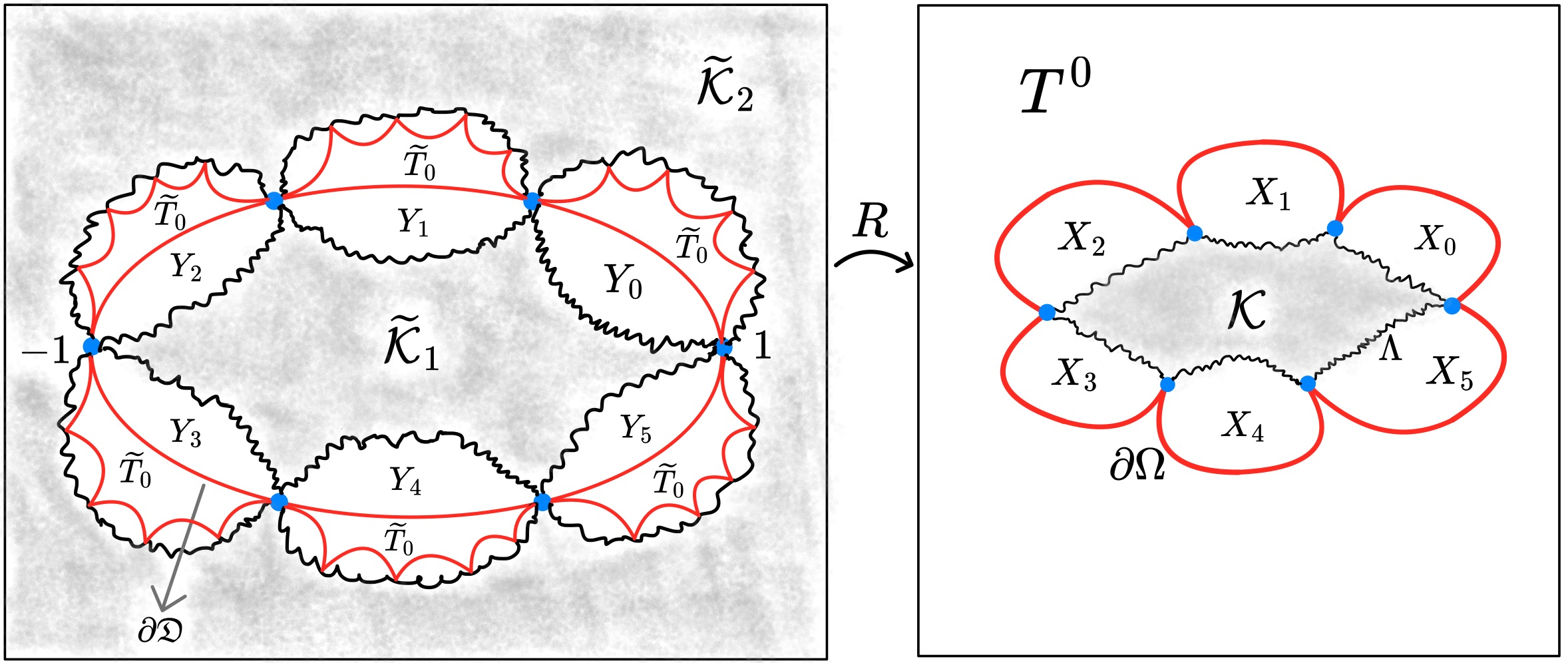}}; 
\end{tikzpicture}
\caption{The dynamical planes of $F$ and $\mathfrak{C}$ are displayed, where $P\in\mathcal{H}_5$ and $\Gamma\in\mathrm{Teich}(\pmb{\Gamma_{1,6}})$.}
\label{corr_fig}
\end{figure}
\noindent\textbf{Figure~\ref{corr_fig}.} Right: The dynamical plane of the conformal mating $F$ of some $P\in\mathcal{H}_{5}$ and some $\Gamma\in\mathrm{Teich}(\pmb{\Gamma_{1,6}})$ is depicted. The domain of definition $\overline{\Omega}$ of the mating $F$ is the bounded region enclosed by the red Jordan curve, and its exterior is $T^0$. The blue points on $\partial T^0$ comprise $S_F$. The non-escaping set $\cK$ of $F$ is shaded in gray. The components $X_1,\cdots,X_6$ of $\overline{\mathcal{T}}\setminus T^0=\overline{\Omega}\setminus\Int{\cK}$ are marked. Left: The dynamical plane of $\mathfrak{C}$ is shown. The domain $\mathfrak{D}$ is the bounded Jordan disk enclosed by the $\eta-$invariant, piecewise analytic (oval shaped) red curve marked as $\partial\mathfrak{D}$. The six blue points marked on $\partial\mathfrak{D}$ constitute $\widetilde{S}$. The part of the non-escaping set of $\mathfrak{C}$ inside $\overline{\mathfrak{D}}$ (respectively, outside $\mathfrak{D}$) is shaded in gray and marked as $\widetilde{\mathcal{K}}_1$ (respectively, as $\widetilde{\mathcal{K}}_2$); it is carried by $R$ univalently (respectively, as a $5:1$ branched cover) onto $\mathcal{K}$. The sets $\widetilde{\mathcal{K}}_1$ and $\widetilde{\mathcal{K}}_2$ intersect in $\widetilde{S}$. On the other hand, the tiling set of $\mathfrak{C}$ is the union of six Jordan domains, each of which is bounded by a black curve. These black curves constitute the limit set $\widetilde{\Lambda}=R^{-1}(\Lambda)$ of the correspondence $\mathfrak{C}$. The six components of $R^{-1}(T^0)$, one in each component of $\widetilde{\cT}$, are marked as $\widetilde{T^0}$. Finally, the closed topological disks $Y_i=(R\vert_{\overline{\mathfrak{D}}})^{-1}(X_i)$ are marked.

\begin{lemma}\label{lifted_tiling_top_lem}
\noindent\begin{enumerate}
\item $\displaystyle\Cl{\widetilde{\cT}}=\bigcup_{i=0}^{p-1} \overline{U_i}$ is connected, where each $U_i$ is a Jordan domain that is mapped with degree $n$ onto $\mathcal{T}$ by $R$,
\item  $\overline{U_i}\cap \overline{U}_{i+1}$ is a single point belonging to $\widetilde{S}$, and
\item $\overline{U_i}\cap \overline{U_j}=\emptyset$ if $\vert j-i\vert\neq1$.
\end{enumerate}
Here, $i,j\in\Z/p\Z$.
\end{lemma}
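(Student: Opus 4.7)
The plan is to proceed in four steps: first count the components of $\widetilde{\cT}$ via Riemann-Hurwitz; next upgrade each component to a Jordan domain; then analyze the local picture at each point of $\widetilde{S}$ to identify touching components; and finally match those adjacencies to the cyclic order on $\partial\mathfrak{D}$. For the counting step, note that since $P\in\mathcal{H}_d$ (with $d=np-1$), $\mathcal{J}(P)$ is a Jordan curve and no external rays are identified, so $\mathcal{L}_P=\emptyset$, $\Lambda$ is a Jordan curve, and $\cT$ is a Jordan domain. Proposition~\ref{crit_pnt_r_sharp_prop} tells me that $R$ has exactly $p$ critical points in $\widetilde{\cT}$, each of multiplicity $n-1$, all mapped to the single critical value $v=\mathfrak{X}_\Gamma(0)\in\cT$; these $p$ critical points contribute $pn=np$ sheets over $v$ and therefore fill the entire fiber $R^{-1}(v)$. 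Any component of $\widetilde{\cT}$ containing no critical point would then produce an extra non-critical preimage of $v$, a contradiction. Combined with the Riemann-Hurwitz computation $\chi(\widetilde{\cT})=np\cdot 1-p(n-1)=p$ and the bound $\chi(U)\leq 1$ for planar domains, this forces exactly $p$ simply connected components $U_0,\ldots,U_{p-1}$, each containing one critical point, and $R|_{U_i}\colon U_i\to\cT$ is a proper degree $n$ branched cover totally ramified at that critical point. The $\eta$-symmetry (with $\eta(\mathfrak{D})=\widehat{\C}\setminus\overline{\mathfrak{D}}$) together with $\mathrm{crit}(R)\cap\mathfrak{D}=\emptyset$ then forces each $U_i$ to meet $\mathfrak{D}$, hence to contain exactly one of the $p$ pockets of $\mathfrak{D}\cap\widetilde{\cT}$.

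For the Jordan step, the key input is that $\cT$ itself is a Jordan domain. A Riemann map $\phi\colon\overline{\D}\to\overline{\cT}$ sending $0$ to $v$ is a boundary homeomorphism by Carath\'eodory (since $\Lambda$ is locally connected), and the unique connected degree $n$ unbranched cover of the annulus $\cT\setminus\{v\}$ yields a conformal isomorphism $\psi\colon\D\to U_i$ satisfying $R\circ\psi=\phi\circ z^n$. Continuity of $R$ on $\widehat{\C}$ and compactness give a continuous extension $\psi\colon\overline{\D}\to\overline{U_i}$; injectivity of this extension on $\mathbb{S}^1$ is delivered by the boundary ramification data of Proposition~\ref{crit_pnt_r_sharp_prop}, which places every boundary critical point of $R|_{U_i}$ in $\widetilde{S}$ with multiplicity one, matching the simple boundary branching of $z\mapsto z^n$. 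Hence $\overline{U_i}$ is a closed Jordan disk and $U_i$ is a Jordan domain.

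For the local analysis at $\widetilde{S}$, each $s\in\widetilde{S}$ is a simple critical point of $R$ (the count in Corollary~\ref{crit_pnt_r_cor} forces the total ramification on $\partial\mathfrak{D}$ to equal $p$ distributed over $p$ points), so in a conformal chart $R$ acts as $z\mapsto z^2$. The Jordan curve $\Lambda$ is a smooth analytic arc near $R(s)\in S_F$, so its preimage $\widetilde{\Lambda}$ near $s$ is a transverse union of two smooth arcs, dividing a neighborhood of $s$ into four sectors alternating between $\widetilde{\cT}$ and $\widetilde{\cK}$. The two $\widetilde{\cT}$-sectors at $s$ must lie in \emph{distinct} components of $\widetilde{\cT}$: otherwise some $\partial U_i$ would self-touch at $s$, contradicting its being a Jordan curve. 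Conversely, if two distinct $U_i,U_j$ accumulate at a point $q$, then $q\in\widetilde{\Lambda}$ and two components of $\widetilde{\cT}$ meet there; $q$ must be a critical point of $R$ (else $\widetilde{\Lambda}$ is locally a single smooth arc with $\widetilde{\cT}$ on one side only), and Proposition~\ref{crit_pnt_r_sharp_prop} identifies such boundary critical points as exactly $\widetilde{S}$. Therefore $\overline{U_i}\cap\overline{U_j}\subset\widetilde{S}$ for $i\neq j$, and at each $s\in\widetilde{S}$ precisely two components meet.

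To finish, I would cyclically label $\widetilde{S}=\{s_0,\ldots,s_{p-1}\}$ in the order they appear along $\partial\mathfrak{D}$. Since $R|_{\overline{\mathfrak{D}}}\colon\overline{\mathfrak{D}}\to\overline{\Omega}$ is a homeomorphism, the $p$ arcs of $\partial\mathfrak{D}$ between consecutive $s_j$'s correspond to the $p$ arcs of $\partial\Omega$ between cusps of $S_F$, each bounding one pocket of $\Omega\cap\cT$. Reindexing the $U_i$'s so that the pocket immediately before $s_j$ (along $\partial\mathfrak{D}$) lies in $U_j$, the two $\widetilde{\cT}$-sectors at $s_j$ identified in the previous step lie in the two pockets flanking $s_j$, hence in $U_j$ and $U_{j+1}$. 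This yields $\overline{U_i}\cap\overline{U_{i+1}}=\{s_i\}$ and $\overline{U_i}\cap\overline{U_j}=\emptyset$ for $|i-j|\neq 1\pmod p$, and the resulting chain of $p$ Jordan disks is automatically connected, proving statement (1). The hardest step is the Jordan-domain upgrade: a simply connected component of an open set need not have Jordan boundary in general, and the argument leans crucially on the Jordan structure of $\cT$ together with the explicit multiplicity-one ramification at $\widetilde{S}$. Once this is in place, the remaining assertions fall out of the local $z^2$-model and the manifest cyclic order on $\partial\mathfrak{D}$.
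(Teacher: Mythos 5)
Your overall route (Riemann--Hurwitz count of components, abstract uniformization of each $U_i$ as a cyclic branched cover, then local analysis at $\widetilde{S}$) is genuinely different from the paper's, but as written it has a gap at exactly the point you yourself flag as the hardest. The Jordan-domain upgrade hinges on injectivity of the boundary extension $\psi\colon\mathbb{S}^1\to\partial U_i$, and the only genuine obstruction to injectivity is a self-touch of $\overline{U_i}$ at a point $s\in\widetilde{S}$, i.e.\ a single component of $\widetilde{\cT}$ occupying both local $\widetilde{\cT}$-sectors at $s$. Your justification of injectivity (``boundary ramification data \ldots matching the simple boundary branching of $z\mapsto z^n$'') does not rule this out: an abstract simply connected domain mapping onto a Jordan domain as a degree-$n$ cover branched over one interior point can perfectly well have a self-touching, non-Jordan closure at a boundary critical point. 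You then exclude precisely this scenario in your third step by appealing to the Jordan property of $\partial U_i$, so the argument is circular at its crux. A separate error is the assertion that $\Lambda$ is a smooth analytic arc near $R(s)\in S_F$: this is false in general, since $\Lambda$ is a fractal welding curve (only $\partial\Omega\setminus S_F$ is analytic, and it is not contained in $\Lambda$). The alternating four-sector picture you want does survive, but only by a purely topological argument: choose a small Jordan neighborhood $W$ of $R(s)$ in which $\Lambda$ is a crosscut and pull back the two complementary pieces under the local degree-two branched cover; each piece lifts to two sectors because it is simply connected and omits the critical value.

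The paper's proof avoids both issues by arguing globally rather than locally: it decomposes $\Cl{\cT}\setminus T^0$ into the $p$ closed Jordan pockets $X_0,\dots,X_{p-1}$ touching consecutively at points of $S_F$, transports them to closed Jordan disks $Y_i\subset\overline{\mathfrak{D}}$ via the homeomorphism $(R\vert_{\overline{\mathfrak{D}}})^{-1}$, and then writes $\Cl{\widetilde{\cT}}=\bigl(\bigcup_i Y_i\bigr)\cup\eta\bigl(\bigcup_i Y_i\bigr)$ using $\eta$-invariance of $\widetilde{\cT}$ and of $\widetilde{S}$; each $\overline{U_i}$ is two such disks glued along the closure of a component of $\partial\mathfrak{D}\setminus\widetilde{S}$, and the adjacency pattern of the $U_i$ at $\widetilde{S}$ is inherited from that of the $X_i$ at $S_F$. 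If you want to keep your local route, the missing input is in fact already available from your first step: each $U_i$ contains exactly one of the $p$ components of $\widetilde{\cT}\cap\mathfrak{D}$, and since $R^{-1}(T^0)$ is disjoint from $\overline{\mathfrak{D}}$, the two $\widetilde{\cT}$-sectors at $s$ contain pieces of the two distinct components of $\widetilde{\cT}\cap\mathfrak{D}$ flanking $s$ along $\partial\mathfrak{D}$; hence they lie in distinct components $U_i$, which gives boundary injectivity without presupposing that $\partial U_i$ is a Jordan curve. With that repair (and the topological version of the sector analysis) your outline can be made to work, but as submitted the key step is not proved.
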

\begin{proof}
We set $T^0:=\widehat{\C}\setminus\overline{\Omega}$. Note that $\overline{\mathcal{T}}\setminus T^0$ is a connected set. In fact, it is the union of $p$ closed topological disks (i.e., closures of Jordan domains) $X_0,\cdots,X_{p-1}$ such that 
\begin{itemize}
\item $\partial X_i\cap\partial T^0 = R(\overline{U_i}\cap \partial\mathfrak{D})$,
\item $X_i\cap X_{i+1}$ is a single point belonging to $S_F$, and
\item $X_i\cap X_j=\emptyset$ if $\vert j-i\vert\neq1$, where $i,j\in\Z/p\Z$.
\end{itemize}
(See Figure~\ref{corr_fig} (right).)
The inverse branch $(R\vert_{\overline{\mathfrak{D}}})^{-1}$ carries each $X_i$ to a closed topological disk $Y_i$ in $\overline{\mathfrak{D}}$. The boundary of each of these pulled back closed disks $Y_i$ contains (the closure of) a component of $\partial\mathfrak{D}\setminus\widetilde{S}$. Hence the boundary of $(R\vert_{\overline{\mathfrak{D}}})^{-1}(\overline{\mathcal{T}}\setminus T^0)=\cup_{i=1}^p Y_i$ contains all of $\partial\mathfrak{D}$. In fact, $(R\vert_{\overline{\mathfrak{D}}})^{-1}(\overline{\mathcal{T}}\setminus T^0)$ contains a relative neighborhood in $\overline{\mathfrak{D}}$ of each point in $\partial\mathfrak{D}\setminus\widetilde{S}$ (see Figure~\ref{corr_fig} (left)). 

Since $\eta(\widetilde{\mathcal{T}})=\widetilde{\mathcal{T}}$ and $\Cl{\widetilde{\mathcal{T}}}\cap\overline{\mathfrak{D}}= (R\vert_{\overline{\mathfrak{D}}})^{-1}(\overline{\mathcal{T}}\setminus T^0)$, we conclude that
$$
\Cl{\widetilde{\mathcal{T}}}\ =\ (R\vert_{\overline{\mathfrak{D}}})^{-1}(\overline{\mathcal{T}}\setminus T^0)\ \bigcup\ \eta\left((R\vert_{\overline{\mathfrak{D}}})^{-1}(\overline{\mathcal{T}}\setminus T^0)\right).
$$
Finally, that fact that $\widetilde{S}$ is $\eta-$invariant implies that
\begin{itemize}
\item  $\overline{U_i}\cap \overline{U}_{i+1}$ is a single point belonging to $\widetilde{S}$,
\item $\overline{U_i}\cap \overline{U_j}=\emptyset$ if $\vert j-i\vert\neq1$, where $i,j\in\Z/p\Z$, and
\item $\Cl{\widetilde{\mathcal{T}}}=\bigcup_{i=1}^p \overline{U_i}$ is connected.
\end{itemize}
(See Figure~\ref{corr_fig} (left).) 
\end{proof}

The main result of this subsection is the following theorem, which ties up our framework of mating factor Bowen-Series maps of genus zero orbifolds with polynomials in principal hyperbolic components with the Bullett-Penrose mating phenomenon. We recall the notation $\widehat{\Gamma}=\langle\Gamma,M_\omega\rangle$ from Section~\ref{factor_bs_gen_subsec}.

\begin{theorem}\label{corr_mating_thm_1}
The correspondence $\mathfrak{C}$ defined by Equation~\eqref{corr_eqn} is a mating of $P$ and $\Sigma:=\faktor{\D}{\widehat{\Gamma}}$ in the following sense.
\begin{enumerate}
\item The dynamics of $\mathfrak{C}$ on $\widetilde{\cT}$ is equivalent to the action of a group 
$$
\langle\eta\rangle\ast\langle\tau\rangle\ \cong\ \Z/2\Z\ast\Z/(np)\Z
$$ 
of conformal automorphisms of $\widetilde{\cT}$. Here, $\tau$ is a conformal automorphism of $\widetilde{\cT}$ of order $np$ such that $\tau^p$ induces an order $n$ conformal automorphism on each component of $\widetilde{\cT}$.

Moreover, the above group action is properly discontinuous, and the quotient orbifold $\faktor{\widetilde{\cT}}{\mathfrak{C}}$ is biholomorphic to $\Sigma$.

\item The correspondence $\mathfrak{C}$ has a forward branch carrying $\widetilde{\mathcal{K}}\cap\overline{\mathfrak{D}}$ onto itself with degree $np-1$. This branch is topologically conjugate to $P:\mathcal{K}(P)\to \mathcal{K}(P)$, and the restriction of the branch to $\Int{(\widetilde{\mathcal{K}}\cap\overline{\mathfrak{D}})}$ is conformally conjugate to $P\vert_{\Int{\cK(P)}}$. On the other hand, $\mathfrak{C}$ has a backward branch carrying $\widetilde{\mathcal{K}}\setminus\mathfrak{D}$ onto itself with degree $np-1$, and this branch is also topologically conjugate to $P:\mathcal{K}(P)~\to~\mathcal{K}(P)$ with the conjugacy being conformal on $\Int{(\widetilde{\mathcal{K}}\setminus\mathfrak{D})}$.
\end{enumerate}
\end{theorem}

The proof of this theorem will be given in the next three subsections. In Subsections~\ref{corr_group_1_subsubsec} and~\ref{lifted_tiling_quotient_subsubsec}, we will study the group structure of $\mathfrak{C}$ on $\widetilde{\cT}$ which will allow us to identify the conformal structure of the quotient orbifold $\ \faktor{\widetilde{\cT}}{\mathfrak{C}}$. In Subsection~\ref{corr_poly_1_subsubsec}, we will analyze the dynamics of suitable branches of $\mathfrak{C}$ on $\widetilde{\mathcal{K}}$, which will reveal the polynomial structure of the correspondence.

\subsubsection{Group structure in $\mathfrak{C}$}\label{corr_group_1_subsubsec}

\begin{proposition}\label{deck_prop}
There exists a conformal automorphism $\tau$ of $\widetilde{\mathcal{T}}$ such that 
$$
\tau^{np}=\mathrm{id},\ \textrm{and}\ R^{-1}(R(z))=\{z,\tau(z),\cdots,\tau^{np-1}(z)\}\ \forall\ z\in \widetilde{\mathcal{T}}.
$$ 
\end{proposition}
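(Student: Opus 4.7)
The plan is to exhibit $\tau$ concretely by normalizing each branched cover $R|_{U_i}\colon U_i\to\mathcal{T}$ to the standard model $z\mapsto z^n$, and then splicing together compatible lifts of the identity across components with a single ``monodromy twist''. Since $P\in\mathcal{H}_d$, Corollary~\ref{crit_pnt_r_cor} applies: each $R|_{U_i}\colon U_i\to\mathcal{T}$ is a proper holomorphic map of degree $n$, ramified only at a unique critical point $c_i\in U_i$ of multiplicity $n-1$, all with a common critical value $v_0\in\mathcal{T}$. As $\mathcal{T}$ is simply connected and each $U_i$ is a Jordan domain (Lemma~\ref{lifted_tiling_top_lem}), the Riemann mapping theorem produces biholomorphisms $\psi\colon\mathcal{T}\to\D$ with $\psi(v_0)=0$ and $\phi_i\colon U_i\to\D$ with $\phi_i(c_i)=0$, normalized so that $\psi\circ R|_{U_i}=\phi_i^{\,n}$. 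Thus the deck group of $R|_{U_i}$ is cyclic of order $n$, generated by the conformal rotation $\sigma_i:=\phi_i^{-1}\circ(\zeta_n\,\cdot)\circ\phi_i$, where $\zeta_n:=e^{2\pi i/n}$.

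Any biholomorphism $U_i\to U_{i+1}$ (indices mod $p$) lifting $\mathrm{id}_{\mathcal{T}}$ through the two branched covers must have the form $\phi_{i+1}^{-1}\circ(\zeta\,\cdot)\circ\phi_i$ for some $\zeta$ with $\zeta^n=1$. I would then set
\[
\tau_0\,:=\,\phi_1^{-1}\circ(\zeta_n\,\cdot)\circ\phi_0,\qquad \tau_i\,:=\,\phi_{i+1}^{-1}\circ\phi_i\ \ (1\le i\le p-1),
\]
and define $\tau\colon\widetilde{\mathcal{T}}\to\widetilde{\mathcal{T}}$ by $\tau|_{U_i}:=\tau_i$. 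By construction $\tau$ is a biholomorphism of $\widetilde{\mathcal{T}}$ commuting with $R$ that cyclically permutes the components: $\tau(U_i)=U_{i+1\bmod p}$. A telescoping computation collapses the middle terms and yields $\tau^p|_{U_0}=\phi_0^{-1}\circ(\zeta_n\,\cdot)\circ\phi_0=\sigma_0$, the generator of the order-$n$ deck group of $R|_{U_0}$. Since $\tau^p|_{U_j}$ is conjugate to $\tau^p|_{U_0}=\sigma_0$ via $\tau^j$, each $\tau^p|_{U_j}$ is likewise a generator of the deck group of $R|_{U_j}$. Hence $\tau^p$ has order exactly $n$ on every component, so $\tau$ has order exactly $np$ and $\tau^{np}=\mathrm{id}$.

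To verify that $\tau$-orbits coincide with fibers of $R$, note first that every element of the $\tau$-orbit of $z\in\widetilde{\mathcal{T}}$ lies in $R^{-1}(R(z))$ because $\tau$ commutes with $R$. For $z\notin\{c_0,\dots,c_{p-1}\}$, direct analysis using the facts that $\tau^k$ sends $U_j$ to $U_{j+k\bmod p}$ and that $\tau^{jp}|_{U_i}=\sigma_i^{\,j}$ has $c_i$ as its only fixed point shows that $\tau^k(z)\neq z$ for $0<k<np$; the orbit therefore has exactly $np$ pairwise distinct elements, which matches $\#R^{-1}(R(z))=np$ since $R\colon\widetilde{\mathcal{T}}\to\mathcal{T}$ is an unramified covering of degree $np$ away from $v_0$. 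Thus the orbit equals the fiber. For $z\in R^{-1}(v_0)=\{c_0,\dots,c_{p-1}\}$ one checks directly that $\tau(c_i)=c_{i+1\bmod p}$, so the orbit as a set is again the entire fiber.

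The main delicate point is the insertion of the factor $\zeta_n$ in the definition of $\tau_0$: without it the telescoping product collapses to the identity and $\tau$ would have order only $p$, which is insufficient for transitivity on generic fibers. This monodromy twist is precisely what promotes the $p$ separate $\mathbb{Z}/n\mathbb{Z}$-deck actions on the individual components into a single cyclic $\mathbb{Z}/np\mathbb{Z}$-action on $\widetilde{\mathcal{T}}$ that is transitive on every $R$-fiber.
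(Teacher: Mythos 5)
Your proposal is correct and takes essentially the same route as the paper's proof: both uniformize $\mathcal{T}$ and the components $U_0,\dots,U_{p-1}$ so that (after adjusting the Riemann maps by rotations) $R$ becomes the model map $w\mapsto w^n$ on each component, and then define $\tau$ as the cyclic shift of components with a single rotation by $e^{2\pi i/n}$ inserted once per cycle, exactly as in the paper's $\widetilde{\tau}$ on $\D\times\Z/p\Z$. The only differences are cosmetic — you place the monodromy twist on the $U_0\to U_1$ step rather than at the wrap-around, and you write out the order and fiber-transitivity verifications that the paper leaves as ``readily checked.''
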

\begin{proof}
Let $\pmb{\Phi}:\D\times\Z/p\Z\longrightarrow\widetilde{\cT}$ be a conformal isomorphism that sends $(0,j)$ to the unique critical point (of multiplicity $n-1$) of $R$ in $U_j$, for $j\in\Z/p\Z$. Recall that $\mathfrak{X}_\Gamma:\D\to\cT$ is a conformal isomorphism that conjugates $A_{\Gamma}^{\mathrm{fBS}}$ to $F$, and hence sends the unique critical value $0$ of $A_{\Gamma}^{\mathrm{fBS}}$ to the unique critical value of $R$ in $\cT$ (which is also the unique critical value of $F$ in $\cT$). Thus, 
$$
\widetilde{R}:=\mathfrak{X}_\Gamma^{-1}\circ R\circ\pmb{\Phi}:\D\times\Z/p\Z\longrightarrow\D
$$
is a holomorphic branched covering of degree $np$, that restricts to a degree $n$ branched covering $\D\times\{j\}\to\D$ and carries $(0,j)$ to $0$ with local degree $n$. Thus, after possibly pre-composing $\pmb{\Phi}$ with a rotation on each $\D\times\{j\}$, we can write $\widetilde{R}$ as
$$
(w,j)\mapsto w^n,\quad w\in\D,\ j\in\Z/p\Z.
$$
Let us now define a conformal automorphism 
$$
\widetilde{\tau}:\D\times\Z/p\Z\longrightarrow \D\times\Z/p\Z,\quad (w,j)\mapsto 
\begin{cases}
(w,j+1),\quad \mathrm{for}\quad j\in \{0,\cdots,p-2\}\\
(e^{\frac{2i\pi}{n}} w, 0),\quad \mathrm{for}\quad j=p-1.
\end{cases}
$$
It is readily checked that 
$$
\widetilde{\tau}^{np}=\mathrm{id},\ \textrm{and}\ \widetilde{R}^{-1}(\widetilde{R}(w,j))=\{(w,j),\widetilde{\tau}(w,j),\cdots,\widetilde{\tau}^{np-1}(w,j)\}\ \forall\ (w,j)\in \D\times\Z/p\Z.
$$ 
The desired automorphism $\tau$ of $\widetilde{\cT}$ is now given by $\pmb{\Phi}\circ\widetilde{\tau}\circ\pmb{\Phi}^{-1}$.
\end{proof}

\begin{remark}
Note that by construction, $\tau(U_j)=U_{j+1},\ j\in\Z/p\Z$. Moreover, $\tau^p$ restricts to an order $n$ conformal automorphism on each $U_j$.
\end{remark}

It follows that the forward branches of $\mathfrak{C}$ on $\widetilde{\mathcal{T}}$ are given by the conformal automorphisms $\tau\circ\eta,\cdots,\tau^{np-1}\circ\eta$.

\begin{proposition}\label{grand_orbit_group_prop}
The dynamics of $\mathfrak{C}$ on $\widetilde{\cT}$ is equivalent to the action of the group 
$$
\langle\eta\rangle\ast\langle\tau\rangle\ \cong\ \Z/2\Z\ast\Z/(np)\Z
$$ 
of conformal automorphisms of $\widetilde{\cT}$. 
\end{proposition}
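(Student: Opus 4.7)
The plan is to establish two facts: (i) the grand orbit equivalence relation of $\mathfrak{C}$ on $\widetilde{\cT}$ coincides with the orbit equivalence of $\langle \eta, \tau\rangle$ acting as conformal automorphisms, and (ii) the abstract group $\langle \eta, \tau\rangle$ is isomorphic to the free product $\Z/2\Z \ast \Z/(np)\Z$.

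For (i), first I would identify the forward branches of $\mathfrak{C}$ on $\widetilde{\cT}$. Given $z \in \widetilde{\cT}$, the defining Equation~\eqref{corr_eqn} removes the factor $w - \eta(z)$, so the points $w$ with $(z,w) \in \mathfrak{C}$ are the solutions of $R(w) = R(\eta(z))$ other than $w = \eta(z)$ itself. By Proposition~\ref{deck_prop}, these are exactly $\tau^k(\eta(z))$ for $k \in \{1, \ldots, np-1\}$. Thus the forward branches of $\mathfrak{C}$ on $\widetilde{\cT}$ are the conformal automorphisms $\tau^k \circ \eta$. A direct computation $(\tau^{k+1} \circ \eta) \circ (\tau^k \circ \eta)^{-1} = \tau$ shows that $\tau$ belongs to the group generated by these branches, whence $\eta = \tau^{-1} \circ (\tau \circ \eta)$ does as well. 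Therefore the subgroup of $\mathrm{Aut}(\widetilde{\cT})$ generated by all forward branches (and their inverses) of $\mathfrak{C}$ equals $\langle \eta, \tau\rangle$, and (i) follows.

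For (ii), since $\eta^2 = \mathrm{id}$ and $\tau^{np} = \mathrm{id}$, the universal property of free products yields a surjective homomorphism $\pi : \Z/2\Z \ast \Z/(np)\Z \twoheadrightarrow \langle \eta, \tau\rangle$; the real content is its injectivity. I would prove this via a Klein ping-pong argument: it suffices to produce disjoint subsets $X_\eta, X_\tau \subset \widetilde{\cT}$ such that $\eta(X_\tau) \subset X_\eta$ and $\tau^k(X_\eta) \subset X_\tau$ for every $k \in \{1, \ldots, np-1\}$. The natural building blocks are the Jordan components $U_0, \ldots, U_{p-1}$ from Lemma~\ref{lifted_tiling_top_lem}; however, the induced permutation action on the set $\{U_i\}$ factors through the dihedral group $D_p$, which is a proper quotient of $\Z/2\Z \ast \Z/(np)\Z$, so the ping-pong must exploit the finer internal structure of the components.

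The main obstacle is precisely this last point: one cannot rely solely on the permutation action, but must use the order-$n$ rotation $\tau^p$ of each $U_i$ together with the conformal identification $\eta\vert_{U_i} : U_i \to U_{p-1-i}$. Passing through the uniformization $\pmb{\Phi} : \D \times \Z/p\Z \to \widetilde{\cT}$ of Proposition~\ref{deck_prop}, in which $\tau$ takes an explicit affine form and $\eta$ is read off from its action on boundary cusp points, one can take $X_\tau$ to be the $\pmb{\Phi}$-image of a $\tau^p$-fundamental angular sector inside $U_0$ and $X_\eta$ to be the image of a suitably matched sector inside $U_{p-1}$. Verifying the ping-pong inclusions is then a direct computation in these coordinates, after which the standard Klein lemma produces injectivity of $\pi$ and yields the desired free product identification.
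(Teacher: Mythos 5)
Your part (i) is essentially the paper's argument: the forward branches of $\mathfrak{C}$ on $\widetilde{\cT}$ are $\tau^k\circ\eta$, $k=1,\dots,np-1$, and they generate $\langle\eta,\tau\rangle$. The gap is in part (ii), in your choice of ping-pong sets. You correctly state the required inclusions, namely $\eta(X_\tau)\subset X_\eta$ and $\tau^k(X_\eta)\subset X_\tau$ for \emph{every} $k\in\{1,\dots,np-1\}$, but with $X_\tau$ an angular sector inside $U_0$ and $X_\eta$ a sector inside $U_{p-1}$ these cannot hold. Since $\tau(U_j)=U_{j+1}$ (Proposition~\ref{deck_prop} and Lemma~\ref{lifted_tiling_top_lem}), the set $\tau^k(X_\eta)$ lies in $U_{(k-1)\bmod p}$, which is disjoint from $U_0\supset X_\tau$ unless $k\equiv 1 \pmod p$; so for $p\geq 2$ most of the required inclusions fail outright. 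Even in the cases where all images land in $U_0$ (e.g.\ $p=1$, or the residues $k\equiv 1\pmod p$), the sets $\tau^k(X_\eta)$ are images of $X_\eta$ under distinct nontrivial rotations about the critical point of $R$ in $U_0$, so they sit in distinct angular positions and cannot all be contained in a single $\tau^p$-fundamental sector of angle $2\pi/n$ once $n\geq 3$. No choice of single sectors inside two components can satisfy your conditions, so the announced ``direct computation in these coordinates'' would not go through.

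The missing ingredient is the dichotomy $\overline{\mathfrak{D}}$ versus its complement, i.e.\ the injectivity of $R$ on $\overline{\mathfrak{D}}$, which your part (ii) never uses. One correct choice of ping-pong sets is global: $X_\eta:=\widetilde{\cT}\cap\overline{\mathfrak{D}}$ and $X_\tau:=\widetilde{\cT}\setminus\overline{\mathfrak{D}}$ (each meets every component $U_i$). Then $\eta(X_\tau)\subset X_\eta$ because $\eta$ maps $\mathfrak{D}$ inside out; and if $z\in X_\eta$ had $\tau^k(z)\in\overline{\mathfrak{D}}$ for some $1\leq k\leq np-1$, then $R\circ\tau^k=R$ and injectivity of $R\vert_{\overline{\mathfrak{D}}}$ would force $\tau^k(z)=z$, which is impossible because the fixed points of the nontrivial powers $\tau^k$ are exactly the critical points of $R$ in $\widetilde{\cT}$, and these lie outside $\overline{\mathfrak{D}}$ by Corollary~\ref{crit_pnt_r_cor}; hence $\tau^k(X_\eta)\subset X_\tau$ for all such $k$, and Klein's lemma applies since $\langle\tau\rangle$ has order $np\geq 3$. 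The paper implements the same mechanism with slightly different bookkeeping: it assigns ranks to tiles (components of preimages of $\overline{T^0}$) and shows each $\tau^{k}\circ\eta$ sends a rank-$s$ tile in $\widetilde{\cT}\setminus\mathfrak{D}$ to a rank-$(s+1)$ tile in $\widetilde{\cT}\setminus\mathfrak{D}$, which excludes relations of the two normal forms $(\tau^{k_1}\circ\eta)\cdots(\tau^{k_r}\circ\eta)=\mathrm{id}$ or $=\eta$. Either way, the geometric input is the injectivity of $R$ on $\overline{\mathfrak{D}}$ (plus the location of the fixed points of $\tau^k$), not the sector combinatorics inside a single component.
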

\begin{proof}
The tiling set $\mathcal{T}$ is dynamically tessellated. We call $\overline{T^0}$ (closure taken in $\mathcal{T}$) the rank zero tile (where $T^0=\widehat{\C}\setminus\overline{\Omega}$), and connected components of $F^{-m}(\overline{T^0})$ tiles of rank $m$. A connected component of the pre-image of a rank $m$ tile of $\mathcal{T}$ under $R$ is called a rank $m$ tile of $\widetilde{\mathcal{T}}$.

We have already observed that the forward branches of $\mathfrak{C}$ on $\widetilde{\mathcal{T}}$ are given by $\tau\circ\eta,\cdots,\tau^{np-1}\circ\eta$. Furthermore, $\tau=(\tau^{2}\circ\eta)\circ(\tau\circ\eta)^{-1}$, and hence 
$$
\langle\tau\circ\eta,\cdots,\tau^{np-1}\circ\eta\rangle=\langle\eta,\tau\rangle.
$$ 
It now remains to justify that $\langle\eta,\tau\rangle$ is the free product of the cyclic groups $\langle\eta\rangle$ and $\langle\tau\rangle$. This will be done by applying a ping-pong type argument using the tiling structure of $\widetilde{\cT}$.

To this end, first note that any relation in $\langle\eta,\tau\rangle$ other than $\eta^{2}=\mathrm{id}$ and $\tau^{np}=\mathrm{id}$ can be reduced to one of the form 
\begin{equation}
(\tau^{k_1}\circ\eta)\circ\cdots\circ(\tau^{k_r}\circ\eta)=\mathrm{id},
\label{group_relation_1}
\end{equation} 
or 
\begin{equation}
(\tau^{k_1}\circ\eta)\circ\cdots\circ(\tau^{k_r}\circ\eta)=\eta,
\label{group_relation_2}
\end{equation}
where $r\geq 1$ and $k_1,\cdots,k_r\in\{1,\cdots,np-1\}$.
\smallskip

\noindent\textbf{Case 1:} Let us first assume that there exists a relation of the form (\ref{group_relation_1}) in $\langle\eta,\tau\rangle$. We claim that $(\tau^{k_j}\circ\eta)$ maps a tile $\mathfrak{T}$ of rank $s$ in $\widetilde{\mathcal{T}}\setminus \mathfrak{D}$ to a tile of rank $(s+1)$ in~$\widetilde{\mathcal{T}}\setminus \mathfrak{D}$. 
\begin{proof}[Proof of claim]
By Relation~\eqref{f_inv_lift_eqn}, we have that $F(R(\eta(z)))=R(z)$ for $z\in\widetilde{\cT}\setminus\mathfrak{D}$. Hence, $F$ maps $R(\eta(\mathfrak{T}))$ to a rank $s$ tile in $\cT$. It follows that $R(\eta(\mathfrak{T}))$ is a rank $(s+1)$ tile in $\cT$, and hence $\eta(\mathfrak{T})$ is a rank $(s+1)$ tile in $\widetilde{\cT}\cap\overline{\mathfrak{D}}$. As $R$ is injective on $\overline{\mathfrak{D}}$, the non-trivial deck transformation $\tau^{k_j}$ of $R$ carries $\eta(\mathfrak{T})$ to a tile of rank $(s+1)$ in $\widetilde{\mathcal{T}}\setminus \mathfrak{D}$.
\end{proof}

\noindent Hence, the group element on the left of Relation~(\ref{group_relation_1}) maps a tile of rank $0$ in $\widetilde{\cT}$ to a tile of rank $r\geq 1$. Clearly, such an element cannot be the identity map.
\smallskip

\noindent\textbf{Case 2:} Now we consider a relation of the form (\ref{group_relation_2}) in $\langle\tau,\eta\rangle$. Each $(\tau^{k_j}\circ\eta)$ maps $\widetilde{\mathcal{T}}\setminus\mathfrak{D}$ to itself. Hence, the group element on the left of Relation~(\ref{group_relation_2}) maps $\widetilde{\mathcal{T}}\setminus\mathfrak{D}$ to itself, while $\eta$ maps $\widetilde{\mathcal{T}}\setminus\mathfrak{D}$ to $\widetilde{\mathcal{T}}\cap\overline{\mathfrak{D}}$. This shows that there cannot exist a relation of the form (\ref{group_relation_2}) in $\langle\tau,\eta\rangle$.

We conclude that $\eta^{2}=\mathrm{id}$ and $\tau^{np}=\mathrm{id}$ are the only relations in $\langle\eta,\tau\rangle$, and hence $\langle\eta,\tau\rangle=\langle\eta\rangle\ast\langle\tau\rangle\cong\Z/2\Z\ast\Z/(np)\Z$. 
\end{proof}

\subsubsection{The quotient orbifold}\label{lifted_tiling_quotient_subsubsec}

\begin{proposition}\label{lifted_tiling_quotient_prop}
The group $\langle\eta\rangle\ast\langle\tau\rangle$ acts properly discontinuously on $\widetilde{\cT}$. Moreover, the quotient orbifold $\faktor{\widetilde{\cT}}{\langle\eta\rangle\ast\langle\tau\rangle}$ is biholomorphic to $\Sigma=\faktor{\D}{\widehat{\Gamma}}$.
\end{proposition}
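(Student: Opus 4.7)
The plan is to build a natural $G$-invariant holomorphic map $\pi\colon \widetilde{\cT}\to\Sigma$ with $G:=\langle\eta\rangle\ast\langle\tau\rangle$, show it descends to a bijection of orbit spaces, and then establish proper discontinuity by extending the ping-pong rank-tracking from the proof of Proposition~\ref{grand_orbit_group_prop}. Set $\pi:=\pi_\Sigma\circ R$, where $\pi_\Sigma\colon\cT\to\Sigma$ is the quotient of $\cT$ by the grand orbit equivalence of $F\vert_\cT$. Via the conformal conjugacy $\mathfrak{X}_\Gamma\colon\D\to\cT$ between $A_\Gamma^{\mathrm{fBS}}$ and $F\vert_\cT$, the map $\pi_\Sigma$ is identified with the orbifold universal covering $\D\to\D/\widehat{\Gamma}=\Sigma$: the grand orbits of $A_\Gamma^{\mathrm{BS}}$ on $\D$ coincide with $\Gamma$-orbits (the Bowen-Series map acts by the side-pairings of a $\Gamma$-fundamental polygon $\pmb{\Pi}$), and factoring through the $\langle M_\omega\rangle$-quotient in the passage from $A_\Gamma^{\mathrm{BS}}$ to $A_\Gamma^{\mathrm{fBS}}$ adjoins the generator completing $\Gamma$ to $\widehat{\Gamma}=\Gamma\rtimes\langle M_\omega\rangle$. $G$-invariance of $\pi$ is then immediate from Proposition~\ref{deck_prop} (giving $R\circ\tau=R$) and Relations~\eqref{f_lift_eqn},~\eqref{f_inv_lift_eqn} (expressing $R\circ\eta$ as $F^{\pm 1}\circ R$), combined with the $F$-invariance of $\pi_\Sigma$.

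For injectivity on orbit spaces, take $z,w\in\widetilde{\cT}$ with $\pi(z)=\pi(w)$, so that $R(z)$ and $R(w)$ lie in a common grand orbit of $F$: there is a finite chain $R(z)=y_0,y_1,\ldots,y_k=R(w)$ in $\cT$ with adjacent pairs linked either by $F(y_i)=y_{i+1}$ or by $F(y_{i+1})=y_i$. I lift the chain inductively, keeping the lift in the $G$-orbit of $z$: given a lift $\tilde{y}_i$, exactly one $\tau$-translate $\tau^j\tilde{y}_i$ lies in $\overline{\mathfrak{D}}$, and the $np-1$ points $\eta\tau^l\tilde{y}_i$ with $\tau^l\tilde{y}_i\notin\overline{\mathfrak{D}}$ realize, via the injectivity of $R\vert_{\overline{\mathfrak{D}}}$, all $np-1$ distinct $F$-preimages of $y_i$, while applying $\eta$ after moving $\tilde{y}_i$ into $\overline{\mathfrak{D}}$ realizes the forward step $F(y_i)=y_{i+1}$. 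In each case, a lift $\tilde{y}_{i+1}$ of $y_{i+1}$ is found in $G\cdot\tilde{y}_i$, and telescoping produces $g\in G$ with $g(z)=w$, so the induced map $\widetilde{\cT}/G\to\Sigma$ is a bijection.

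For proper discontinuity, I pull the dynamical tessellation of $\cT$ (rank $0$ tile $\overline{T^0}$, rank-$m$ tiles being components of $F^{-m}(\overline{T^0})$) back through $R$ to a tessellation of $\widetilde{\cT}$. By the rank-tracking argument in the proof of Proposition~\ref{grand_orbit_group_prop}, each block $\tau^k\eta$ with $k\neq 0$ sends a rank-$s$ tile in $\widetilde{\cT}\setminus\mathfrak{D}$ strictly to a rank-$(s+1)$ tile in the same region, while an $\eta$ or $\tau^k$ at the beginning or end of a reduced word shifts tile rank by at most $1$; hence any reduced word of length $\ell$ shifts tile rank by a quantity comparable to $\ell$. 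A compact $K\subset\widetilde{\cT}$ meets only finitely many tiles of rank at most some $M$, so any $g\in G$ of sufficiently large reduced length must map $K$ into tiles of rank exceeding $M$, which are disjoint from $K$ apart from the finitely many cusp points in $\widetilde{S}$; since $G$ has only finitely many reduced words of bounded length, $\{g\in G:gK\cap K\neq\emptyset\}$ is finite. Proper discontinuity then equips $\widetilde{\cT}/G$ with a Riemann orbifold structure and forces the holomorphic $G$-invariant map $\pi$ to descend to a biholomorphism of orbifolds $\widetilde{\cT}/G\to\Sigma$. The main obstacle I anticipate is the uniform rank-tracking across all reduced word types: the proof of Proposition~\ref{grand_orbit_group_prop} only controlled two specific families (\eqref{group_relation_1},~\eqref{group_relation_2}), and the interplay between $\tau$'s permutation of the sheets $U_j$ and $\eta$'s inside/outside swap (especially near the cusp points of $\widetilde{S}$, where distinct sheets share boundary and several tile ranks coexist) requires careful book-keeping to rule out anomalous cancellations.
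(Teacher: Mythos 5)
Your strategy (an invariant holomorphic map $\pi=\pi_\Sigma\circ R$, a chain-lifting argument identifying $G$-orbits with grand orbits of $F\vert_{\cT}$, then proper discontinuity via rank growth) is genuinely different from the paper's proof, which instead lifts $\mathfrak{X}_\Gamma$ to a conformal isomorphism $\widetilde{\mathfrak{X}_\Gamma}\colon\D\to U_0$, exhibits the sector $\mathfrak{S}=\widetilde{\mathfrak{X}_\Gamma}(\psi_\rho(\widehat{\pmb{\Pi}}))$ as a closed fundamental domain for the stabilizer $G_0$ of $U_0$ (transitivity of $\langle\tau\rangle$ on the $U_i$ reduces everything to $U_0$), and reads off the side pairings, which are conjugate to $\rho(g_{1,1}),\dots,\rho(g_{1,p}),M_\omega$; this gives proper discontinuity and the identification of the quotient with $\faktor{\D}{\widehat{\Gamma}}$ in one stroke. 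Your orbit-bijection step is sound in outline, but the proper-discontinuity step has a genuine gap, which you yourself flag: the assertion that every reduced word of length $\ell$ shifts tile rank by an amount comparable to $\ell$ is not proved, and in fact it is false.

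The reason is the asymmetry of $\eta$: since $R\circ\eta=F\circ R$ on $\overline{\mathfrak{D}}$ and $F\circ R\circ\eta=R$ off $\overline{\mathfrak{D}}$, the involution $\eta$ raises the rank of a tile lying in $\widetilde{\cT}\setminus\mathfrak{D}$ by one but lowers the rank of a tile lying in $\widetilde{\cT}\cap\overline{\mathfrak{D}}$ by one, while powers of $\tau$ preserve rank and merely permute the $np$ translates, exactly one of which lies in $\overline{\mathfrak{D}}$ (for tiles of rank $\geq 1$). Starting from a tile of large rank $s$ inside $\overline{\mathfrak{D}}$ and choosing at each stage the unique nontrivial power of $\tau$ that returns the current tile to $\overline{\mathfrak{D}}$, one produces a reduced word of length about $2a$ lowering rank by $a$; following it by a product of blocks $\tau^{k}\circ\eta$ (prefixed by a $\tau$-power that keeps the tile outside, so the concatenation stays reduced) raises the rank back, yielding arbitrarily long reduced words whose net rank shift on suitable tiles is bounded. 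So rank shift is not comparable to word length, and your finiteness conclusion for $\{g: gK\cap K\neq\emptyset\}$ does not follow as written. The argument can be repaired by the one-way feature you did not isolate: after an $\eta$-step applied outside $\mathfrak{D}$ the image lies in $\overline{\mathfrak{D}}$, and every nontrivial $\tau$-power then expels it, so once an $\eta$-step raises rank all subsequent $\eta$-steps do too; hence the number of rank-decreasing steps along a reduced word is bounded by the rank of the initial tile, and a compact $K\subset\widetilde{\cT}$ (meeting only finitely many tiles, all of rank at most $M$) can satisfy $gK\cap K\neq\emptyset$ only for words of length $O(M)$. You would also need to justify, at the end, that the descended bijection respects the orbifold structures (the isotropy of $G$ at the critical points of $R$ in $\widetilde{\cT}$ versus the order $n$ and order two points of $\Sigma$), and that the identification of the grand-orbit quotient of $F\vert_{\cT}$ with $\faktor{\D}{\widehat{\Gamma}}$ is the orbit-equivalence statement for the factor Bowen-Series map; the paper's fundamental-domain route obtains both of these for free from the side-pairing identification.
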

\begin{proof}
We set $\widetilde{T^0}:=R^{-1}(T^0)$. 
Note that $\widetilde{T^0}$ consists of $p$ components, one in each $U_i$, $i\in\Z/p\Z$.
Further, let 
$$
G_0:=\{f\in\langle\eta\rangle\ast\langle\tau\rangle: f(U_0)=U_0\}
$$
be the stabilizer subgroup of $U_0$ in $\langle\eta\rangle\ast\langle\tau\rangle$. As the cyclic group $\langle\tau\rangle$ acts transitively on the components of $\widetilde{\cT}$, it suffices to show that $G_0$ acts properly discontinuously on $U_0$ and that $\faktor{U_0}{G_0}$ is biholomorphic to $\Sigma$. 

Note that the maps $\xi$ and $R$ in the vertical arrows of the following commutative diagram are degree $n$ branched coverings. 
\[
 \begin{tikzcd}
 \left(\D,0\right)    \arrow{d}[swap]{\mathrm{\xi:w\mapsto w^n}} \arrow{r}{\widetilde{\mathfrak{X}_\Gamma}} &  \left(U_0,x_0\right) \arrow{d}{R} \\
\left(\D,0\right)   \arrow{r}{\mathfrak{X}_{\Gamma}}  & \left(\cT,R(x_0)\right)
\end{tikzcd}
\]
Moreover, $\xi$ (respectively, $R$) has an $(n-1)-$fold critical point at $0$ (respectively, at $x_0$) with the associated critical value at $0$ (respectively, at $R(x_0)$). Recall also that $0\in\D$ (respectively, $R(x_0)$) is the unique critical value of $A_{\Gamma}^{\mathrm{fBS}}$ (respectively, of $F$ in $\cT$). Since the conformal map $\mathfrak{X}_{\Gamma}$ conjugates $A_{\Gamma}^{\mathrm{fBS}}$ to $F$, it follows that $\mathfrak{X}_{\Gamma}$ sends $0$ to $R(x_0)$. Hence, $\mathfrak{X}_{\Gamma}$ lifts to a conformal isomorphism $\widetilde{\mathfrak{X}_\Gamma}:\D\to U_0$ that maps $0$ to $x_0$. 

By construction, $\widetilde{\mathfrak{X}_\Gamma}$ maps $\Int{\psi_\rho(\pmb{\Pi})}$ conformally onto $\widetilde{T^0_{U_0}}:=\widetilde{T^0}\cap U_0$. After possibly pre-composing $\widetilde{\mathfrak{X}_\Gamma}$ with a power of $M_\omega$, we can assume that $\widetilde{\mathfrak{X}_\Gamma}$ takes the bi-infinite geodesic $\psi_\rho(C_{1,1})\subset \partial\psi_\rho(\pmb{\Pi})$ onto $\partial\mathfrak{D}\cap U_0 \subset\partial\widetilde{T^0_{U_0}}$. Since $\tau^p$ restricts to an order $n$ automorphism of the $np-$gon $\widetilde{T^0_{U_0}}$ that fixes the unique critical point $x_0$ of $R$ (of multiplicity $n-1$) in $U_0$ and has derivative $\omega$ at this fixed point (this follows from the construction of $\tau$ in Proposition~\ref{deck_prop}), the above construction implies that $\widetilde{\mathfrak{X}_\Gamma}$ conjugates $M_\omega$ to $\tau^p$.

Let us set
$$
\mathfrak{S}:=\widetilde{\mathfrak{X}_\Gamma}(\psi_\rho(\widehat{\pmb{\Pi}}))\subseteq \widetilde{T^0_{U_0}},
$$
where $\widehat{\pmb{\Pi}}$ is the fundamental domain of $\widehat{\pmb{\Gamma}}_{n,p}=\pmb{\Gamma_{n,p}}\rtimes\langle M_\omega\rangle$ introduced in Subsection~\ref{factor_bs_gen_subsec}.
The set $\mathfrak{S}$ is a closed sector (in the topology of $U_0$) based at $x_0$ whose sides are geodesics in the hyperbolic metric of $U_0$. Moreover, $R$ is injective on the interior of $\mathfrak{S}$ and maps the two geodesics emanating from $x_0$ to the line segment $\mathfrak{X}_\Gamma(0,1)$ in $\cT$.

It is not hard to see using the actions of the generators $\tau^{j}\circ\eta$ of the group $\langle\eta\rangle\ast\langle\tau\rangle$ on the tiles of $\widetilde{\cT}$ that $\mathfrak{S}$ is a closed fundamental domain for the $G_0-$action on $U_0$. In particular, $G_0$ acts properly discontinuously on $U_0$.

We now proceed to identify the quotient $\faktor{U_0}{G_0}$. Each component $U_i$ of $\widetilde{\cT}$ is stabilized by some $\tau^j\circ\eta$. All these maps, conjugated by suitable powers of $\tau$, give elements of $G_0$ that act as side-pairing transformations on the boundary of the $np-$gon $\widetilde{T^0_{U_0}}$. Combined with the map $\tau^p$, (a subset of) these maps pair the sides of $\mathfrak{S}$. Finally, $\widetilde{\mathfrak{X}_\Gamma}$ conjugates these side-pairing transformations for the sector $\mathfrak{S}$ to the side-pairing transformations $\rho(g_{1,1}),\cdots,\rho(g_{1,p}), M_\omega$ for the fundamental domain $\psi_\rho(\widehat{\pmb{\Pi}})$ of $\widehat{\Gamma}=\Gamma\rtimes\langle M_\omega\rangle$.
It now follows that $\ \faktor{U_0}{G_0}$ is biholomorphic to the quotient~$\ \faktor{\D}{\widehat{\Gamma}}$.
\end{proof}

\begin{remark}
Not just the group $\widehat{\Gamma}$, the representation $\widehat{\rho}:\widehat{\pmb{\Gamma}}_{n,p}\to\widehat{\Gamma}$ (see Section~\ref{factor_bs_gen_subsec}) is also recovered from $\mathfrak{C}$ via the side-pairing transformations of $\mathfrak{S}$ described in the proof of Proposition~\ref{lifted_tiling_quotient_prop}.
\end{remark}

\subsubsection{Polynomial structure in $\mathfrak{C}$}\label{corr_poly_1_subsubsec}

We now set $\widetilde{\mathcal{K}}_1:=\widetilde{\mathcal{K}}\cap\overline{\mathfrak{D}}$ and $\widetilde{\mathcal{K}}_2:=\widetilde{\mathcal{K}}\setminus\mathfrak{D}$. The description of $\widetilde{\mathcal{T}}$ given in Section~\ref{corr_group_1_subsubsec} can be used to study the structure of $\widetilde{\cK}_1$ and $\widetilde{\cK}_2$.

\begin{lemma}\label{lifted_ne_top_lem}
\noindent\begin{enumerate}
\item $\widetilde{\cK}_1\cap\partial\mathfrak{D}=\widetilde{\cK}_2\cap\partial\mathfrak{D}=\widetilde{\mathcal{K}}_1\cap\widetilde{\mathcal{K}}_2=\widetilde{S}$.

\item $\widetilde{\mathcal{K}}_2=\eta(\widetilde{\mathcal{K}}_1)$.

\item $R$ carries $\widetilde{\mathcal{K}}_1$ (respectively, $\widetilde{\mathcal{K}}_2$) homeomorphically (respectively, as a degree $np-1$ branched cover) onto $\mathcal{K}$.

\item $\widetilde{\cK}$ is connected.
\end{enumerate}
\end{lemma}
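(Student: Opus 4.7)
The plan is to leverage two facts about the present setting. First, $P\in\mathcal{H}_d$ forces $\mathcal{J}(P)$ to be a Jordan curve, so $\mathcal{L}_P=\emptyset$ and we are in the scope of Corollary~\ref{crit_pnt_r_cor}; in particular $R:\widehat{\C}\to\widehat{\C}$ is a degree $np$ rational map, $R|_{\overline{\mathfrak{D}}}$ is a homeomorphism onto $\overline{\Omega}=\mathrm{Dom}(F)$, and $\widetilde{S}=(R|_{\partial\mathfrak{D}})^{-1}(S_F)$ is a nonempty finite set. Second, as established in the proof of Proposition~\ref{mating_class_prop}, $\eta$ swaps the Jordan disk $\mathfrak{D}$ with its complement, so $\eta(\overline{\mathfrak{D}})=\widehat{\C}\setminus\mathfrak{D}$. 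These two facts will drive everything.

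For part~(1), I would first verify the key identity $\partial\Omega\cap\mathcal{K}=S_F$. To see it, note that each arc of $\partial\Omega\setminus S_F$ arises as the image under $\mathfrak{X}_\Gamma$ of a hyperbolic geodesic bounding the ideal polygon $\overline{\D}\setminus\mathcal{D}_\Gamma$, and therefore lies in $\mathcal{T}=\mathfrak{X}_\Gamma(\D)$, which is disjoint from $\mathcal{K}$; the reverse inclusion $S_F\subseteq\mathcal{K}\cap\partial\Omega$ is immediate from $S_F\subset\Lambda\subset\mathcal{K}$. The homeomorphism $R|_{\partial\mathfrak{D}}:\partial\mathfrak{D}\to\partial\Omega$ then yields $\widetilde{\mathcal{K}}_1\cap\partial\mathfrak{D}=(R|_{\partial\mathfrak{D}})^{-1}(\mathcal{K}\cap\partial\Omega)=\widetilde{S}$; the same computation applies to $\widetilde{\mathcal{K}}_2\cap\partial\mathfrak{D}$, and $\widetilde{\mathcal{K}}_1\cap\widetilde{\mathcal{K}}_2=\widetilde{\mathcal{K}}\cap\partial\mathfrak{D}=\widetilde{S}$. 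For part~(2), combining $\eta(\overline{\mathfrak{D}})=\widehat{\C}\setminus\mathfrak{D}$ with $\eta(\widetilde{\mathcal{K}})=\widetilde{\mathcal{K}}$ from Proposition~\ref{corr_partition_1_prop}(1) gives $\eta(\widetilde{\mathcal{K}}_1)=\widetilde{\mathcal{K}}\cap(\widehat{\C}\setminus\mathfrak{D})=\widetilde{\mathcal{K}}_2$.

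For part~(3), $R|_{\widetilde{\mathcal{K}}_1}$ is a homeomorphism onto $\mathcal{K}$ by restriction of the homeomorphism $R|_{\overline{\mathfrak{D}}}$, using $\mathcal{K}\subseteq\overline{\Omega}$. For $R|_{\widetilde{\mathcal{K}}_2}$, I would pick a generic $x\in\mathcal{K}\setminus S_F$ avoiding the critical values of $R$; all $np$ of its preimages then lie in $\widetilde{\mathcal{K}}$, exactly one lies in $\mathfrak{D}$ (since $x\notin\partial\Omega$), and the remaining $np-1$ lie in $\widetilde{\mathcal{K}}\setminus\overline{\mathfrak{D}}\subseteq\widetilde{\mathcal{K}}_2$, exhibiting $R|_{\widetilde{\mathcal{K}}_2}$ as a degree $np-1$ branched cover onto $\mathcal{K}$. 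For part~(4), $\mathcal{K}\cong\mathcal{K}(P)$ is a Jordan disk and hence connected, so $\widetilde{\mathcal{K}}_1=(R|_{\overline{\mathfrak{D}}})^{-1}(\mathcal{K})$ is connected, $\widetilde{\mathcal{K}}_2=\eta(\widetilde{\mathcal{K}}_1)$ is connected, and the two are glued along the nonempty set $\widetilde{S}$, yielding a connected $\widetilde{\mathcal{K}}$. The only step that requires any thought is the identification $\partial\Omega\cap\mathcal{K}=S_F$ in part~(1), where one must remember how $\partial\Omega$ is built from geodesic sides of $\mathcal{D}_\Gamma$ under $\mathfrak{X}_\Gamma$; every remaining piece is a routine preimage count through $R|_{\overline{\mathfrak{D}}}$, involution bookkeeping for $\eta$, or transfer of connectedness through a homeomorphism.
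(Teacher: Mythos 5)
Your proposal is correct and follows essentially the same route as the paper: part (1) rests on the identity $\partial\Omega\cap\mathcal{K}=S_F$ (which the paper imports from Section~\ref{unif_rat_crit_pnt_subsec} and you re-derive from the geodesic description of $\partial\mathrm{Dom}(F)\setminus S_F$), part (2) on $\eta$-invariance of $\widetilde{\mathcal{K}}$ together with $\eta$ swapping $\mathfrak{D}$ with its complement, part (3) on $R\vert_{\overline{\mathfrak{D}}}$ being a homeomorphism onto $\overline{\Omega}\supset\mathcal{K}$ plus the global degree-$np$ count, and part (4) on gluing the two connected pieces along the nonempty set $\widetilde{S}$. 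The only cosmetic differences (a generic-fiber count in (3) instead of citing the global branched covering, and bypassing $\eta(\widetilde{S})=\widetilde{S}$ in (2)) do not change the argument.
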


\begin{proof}
1) By definition, $\widetilde{\cK}_i\cap\partial\mathfrak{D}=\{z\in\partial\mathfrak{D}: R(z)\in\cK\}$, for $i\in\{1,2\}$. Recall from Section~\ref{unif_rat_crit_pnt_subsec} that $\partial\Omega=R(\partial\mathfrak{D})$ meets $\cK$ precisely at the finite set $S_F$. Hence, $\widetilde{\cK}_i\cap\partial\mathfrak{D}=(R\vert_{\partial\mathfrak{D}})^{-1}(S_F)=\widetilde{S}$, for $i\in\{1,2\}$. Since $\widetilde{\mathcal{K}}_1\cap\widetilde{\mathcal{K}}_2\subset\partial\mathfrak{D}$, it now follows that $\widetilde{\mathcal{K}}_1\cap\widetilde{\mathcal{K}}_2 =\widetilde{S}$.

2) The $\eta-$invariance of $\widetilde{\mathcal{K}}$ (see Proposition~\ref{corr_partition_1_prop}) implies that $\eta(\widetilde{\cK}\cap\mathfrak{D})=\widetilde{\cK}\setminus\overline{\mathfrak{D}}$. By Lemma~\ref{S_F_lem}, $\eta(\widetilde{S})=\widetilde{S}$. The result now follows from these facts and the description of $\widetilde{\cK}_i\cap\partial\mathfrak{D}$, $i\in\{1,2\}$, given in the previous part.

3) As $R$ is a homeomorphism from $\overline{\mathfrak{D}}$ onto $\overline{\Omega}$ and $\cK\subset\overline{\Omega}$, it follows that $\widetilde{\mathcal{K}}_1=R^{-1}(\mathcal{K})\cap\overline{\mathfrak{D}}=(R\vert_{\overline{\mathfrak{D}}})^{-1}(\cK)$. Hence, $R$ carries $\widetilde{\cK}_1$ homeomorphically onto $\cK$. Since $R$ is a global branched covering of degree $np$, it now follows that it maps $\widetilde{\cK}_2=R^{-1}(\cK)\setminus\mathfrak{D}$ as a degree $np-1$ branched cover onto $\mathcal{K}$.

4) Connectivity of $\cK$, combined with parts (2) and (3) of this lemma, implies that both $\widetilde{\cK}_1$ and $\widetilde{\cK}_2$ are connected. Since $\widetilde{\mathcal{K}}_1\cap\widetilde{\mathcal{K}}_2=\widetilde{S}\neq\emptyset$, we conclude that $\widetilde{\cK}=\widetilde{\mathcal{K}}_1\cup\widetilde{\mathcal{K}}_2$ is connected.
\end{proof}

\begin{proposition}\label{lifted_ne_dyn_prop}
\noindent\begin{enumerate}
\item $\widetilde{\mathcal{K}}_2$ is forward invariant, and hence, $\widetilde{\mathcal{K}}_1$ is backward invariant under $\mathfrak{C}$.

\item $\mathfrak{C}$ has a forward branch carrying $\widetilde{\mathcal{K}}_1$ onto itself with degree $np-1$, and this branch is conformally conjugate to $P:\mathcal{K}(P)\to \mathcal{K}(P)$. 

\item $\mathfrak{C}$ has a backward branch carrying $\widetilde{\mathcal{K}}_2$ onto itself with degree $np-1$, and this branch is conformally conjugate to $P:\mathcal{K}(P)\to \mathcal{K}(P)$.
\end{enumerate}
\end{proposition}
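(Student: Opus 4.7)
The strategy is to realize the induced dynamics of $\mathfrak{C}$ on $\widetilde{\cK}_1$ and on $\widetilde{\cK}_2$ as two explicit single-valued self-maps, each conjugate via $R$ (respectively $\eta\circ R$) to $F\vert_{\cK}$, which is itself conjugate to $P\vert_{\cK(P)}$ by the mating semi-conjugacy $\mathfrak{X}_P$ of Definition~\ref{conf_mat_def}. The key point is that the defining equation~\eqref{corr_eqn} excludes the trivial root $w=\eta(z)$, which is what allows each of $\widetilde{\cK}_1$ and $\widetilde{\cK}_2$ to carry a full copy of $P$-dynamics. The one technical subtlety throughout is the finite set $\widetilde{S}\subset\widetilde{\cK}_1\cap\widetilde{\cK}_2$, where the branches to be constructed formally collapse onto the excluded one; this will be handled by continuous extension.

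For part (1), fix $z\in\widetilde{\cK}_2=\widetilde{\cK}\setminus\mathfrak{D}$ and $w$ with $(z,w)\in\mathfrak{C}$. Proposition~\ref{corr_partition_1_prop} gives $w\in\widetilde{\cK}$, so it suffices to prove $w\notin\mathfrak{D}$. Since $\eta$ swaps $\mathfrak{D}$ and $\widehat{\C}\setminus\overline{\mathfrak{D}}$, we have $\eta(z)\in\overline{\mathfrak{D}}$, and by injectivity of $R\vert_{\overline{\mathfrak{D}}}$ (Corollary~\ref{main_hyp_comp_mating_class_cor}) it is the unique $R$-preimage of $R(\eta(z))$ lying in $\overline{\mathfrak{D}}$. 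The equation $R(w)=R(\eta(z))$ with $w\neq\eta(z)$ therefore forces $w\in\widehat{\C}\setminus\overline{\mathfrak{D}}$. Backward invariance of $\widetilde{\cK}_1$ follows by the symmetric argument with the roles of $z$ and $w$ reversed.

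For part (2), I define $\sigma:\widetilde{\cK}_1\to\widetilde{\cK}_1$ by $\sigma(z):=(R\vert_{\overline{\mathfrak{D}}})^{-1}(F(R(z)))$, which is well-defined with range in $\widetilde{\cK}_1$ since $R(\widetilde{\cK}_1)=\cK=F(\cK)\subset\overline{\Omega}=R(\overline{\mathfrak{D}})$ (Lemma~\ref{lifted_ne_top_lem}(3)). For $z\in\widetilde{\cK}_1\setminus\widetilde{S}=\widetilde{\cK}\cap\mathfrak{D}$, $\sigma(z)\in\overline{\mathfrak{D}}$ is distinct from $\eta(z)\in\widehat{\C}\setminus\overline{\mathfrak{D}}$ and satisfies $R(\sigma(z))=F(R(z))=R(\eta(z))$, so $(z,\sigma(z))\in\mathfrak{C}$; thus $\sigma$ is a genuine forward branch of $\mathfrak{C}$ off $\widetilde{S}$ and extends continuously across $\widetilde{S}$. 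By construction, $R\vert_{\widetilde{\cK}_1}:\widetilde{\cK}_1\to\cK$ conjugates $\sigma$ to $F\vert_{\cK}$; composing with $\mathfrak{X}_P:(\cK(P),P)\to(\cK,F\vert_{\cK})$ furnishes the desired conformal conjugacy with $P\vert_{\cK(P)}$, of degree $\deg P=np-1$.

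For part (3), set $\sigma^{-}(z):=\eta\bigl((R\vert_{\overline{\mathfrak{D}}})^{-1}(R(z))\bigr)$ on $\widetilde{\cK}_2$. By Lemma~\ref{lifted_ne_top_lem}(2)--(3) this maps $\widetilde{\cK}_2$ onto itself with degree $np-1$ (as the composition of the degree $np-1$ branched cover $R\vert_{\widetilde{\cK}_2}$ with two homeomorphisms). The chain $R(\eta(\sigma^{-}(z)))=R((R\vert_{\overline{\mathfrak{D}}})^{-1}(R(z)))=R(z)$, combined with $\sigma^{-}(z)\neq\eta(z)$ on $\widetilde{\cK}_2\setminus\widetilde{S}$, shows $(\sigma^{-}(z),z)\in\mathfrak{C}$, identifying $\sigma^{-}$ with a backward branch of $\mathfrak{C}$. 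Finally, using $F\circ R=R\circ\eta$ on $\overline{\mathfrak{D}}$, a direct computation shows that the homeomorphism $\eta\circ(R\vert_{\overline{\mathfrak{D}}})^{-1}\circ\mathfrak{X}_P:\cK(P)\to\widetilde{\cK}_2$ intertwines $P\vert_{\cK(P)}$ with $\sigma^{-}$, completing the proof.
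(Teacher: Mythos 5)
Your proposal is correct and follows essentially the same route as the paper: your $\sigma=(R\vert_{\overline{\mathfrak{D}}})^{-1}\circ F\circ R$ coincides with the paper's forward branch $g\circ\eta$ (since $F\circ R=R\circ\eta$ on $\overline{\mathfrak{D}}$), your $\sigma^{-}$ is the paper's backward branch $\eta\circ g$, and the conjugacies via $R\vert_{\widetilde{\cK}_1}$, $\eta$, and $\mathfrak{X}_P$ are the same ones used there. The only cosmetic differences are that you verify the conjugacy on $\widetilde{\cK}_2$ by a direct computation rather than by noting that $\eta$ conjugates the backward branch to the forward one, and that your treatment of $\widetilde{S}$ (where $w=\eta(z)$ is actually not excluded, these being critical points of $R$ on $\partial\mathfrak{D}$, so the needed conclusions hold there anyway) is slightly more explicit than the paper's.
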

\begin{proof}
1) Let $z\in\widetilde{\mathcal{K}}_2$. By Lemma~\ref{lifted_ne_top_lem}, $\eta(z)\in \widetilde{\mathcal{K}}_1$. The same lemma also tells us that each point in $\cK$ has a unique preimage under $R$ in $\widetilde{\mathcal{K}}_1$ and $d=np-1$ preimages under $R$ (counted with multiplicity) in $\widetilde{\mathcal{K}}_2$. Hence, the non-identity local deck transformations of $R$ (i.e., local branches of $R^{-1}\circ R$) send $\eta(z)\in\widetilde{\mathcal{K}}_1$ to $\widetilde{\mathcal{K}}_2$. Thus, $\widetilde{\mathcal{K}}_2$ is preserved by the forward branches of the correspondence $\mathfrak{C}$.

A similar reasoning shows that $\widetilde{\mathcal{K}}_1$ is invariant under the backward branches of $\mathfrak{C}$.
\smallskip
 
2) Recall from Lemma~\ref{lifted_ne_top_lem} that $R$ carries $\widetilde{\cK}_1$ homeomorphically onto $\cK$. We denote the correspondence inverse branch by $\left(R\vert_{\widetilde{\mathcal{K}}_1}\right)^{-1}$. Further, $R:\widetilde{\cK}_2\to\cK$ is a degree $np-1$ map. Now define 
\begin{equation*}
g:\widetilde{\cK}_2\to\widetilde{\cK}_1,\quad g:= \left(R\vert_{\widetilde{\mathcal{K}}_1}\right)^{-1}\circ R\vert_{\widetilde{\cK}_2}.
\end{equation*}
By definition, $g$ is a degree $np-1$ map satisfying $R\circ g=R$. Thus,
$$
g\circ\eta:\widetilde{\mathcal{K}}_1\to\widetilde{\mathcal{K}}_1
$$ 
is a degree $np-1$ forward branch of the correspondence $\mathfrak{C}$.

Clearly, the forward branch $(g\circ\eta)\vert_{\widetilde{\mathcal{K}}_1}$ is topologically conjugate (conformally on the interior) to $F\vert_{\mathcal{K}}\equiv R\circ\eta\circ (R\vert_{\widetilde{\mathcal{K}}_1})^{-1}$ via the homeomorphism $R:\widetilde{\mathcal{K}}_1\to \mathcal{K}$. The result now follows from the above discussion and the fact that $F:\cK\to\cK$ is topologically conjugate (conformally on the interior) to $P:\cK(P)\to\cK(P)$ via $\mathfrak{X}_P$.
\smallskip

3) It is easy to see that the map 
$$
\eta\circ g=\eta\circ\left(R\vert_{\widetilde{\cK}_1}\right)^{-1}\circ R: \widetilde{\mathcal{K}}_2\to\widetilde{\mathcal{K}}_2
$$
is a backward branch of the correspondence $\mathfrak{C}$ carrying $\widetilde{\mathcal{K}}_2$ onto itself with degree $np-1$. 
Finally, $\eta$ restricts to a conformal conjugacy between the backward branch $(\eta\circ g)\vert_{\widetilde{\mathcal{K}}_2}$ and the forward branch $(g\circ\eta)\vert_{\widetilde{\mathcal{K}}_1}$. 
\end{proof}

\begin{proof}[Proof of Theorem~\ref{corr_mating_thm_1}]
Follows from Propositions~\ref{grand_orbit_group_prop},~\ref{lifted_tiling_quotient_prop}, and~\ref{lifted_ne_dyn_prop}.
\end{proof}

\subsection{The general case}\label{gen_corr_subsec}

In this subsection, we will associate an algebraic correspondence with the conformal mating 
$$
F:\bigcup_{\alpha\in\mathcal{I}} R_\alpha(\overline{\mathfrak{D}_\alpha})\longrightarrow\widehat{\C}
$$
of the factor Bowen-Series map $A_{\Gamma}^{\mathrm{fBS}}$, where $(\rho:\pmb{\Gamma_{n,p}}\to\Gamma)\in\mathrm{Teich}^\omega(\pmb{\Gamma_{n,p}})$, and a polynomial $P$ lying in any real-symmetric (not necessarily principal) hyperbolic component in the connectedness locus of degree $np-1$ polynomials (see Proposition~\ref{mating_class_prop}). The correspondence will live on a nodal Riemann surface whose non-singular components are Riemann spheres.

Let us recall some notation (from Section~\ref{unif_rat_crit_pnt_subsec}) that will be used in this section. 
\begin{itemize}
\item The domain of $R_\alpha$ is denoted by $\widehat{\C}_\alpha$, and points in $\widehat{\C}_\alpha$ are denoted by $(z,\alpha)$. In particular, $\mathfrak{D}_\alpha \subset\widehat{\C}_\alpha$.

\item $\mathfrak{U}\ =\ \bigsqcup_{\alpha\in\mathcal{I}} \widehat{\C}_{\alpha}.$

\item $R:\ \mathfrak{U}\longrightarrow \widehat{\C},\ (z,\alpha)\mapsto R_{\alpha}(z),$ is a branched covering of degree $np$.

\item $\eta_\ast\ : \mathfrak{U}\longrightarrow \mathfrak{U},\ (z,\alpha)\mapsto (\eta(z),\kappa(\alpha)),$ is a homeomorphism. 

\item $\mathfrak{D}= \bigsqcup_{\alpha\in\mathcal{I}} \mathfrak{D}_\alpha.$
\end{itemize}

The conformal mating $F$ gives rise to a holomorphic correspondence on $\mathfrak{U}$ as follows. For $\alpha,\beta\in\mathcal{I}$, define
$$
\mathfrak{C}_{\alpha,\beta}:=\ \{(z,w)\in\widehat{\C}_\alpha\times\widehat{\C}_\beta: \frac{R_\beta(w)-R_{\kappa(\alpha)}(\eta(z))}{w-\eta(z)}=0\},\qquad \mathrm{if}\ \kappa(\alpha)=\beta,
$$
and
$$
\mathfrak{C}_{\alpha,\beta}:=\ \{(z,w)\in\widehat{\C}_\alpha\times\widehat{\C}_\beta: R_\beta(w)-R_{\kappa(\alpha)}(\eta(z))=0\},\qquad \mathrm{if}\ \kappa(\alpha)\neq \beta.
$$
The union of the algebraic curves $\mathfrak{C}_{\alpha,\beta}$ can be written succinctly as
\begin{equation}
\{(\mathfrak{u}_1,\mathfrak{u}_2)\in\mathfrak{U}\times\mathfrak{U}: \frac{R(\mathfrak{u}_2)-R(\eta_\ast(\mathfrak{u}_1))}{\mathfrak{u}_2-\eta_\ast(\mathfrak{u}_1)}=0\}.
\label{corr_gen_eqn}
\end{equation}
(The division in Equation~\eqref{corr_gen_eqn} makes sense since the numerator and the denominator can be viewed as points of $\widehat{\C}$.)
The first and second coordinate projection maps $\pi_1^\alpha$ and $\pi_2^\beta$ from $\mathfrak{C}_{\alpha,\beta}$ onto $\widehat{\C}_\alpha$ and $\widehat{\C}_\beta$ define a holomorphic (in fact, algebraic) correspondence from $\widehat{\C}_\alpha$ onto $\widehat{\C}_\beta$ (cf. \cite{DS06}):
\[
\begin{tikzcd}
 & \mathfrak{C}_{\alpha,\beta} \arrow{dl}[swap]{\pi_1^\alpha} \arrow{dr}{\pi_2^\beta} \\
\widehat{\C}_\alpha && \widehat{\C}_\beta.
\end{tikzcd}
\]
Combining all these holomorphic correspondences for various $\alpha,\beta\in\mathcal{I}$, we obtain a holomorphic correspondence on $\mathfrak{U}$ defined by the reducible curve $\sum_{\alpha,\beta}\mathfrak{C}_{\alpha,\beta}$. We denote this correspondence by $\mathfrak{C}_\ast$.

In order to capture the mating structure of the correspondence, we need to pass to a quotient of $\mathfrak{U}$. To this end, we endow $\mathfrak{U}$ with the following finite equivalent relation:
\begin{center}
For $z\in\widetilde{S}_\alpha\subset\widehat{\C}_\alpha$ and $w\in\widetilde{S}_\beta\subset\widehat{\C}_\beta$,
\smallskip

$(z,\alpha)\sim_{\mathrm{w}} (w,\beta)\iff R_\alpha(z)=R_\beta(w)$.
\end{center}
(See Section~\ref{unif_rat_crit_pnt_subsec} for the definition of $\widetilde{S}_\alpha$.)
The fact that $\mathrm{Dom}(F)$ is the quotient of $\overline{\D}$ by a finite lamination (see Proposition~\ref{mat_dom_prop}) and that $R_\alpha\vert_{\partial\mathfrak{D}_\alpha}$ is injective (for all $\alpha\in\mathcal{I}$) imply that 
$$
\mathfrak{W}\ :=\ \faktor{\mathfrak{U}}{\sim_{\mathrm{w}}}
$$
has the structure of a compact, simply connected, nodal Riemann surface. 
By definition, the map $R:\mathfrak{U}\to\widehat{\C}$ descends to a map
$$
\widecheck{R}: \mathfrak{W}\longrightarrow\widehat{\C}.
$$ 
Note that $
\widecheck{R}$ is also a degree $np$ branched covering. Abusing notation, we denote the image of a set $X\subset\mathfrak{U}$ (respectively, a point $p\in\mathfrak{U}$) under the quotient map $\mathfrak{U}\longrightarrow\mathfrak{W}$ by $X$ (respectively, $p$).

\begin{lemma}\label{eta_descends_lem}
The homeomorphism $\eta_\ast:\mathfrak{U}\to\mathfrak{U}$ descends to a homeomorphism $\widecheck{\eta}:\mathfrak{W}\longrightarrow\mathfrak{W}$.
\end{lemma}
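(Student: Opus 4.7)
The plan is to verify that the homeomorphism $\eta_\ast$ descends to the quotient $\mathfrak{W}$ by checking that $\eta_\ast$ respects the equivalence relation $\sim_{\mathrm{w}}$, and then deduce that the descended map is a homeomorphism from the fact that $\eta_\ast$ is an involution.

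First I would check that $\eta_\ast$ maps equivalent points to equivalent points. Suppose $(z,\alpha)\sim_{\mathrm{w}}(w,\beta)$, so $z\in\widetilde{S}_\alpha$, $w\in\widetilde{S}_\beta$, and $R_\alpha(z)=R_\beta(w)$. By part~(3) of Lemma~\ref{S_F_lem}, $\eta$ sends $\widetilde{S}_\alpha$ bijectively to $\widetilde{S}_{\kappa(\alpha)}$, so $\eta(z)\in\widetilde{S}_{\kappa(\alpha)}$ and similarly $\eta(w)\in\widetilde{S}_{\kappa(\beta)}$. Thus the images $\eta_\ast(z,\alpha)=(\eta(z),\kappa(\alpha))$ and $\eta_\ast(w,\beta)=(\eta(w),\kappa(\beta))$ lie in the set where $\sim_{\mathrm{w}}$ is defined. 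It remains to show that $R_{\kappa(\alpha)}(\eta(z))=R_{\kappa(\beta)}(\eta(w))$. This is where Proposition~\ref{mating_class_prop}(3) enters: the identity
\[
F\vert_{\overline{\Omega_\gamma}}\ \equiv\ R_{\kappa(\gamma)}\circ\eta\circ\bigl(R_\gamma\vert_{\overline{\mathfrak{D}_\gamma}}\bigr)^{-1}
\]
for every $\gamma\in\mathcal{I}$ translates to the functional equation $R_{\kappa(\gamma)}\circ\eta=F\circ R_\gamma$ on $\overline{\mathfrak{D}_\gamma}$. Applying this with $\gamma=\alpha$ and $\gamma=\beta$ gives
\[
R_{\kappa(\alpha)}(\eta(z))\ =\ F(R_\alpha(z))\ =\ F(R_\beta(w))\ =\ R_{\kappa(\beta)}(\eta(w)),
\]
as desired, where the middle equality uses $R_\alpha(z)=R_\beta(w)\in S_F$.

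Having shown that $\eta_\ast$ preserves $\sim_{\mathrm{w}}$, standard quotient-topology considerations yield a continuous map $\widecheck{\eta}:\mathfrak{W}\to\mathfrak{W}$ making the diagram
\[
\begin{tikzcd}
\mathfrak{U}\arrow{r}{\eta_\ast}\arrow{d} & \mathfrak{U}\arrow{d}\\
\mathfrak{W}\arrow{r}{\widecheck{\eta}} & \mathfrak{W}
\end{tikzcd}
\]
commute, where the vertical arrows are the quotient projections. Since $\eta_\ast$ is an involution (as both $\eta$ and $\kappa$ are involutions), the same is true of $\widecheck{\eta}$; hence $\widecheck{\eta}$ is its own continuous inverse, and therefore a homeomorphism of $\mathfrak{W}$.

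No step here is a serious obstacle: the only non-cosmetic ingredient is the functional equation from Proposition~\ref{mating_class_prop}(3), and the rest is formal manipulation of quotient spaces.
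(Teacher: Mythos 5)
Your proof is correct and follows essentially the same route as the paper: both verify that $\eta_\ast$ preserves $\sim_{\mathrm{w}}$ using Lemma~\ref{S_F_lem}(3) for $\eta(\widetilde{S}_\alpha)=\widetilde{S}_{\kappa(\alpha)}$ together with the chain $R_{\kappa(\alpha)}(\eta(z))=F(R_\alpha(z))=F(R_\beta(w))=R_{\kappa(\beta)}(\eta(w))$ coming from the uniformization identity of Proposition~\ref{mating_class_prop}. Your concluding remarks on quotient topology and the involution property merely make explicit what the paper leaves implicit.
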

\begin{proof}
Let us suppose that $(z,\alpha)\sim_{\mathrm{w}} (w,\beta)$; i.e., $z\in\widetilde{S}_\alpha, w\in\widetilde{S}_\beta,$ and $R_\alpha(z)=R_\beta(w)$. We need to show that $\eta_\ast(z,\alpha)\sim_{\mathrm{w}} \eta_\ast(w,\beta)$.

To this end, note that 
$$
\eta_\ast(z,\alpha)=(\eta(z),\kappa(\alpha))\in\widetilde{S}_{\kappa(\alpha)},\quad \textrm{and}\quad \eta_\ast(w,\beta)=(\eta(w),\kappa(\beta))\in\widetilde{S}_{\kappa(\beta)}
$$ 
(by Lemma~\ref{S_F_lem}). Now, $R_{\kappa(\alpha)}(\eta(z))=F(R_\alpha(z))=F(R_\beta(w))=R_{\kappa(\beta)}(\eta(w))$, and hence $\eta_\ast(z,\alpha)\sim_{\mathrm{w}} \eta_\ast(w,\beta)$.
\end{proof} 

Thus, the correspondence $\mathfrak{C}_\ast$ on $\mathfrak{U}$ also descends to a correspondence on $\mathfrak{W}$. We denote this correspondence by $\mathfrak{C}$.

\begin{remark}\label{examples_eta_rem}
Recall that for each of the two conformal matings $F$ considered in  Section~\ref{examples_subsec}, the domain of definition $\mathrm{Dom}(F)$ is a union of two closed Jordan disks touching at a single point. Thus, the associated correspondence $\mathfrak{C}$ is defined on a nodal sphere with a unique node; i.e., the nodal surface $\mathfrak{W}$ has precisely two non-singular components, and both of them are Riemann spheres. 

In the quadratic example of Section~\ref{quad_example_subsubsec}, the mating $F$ preserves the boundary of each of the two Jordan disks comprising $\Int{\mathrm{Dom}(F)}$, and hence the involution $\widecheck{\eta}$ of Lemma~\ref{eta_descends_lem} leaves each of the two Riemann spheres in $\mathfrak{W}$ invariant. 

In the cubic example of Section~\ref{cubic_example_subsubsec}, the mating $F$ swaps the boundaries of the two Jordan disks forming $\Int{\mathrm{Dom}(F)}$, so the involution $\widecheck{\eta}$ carries the two Riemann spheres in $\mathfrak{W}$ to each other. 
\end{remark}

\subsubsection{Dynamical partition for $\mathfrak{C}$}\label{inv_partition_corr_2_subsubsec}

As in Section~\ref{inv_partition_corr_1_subsubsec}, we define
$$
\widetilde{\mathcal{K}}:=\widecheck{R}^{-1}(\mathcal{K}),\quad \widetilde{\mathcal{T}}:= \widecheck{R}^{-1}(\mathcal{T}).
$$
The proof of Proposition~\ref{corr_partition_1_prop} applies mutatis mutandis to the current setting and implies the following result.

\begin{proposition}\label{corr_partition_2_prop}
\noindent\begin{enumerate}
\item $\widecheck{\eta}(\widetilde{\mathcal{T}})=\widetilde{\mathcal{T}}$, and $\widecheck{\eta}(\widetilde{\cK})=\widetilde{\cK}$.

\item Let $(\mathfrak{u}_1,\mathfrak{u}_2)\in\mathfrak{C}$. Then $\mathfrak{u}_1\in\widetilde{\mathcal{T}}$ (respectively, $\mathfrak{u}_1\in\widetilde{\mathcal{K}}$) if and only if $\mathfrak{u}_2\in\widetilde{\mathcal{T}}$ (respectively, $\mathfrak{u}_2\in\widetilde{\mathcal{K}}$).
\end{enumerate}
\end{proposition}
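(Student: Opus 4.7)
The plan is to follow the template of the proof of Proposition~\ref{corr_partition_1_prop}, translating its two pointwise identities to identities on $\mathfrak{W}$. The semiconjugacy $F\vert_{\overline{\Omega_\alpha}}\equiv R_{\kappa(\alpha)}\circ\eta\circ(R_\alpha\vert_{\overline{\mathfrak{D}_\alpha}})^{-1}$ of Proposition~\ref{mating_class_prop} is precisely what is needed: under the bookkeeping built into $\eta_\ast(z,\alpha)=(\eta(z),\kappa(\alpha))$, it descends (via Lemma~\ref{eta_descends_lem} and the construction of $\widecheck{R}$) to
\[
F\circ \widecheck{R}(\mathfrak{u}) \;=\; \widecheck{R}\bigl(\widecheck{\eta}(\mathfrak{u})\bigr), \qquad \mathfrak{u}\in\overline{\mathfrak{D}}.
\]
Applying this identity to $\widecheck{\eta}(\mathfrak{u})\in\overline{\mathfrak{D}}$ whenever $\mathfrak{u}\in\mathfrak{W}\setminus\mathfrak{D}$ (legitimate since $\eta$ sends $\mathfrak{D}_\alpha$ to the complement of $\overline{\mathfrak{D}_{\kappa(\alpha)}}$ by Proposition~\ref{mating_class_prop}(1)) and using $\widecheck{\eta}^{\,2}=\mathrm{id}$ yields the dual identity $F\circ \widecheck{R}(\widecheck{\eta}(\mathfrak{u}))=\widecheck{R}(\mathfrak{u})$ for $\mathfrak{u}\in\mathfrak{W}\setminus\mathfrak{D}$. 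These two identities constitute the only real content beyond Proposition~\ref{corr_partition_1_prop}.

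For part~(1), I would imitate verbatim the two-case split used there: for $\mathfrak{u}\in\overline{\mathfrak{D}}\cap\widetilde{\mathcal{K}}$ the first identity gives $\widecheck{R}(\widecheck{\eta}(\mathfrak{u}))=F(\widecheck{R}(\mathfrak{u}))\in\mathcal{K}$ by forward $F$-invariance, while for $\mathfrak{u}\in\widetilde{\mathcal{K}}\setminus\mathfrak{D}$ the second identity gives $F(\widecheck{R}(\widecheck{\eta}(\mathfrak{u})))=\widecheck{R}(\mathfrak{u})\in\mathcal{K}$, and backward invariance again places $\widecheck{R}(\widecheck{\eta}(\mathfrak{u}))$ in $\mathcal{K}$; the argument for $\widetilde{\mathcal{T}}$ is identical with $\mathcal{T}$ in place of $\mathcal{K}$. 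For part~(2), the defining equation~\eqref{corr_gen_eqn} of $\mathfrak{C}_\ast$, descended to $\mathfrak{W}$, reads $\widecheck{R}(\mathfrak{u}_2)=\widecheck{R}(\widecheck{\eta}(\mathfrak{u}_1))$ for $(\mathfrak{u}_1,\mathfrak{u}_2)\in\mathfrak{C}$. If $\mathfrak{u}_1\in\widetilde{\mathcal{K}}\cap\overline{\mathfrak{D}}$ the first identity rewrites this as $\widecheck{R}(\mathfrak{u}_2)=F(\widecheck{R}(\mathfrak{u}_1))\in\mathcal{K}$; if instead $\mathfrak{u}_1\in\widetilde{\mathcal{K}}\setminus\mathfrak{D}$ then $\widecheck{\eta}(\mathfrak{u}_1)\in\overline{\mathfrak{D}}$ and the first identity applied at $\widecheck{\eta}(\mathfrak{u}_1)$ gives $F(\widecheck{R}(\mathfrak{u}_2))=\widecheck{R}(\mathfrak{u}_1)\in\mathcal{K}$, so $\widecheck{R}(\mathfrak{u}_2)\in\mathcal{K}$ by backward invariance. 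Either way $\mathfrak{u}_2\in\widetilde{\mathcal{K}}$, and the $\widetilde{\mathcal{T}}$ analog is identical. For the converse direction I would invoke the symmetry $(\mathfrak{u}_1,\mathfrak{u}_2)\in\mathfrak{C}\Leftrightarrow(\widecheck{\eta}(\mathfrak{u}_2),\widecheck{\eta}(\mathfrak{u}_1))\in\mathfrak{C}$, which is immediate from~\eqref{corr_gen_eqn} and $\widecheck{\eta}^{\,2}=\mathrm{id}$, combined with part~(1) already established.

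The only real obstacle is formal: one must ensure that the pointwise identities on $\mathfrak{U}$ descend unambiguously to the noded surface $\mathfrak{W}$. For $\widecheck{R}$ this is built into the construction, since $(z,\alpha)\sim_\mathrm{w}(w,\beta)$ means precisely $R_\alpha(z)=R_\beta(w)$; for $\widecheck{\eta}$ it is exactly Lemma~\ref{eta_descends_lem}. Beyond this bookkeeping nothing new is required, confirming the author's remark that the proof of Proposition~\ref{corr_partition_1_prop} applies mutatis mutandis.
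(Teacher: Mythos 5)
Your proposal is correct and is exactly the "mutatis mutandis" argument the paper intends: you descend the mating relation of Proposition~\ref{mating_class_prop} to the identities $F\circ\widecheck{R}=\widecheck{R}\circ\widecheck{\eta}$ on $\overline{\mathfrak{D}}$ and its dual on $\mathfrak{W}\setminus\mathfrak{D}$, and then repeat the two-case invariance argument of Proposition~\ref{corr_partition_1_prop}. The only nitpick is that the inside-out property $\eta(\mathfrak{D}_\alpha)=\widehat{\C}\setminus\overline{\mathfrak{D}_{\kappa(\alpha)}}$ is established in the proof of Proposition~\ref{mating_class_prop} rather than in its item~(1), which only records the boundary statement; this does not affect the argument.
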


\subsubsection{Group structure in $\mathfrak{C}$}\label{corr_group_2_subsubsec}

Thanks to the description of the critical points of $\widecheck{R}$ in $\widecheck{R}^{-1}(\cT)\setminus\overline{\mathfrak{D}}$ given in Proposition~\ref{crit_pnt_r_sharp_prop}, the arguments of Subsections~\ref{corr_group_1_subsubsec} and~\ref{lifted_tiling_quotient_subsubsec} apply mutatis mutandis to the general situation and imply the following results.
 
\begin{proposition}\label{grand_orbit_group_gen_prop}
\noindent\begin{enumerate}
\item $\widetilde{\cT}$ is the union of $p$ disjoint topological disks $U_0,\cdots,U_{p-1}$, where each $U_i$ contains a unique critical point (of multiplicity $n-1$) of $\widecheck{R}$ and is mapped onto $\cT$ with degree $n$. 

\item There exists a conformal automorphism $\tau$ of $\widetilde{\mathcal{T}}$ such that 
$$
\tau^{np}=\mathrm{id},\ \textrm{and}\ \widecheck{R}^{-1}(\widecheck{R}(z))=\{z,\tau(z),\cdots,\tau^{np-1}(z)\}\ \forall\ z\in \widetilde{\mathcal{T}}.
$$ 
Hence, the forward branches of $\mathfrak{C}$ on $\widetilde{\mathcal{T}}$ are given by the conformal automorphisms $\tau\circ\widecheck{\eta},\cdots,\tau^{np-1}\circ\widecheck{\eta}$.

\item The dynamics of $\mathfrak{C}$ on $\widetilde{\cT}$ is equivalent to the action of the group 
$$
\langle\widecheck{\eta}\rangle\ast\langle\tau\rangle\ \cong\ \Z/2\Z\ast\Z/(np)\Z
$$ 
of conformal automorphisms of $\widetilde{\cT}$. 

\item The group $\langle\widecheck{\eta}\rangle\ast\langle\tau\rangle$ acts properly discontinuously on $\widetilde{\cT}$. Moreover, the quotient orbifold $\faktor{\widetilde{\cT}}{\langle\widecheck{\eta}\rangle\ast\langle\tau\rangle}$ is biholomorphic to $\Sigma=\faktor{\D}{\widehat{\Gamma}}$.
\end{enumerate}
\end{proposition}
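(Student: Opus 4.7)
The plan is to transport, essentially verbatim, the arguments of Propositions~\ref{deck_prop}, \ref{grand_orbit_group_prop}, and \ref{lifted_tiling_quotient_prop} from the principal hyperbolic component setting to the noded Riemann surface $\mathfrak{W}$. The crucial preliminary observation is that although $\mathfrak{W}$ is noded, all nodes arise from identifications of points of $\widetilde{S} = \bigsqcup_\alpha \widetilde{S}_\alpha$, whereas $\widetilde{\cT} = \widecheck{R}^{-1}(\cT)$ is disjoint from $\widetilde{S}$ (since $\cT \cap S_F = \emptyset$). Consequently $\widetilde{\cT}$ is an open, smooth (possibly disconnected) Riemann surface inside $\mathfrak{W}$, and every geometric/dynamical argument on $\widetilde{\cT}$ used in Section~\ref{special_corr_subsec} makes sense without modification.

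For part~(1), I would apply Riemann--Hurwitz to $\widecheck{R}: \widetilde{\cT} \to \cT$ exactly as in the opening paragraph of Section~\ref{corr_group_1_subsubsec}. By Proposition~\ref{crit_pnt_r_sharp_prop}(3), the critical points of $\widecheck{R}$ inside $\widetilde{\cT}$ are $p$ distinct points of multiplicity $n-1$ all projecting to the single critical value $\mathfrak{X}_\Gamma(0) \in \cT$. Since $\cT$ is a topological disk and all ramification is concentrated over one point, a component-by-component Euler characteristic count (equivalently, analyzing $\widecheck{R}^{-1}(\cT \setminus \{\mathfrak{X}_\Gamma(0)\})$ as a finite unramified cover of a once-punctured disk) forces $\widetilde{\cT}$ to consist of exactly $p$ topological disks $U_0, \ldots, U_{p-1}$, each containing one critical point and mapping to $\cT$ with degree $n$.

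For part~(2), I would imitate the proof of Proposition~\ref{deck_prop}: fix a biholomorphism $\pmb{\Phi}:\D \times \Z/p\Z \to \widetilde{\cT}$ sending $(0,j)$ to the critical point in $U_j$, reduce the lifted map $\mathfrak{X}_\Gamma^{-1}\circ \widecheck{R} \circ \pmb{\Phi}$ to the normal form $(w,j) \mapsto w^n$ after rotations on each sheet, and push forward via $\pmb{\Phi}$ the obvious order-$np$ automorphism cycling the sheets (and applying $w \mapsto e^{2\pi i/n} w$ on the last) to obtain $\tau$. Parts~(3) and~(4) then follow by repeating the ping-pong argument of Proposition~\ref{grand_orbit_group_prop} and the fundamental-sector analysis of Proposition~\ref{lifted_tiling_quotient_prop}: the tile structure on $\cT$ (rank-one tile $\overline{T^0}$ with $T^0 = \widehat{\C}\setminus\overline{\mathrm{Dom}(F)}$, higher-rank tiles as components of $F^{-m}(\overline{T^0})$) pulls back to a tile structure on $\widetilde{\cT}$ via $\widecheck{R}$, each generator $\tau^k \circ \widecheck{\eta}$ sends a rank-$s$ tile outside $\mathfrak{D}$ to a rank-$(s+1)$ tile outside $\mathfrak{D}$ (using the natural generalization of Relation~\eqref{f_inv_lift_eqn} on $\mathfrak{W}$), and the side-pairings of the fundamental sector $\mathfrak{S} = \widetilde{\mathfrak{X}_\Gamma}(\psi_\rho(\widehat{\pmb{\Pi}})) \subset U_0$ are conjugated by $\widetilde{\mathfrak{X}_\Gamma}$ to the generators $\rho(g_{1,1}), \ldots, \rho(g_{1,p}), M_\omega$ of $\widehat{\Gamma}$, yielding $\faktor{\widetilde{\cT}}{\langle \widecheck{\eta}\rangle \ast \langle \tau \rangle} \cong \faktor{\D}{\widehat{\Gamma}}$.

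The main obstacle is bookkeeping on $\mathfrak{W}$: the inside/outside-$\mathfrak{D}$ dichotomy used in the ping-pong argument must be reinterpreted as membership in $\mathfrak{D} = \bigsqcup_\alpha \mathfrak{D}_\alpha \subset \mathfrak{W}$, and the closed disks $X_i$ into which $\Cl{\cT} \setminus T^0$ decomposes (the analogs from Lemma~\ref{lifted_tiling_top_lem}) are now indexed jointly by the components $\Omega_\alpha$ of $\Int{\mathrm{Dom}(F)}$ and by the components of $\partial\Omega_\alpha \setminus S_F$. Once this interpretation is fixed and one verifies that the inverse branches $(R_\alpha|_{\overline{\mathfrak{D}_\alpha}})^{-1}$ pull these pieces back to a covering of $\overline{\mathfrak{D}}$ inside $\mathfrak{W}$ whose closure contains all of $\partial\mathfrak{D}$, the combinatorial arguments of Section~\ref{special_corr_subsec} go through with no essential modification.
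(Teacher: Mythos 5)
Your proposal is correct and follows essentially the same route as the paper, whose proof of this proposition simply states that the arguments of the principal-hyperbolic-component case (the Riemann--Hurwitz count, the construction of $\tau$, the ping-pong argument, and the fundamental-sector analysis) apply mutatis mutandis, given the critical point description in Proposition~\ref{crit_pnt_r_sharp_prop}. Your preliminary observation that $\widetilde{\cT}$ avoids the nodes of $\mathfrak{W}$ (because $\widecheck{R}(\widetilde{S})=S_F\subset\Lambda$ is disjoint from the open set $\cT$) is exactly the point that makes the verbatim transfer legitimate, so the write-up matches the paper's intent.
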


\subsubsection{Polynomial structure in $\mathfrak{C}$}\label{corr_poly_2_subsubsec}

We set
$$
\widetilde{S}:=\bigsqcup_{\alpha\in\mathcal{I}}\widetilde{S}_{\alpha}.
$$ 
Note that $\widecheck{\eta}$ maps $\overline{\mathfrak{D}}$ onto $\mathfrak{W}\ \setminus\ \mathfrak{D}$, and preserves $\widetilde{S}$. 
As in Section~\ref{corr_poly_1_subsubsec}, we set $\widetilde{\mathcal{K}}_1:=\widetilde{\mathcal{K}}\cap\overline{\mathfrak{D}}$ and $\widetilde{\mathcal{K}}_2:=\widetilde{\mathcal{K}}\ \setminus\ \mathfrak{D}$.

\begin{lemma}\label{lifted_ne_top_1_lem}
We have that $\widetilde{\mathcal{K}}_2=\widecheck{\eta}(\widetilde{\mathcal{K}}_1)$, and $\widetilde{\mathcal{K}}_1\cap\widetilde{\mathcal{K}}_2=\widetilde{S}$.
\end{lemma}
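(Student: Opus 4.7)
The plan is to mimic, in the quotient setting $\mathfrak{W}=\mathfrak{U}/\sim_{\mathrm{w}}$, the two-part argument already carried out in Lemma~\ref{lifted_ne_top_lem}(1)--(2) for the principal-component case. Everything needed has been assembled in Sections~\ref{conf_mating_char_sec} and~\ref{gen_corr_subsec}; only bookkeeping remains.

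\textbf{First equality.} I would start by observing that $\widecheck{\eta}$ interchanges $\overline{\mathfrak{D}}$ and $\mathfrak{W}\setminus\mathfrak{D}$. On each sheet $\widehat{\C}_{\alpha}$, Proposition~\ref{mating_class_prop}(1) gives $\eta(\partial\mathfrak{D}_\alpha)=\partial\mathfrak{D}_{\kappa(\alpha)}$; since $\eta$ sends the interior of $\D$ to its exterior and $\mathfrak{D}_\alpha,\mathfrak{D}_{\kappa(\alpha)}$ are Jordan domains, $\eta_\ast(\mathfrak{D}_\alpha)=\widehat{\C}_{\kappa(\alpha)}\setminus\overline{\mathfrak{D}_{\kappa(\alpha)}}$. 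Lemma~\ref{eta_descends_lem} guarantees that this descends cleanly to $\mathfrak{W}$. Combined with the $\widecheck{\eta}$-invariance of $\widetilde{\mathcal{K}}$ from Proposition~\ref{corr_partition_2_prop}(1),
\[
\widecheck{\eta}(\widetilde{\mathcal{K}}_1)=\widecheck{\eta}(\widetilde{\mathcal{K}}\cap\overline{\mathfrak{D}})=\widetilde{\mathcal{K}}\cap(\mathfrak{W}\setminus\mathfrak{D})=\widetilde{\mathcal{K}}_2.
\]

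\textbf{Second equality.} The crucial step is to compute $\widetilde{\mathcal{K}}\cap\partial\mathfrak{D}_\alpha$ for each $\alpha\in\mathcal{I}$. Since $R_\alpha$ maps $\overline{\mathfrak{D}_\alpha}$ homeomorphically onto $\overline{\Omega_\alpha}$, a point $(z,\alpha)\in\partial\mathfrak{D}_\alpha$ belongs to $\widetilde{\mathcal{K}}$ iff $R_\alpha(z)\in\partial\Omega_\alpha\cap\mathcal{K}$. I would then argue that
\[
\partial\Omega_\alpha\cap\mathcal{K}=S_F\cap\partial\Omega_\alpha.
\]
Indeed, as observed in Section~\ref{unif_rat_crit_pnt_subsec}, $\partial\mathrm{Dom}(F)\setminus S_F$ is a union of analytic arcs, each the $\mathfrak{X}_\Gamma\circ\xi$-image of a hyperbolic geodesic in $\D$, hence contained in $\mathcal{T}$; and the remaining points of $\partial\Omega_\alpha$ that lie in the interior of $\mathrm{Dom}(F)$ are cut points of $\mathrm{Dom}(F)$, which by the discussion of the lamination $\mathcal{L}_P$ in Section~\ref{lami_model_subsec} correspond to landing points of external rays of $P$ and hence lie in $\Lambda\subset\mathcal{K}$ --- and these cut points are exactly the elements of $S_F\setminus S_F^{\mathrm{cusp}}$ on $\partial\Omega_\alpha$. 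Therefore $\partial\Omega_\alpha\cap\mathcal{K}$ is the finite set $S_F\cap\partial\Omega_\alpha$, and by the definition of $\widetilde{S}_\alpha$ this gives $\widetilde{\mathcal{K}}\cap\partial\mathfrak{D}_\alpha=\widetilde{S}_\alpha$.

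\textbf{Conclusion.} Since $\widetilde{\mathcal{K}}_1\subset\overline{\mathfrak{D}}$ and $\widetilde{\mathcal{K}}_2\subset\mathfrak{W}\setminus\mathfrak{D}$, their intersection lies in $\partial\mathfrak{D}$, so
\[
\widetilde{\mathcal{K}}_1\cap\widetilde{\mathcal{K}}_2\ \subset\ \widetilde{\mathcal{K}}\cap\partial\mathfrak{D}\ =\ \bigcup_{\alpha\in\mathcal{I}}\widetilde{S}_\alpha\ =\ \widetilde{S},
\]
where the last equality uses that the quotient $\mathfrak{U}\to\mathfrak{W}$ produces the set $\widetilde{S}$ from the disjoint union $\bigsqcup_\alpha\widetilde{S}_\alpha$ by the identifications $\sim_{\mathrm{w}}$. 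The reverse inclusion is immediate: each $(z,\alpha)\in\widetilde{S}_\alpha$ lies in $\overline{\mathfrak{D}_\alpha}$ with $R_\alpha(z)\in S_F\subset\Lambda\subset\mathcal{K}$, so $\widetilde{S}\subset\widetilde{\mathcal{K}}_1$; applying $\widecheck{\eta}$ and invoking the first part gives $\widetilde{S}=\widecheck{\eta}(\widetilde{S})\subset\widetilde{\mathcal{K}}_2$ as well. The only subtlety I anticipate is tracking the sheet-swapping under $\widecheck{\eta}$ and the gluing at the nodes of $\mathfrak{W}$, but Lemma~\ref{eta_descends_lem} and the compatibility of $\widecheck{R}$ with $\sim_{\mathrm{w}}$ handle both cleanly.
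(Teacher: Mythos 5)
Your argument is correct and follows the paper's own proof in all essentials: the first equality comes from the $\widecheck{\eta}$-invariance of $\widetilde{\mathcal{K}}$ together with the fact that $\widecheck{\eta}$ swaps $\overline{\mathfrak{D}}$ and $\mathfrak{W}\setminus\mathfrak{D}$, and the second from identifying $\widetilde{\mathcal{K}}\cap\partial\mathfrak{D}$ with $\widetilde{S}$ via $\widecheck{R}(\widetilde{S})=S_F\subset\mathcal{K}$ and $\widecheck{R}(\partial\mathfrak{D}\setminus\widetilde{S})=\partial\mathrm{Dom}(F)\setminus S_F\subset\mathcal{T}$, exactly as in the paper (you merely carry out the bookkeeping sheet by sheet rather than on the quotient). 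One cosmetic slip: the touching points of the $\Omega_\alpha$ lie on $\partial\mathrm{Dom}(F)$, not in its interior, but this does not affect the dichotomy you use.
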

\begin{proof}
The first statement follows from the fact that $\widecheck{\eta}(\widetilde{\mathcal{K}})=\widetilde{\mathcal{K}}$. 
For the second statement, first observe that
$$
\widetilde{\mathcal{K}}_1\cap\widetilde{\mathcal{K}}_2=\{\mathfrak{u}\in\partial\mathfrak{D}: \widecheck{R}(\mathfrak{u})\in\mathcal{K}\}.
$$
The result is now a consequence of the fact that $\widecheck{R}(\widetilde{S})=S_F\subset\mathcal{K}$ and $\widecheck{R}(\partial\mathfrak{D}\ \setminus\ \widetilde{S})=\partial\mathrm{Dom}(F) \setminus S_F\subset\mathcal{T}$.
\end{proof}

Finally, Proposition~\ref{lifted_ne_dyn_prop} naturally generalizes to the current setting.

\begin{proposition}\label{lifted_ne_dyn_gen_prop}
\noindent\begin{enumerate}
\item $\widetilde{\mathcal{K}}_2$ is forward invariant, and hence, $\widetilde{\mathcal{K}}_1$ is backward invariant under $\mathfrak{C}$.

\item $\mathfrak{C}$ has a forward branch carrying $\widetilde{\mathcal{K}}_1$ onto itself with degree $np-1$, and this branch is conformally conjugate to $P:\cK(P)\to\cK(P)$.

\item $\mathfrak{C}$ has a backward branch carrying $\widetilde{\mathcal{K}}_2$ onto itself with degree $np-1$, and this branch is also conformally conjugate to $P:\cK(P)\to\cK(P)$.
\end{enumerate}
\end{proposition}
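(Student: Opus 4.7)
The plan is to run the argument of Proposition~\ref{lifted_ne_dyn_prop} verbatim on the noded Riemann surface $\mathfrak{W}$, with the key enabling observation being that $\widecheck{R}\vert_{\overline{\mathfrak{D}}}:\overline{\mathfrak{D}}\to \mathrm{Dom}(F)$ is a bijection. The equivalence relation $\sim_{\mathrm{w}}$ was built precisely so that the identifications among the $\partial\mathfrak{D}_\alpha$'s match the pinchings of the $\overline{\Omega_\alpha}$'s at $S_F$; consequently, any failure of injectivity of $R$ on $\bigsqcup_\alpha \overline{\mathfrak{D}_\alpha}$ disappears after passing to $\mathfrak{W}$, and we obtain a globally defined inverse $(\widecheck{R}\vert_{\overline{\mathfrak{D}}})^{-1}:\mathrm{Dom}(F)\to\overline{\mathfrak{D}}$.

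For part (1), I would take $\mathfrak{u}_1\in\widetilde{\cK}_2$ and $(\mathfrak{u}_1,\mathfrak{u}_2)\in\mathfrak{C}$. Proposition~\ref{corr_partition_2_prop}(2) gives $\mathfrak{u}_2\in\widetilde{\cK}$, while Equation~\eqref{corr_gen_eqn} forces $\widecheck{R}(\mathfrak{u}_2)=\widecheck{R}(\widecheck{\eta}(\mathfrak{u}_1))$ with $\mathfrak{u}_2\neq\widecheck{\eta}(\mathfrak{u}_1)$. Because $\widecheck{\eta}(\mathfrak{u}_1)\in\overline{\mathfrak{D}}$ and $\widecheck{R}$ is injective on $\overline{\mathfrak{D}}$, the point $\widecheck{\eta}(\mathfrak{u}_1)$ is the unique preimage of $\widecheck{R}(\widecheck{\eta}(\mathfrak{u}_1))$ lying in $\overline{\mathfrak{D}}$. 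Excluding it forces $\mathfrak{u}_2\notin\mathfrak{D}$, so $\mathfrak{u}_2\in\widetilde{\cK}_2$; backward invariance of $\widetilde{\cK}_1$ then follows by symmetry.

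For part (2), set $V:=\widecheck{R}^{-1}(\mathrm{Dom}(F))\setminus\mathfrak{D}$ and define $g:=(\widecheck{R}\vert_{\overline{\mathfrak{D}}})^{-1}\circ\widecheck{R}:\overline{V}\to\overline{\mathfrak{D}}$. Since $\widecheck{R}$ has total degree $np$ and $\widecheck{R}\vert_{\overline{\mathfrak{D}}}$ accounts for one sheet, $g$ is a degree $np-1$ branched covering satisfying $\widecheck{R}\circ g=\widecheck{R}$ on $\overline{V}$. The inclusion $\widecheck{R}(\widetilde{\cK}_2)\subset\cK\subset\mathrm{Dom}(F)$ puts $\widetilde{\cK}_2\subset\overline{V}$, and Lemma~\ref{lifted_ne_top_1_lem} gives $\widecheck{\eta}(\widetilde{\cK}_1)=\widetilde{\cK}_2$, so $g\circ\widecheck{\eta}:\widetilde{\cK}_1\to\widetilde{\cK}_1$ is a well-defined forward branch of $\mathfrak{C}$ of degree $np-1$. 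By construction and the semi-conjugacy $\widecheck{R}\circ\widecheck{\eta}=F\circ\widecheck{R}$ on $\overline{\mathfrak{D}}$, the homeomorphism $\widecheck{R}\vert_{\widetilde{\cK}_1}:\widetilde{\cK}_1\to\cK$ (conformal on the interior) conjugates $g\circ\widecheck{\eta}$ to $F\vert_{\cK}$; post-composing with $\mathfrak{X}_P$ yields the desired conformal conjugacy to $P\vert_{\cK(P)}$. For part (3), the composition $\widecheck{\eta}\circ g:\widetilde{\cK}_2\to\widetilde{\cK}_2$ is a backward branch of $\mathfrak{C}$, and the restriction of $\widecheck{\eta}$ to $\widetilde{\cK}_1$ is a conformal conjugacy between $g\circ\widecheck{\eta}$ and $\widecheck{\eta}\circ g$.

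The only substantive step beyond the principal-component case is verifying the bijectivity of $\widecheck{R}\vert_{\overline{\mathfrak{D}}}$; this is the main obstacle. It should follow from the lamination description of $\mathrm{Dom}(F)$ in Proposition~\ref{mat_dom_prop} together with the identification of components of $\Int{\mathrm{Dom}(F)}$ with gaps of $\mathcal{L}_P$ in Lemma~\ref{comp_count_lem}: these together imply that the only identifications needed on $\bigsqcup_\alpha\overline{\mathfrak{D}_\alpha}$ to recover $\mathrm{Dom}(F)$ occur on $\widetilde{S}$ via $R$, which is exactly the content of $\sim_{\mathrm{w}}$.
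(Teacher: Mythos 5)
Your proposal is correct and takes essentially the same route as the paper: the paper likewise transplants the proof of Proposition~\ref{lifted_ne_dyn_prop} to $\mathfrak{W}$, writing your branch $g\circ\widecheck{\eta}$ component-wise as $\mathfrak{b}(z,\alpha)=\bigl(\bigl(R_{\beta}\vert_{\overline{\mathfrak{D}_{\beta}}}\bigr)^{-1}\bigl(R_{\kappa(\alpha)}(\eta(z))\bigr),\beta\bigr)$ and relying on the fact (which you package as the bijectivity of $\widecheck{R}\vert_{\overline{\mathfrak{D}}}$, indeed guaranteed by the definition of $\sim_{\mathrm{w}}$ together with the injectivity of each $R_\alpha$ on $\overline{\mathfrak{D}_\alpha}$) that $\widecheck{R}:\widetilde{\mathcal{K}}_1\to\mathcal{K}$ is a homeomorphism conjugating this branch to $F\vert_{\mathcal{K}}$, hence to $P\vert_{\cK(P)}$ via $\mathfrak{X}_P$. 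Your handling of parts (1) and (3) also matches the paper's, which simply refers back to the principal hyperbolic component case.
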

\begin{proof}
The proof is similar to that of Proposition~\ref{lifted_ne_dyn_prop}. We only give a proof of the second statement.

The forward branch of $\mathfrak{C}$ carrying $\widetilde{\mathcal{K}}_1$ onto itself (with degree $np-1$) acts as 
$$
\mathfrak{b}:\ \left(z,\alpha\right)\mapsto \left(\left(R_{\beta}\vert_{\overline{\mathfrak{D}_{\beta}}}\right)^{-1}\left(R_{\kappa(\alpha)}(\eta(z))\right),\beta\right),
$$
where $(z,\alpha)\in\widetilde{\mathcal{K}}_1$, and $R_{\kappa(\alpha)}(\eta(z))\in\overline{\Omega_\beta}$.
It is easy to see from the construction that $\widecheck{R}:\widetilde{\mathcal{K}}_1\longrightarrow \mathcal{K}$ is a homeomorphism. We claim that $\widecheck{R}\vert_{\widetilde{\mathcal{K}}_1}$ is a conjugating map between $\mathfrak{b}$ and $F\vert_{\cK}$. To this end, note that 
$$
F(\widecheck{R}(z,\alpha))=F(R_\alpha(z))=R_{\kappa(\alpha)}(\eta(z)),
$$
and 
$$
\widecheck{R}\left(\mathfrak{b}(z,\alpha)\right)=\widecheck{R}\left(\left(R_{\beta}\vert_{\overline{\mathfrak{D}_{\beta}}}\right)^{-1}\left(R_{\kappa(\alpha)}(\eta(z))\right),\beta\right)=R_{\kappa(\alpha)}(\eta(z)).
$$
It follows that $\widecheck{R}\vert_{\widetilde{\mathcal{K}}_1}\circ\mathfrak{b}=F\circ\widecheck{R}\vert_{\widetilde{\mathcal{K}}_1}$. To complete the proof, we note that $F\vert_{\cK}$ is conformally conjugate to $P\vert_{\cK(P)}$ via the mating conjugacy $\mathfrak{X}_P$.
\end{proof}

We summarize the above results in the following theorem.

\begin{theorem}\label{corr_mating_thm_2}
The correspondence $\mathfrak{C}$ on $\mathfrak{W}$ defined by Equation~\eqref{corr_gen_eqn} is a mating of $P$ and $\Sigma:=\faktor{\D}{\widehat{\Gamma}}$ in the following sense.
\begin{enumerate}
\item The dynamics of $\mathfrak{C}$ on $\widetilde{\cT}$ is equivalent to the action of a group 
$$
\langle\widecheck{\eta}\rangle\ast\langle\tau\rangle\ \cong\ \Z/2\Z\ast\Z/(np)\Z
$$ 
of conformal automorphisms of $\widetilde{\cT}$. Here, $\tau$ is a conformal automorphism of $\widetilde{\cT}$ of order $np$ such that $\tau^p$ induces an order $n$ conformal automorphism on each component of $\widetilde{\cT}$.

Moreover, the above group action is properly discontinuous, and the quotient orbifold $\faktor{\widetilde{\cT}}{\mathfrak{C}}$ is biholomorphic to $\Sigma$.

\item The correspondence $\mathfrak{C}$ has a forward branch carrying $\widetilde{\mathcal{K}}\cap\overline{\mathfrak{D}}$ onto itself with degree $np-1$, and this branch is topologically conjugate to $P:\mathcal{K}(P)\to \mathcal{K}(P)$, with the conjugacy being conformal on the interior. On the other hand, $\mathfrak{C}$ has a backward branch carrying $\widetilde{\mathcal{K}}\ \setminus\ \mathfrak{D}$ onto itself with degree $np-1$, and this branch is also topologically conjugate to $P:\mathcal{K}(P)\to \mathcal{K}(P)$, with the conjugacy being conformal on the interior.
\end{enumerate}
\end{theorem}

\begin{proof}
The first statement is the content of Proposition~\ref{grand_orbit_group_gen_prop} and the second one is the content of Proposition~\ref{lifted_ne_dyn_gen_prop}.
\end{proof}

We are now ready to prove a slightly more general version of Theorem~\ref{corr_mating_intro_thm} announced in the introduction.

\begin{theorem}\label{corr_main_thm}
Let $\Sigma$ be a hyperbolic orbifold of genus zero with arbitrarily many (at least one, but finite) punctures, at most one order two orbifold point, and at most one order $\nu\geq 3$ orbifold point.
Further, let $P$ be a polynomial in a real-symmetric hyperbolic component of degree $1-2\nu\cdot\chi_{\mathrm{orb}}(\Sigma)$ (respectively, $1-2\chi_{\mathrm{orb}}(\Sigma)$) polynomials if $\Sigma$ has (respectively, does not have) an order $\nu$ orbifold point.

Then, there exist a holomorphic, hence algebraic correspondence $\mathfrak{C}$ on a compact, simply connected, (possibly noded) Riemann surface $\mathfrak{W}$ and a $\mathfrak{C}-$invariant partition $\mathfrak{W}=\widetilde{\cT}\sqcup\widetilde{\cK}$ such that the following hold.
\begin{enumerate}
	\item On $\widetilde{\cT}$, the dynamics of $\mathfrak{C}$ is orbit-equivalent to the action of a  group of conformal automorphisms acting properly discontinuously. Further, $\faktor{\widetilde{\cT}}{\mathfrak{C}}$ is biholomorphic to $\Sigma$.
	
	\item $\widetilde{\cK}$ can be written as the union of two copies $\widetilde{\cK}_1, \widetilde{\cK}_2$ of $\cK(P)$ (where $\cK(P)$ is the filled Julia set of $P$), such that $\widetilde{\cK}_1$ and $\widetilde{\cK}_2$ intersect in finitely many points. Furthermore, $\mathfrak{C}$ has a forward (respectively, backward) branch carrying $\widetilde{\cK}_1$ (respectively, $\widetilde{\cK}_2$) onto itself with degree $np-1$, and this branch is conformally conjugate to $P:\mathcal{K}(P)\to \mathcal{K}(P)$. 
\end{enumerate}
In particular, if $P$ lies in a principal hyperbolic component, then $\mathfrak{W}=\widehat{\C}$; i.e., $\mathfrak{C}$ is an algebraic correspondence on the Riemann sphere.
\end{theorem}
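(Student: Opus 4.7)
The strategy is to reduce Theorem~\ref{corr_main_thm} to the previously established Theorem~\ref{corr_mating_thm_2}. The reduction proceeds in three steps: translate the orbifold data of $\Sigma$ into integers $(n,p)$ and a Fuchsian group in $\mathrm{Teich}^\omega(\pmb{\Gamma_{n,p}})$, verify that the degree of $P$ matches the combinatorial parameter $np-1$, and check the collapse of the noded surface $\mathfrak{W}$ to $\widehat{\C}$ in the principal hyperbolic case.

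First I would read $(n,p)$ off the signature of $\Sigma$. Set $n=\nu$ if $\Sigma$ has an order $\nu\geq 3$ orbifold point and $n=1$ otherwise. The classification at the end of Section~\ref{factor_bs_gen_subsec} identifies $\faktor{\D}{\widehat{\pmb{\Gamma}}_{n,p}}$ as a sphere with $\lfloor p/2\rfloor+1$ punctures and an order two orbifold point iff $p$ is odd, so given $k$ cusps of $\Sigma$ I take $p=2k-1$ when $\Sigma$ has an order two point and $p=2(k-1)$ otherwise. Remark~\ref{reconcile_rem_2} then guarantees that every such $\Sigma$ is uniformized by some $\widehat{\Gamma}=\Gamma\rtimes\langle M_\omega\rangle$ with $\Gamma\in\mathrm{Teich}^\omega(\pmb{\Gamma_{n,p}})$. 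A direct computation of $\chi_{\mathrm{orb}}(\Sigma)$ in the four sub-cases then verifies that the degree hypothesis on $P$ amounts to $\deg P=np-1$, so the input of Theorem~\ref{conf_mat_thm} is satisfied.

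Next I would feed $(\Gamma,P)$ through the existing machinery. Theorem~\ref{conf_mat_thm} supplies a unique conformal mating $F$ of $A_{\Gamma}^{\mathrm{fBS}}$ and $P$; since $P$ lies in a real-symmetric hyperbolic component, Proposition~\ref{mating_class_prop} produces Jordan domains $\mathfrak{D}_\alpha$ and rational maps $R_\alpha$ uniformizing the components $\Omega_\alpha$ of $\Int{\mathrm{Dom}(F)}$. The algebraic correspondence cut out by Equation~\eqref{corr_gen_eqn} on $\mathfrak{U}=\bigsqcup_\alpha\widehat{\C}_\alpha$ descends via Lemma~\ref{eta_descends_lem} to a correspondence $\mathfrak{C}$ on $\mathfrak{W}=\faktor{\mathfrak{U}}{\sim_{\mathrm{w}}}$, and the invariant partition $\widetilde{\cT}\sqcup\widetilde{\cK}$ is the pullback under $\widecheck{R}$ of the tiling/non-escaping decomposition of the $F$-plane. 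Theorem~\ref{corr_mating_thm_2} now delivers both conclusions verbatim, with $\widetilde{\cK}_1=\widetilde{\cK}\cap\overline{\mathfrak{D}}$ and $\widetilde{\cK}_2=\widetilde{\cK}\setminus\mathfrak{D}$; their intersection equals $\widetilde{S}$ by Lemma~\ref{lifted_ne_top_1_lem} and is finite because $\widecheck{R}$ sends it injectively into the finite set $S_F$.

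For the last clause (principal hyperbolic case), I would observe that $P\in\mathcal{H}_d$ forces $\mathcal{J}(P)$ to be a Jordan curve, so distinct external rays have distinct landing points and the lamination $\mathcal{L}_P$ of Definition~\ref{lamination_def} is empty. Proposition~\ref{mat_dom_prop} together with Lemma~\ref{comp_count_lem} then reduces the index set $\mathcal{I}$ to a singleton and makes $\Omega$ a Jordan domain, so $\sim_{\mathrm{w}}$ is trivial and $\mathfrak{W}\cong\widehat{\C}$. Equation~\eqref{corr_gen_eqn} reduces to~\eqref{corr_eqn} with the single rational map $R$ of degree $d+1$ provided by Corollary~\ref{main_hyp_comp_mating_class_cor}, and Theorem~\ref{corr_mating_thm_1} is the specialization. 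The main obstacle I anticipate is essentially clerical, namely the careful matching of orbifold signatures to the parity of $p$ and the value of $n$; the substantive dynamical content has already been encapsulated in Proposition~\ref{mating_class_prop}, Lemma~\ref{eta_descends_lem}, and Theorem~\ref{corr_mating_thm_2}.
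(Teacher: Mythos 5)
Your proposal is correct and follows essentially the same route as the paper: read $(n,p)$ off the signature of $\Sigma$ (the paper uses $\delta_1,\delta_2,\delta_3$ with exactly your values of $p$ and $n$), verify $\deg P=np-1$ via the orbifold Euler characteristic, uniformize $\Sigma$ by some $\Gamma_\Sigma\in\mathrm{Teich}(\widehat{\pmb{\Gamma}}_{n,p})\cong\mathrm{Teich}^\omega(\pmb{\Gamma_{n,p}})$, and quote Theorems~\ref{corr_mating_thm_1} and~\ref{corr_mating_thm_2}; your justification that $\mathcal{L}_P=\emptyset$ (Jordan-curve Julia set) collapses $\mathfrak{W}$ to $\widehat{\C}$ in the principal case matches the paper's use of Corollary~\ref{main_hyp_comp_mating_class_cor}. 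The only detail left implicit is the observation, noted in the paper, that hyperbolicity of $\Sigma$ forces $np\geq 3$, which is needed for the group $\pmb{\Gamma_{n,p}}$ and the factor Bowen--Series machinery to apply.
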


\begin{proof}
Let us assume that $\Sigma$ has $\delta_1\geq 1$ punctures, $\delta_2\in\{0,1\}$ order two orbifold points, and $\delta_3\in\{0,1\}$ order $\nu\geq 3$ orbifold points. The condition that $\Sigma$ is hyperbolic is equivalent to the requirement that
$$
\chi_{\mathrm{orb}}(\Sigma)=2-\delta_1-\frac{\delta_2}{2}-\delta_3(1-\frac{1}{\nu})<0.
$$
We set
$$
n=\begin{cases}
1 \quad \mathrm{if}\quad \delta_3=0,\\
\nu \quad \mathrm{if}\quad \delta_3=1.
\end{cases}
$$
Further, we set
$$
p=\begin{cases}
2(\delta_1-1) \quad \mathrm{if}\quad \delta_2=0,\\
2\delta_1-1 \quad \mathrm{if}\quad \delta_2=1.
\end{cases}
$$
Note that when $\delta_3=0$, then 
$$
1-2\chi_{\mathrm{orb}}(\Sigma)=1-2(2-\delta_1-\frac{\delta_2}{2})=2\delta_1+\delta_2-3=p-1=np-1,
$$
(as $n=1$ in this case). On the other hand, when $\delta_3=1$, then
$$
1-2\nu\cdot\chi_{\mathrm{orb}}(\Sigma)=1-2\nu(1-\delta_1-\frac{\delta_2}{2}+\frac{1}{\nu})=2\nu\delta_1+\nu\delta_2-2\nu-1=\nu p-1=np-1
$$
(as $n=\nu$ in this case). Moreover, the restriction on $\chi_{\mathrm{orb}}(\Sigma)$ implies that $np\geq 3$. 

By construction, $\faktor{\D}{\widehat{\pmb{\Gamma}}_{n,p}}$ is homeomorphic to $\Sigma$ (as orbifolds). It follows that there exists $\Gamma_\Sigma\in\mathrm{Teich}(\widehat{\pmb{\Gamma}}_{n,p})$ such that $\faktor{\D}{\Gamma_\Sigma}$ is biholomorphic to $\Sigma$.
The result now follows by applying Theorems~\ref{corr_mating_thm_1} and~\ref{corr_mating_thm_2} on the pair $\Gamma_\Sigma, P$.
\end{proof}

\section{A character variety and a simultaneous uniformization locus}\label{char_var_sec}

In this section, we will put the results of the previous sections together to justify the diagram (Figure~\ref{intro_fig}) furnished in the introduction.
Along the way, we will put an algebraic structure on the moduli space of our correspondences in terms of the coefficients of the uniformizing rational maps. The construction of this space of correspondences will lay the foundation for the proof of Theorem~\ref{punc_sphere_teich_corr_intro_thm} (see Section~\ref{punc_sphere_bers_sec}).

We recall that $n,p$ are positive integers with $np\geq 3$, and $d:=np-1$. For $(\rho:\pmb{\Gamma_{n,p}}\to\Gamma)\in\mathrm{Teich}^\omega(\pmb{\Gamma_{n,p}})$, the conformal mating of $A_{\Gamma}^{\mathrm{fBS}}$ and $P\in\mathcal{H}_{d}$ is denoted by $F:\overline{\Omega}\to\widehat{\C}$. The associated mating semi-conjugacies are denoted by $\mathfrak{X}_P$ and $\mathfrak{X}_\Gamma$ (see Definition~\ref{conf_mat_def}). Further, let $R, \mathfrak{D}$ be as in Corollary~\ref{main_hyp_comp_mating_class_cor}.

\subsection{Moduli space of marked matings}\label{mating_moduli_subsec} Recall from Theorem~\ref{conf_mat_thm} that the conformal mating $F:\overline{\Omega}\to\widehat{\C}$ of $A_{\Gamma}^{\mathrm{fBS}}$ and $P$ is unique up to M{\"o}bius conjugacy. A \emph{marked conformal mating} is a pair $(F= A_{\Gamma}^{\mathrm{fBS}}\mate P, \mathfrak{X}_\Gamma(1))$. Two such pairs are equivalent if there is a M{\"o}bius map that conjugates the conformal matings respecting the marked fixed points. The collection of equivalence classes of marked conformal matings will be referred to as the \emph{moduli space of marked matings} associated with $\mathrm{Teich}^\omega(\pmb{\Gamma_{n,p}})$ and $\mathcal{H}_{d}$. We denote this space by
$$
\cM\equiv \cM\left(\mathrm{Teich}^\omega(\pmb{\Gamma_{n,p}}),\mathcal{H}_{d}\right).
$$ 
We have a natural map
\begin{equation*}
\begin{split}
\Xi_1:\ \mathrm{Teich}^\omega(\pmb{\Gamma_{n,p}})\times\mathcal{H}_{d}\quad \longrightarrow \quad \cM\ \\
(\Gamma,P) \quad \mapsto \quad [F:= A_{\Gamma}^{\mathrm{fBS}}\mate P, \mathfrak{X}_\Gamma(1)].
\end{split}
\label{teich_hyp_to_mating}
\end{equation*}

Let us now fix a conformal mating $F= A_{\Gamma}^{\mathrm{fBS}}\mate P:\overline{\Omega}\to\widehat{\C}$. By Corollary~\ref{main_hyp_comp_mating_class_cor}, there exist a Jordan domain $\mathfrak{D}$ (with $\eta(\partial\mathfrak{D})=\partial\mathfrak{D}$) and a degree $(d+1)$ rational map $R$ of $\widehat{\C}$ that maps $\overline{\mathfrak{D}}$ injectively onto $\overline{\Omega}$, such that $F\vert_{\overline{\Omega}}\equiv R\circ\eta\circ (R\vert_{\overline{\mathfrak{D}}})^{-1}$. Clearly, conjugating $F$ by a M{\"o}bius map amounts to post-composing $R$ with the same M{\"o}bius map.
We will now show that when a particular $F$ is chosen, the associated rational map $R$ is essentially unique.

We denote the centralizer of $\eta$ in $\mathrm{PSL}_2(\C)$ by $C(\eta)$.

\begin{proposition}\label{unique_unif_prop}
Let $F:\overline{\Omega}\to\widehat{\C}$ be a conformal mating of $A_{\Gamma}^{\mathrm{fBS}}$ and $P$. Suppose further that there exist pairs $(R_1,\mathfrak{D}_1), (R_2,\mathfrak{D}_2)$ with the following properties.
\begin{enumerate}
\item $\mathfrak{D}_i$ a Jordan domain with $\eta(\partial\mathfrak{D}_i)=\partial\mathfrak{D}_i$, 
\item $R_i\vert_{\overline{\mathfrak{D}_i}}$ is injective,
\item $R_i(\overline{\mathfrak{D}_i})=\overline{\Omega}$, and
\item $F\vert_{\overline{\Omega}}\equiv R_i\circ\eta\circ (R_i\vert_{\overline{\mathfrak{D}_i}})^{-1}$, for $i\in\{1,2\}$.
\end{enumerate}
Then, there exists a M{\"o}bius map $M\in C(\eta)$ such that $M(\mathfrak{D}_1)=\mathfrak{D}_2$ and $R_1\equiv R_2\circ M$.
\end{proposition}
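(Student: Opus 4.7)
The plan is to construct $M$ directly by extending the conformal isomorphism between the two Jordan domains. Set $\phi_1 := (R_2\vert_{\overline{\mathfrak{D}_2}})^{-1} \circ R_1\vert_{\overline{\mathfrak{D}_1}}: \overline{\mathfrak{D}_1} \to \overline{\mathfrak{D}_2}$, a homeomorphism conformal on the interior. Since $\partial\mathfrak{D}_i$ is $\eta$-invariant and $\eta$ is a non-trivial M\"obius involution, $\eta$ swaps $\mathfrak{D}_i$ and $\widehat{\C}\setminus\overline{\mathfrak{D}_i}$ (this is how the $\mathfrak{D}_i$ arose in the proof of Proposition~\ref{mating_class_prop}). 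So I can define
\[
M(z) := \begin{cases} \phi_1(z), & z \in \overline{\mathfrak{D}_1},\\ \eta \circ \phi_1 \circ \eta(z), & z \in \widehat{\C}\setminus \mathfrak{D}_1.\end{cases}
\]

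First I would verify that the two pieces agree on $\partial\mathfrak{D}_1$. From $F\vert_{\overline{\Omega}} \equiv R_i\circ\eta\circ (R_i\vert_{\overline{\mathfrak{D}_i}})^{-1}$ one reads off $R_i\circ\eta = F\circ R_i$ on $\overline{\mathfrak{D}_i}$, and by definition of $\phi_1$ one has $R_2\circ\phi_1 = R_1$ on $\overline{\mathfrak{D}_1}$. For $z\in\partial\mathfrak{D}_1$, stringing these together gives
\[
R_2\bigl(\eta(\phi_1(\eta(z)))\bigr) = F\bigl(R_2(\phi_1(\eta(z)))\bigr) = F\bigl(R_1(\eta(z))\bigr) = R_1(z) = R_2(\phi_1(z)).
\]
Both $\eta(\phi_1(\eta(z)))$ and $\phi_1(z)$ lie in $\partial\mathfrak{D}_2$, on which $R_2$ is injective, so they coincide. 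Hence $M$ is a well-defined homeomorphism of $\widehat{\C}$, holomorphic off $\partial\mathfrak{D}_1$, and the same computation shows $R_2\circ M = R_1$ globally on $\widehat{\C}$.

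The next step is to promote $M$ to a globally holomorphic map using the equation $R_2\circ M = R_1$. For any $z_0\in\partial\mathfrak{D}_1$ such that $R_1(z_0)$ is not a critical value of $R_2$, there is a local holomorphic inverse branch $\sigma$ of $R_2$ near $R_1(z_0)$ with $\sigma(R_1(z_0))=M(z_0)$; by continuity $M \equiv \sigma\circ R_1$ in a neighborhood of $z_0$, so $M$ is holomorphic there. The set of exceptional $z_0\in\partial\mathfrak{D}_1$ is finite (the fiber of finitely many critical values under $R_1\vert_{\partial\mathfrak{D}_1}$). Since $M$ is continuous, hence bounded, on $\widehat{\C}$ and holomorphic off a finite set, Riemann's removability theorem upgrades $M$ to a rational map; being a homeomorphism, it has degree one and is thus a M\"obius transformation.

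Finally, I would check $M\in C(\eta)$ and $M(\mathfrak{D}_1)=\mathfrak{D}_2$. The first follows because for $z\in\mathfrak{D}_1$ we have $\eta(z)\in\widehat{\C}\setminus\overline{\mathfrak{D}_1}$, whence $M(\eta(z)) = \eta(\phi_1(\eta(\eta(z)))) = \eta(\phi_1(z)) = \eta(M(z))$; the identity $M\circ\eta=\eta\circ M$ then propagates to all of $\widehat{\C}$ by the identity theorem. The second is immediate from $M\vert_{\mathfrak{D}_1} = \phi_1$. The step I expect to require the most care is the boundary compatibility check, since it relies crucially on the hypothesis $\eta(\partial\mathfrak{D}_i)=\partial\mathfrak{D}_i$ together with $\eta$ actually swapping inside and outside of $\mathfrak{D}_i$; once that is in hand, the removability and M\"obius identification are routine.
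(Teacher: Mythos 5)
Your construction of $M$ is exactly the paper's (the same two-piece formula $\phi_1$ on $\overline{\mathfrak{D}_1}$ and $\eta\circ\phi_1\circ\eta$ outside, glued along $\partial\mathfrak{D}_1$ via the fact that $\phi_1$ conjugates $\eta\vert_{\partial\mathfrak{D}_1}$ to $\eta\vert_{\partial\mathfrak{D}_2}$), but you diverge at the crucial step of upgrading the homeomorphism $M$ to a M\"obius map. The paper argues that $\partial\mathfrak{D}_1$ is a piecewise non-singular analytic Jordan curve --- using that $\partial\Omega\setminus S_F$ consists of finitely many non-singular analytic arcs and that $R_1$ has no critical points on $\partial\mathfrak{D}_1\setminus(R_1\vert_{\partial\mathfrak{D}_1})^{-1}(S_F)$ (Corollary~\ref{crit_pnt_r_cor}) --- and then invokes conformal removability of such curves. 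You instead exploit the global identity $R_2\circ M=R_1$: at every boundary point whose $R_1$-image is not a critical value of $R_2$ you write $M=\sigma\circ R_1$ for a local inverse branch $\sigma$ of $R_2$, so $M$ is holomorphic there, and the finitely many remaining points are removed by continuity. This is a genuinely different, more elementary route: it needs no regularity of $\partial\mathfrak{D}_1$, no conformal removability of curves, and no input from Section~\ref{unif_rat_crit_pnt_subsec}, whereas the paper's route packages the same conclusion through the critical-point analysis it has already developed (and which it reuses elsewhere). Both arguments are sound, and your verification of the boundary compatibility, of $R_2\circ M=R_1$ on both pieces, and of $M\in C(\eta)$ is correct.

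One caveat: your stated reason for the inside-out property --- that a non-trivial M\"obius involution preserving a Jordan curve must swap its complementary components --- is false in general (the map $\eta(z)=1/z$ preserves the extended imaginary axis and fixes each half-plane, since $\pm1$ are its fixed points). The swap is nevertheless forced here, e.g.\ because if $\eta$ preserved $\mathfrak{D}_i$ then $F=R_i\circ\eta\circ(R_i\vert_{\overline{\mathfrak{D}_i}})^{-1}$ would be a bijection of $\overline{\Omega}$, contradicting that $F$ restricted to its non-escaping set is conjugate to the degree $d\geq 2$ map $P\vert_{\mathcal{K}(P)}$; alternatively one can simply recall, as your parenthetical does, that the $(R,\mathfrak{D})$ to which Proposition~\ref{unique_unif_prop} is applied come from Proposition~\ref{mating_class_prop}, where the swap is built in. The paper's proof uses the same property silently, so this is a shared, easily repaired gap rather than a flaw specific to your argument.
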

\begin{proof}
We define
$$
M:\widehat{\C}\to\widehat{\C},\quad z\mapsto
\begin{cases}
(R_2\vert_{\overline{\mathfrak{D}_2}})^{-1}\circ R_1(z),\hspace{1.2cm} \mathrm{if}\ z\in \overline{\mathfrak{D}_1},\\
\eta\circ(R_2\vert_{\mathfrak{D}_2})^{-1}\circ R_1\circ\eta(z),\hspace{1.5mm} \mathrm{if}\ z\in \widehat{\C}\setminus\overline{\mathfrak{D}_1}.
\end{cases} 
$$
Since $(R_2\vert_{\partial\mathfrak{D}_2})^{-1}\circ R_1:\partial\mathfrak{D}_1\to\partial\mathfrak{D}_2$ conjugates $\eta\vert_{\partial\mathfrak{D}_1}$ to $\eta\vert_{\partial\mathfrak{D}_2}$, it follows that the piecewise definitions of $M$ agree continuously, and hence $M$ is a homeomorphism of the Riemann sphere that commutes with $\eta$. Moreover, $M$ is conformal away from the Jordan curve $\partial\mathfrak{D}_1$. 

The facts that $\partial\Omega\setminus S_F$ is a union of finitely many non-singular analytic arcs (see Section~\ref{unif_rat_crit_pnt_subsec}) and that $R_1$ has no critical point on $\partial\mathfrak{D}_1\setminus (R_1\vert_{\partial\mathfrak{D}_1})^{-1}(S_F)$ (by Corollary~\ref{crit_pnt_r_cor}) together imply that $\partial\mathfrak{D}_1$ is a piecewise non-singular analytic curve. In particular, $\partial\mathfrak{D}_1$ is conformally removable. It now follows that $M$ is a M{\"o}bius map commuting with $\eta$. 
Moreover, the definition of $M$ implies that $M(\mathfrak{D}_1)=\mathfrak{D}_2$ and $R_1\equiv R_2\circ M$.
\end{proof}

\noindent After possibly pre-composing with $z\mapsto -z$, we can and will assume that $R(1)=\mathfrak{X}_\Gamma(1)$.

\subsection{Space of correspondences as character variety}\label{corr_mod_space_subsec} 

Let us consider the space $\mathscr{C}$ of all correspondences of the form
\begin{equation}
(z,w)\in\mathfrak{C}\iff \frac{R(w)-R(\eta(z))}{w-\eta(z)}=0,
\label{corr_eqn_1}
\end{equation}
where $R\in\mathrm{Rat}_{d+1}(\C)$. Such a correspondence $\mathfrak{C}$ has bi-degree $d$:$d$; the $d$ forward (respectively, backward) branches of $\mathfrak{C}$ send a point $z$ to the $d$ points in the set $R^{-1}(R(\eta(z)))\setminus\{\eta(z)\}$ (respectively, to the $d$ points in the set $\eta(R^{-1}(R(z)))\setminus\{\eta(z)\}$). Note that the space $\mathscr{C}$, which is parametrized by the quasi-projective variety $\mathrm{Rat}_{d+1}(\C)$, defines an ambient space in which the correspondences produced by Theorem~\ref{corr_mating_thm_1} live. 

\begin{definition}(cf.\cite[\S 2]{BP94})\label{equiv_corr_def}
We say that two correspondences $\mathfrak{C}_1, \mathfrak{C}_2$ in $\mathscr{C}$ are \emph{equivalent} if there exists $M\in C(\eta)$ (where $C(\eta)$ is the centralizer of $\eta$ in $\mathrm{PSL}_2(\C)$) such that
$$
(z,w)\in\mathfrak{C}_1\ \iff\ (Mz,Mw)\in\mathfrak{C}_2.
$$ 
\end{definition}
\begin{remark}
Suppose that the correspondences $\mathfrak{C}_1, \mathfrak{C}_2$ are equivalent in the sense of Definition~\ref{equiv_corr_def}. If $\phi$ is a local holomorphic branch of $\mathfrak{C}_1$, then $M\circ\phi\circ M^{-1}$ is a local holomorphic branch of $\mathfrak{C}_2$. Thus, the branches of two equivalent correspondences are M{\"o}bius conjugate.
\end{remark}

A routine computation using Equation~\eqref{corr_eqn_1} and Definition~\ref{equiv_corr_def} shows that two distinct correspondences $\mathfrak{C}_1, \mathfrak{C}_2\in\mathscr{C}$ defined by $R_1, R_2\in\mathrm{Rat}_{d+1}(\C)$ are equivalent if and only if $R_1\equiv R_2\circ M$, for some $M\in C(\eta)$. On the other hand, replacing $R$ by $M\circ R$, for $M\in\pslc$, produces the same correspondence $\mathfrak{C}$.

Therefore, the space of equivalence classes of correspondences in $\mathscr{C}$ is parametrized by the quotient $\ \faktor{\mathrm{Rat}_{d+1}(\C)}{\sim}$ under the equivalence relation
	$$
	R\sim M_2\circ R\circ M_1,
	$$
where $R\in \mathrm{Rat}_{d+1}(\C), M_2\in\pslc$, and $M_1\in C(\eta)$.
The space $\ \faktor{\mathrm{Rat}_{d+1}(\C)}{\sim}$, with its algebraic structure, can be regarded as an analog of the  \emph{character variety} 
for surface groups (the algebraic structure comes from a GIT quotient construction, see \cite{Mum65}; compare \cite[\S 4.3]{Kap01}, \cite{LM85} for related constructions of moduli spaces).

\subsection{A simultaneous uniformization locus of correspondences}\label{simult_unif_corr_subsec}

According to Section~\ref{mating_moduli_subsec}, the rational maps $R$ associated with the conformal matings in $\cM$ are well-defined only up to pre-composition with M{\"o}bius maps in $C(\eta)$ and post-composition with arbitrary M{\"o}bius maps. In light of the discussion in Section~\ref{corr_mod_space_subsec}, each marked conformal mating in the moduli space $\cM$ defines an equivalence class of correspondences in $\ \faktor{\mathrm{Rat}_{d+1}(\C)}{\sim}$ via Equation~\eqref{corr_eqn_1}, where $R$ is the rational uniformizing map of Corollary~\ref{main_hyp_comp_mating_class_cor} normalized so that  $R(1)~=~\mathfrak{X}_\Gamma(1)$. Thus, we have a well-defined map
\begin{equation*}
\begin{split}
\Xi_2:\  \cM \quad \longrightarrow \quad \faktor{\mathrm{Rat}_{d+1}(\C)}{\sim}\ \\
[F= A_{\Gamma}^{\mathrm{fBS}}\mate P, \mathfrak{X}_\Gamma(1)] \quad \mapsto  \quad [\mathfrak{C}].
\end{split}
\label{mating_to_corr}
\end{equation*}
We denote the image of $\Xi_2$ in the `character variety' $\ \faktor{\mathrm{Rat}_{d+1}(\C)}{\sim}$ by
$$
\cC\equiv \cC\left(\mathrm{Teich}^\omega(\pmb{\Gamma_{n,p}}),\mathcal{H}_{d}\right),
$$
and call it the \emph{moduli space of correspondences} associated with $\mathrm{Teich}^\omega(\pmb{\Gamma_{n,p}})$ and $\mathcal{H}_{d}$. Note that the space $\cC$ can be seen as a locus of simultaneous uniformizations of marked groups in $\mathrm{Teich}^\omega(\pmb{\Gamma_{n,p}})$ and polynomials in $\mathcal{H}_{d}$.

\subsection{Intrinsic description of the mating structure of correspondences}\label{corr_mating_struct_intrinsic_subsec}

Let $\mathfrak{C}\in\cC$. By construction, there exists $(\Gamma,P)\in \mathrm{Teich}^\omega(\pmb{\Gamma_{n,p}})\times\mathcal{H}_{d}$ such that $\Xi_2\circ\Xi_1(\Gamma,P)=\mathfrak{C}$. Let $R$ be a rational map generating the correspondence $\mathfrak{C}$. By our normalization, $R(1)=\mathfrak{X}_\Gamma(1)$. 

Recall that while the correspondence $\mathfrak{C}$ was defined solely in terms of the rational map $R$ (via Equation~\eqref{corr_eqn_1}), the dynamical partition for $\mathfrak{C}$ was given in terms of $F=A_{\Gamma}^{\mathrm{fBS}}\mate P$, or equivalently, in terms of the rational map $R$ and the Jordan domain~$\mathfrak{D}$ (recall the relation $F\vert_{\overline{\Omega}}\equiv R\circ\eta\circ (R\vert_{\overline{\mathfrak{D}}})^{-1}$ from Corollary~\ref{main_hyp_comp_mating_class_cor}). We will now expound how the complete dynamical structure of $\mathfrak{C}$ (including the limit, tiling, and non-escaping sets of $\mathfrak{C}$ and the domain $\mathfrak{D}$) can be recovered directly from $R$.  

Since the iterated $F-$preimages of $\mathfrak{X}_\Gamma(1)$ are dense in the limit set $\Lambda$ of $F$  (this follows from the fact that the iterated $P-$preimages of any point on $\mathcal{J}(P)$ are dense in $\mathcal{J}(P)$), it follows that the grand orbit of $1$ under the correspondence $\mathfrak{C}$ is dense in the limit set $\widetilde{\Lambda}$ of $\mathfrak{C}$ (see Figure~\ref{corr_fig}). Hence, the limit set of $\mathfrak{C}$ can be recovered from $R$ (without knowledge of the domain $\mathfrak{D}$). 

The tiling set $\widetilde{\mathcal{T}}$ of $\mathfrak{C}$ can now be recognized as the union of the connected components of $\widehat{\C}\setminus\widetilde{\Lambda}$ on which $\mathfrak{C}$ acts properly discontinuously (with torsion points, when $n>1$). The closures of the other two components of $\widehat{\C}\setminus\widetilde{\Lambda}$ comprise $\widetilde{\mathcal{K}}$. On one of these two components, the map $R$ is injective, while $R$ maps the other component with degree $d$. The closure of the former (respectively, the latter) component is $\widetilde{\mathcal{K}_1}$ (respectively, $\widetilde{\mathcal{K}_2}$).

Thanks to the description of the closure of $\widetilde{\mathcal{T}}$ given in Lemma~\ref{lifted_tiling_top_lem}, we know that the components $U_0,\cdots,U_{p-1}$ of $\widetilde{\mathcal{T}}$ are Jordan domains, and they form a chain such that neighboring components touch at critical points of $R$ that lie on $\widetilde{\Lambda}$. We now consider the Jordan curve $\mathfrak{J}$ obtained by connecting the critical points of $R$ on $\widetilde{\Lambda}$ consecutively by hyperbolic geodesics in the components $U_i$. By the proof of Proposition~\ref{lifted_tiling_quotient_prop}, the map $R$ is injective on one of the complementary components of $\mathfrak{J}$, and this component coincides with $\mathfrak{D}$ (see Figure~\ref{corr_fig}). 

Thus, we can reconstruct $\mathfrak{D}, \widetilde{\mathcal{K}}, \widetilde{\mathcal{T}},$ and $\widetilde{\Lambda}$ from the rational map $R$. Clearly, the set $\widetilde{T^0}\subset\widetilde{\cT}$ (which is the union of the rank zero tiles in the tiling set of $\mathfrak{C}$) and hence the set $\widetilde{T^0_{U_0}}= \widetilde{T^0}\cap U_0$ can also be reconstructed from the above data.

The proof Proposition~\ref{lifted_tiling_quotient_prop} also shows that when the topological disk $U_0$ is uniformized by the unit disk, the set $\widetilde{T^0_{U_0}}$ corresponds to an ideal $np-$gon $\mathfrak{P}$ in $\D$ that admits the rotation $M_\omega$ as a symmetry. Moreover, pulling back a sector of angle $2\pi/n$ in $\mathfrak{P}$ (with geodesic boundary) under this uniformization yields a fundamental domain $\mathfrak{S}$ for the $\mathfrak{C}-$action on $\widetilde{\cT}$ equipped with side-pairing transformations. This defines a marking on the quotient $\faktor{\widetilde{\cT}}{\mathfrak{C}}$. This marked Riemann surface is biholomorphic to
\smallskip

\noindent$\bullet$ a sphere with $\frac{p}{2}+1$ punctures and an order $n$ orbifold point for $p$ even, and
\smallskip

\noindent$\bullet$ a sphere with $\frac{p+1}{2}$ punctures, an order two orbifold point and an order $n$ orbifold point for $p$ odd.
\smallskip

In other words, the correspondence $\mathfrak{C}$ determines a unique element of $\mathrm{Teich}(\widehat{\pmb{\Gamma}}_{n,p})\cong\mathrm{Teich}^\omega(\pmb{\Gamma_{n,p}})$. 

Finally, by Proposition~\ref{lifted_ne_dyn_prop}, an appropriate branch of $\mathfrak{C}$ on $\widetilde{\mathcal{K}}_1$ is conformally conjugate to the action of a polynomial in $\mathcal{H}_{d}$ on its filled Julia set. In fact, such a polynomial is uniquely determined when we require that the conjugacy sends the fixed point $1$ of this correspondence branch to the  landing point of the external dynamical ray at angle $0$ for the polynomial.

The above recipe defines a map 
\begin{equation*}
\begin{split}
\Xi_3:\cC \longrightarrow \mathrm{Teich}^\omega(\pmb{\Gamma_{n,p}})\times\mathcal{H}_{d}
\label{corr_to_teich_hyp}
\end{split}
\end{equation*}
that is, by construction, the inverse of the map $\Xi_2\circ\Xi_1$. This completes the justification of the commutative diagram (Figure~\ref{intro_fig}) presented in the introduction.

\section{A Bers slice for genus zero orbifolds}\label{punc_sphere_bers_sec}
We continue to use the notation of Section~\ref{char_var_sec}. Recall that in that section, we constructed a simultaneous uniformization locus $\cC\equiv\cC\left(\mathrm{Teich}^\omega(\pmb{\Gamma_{n,p}}),\mathcal{H}_{d}\right)$ in the `character variety' $\ \faktor{\mathrm{Rat}_{d+1}(\C)}{\sim}\ $ of bi-degree $d$:$d$ algebraic correspondences on $\widehat{\C}$ defined by Equation~\eqref{corr_eqn_1}. The space $\cC$ is the analog of the quasi-Fuchsian space in our setup. Our next goal is to manufacture a complex-analytic slice in this simultaneous uniformization locus such that the polynomial component is frozen to be $\pmb{P}(z):=z^d$ (in $\mathcal{H}_d$), while the marked groups run through $\mathrm{Teich}^\omega(\pmb{\Gamma_{n,p}})$. This is akin to Bers' original construction of the Bers slice in the quasi-Fuchsian locus (cf. \cite[\S 5.10]{Mar16}).

\subsection{The Bers embedding}\label{bers_embed_subsec}

With the natural identification of $\mathrm{Teich}^\omega(\pmb{\Gamma_{n,p}})$ with $\mathrm{Teich}^\omega(\pmb{\Gamma_{n,p}}) \times\{\pmb{P}\}$, the map
$$
\Xi_2\circ\Xi_1:\mathrm{Teich}^\omega(\pmb{\Gamma_{n,p}}) \times\{\pmb{P}\}\longrightarrow \cC
$$
gives rise to a map
$$
\mathfrak{B}:\mathrm{Teich}^\omega(\pmb{\Gamma_{n,p}})\longrightarrow \cC
$$
(See Subsections~\ref{mating_moduli_subsec},~\ref{corr_mod_space_subsec} for the definitions of $\Xi_1,\Xi_2$.)
 
\begin{remark}
From the discussion in this section, it will follow that the map $\mathfrak{B}$ can be thought of as an analog  of the `Bers embedding' of $\mathrm{Teich}(\widehat{\pmb{\Gamma}}_{n,p})$ into the (analog of the) `quasi-Fuchsian space' $\cC$, where the latter sits inside the 
(analog of the) `character variety' $\ \faktor{\mathrm{Rat}_{d+1}(\C)}{\sim}\ $.
\end{remark}

We will now show that the image of the map $\mathfrak{B}$ can be identified with a subset of $\C^L$, where 
$$
L:=\mathrm{dim}_{\C}\left(\mathrm{Teich}^\omega(\pmb{\Gamma_{n,p}})\right)=\mathrm{dim}_{\C}\left(\mathrm{Teich}(\widehat{\pmb{\Gamma}}_{n,p})\right).
$$

Suppose that a correspondence $\mathfrak{C}$ defined by a degree $(d+1)$ rational map $R$ (via Equation~\eqref{corr_eqn_1}) lies in the image of $\mathfrak{B}$. Then, by Corollary~\ref{crit_pnt_r_cor}, the map $R$ has $p$ critical points on $\partial\mathfrak{D}$, a critical point of multiplicity $np-2$ in $\Int{\widetilde{\mathcal{K}_2}}$, and $p$ distinct critical points, each of multiplicity $n-1$, in $\widetilde{\cT}\setminus\overline{\mathfrak{D}}$. 

Possibly after pre and post-composing $R$ with elements of $C(\eta)$ and $\mathrm{PSL}_2(\C)$ (respectively), we can assume the following.
\begin{enumerate}
\item $\infty$ is the unique superattracting fixed point (of local degree $d$) of the corresponding conformal mating $F$,
\item $\infty\in\mathfrak{D}$ with $R(\infty)=\infty$,
\item $R'(\infty)=1$, and
\item $R'(1)=0$ with $R(1)=\mathfrak{X}_\Gamma(1)$.
\end{enumerate}
As $F$ maps $\infty$ to itself with local degree $np-1$, it follows that $R$ has an order $np-1$ pole at the origin. The conditions $R(\infty)=\infty$ and $R'(\infty)=1$ now imply that $R$ is of the form 
$$
R(z)=\frac{R_1(z)}{z^{np-1}},
$$ 
where $R_1$ is a monic polynomial of degree $np$. To obtain an explicit form of $R_1$, we need to consider various cases.

\subsubsection*{Punctured spheres without orbifold points}
In this case, $n=1$ and $p$ is an even integer. We set $p=2q$, for some $q\geq 2$. 
We first post-compose $R$ with a translation to write it as
$$
R(z)=z+\frac{a_1}{z}+\cdots+\frac{a_{2q-1}}{z^{2q-1}},
$$
for $a_1,\cdots,a_{2q-1}\in\C$. Note also that Corollary~\ref{crit_pnt_r_cor} forces the $2q$ critical points of $R$ on $\partial\mathfrak{D}$ to be of the form 
$$
\{1, -1, c_1,\frac{1}{c_1},\cdots, c_{q-1},\frac{1}{c_{q-1}}\},
$$
for some $c_1,\cdots,c_{q-1}\in\C^*$. 
Differentiating $R$, one sees that the degree $2q$ polynomial
$$
Q(z):= z^{2q}-\sum_{j=1}^{2q-1} j a_j z^{2q-1-j}
$$
has $\{1, -1, c_1,\frac{1}{c_1},\cdots, c_{q-1},\frac{1}{c_{q-1}}\}$ as its roots.
A routine application of Vieta's formula now shows that 
\begin{equation}
R(z)=z+\frac{a_1}{z}+\cdots+\frac{a_{q-2}}{z^{q-2}}+\frac{a_q}{z^q}+\cdots+\frac{a_{2q-3}}{z^{2q-3}}+\frac{1}{(2q-1)\cdot z^{2q-1}},
\label{bers_slice_1}
\end{equation}
where
\begin{equation}
a_{2q-j-1}=-\ \frac{(j-1)}{(2q-j-1)} a_{j-1},\quad j\in\{2,\cdots,q-1\}.
\label{bers_slice_2}
\end{equation}
We identify the rational maps in the image of $\mathfrak{B}$ (where the normalization of these rational maps is given by Equations~\eqref{bers_slice_1} and~\eqref{bers_slice_2}) with their $q-2$ independent complex coefficients $a_1,\cdots,a_{q-2}$. Thus, the image of $\mathfrak{B}$ can be identified with a subset of $\C^{q-2}$. 

We also note that as $\ \faktor{\D}{\widehat{\pmb{\Gamma}}_{1,2q}}$ is a $(q+1)-$times punctured sphere, its Teichm{\"u}ller space has complex dimension $q-2$.

\subsubsection*{Genus zero orbifolds with exactly one orbifold point of order $2$ and no orbifold point  of order $\nu\geq 3$}
In this case, $n=1$ and $p$ is an odd integer. We set $p=2q+1$, for some $q\geq 2$. 
As in the previous case, we can post-compose $R$ with a translation to write it as
$$
R(z)=z+\frac{a_1}{z}+\cdots+\frac{a_{2q}}{z^{2q}},
$$
for $a_1,\cdots,a_{2q}\in\C$. Moreover, Corollary~\ref{crit_pnt_r_cor} implies that the $2q+1$ critical points of $R$ on $\partial\mathfrak{D}$ are the form 
$$
\{1, c_1,\frac{1}{c_1},\cdots, c_{q},\frac{1}{c_{q}}\},
$$
for some $c_1,\cdots,c_{q}\in\C^*$. 
Differentiating $R$, one sees that the degree $2q$ polynomial
$$
Q(z):= z^{2q+1}-\sum_{j=1}^{2q} j a_j z^{2q-j}
$$
has $\{1, c_1,\frac{1}{c_1},\cdots, c_{q},\frac{1}{c_{q}}\}$ as its roots.
Once again, a straightforward computation using Vieta's formula shows that 
\begin{equation}
R(z)=z+\frac{a_1}{z}+\cdots+\frac{a_{q-1}}{z^{q-1}}+\frac{a_q}{z^q}+\cdots+\frac{a_{2q-2}}{z^{2q-2}}+\frac{1}{2q\cdot z^{2q}},
\label{bers_slice_3}
\end{equation}
where
\begin{equation}
a_{2q-j}= -\ \frac{(j-1)}{(2q-j)} a_{j-1},\quad j\in\{2,\cdots,q\}.
\label{bers_slice_4}
\end{equation}
Thus, with the identification of the rational maps in the image of $\mathfrak{B}$ (normalized by Equations~\eqref{bers_slice_3} and~\eqref{bers_slice_4}) with their $q-1$ independent complex coefficients $a_1,\cdots,a_{q-1}$, the map $\mathfrak{B}$ can be thought of as taking values in $\C^{q-1}$. 

Further, as $\faktor{\D}{\widehat{\pmb{\Gamma}}_{1,2q+1}}$ is a genus zero orbifold with $(q+1)$ punctures and one order two orbifold point, its Teichm{\"u}ller space has complex dimension $q-1$.

\subsubsection*{Genus zero orbifolds with exactly one orbifold point  of order $\nu\geq 3$ and at most one orbifold point of order $2$}
In this case, $n=\nu\geq 3$ and $p$ is odd (respectively, even) depending on whether the orbifold has (respectively, does not have) an order two orbifold point. Recall that by Corollary~\ref{crit_pnt_r_cor}, in addition to the $(np-2)-$fold critical point at the origin, the map $R$ has 
\begin{itemize}
\item $p$ distinct critical points on $\partial\mathfrak{D}$, of which one/two are fixed by $\eta$ (depending on whether $p$ is odd/even) and the others form $2-$cycles under $\eta$, and
\item $p$ distinct critical points, each of multiplicity $n-1$, in $\widetilde{\cT}\setminus\overline{\mathfrak{D}}$, and all these critical points are mapped to a common critical value in $\cT$.
\end{itemize} 
It will be convenient to post-compose $R$ with a translation such that the critical value of $R$ in $\cT$ is at the origin. Then $R$ has precisely $p$ distinct zeroes at $a_1,\cdots, a_p$, each of  multiplicity $n-1$. Therefore, 
$$
R(z)=\frac{(z-a_1)^n\cdots (z-a_p)^n}{z^{np-1}}.
$$
In particular, the coefficients of $R$ can be written in terms of the elementary symmetric polynomials $e_1,\cdots, e_p$ in $a_1,\cdots, a_p$. Using (logarithmic) differentiation, one now easily sees that the $p$ critical points of $R$ on $\partial\mathfrak{D}$ are roots of the equation
\begin{equation}
\begin{split}
n\left(\sum_{j=1}^p \frac{a_j}{z-a_j}\right) +1 =0,\\
\iff\ Q(z):= z^p+\sum_{j=1}^p (-1)^j e_j (1-nj) z^{p-j}=0.
\end{split}
\label{log_diff_eqn}
\end{equation}
Since the roots of $Q$ are of the form 
$$
\{1, -1, c_1,\frac{1}{c_1},\cdots, c_{q-1},\frac{1}{c_{q-1}}\},
$$
when $p=2q$, and of the form
$$
\{1, c_1,\frac{1}{c_1},\cdots, c_{q},\frac{1}{c_{q}}\},
$$
when $p=2q+1$, we are now reduced to the computations carried out in the previous two cases. In particular, it follows that if $p=2q$ (respectively, $p=2q+1$), the polynomial $Q$ has $q-1$ (respectively, $q$) independent coefficients. Since the coefficients of $Q$ are multiples of the elementary symmetric polynomials $e_1,\cdots, e_p$ (see Equation~\eqref{log_diff_eqn}), we conclude that only $q-1$ (respectively, $q$) of these elementary symmetric polynomials are unconstrained. Hence, the rational map $R$ also has $q-1$ (respectively, $q$) independent complex coefficients. As in the previous cases, the image of $\mathfrak{B}$ can therefore be identified with a subset of $\C^{q-1}$ (respectively, of $\C^q$). 
Finally, we remark that $\mathrm{Teich}(\widehat{\pmb{\Gamma}}_{n,2q})$ (respectively, $\mathrm{Teich}(\widehat{\pmb{\Gamma}}_{n,2q+1})$) has complex dimension $q-1$ (respectively, $q$).

\subsubsection*{Complex-analyticity of $\mathfrak{B}$}

The preceding analysis shows that the image of the map $\mathfrak{B}$ can be identified with a subset of $\C^L$, where 
$L=\mathrm{dim}_{\C}\left(\mathrm{Teich}(\widehat{\pmb{\Gamma}}_{n,p})\right).$

Recall that the Teichm{\"u}ller space of an orbifold (or a Fuchsian group) can be endowed with a complex structure via the Bers simultaneous uniformization theorem. Specifically, in the statement below, we identify $\mathrm{Teich}(\widehat{\pmb{\Gamma}}_{n,p})$ with the Bers slice $\mathcal{B}(\widehat{\pmb{\Gamma}}_{n,p})$ in the space of quasi-Fuchsian representations of $\widehat{\pmb{\Gamma}}_{n,p}$ (see \cite[\S 5.10]{Mar16}).

\begin{proposition}\label{bers_embedding_corr_prop}
$\mathfrak{B}:\mathrm{Teich}(\widehat{\pmb{\Gamma}}_{n,p})\longrightarrow \C^{L}$ is a biholomorphism onto its image.
\end{proposition}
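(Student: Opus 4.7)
The plan is to show that $\mathfrak{B}$ is (i) holomorphic, (ii) injective, and then to deduce (iii) that it is a biholomorphism onto its image by a dimension count, since both $\mathrm{Teich}(\widehat{\pmb{\Gamma}}_{n,p})$ and the ambient $\C^L$ have complex dimension $L$.

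For holomorphicity, I would trace holomorphic parameter dependence through the construction of $\mathfrak{B}$ step by step. A point $(\rho:\pmb{\Gamma_{n,p}}\to\Gamma)\in\mathrm{Teich}^\omega(\pmb{\Gamma_{n,p}})$ is represented, via Bers coordinates, by an $M_\omega$- and $\pmb{\Gamma_{n,p}}$-equivariant Beltrami coefficient $\mu_\rho$ on $\D$ varying holomorphically in the parameter; by the measurable Riemann mapping theorem with parameters, the normalized quasiconformal solution $\psi_\rho$ depends holomorphically on $\mu_\rho$. Consequently the circle homeomorphism $\mathfrak{h}_\Gamma=\widehat{\psi}_\rho\circ\mathfrak{h}$ and its David extension to $\D$ vary holomorphically. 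Freezing $P=\pmb{P}=z^d\in\mathcal{H}_d$, the David coefficient $\mu$ on $\widehat{\C}$ constructed in the proof of Theorem~\ref{conf_mat_thm} (as the pullback of the trivial structure by $\mathfrak{h}_\Gamma\circ\eta\circ\psi_{\pmb{P}}^{-1}$ on $\mathcal{B}_\infty(\pmb{P})$ and the trivial structure on $\overline{\D}$) therefore forms a holomorphic family. A parametric version of the David integrability theorem then gives a normalized straightening $\mathfrak{H}$ depending holomorphically on the parameter, and hence the conformal mating $F$, the uniformizing rational map $R$ of Corollary~\ref{main_hyp_comp_mating_class_cor}, and (under the normalization fixed in Section~\ref{bers_embed_subsec}) its coefficients all depend holomorphically on $\Gamma$. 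These coefficients are precisely the components of $\mathfrak{B}$.

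For injectivity, I would invoke the intrinsic reconstruction procedure from Section~\ref{corr_mating_struct_intrinsic_subsec}. From $\mathfrak{C}=\mathfrak{B}(\Gamma)$ alone one recovers the limit set $\widetilde{\Lambda}$ (as the closure of the grand orbit of $1$), the tiling set $\widetilde{\cT}$ (as the locus of properly discontinuous action), the Jordan curve $\mathfrak{J}$ bounding $\mathfrak{D}$ (via the critical points of $R$ on $\widetilde{\Lambda}$ joined by hyperbolic geodesics in the components of $\widetilde{\cT}$), and a marked fundamental sector $\mathfrak{S}$ together with its side-pairing transformations. This last datum determines a marked point of $\mathrm{Teich}(\widehat{\pmb{\Gamma}}_{n,p})$, so the reconstruction map $\Xi_3$ restricted to the Bers slice is a left inverse for $\mathfrak{B}$, forcing injectivity.

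To upgrade to a biholomorphism, note that a holomorphic injection between equidimensional complex manifolds must have everywhere non-vanishing Jacobian: otherwise it would locally factor through a hypersurface, contradicting injectivity. By the holomorphic inverse function theorem, $\mathfrak{B}$ is then a local biholomorphism at every point; combined with global injectivity this shows that the image is open in $\C^L$ and $\mathfrak{B}$ is a biholomorphism onto it. The main technical obstacle I anticipate is the precise justification of holomorphic dependence of the David straightening $\mathfrak{H}$ on the Teichm\"uller parameter: the classical parametric measurable Riemann mapping theorem applies only to uniformly elliptic Beltrami equations, while here the dilatation is merely David-controlled near $\mathbb{S}^1$. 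This must be handled either by quoting a parametric David integrability statement directly or by a quasiconformal approximation argument in which the uniqueness clause of the David theorem and the $W^{1,1}$-removability of $\mathbb{S}^1=\mathcal{J}(\pmb{P})$ are used to pin down the limiting normalization and regularity.
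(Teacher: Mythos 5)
Your injectivity step (via the reconstruction map $\Xi_3$ of Section~\ref{corr_mating_struct_intrinsic_subsec}) and your final upgrade from an injective holomorphic map between equidimensional complex manifolds to a biholomorphism onto the image both match the paper. The genuine gap is in the holomorphicity step, and it is exactly the obstacle you flag yourself: there is no ``parametric David integrability theorem'' to quote, and holomorphic dependence of David straightenings on a parameter is not available, so tracing the construction of Theorem~\ref{conf_mat_thm} (circle homeomorphism $\mathfrak{h}_\Gamma$, its David extension, the David coefficient $\mu$, the integrating map $\mathfrak{H}$) parameter by parameter does not produce a proof; the vague alternative of a ``quasiconformal approximation argument'' is not carried out, and it is precisely the hard analytic content that would need to be supplied. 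As written, the holomorphicity of $\mathfrak{B}$ is asserted rather than proved.

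The paper avoids this issue entirely by never re-running the David construction in the parameter: it applies Theorem~\ref{conf_mat_thm} only once, at the base point, to get the mating $\pmb{F}$ of $\pmb{P}(z)=z^{np-1}$ with $A_{\pmb{\Gamma_{n,p}}}^{\mathrm{fBS}}$ together with its normalized rational map $\pmb{R}$. A point $\widehat{\rho}\in\mathcal{B}(\widehat{\pmb{\Gamma}}_{n,p})$ is given by a quasiconformal map $\psi_\rho$, so the Beltrami coefficient $\mu_\rho=\psi_\rho^*(\mu_0)$ is \emph{bounded} ($\|\mu_\rho\|_\infty<1$) and varies holomorphically in $\widehat{\rho}$; pushing it forward by the conformal mating conjugacy $\mathfrak{X}_{\pmb{\Gamma_{n,p}}}$ gives a bounded, $\pmb{F}$-invariant coefficient $\mu_{\pmb{F},\rho}$ supported in the tiling set, and pulling back by $\pmb{R}$ gives $\pmb{R}^*(\mu_{\pmb{F},\rho})$. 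Integrating these with the \emph{classical} measurable Riemann mapping theorem with holomorphic parameters yields maps $\phi_\rho$, $\widehat{\phi}_\rho$ depending holomorphically on $\widehat{\rho}$, and by (the proof of) Proposition~\ref{mating_class_prop} the mating for $\Gamma$ is $\phi_\rho\circ\pmb{F}\circ\phi_\rho^{-1}$ with uniformizing rational map $R_\rho=\phi_\rho\circ\pmb{R}\circ\widehat{\phi}_\rho^{-1}$, whose coefficients are then holomorphic in $\widehat{\rho}$. In other words, because the polynomial is frozen and every group in the slice is a quasiconformal deformation of $\pmb{\Gamma_{n,p}}$, the whole Bers slice of matings is obtained from one David-constructed base mating by uniformly elliptic deformations; this is the missing idea your proposal needs, and without it (or a genuinely new parametric David statement) the holomorphicity claim does not go through.
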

\begin{proof}
We recall the notation $\pmb{P}(z)=z^{np-1}$. Let $\mathfrak{X}_{\pmb{P}}:\overline{\D}\to\widehat{\C}$ and $\mathfrak{X}_{\pmb{\Gamma_{n,p}}}:\overline{\D}\to\widehat{\C}$ be the mating conjugacies associated with the conformal mating $\pmb{F}$ of $\pmb{P}$ and $A_{\pmb{\Gamma_{n,p}}}^{\mathrm{fBS}}$ (see Definition~\ref{conf_mat_def}).

Each representation $(\widehat{\rho}:\widehat{\pmb{\Gamma}}_{n,p}\to\widehat{\Gamma})\in\mathcal{B}(\widehat{\pmb{\Gamma}}_{n,p})$ (see Section~\ref{factor_bs_gen_subsec}) is given by 
$$
\widehat{\rho}(g)=\psi_\rho\circ g\circ\psi_\rho^{-1},\ g\in\widehat{\pmb{\Gamma}}_{n,p},
$$ 
where $\psi_\rho$ is a quasiconformal homeomorphism of $\widehat{\C}$ that is conformal on $\D^*$.
Moreover, the quasiconformal maps $\psi_\rho$ depend complex-analytically on representations $\widehat{\rho}\in\mathcal{B}(\widehat{\pmb{\Gamma}}_{n,p})$. We define the $\widehat{\pmb{\Gamma}}_{n,p}-$invariant Beltrami coefficient $\mu_\rho:=\psi_\rho^*(\mu_0)$ (where $\mu_0$ is the trivial Beltrami coefficient), and note that $\mu_\rho$ also depends complex-analytically on $\widehat{\rho}$. We further push $\mu_\rho$ forward to the dynamical plane of $A_{\pmb{\Gamma_{n,p}}}^{\mathrm{fBS}}$, and continue to call it $\mu_\rho$.

It follows that the $\pmb{F}-$invariant Beltrami coefficients 
$$
\mu_{\pmb{F},\rho}:=\begin{cases}
\left(\mathfrak{X}_{\pmb{\Gamma_{n,p}}}\right)_*(\mu_\rho)\quad \textrm{on}\quad \mathfrak{X}_{\pmb{\Gamma_{n,p}}}(\D),\\
\quad 0\hspace{2cm} \textrm{elsewhere},
\end{cases}
$$ 
depend complex-analytically on $\widehat{\rho}\in\mathcal{B}(\widehat{\pmb{\Gamma}}_{n,p})$. Consequently, the (normalized) quasiconformal maps $\phi_\rho$ solving the Beltrami equation with coefficient $\mu_{\pmb{F},\rho}$ depend complex-analytically on $\widehat{\rho}$. Furthermore, the map $\phi_\rho\circ\pmb{F}\circ\phi_\rho^{-1}$ is the conformal mating of $\pmb{P}$ and $A_{\Gamma}^{\mathrm{fBS}}$ with mating conjugacies $\phi_\rho\circ\mathfrak{X}_{\pmb{P}}$ and $\phi_\rho\circ\mathfrak{X}_{\pmb{\Gamma_{n,p}}}\circ\widehat{\psi}_\rho^{-1}$, where $\widehat{\psi}_\rho$ is the quasiconformal conjugacy between $A_{\pmb{\Gamma_{n,p}}}^{\mathrm{fBS}}$ and $A_{\Gamma}^{\mathrm{fBS}}$ induced by $\psi_\rho$.

Let $\pmb{R}$ be the normalized rational map associated with the conformal mating $\pmb{F}$. Let $\widehat{\phi}_\rho$ be a quasiconformal map solving the Beltrami equation with coefficient $\pmb{R}^*(\mu_{\pmb{F},\rho})$. As the Beltrami coefficients $\pmb{R}^*(\mu_{\pmb{F},\rho})$ depend complex-analytically on $\widehat{\rho}$, the same is true for the maps $\widehat{\phi}_\rho$.
By the proof of Proposition~\ref{mating_class_prop},
$$
R_\rho:=\phi_\rho\circ\pmb{R}\circ\widehat{\phi}_\rho^{-1}
$$
is a rational map associated with the conformal mating $\phi_\rho\circ\pmb{F}\circ\phi_\rho^{-1}$. Since both families of quasiconformal maps $\{\phi_\rho\}_{\widehat{\rho}\in\mathcal{B}(\widehat{\pmb{\Gamma}}_{n,p})}$ and $\{\widehat{\phi}_\rho\}_{\widehat{\rho}\in\mathcal{B}(\widehat{\pmb{\Gamma}}_{n,p})}$ depend complex-analytically on $\widehat{\rho}$, it follows that the coefficients of $R_\rho$ also depend complex-analytically on $\widehat{\rho}$.
Hence, the map $\mathfrak{B}:\mathcal{B}(\widehat{\pmb{\Gamma}}_{n,p})\longrightarrow \C^L$ is complex-analytic. 

The existence of the inverse map $\Xi_3$ in Section~\ref{corr_mating_struct_intrinsic_subsec} shows that the map $\mathfrak{B}$ is injective. Since the complex dimension of $\mathrm{Teich}(\widehat{\pmb{\Gamma}}_{n,p})\cong\mathcal{B}(\widehat{\pmb{\Gamma}}_{n,p})$ is $L$, it follows that $\mathfrak{B}$ is a biholomorphism onto its image (cf. \cite[Theorem~2.14]{range}).
\end{proof}

\subsection{Proof of Theorem~\ref{punc_sphere_teich_corr_intro_thm}}\label{thm_c_prf_subsec}

\begin{proof}[Proof of Theorem~\ref{punc_sphere_teich_corr_intro_thm}]
Note that by construction, the map $\mathfrak{B}$ sends each representation $\widehat{\rho}:\widehat{\pmb{\Gamma}}_{n,p}\to\widehat{\Gamma}$ in the Bers slice $\mathcal{B}(\widehat{\pmb{\Gamma}}_{n,p})$ to a bi-degree $(np-1)$:$(np-1)$ algebraic correspondence $\mathfrak{C}$ on $\widehat{\C}$ that is a mating of $z^{np-1}$ and $\faktor{\D}{\widehat{\Gamma}}$ in the sense of Theorem~\ref{corr_mating_thm_1}. The $L$ complex coefficients of the normalized rational maps $R$ defining these correspondences $\mathfrak{C}$ endow the resulting space of correspondences with a complex manifold structure. By Proposition~\ref{bers_embedding_corr_prop}, the map $\mathfrak{B}$ yields a biholomorphism  between the above complex manifold and the Bers slice $\mathcal{B}(\widehat{\pmb{\Gamma}}_{n,p})$.
\end{proof}

We conclude this section with the following question.

\begin{question}
Let $\Sigma\in\cS$, and $L:=\dim_{\C}(\mathrm{Teich}(\Sigma))$.
\noindent\begin{enumerate}
\item Is the image $\mathfrak{B}(\mathrm{Teich}(\Sigma))$ pre-compact in $\C^{L}$?
\item Describe the dynamics of the correspondences that lie on the boundary of $\mathfrak{B}(\mathrm{Teich}(\Sigma))$. In particular, do Bers boundary groups not treated in \cite[Section 7]{MM1} arise?
\end{enumerate}
\end{question}

\section{Bullett-Penrose-Lomonaco correspondences}\label{bp_corr_sec}

As mentioned in the introduction, in the special case $n=3$ and $p=1$, the correspondences produced by Theorem~\ref{corr_mating_intro_thm} belong to the family of bi-degree $2$:$2$ correspondences studied by Bullett-Penrose-Lomonaco \cite{BP94,BL20a,BL20b,BL22}. In this section, we will derive explicit formulae for these correspondences using our conformal matings framework, and show that they can indeed be brought to the Bullett-Penrose normal form (see \cite[Equation~1.1]{BP94}).

Recall from Section~\ref{hecke_factor_bs_subsec} that the index three extension $\widehat{\pmb{\Gamma}}_{3,1}$ of $\pmb{\Gamma_{3,1}}$ is M{\"o}bius conjugate to the standard modular group $\textrm{PSL}_2(\Z)$. In particular, $\mathrm{Teich}^\omega(\pmb{\Gamma_{3,1}})\cong\mathrm{Teich}(\widehat{\pmb{\Gamma}}_{3,1})$ is a singleton. Further let $P$ be a polynomial lying in a real-symmetric hyperbolic component in the connectedness locus of degree $np-1=2$ polynomials; i.e., $P$ is a quadratic polynomial in a real-symmetric hyperbolic component of the Mandelbrot set. We denote the conformal mating of $A_{\pmb{\Gamma_{3,1}}}^{\mathrm{fBS}}$ and $P$ by $F$.

Since $p=1$, it follows that the set $\mathcal{A}_p$ is a singleton, and hence the lamination $\mathcal{L}_P$ is empty (see Section~\ref{lami_model_subsec}). Therefore, $\Omega:=\Int{\mathrm{Dom}(F)}$ is a Jordan domain. By Proposition~\ref{mating_class_prop}, there exist a cubic rational map $R$ and a Jordan domain $\mathfrak{D}$ with $\eta(\partial\mathfrak{D})=\partial\mathfrak{D}$ such that $R$ carries $\mathfrak{D}$ injectively onto $\Omega$ and $F\vert_{\Omega}\equiv R\circ\eta\circ (R\vert_{\mathfrak{D}})^{-1}$. (The Jordan domain $\mathfrak{D}$ is depicted as the disk $\Delta_J^{st}$ in \cite[Figure~4]{BL20b}.) By Corollary~\ref{crit_pnt_r_cor}, the map $R$ has a critical point $c_1\in\partial\mathfrak{D}$ that is fixed under $\eta$. Moreover, the same corollary says that $R$ has a simple critical point $c_2\in R^{-1}(\cK)\setminus\overline{\mathfrak{D}}$ and a double critical point $c_3\in R^{-1}(\cT)\setminus\overline{\mathfrak{D}}$.

Also note that $\widetilde{\cT}$ is a simply connected domain, and $R:\widetilde{\cT}\to\cT$ is a degree three branched covering with a double critical point at $c_3$. By Theorem~\ref{corr_mating_thm_2}, the action of the associated correspondence $\mathfrak{C}$ on $\widetilde{\cT}$ is conformally conjugate to the action of the modular group $\widehat{\pmb{\Gamma}}_{3,1}$ on $\D$, and $\mathfrak{C}$ is a mating of $P$ and the modular surface $\faktor{\D}{\widehat{\pmb{\Gamma}}_{3,1}}$.

We will now bring the correspondence $\mathfrak{C}$ to the Bullett-Penrose-Lomonaco normal form. Let $M_1, M_2$ be M{\"o}bius maps such that
$$
M_1(c_1)=1,\ M_1(c_2)=-1,\ M_1(c_3)=\infty,
$$
and
$$
M_2(R(c_1))= -2,\ M_2(R(c_2)=2,\ M_2(R(c_3)=\infty.
$$
We set $R_1:= M_2\circ R\circ M_1^{-1}$. Then, $R_1$ has a double critical point at $\infty$ with the associated critical value also at $\infty$, and hence $R_1$ is a cubic polynomial. An elementary calculation using the facts that the two finite critical points of $R_1$ are at $\pm 1$ and the associated critical values are at $\mp 2$ now shows that $R_1(u)=u^3-3u$. We also set 
$$
\Omega_1=M_2(\Omega),\quad \eta_1:=M_1\circ\eta\circ M_1^{-1},\quad F_1\equiv M_2\circ F\circ M_2^{-1},
$$
and observe that
$$
F_1\vert_{\Omega_1}\equiv M_2\circ R\circ\eta\circ\left(R\vert_{\mathfrak{D}}\right)^{-1}\circ M_2^{-1}\equiv R_1\circ \eta_1\circ(R_1\vert_{M_1(\mathfrak{D})})^{-1}.
$$
Note that the involution $\eta_1$ fixes $1$ and $a:=M_1(-1)$, and hence can be written as $\eta_1(u)=\frac{(a+1)u-2a}{2u-(a+1)}$. We will change coordinates so that $\eta_1$ becomes the involution $z\mapsto -z$. To this end, we define $R_2:=R_1\circ M_3^{-1}$, where $M_3(u)=\frac{u-1}{a-u}$ sends the fixed points $1,a$ of $\eta_1$ to $0,\infty$, respectively. The conjugated involution $\eta_2:=M_3\circ\eta_1\circ M_3^{-1}$ fixes $0,\infty$, and thus can be written as $\eta_2(z)=-z$. Finally,
$$
F_1\vert_{\Omega_1}\equiv R_2\circ M_3\circ \eta_1\circ M_3^{-1} \circ (R_2\vert_{M_3\circ M_1(\mathfrak{D})})^{-1}\equiv R_2\circ\eta_2\circ (R_2\vert_{M_3\circ M_1(\mathfrak{D})})^{-1}.
$$
The associated correspondence (which is obtained by lifting $F_1$ and its backward branches by $R_2$) is given by
\begin{align*}
(X,Y)\in\mathfrak{C} & \iff R_2(Y)-R_2(\eta_2(X))=0,\ Y\neq\eta_2(X)\\
& \iff R_1(M_3^{-1}(Y))-R_1(M_3^{-1}(-X))=0,\ Y\neq -X\\
& \iff \left(\frac{aY+1}{Y+1}\right)^3-3\left(\frac{aY+1}{Y+1}\right)=\left(\frac{-aX+1}{-X+1}\right)^3-3\left(\frac{-aX+1}{-X+1}\right),\\ 
& \hspace{9cm} Y\neq-X,\\
& \iff  \left(\frac{aY+1}{Y+1}\right)^2+ \left(\frac{aY+1}{Y+1}\right) \left(\frac{aX-1}{X-1}\right)+ \left(\frac{aX-1}{X-1}\right)^2=3.
\end{align*}

Thus, the correspondence $\mathfrak{C}$ belongs to the family of bi-degree $2$:$2$ correspondences a la Bullett-Penrose-Lomonaco \cite{BP94,BL20a,BL20b}.

\begin{remark}
More generally, when $p=1$ and $n\geq 3$, the uniformizing rational maps $R$ can be chosen as degree $n$ polynomials. The associated correspondences $\mathfrak{C}$ are matings of degree $(n-1)$ polynomials $P$ and the genus zero orbifold $\Sigma=\faktor{\D}{\widehat{\pmb{\Gamma}}_{n,1}}$ with exactly one puncture, exactly one order two orbifold point, and exactly one order $n\geq 3$ orbifold point. Note that $\widehat{\pmb{\Gamma}}_{n,1}$ has an index two subgroup $\pmb{\Gamma}^{\ast}_{n,1}$ that uniformizes the genus zero orbifold $\Sigma^*$ with  exactly one puncture, exactly two order $n\geq 3$ orbifold points, and no other orbifold point.
The correspondences $\mathfrak{C}$ admit index two subcorrespondences that are matings of $P^{\circ 2}$ (polynomials of degree $(n-1)^2$) and orbifolds $\Sigma^*$ double covering $\Sigma$ (cf. \cite[\S 4.3]{B00}).
\end{remark}


\begin{thebibliography}{LMMN25}


\bibitem[AIM09]{AIM09}
K.~Astala, T.~Iwaniec, and G.~Martin.
\newblock Elliptic partial differential equations and quasiconformal mappings in the plane, volume 148 of {\em Princeton Mathematical Series}.
\newblock Princeton Univ.\ Press, Princeton, NJ, 2009.
	
\bibitem[AS76]{AS76}
D.~Aharonov and H.~S.~Shapiro,
\newblock Domains on which analytic functions satisfy quadrature identities.
\newblock {\em J. Analyse Math.}, 30:39--73, 1976.

\bibitem[Ber60]{Ber60}
L.~Bers.
\newblock Simultaneous uniformization.
\newblock {\em Bull. Amer. Math. Soc.}, 66:94--97, 1960.

\bibitem[BF03]{BF03}
S.~Bullett and M.~Freiberger.
\newblock Hecke groups, polynomial maps and matings.
\newblock {\em Internat. J. Modern Phys. B}, 17:3922--3931, 2003.

\bibitem[BF05]{BF05}
S.~Bullett and M.~Freiberger.
\newblock Holomorphic correspondences mating Chebyshev-like maps with Hecke groups. 
\newblock {\em Ergodic Theory Dynam. Systems}, 25:1057--1090, 2005.

\bibitem[BH00]{BH00}
S.~Bullett and  W. ~Harvey.
\newblock Mating quadratic maps with Kleinian groups via quasiconformal surgery.
\newblock {\em Electron. Res. Announc. Amer. Math. Soc.}, 6:21-30, 2000.

\bibitem[BH07]{BH07}
S.~Bullett and  P. ~Haissinsky.
\newblock Pinching holomorphic correspondences.  
\newblock {\em Conform. Geom. Dyn.}, 11(2007), 65-89.

\bibitem[BL20]{BL20a}
S.~Bullett and L.~Lomonaco.
\newblock Mating quadratic maps with the modular group {II}.
\newblock {\em Invent. Math.}, 220:185--210, 2020.

\bibitem[BL22]{BL22}
S.~Bullett and L.~Lomonaco.
\newblock Dynamics of Modular Matings.
\newblock {\em Adv. Math.}, 410, Paper No. 108758, 43 pp, 2022.

\bibitem[BL24]{BL20b}
S.~Bullett and L.~Lomonaco.
\newblock Mating quadratic maps with the modular group {III}: the modular Mandelbrot set.
\newblock {\em Adv. Math.}, 458, Paper No. 109956, 48 pp., 2024.

\bibitem[BLLM24]{BLLM}
S.~Bullett, L.~Lomonaco, M.~Lyubich, and S.~Mukherjee.
\newblock Mating parabolic rational maps with Hecke groups.
\newblock \url{https://arxiv.org/abs/2407.14780}, 2024.

\bibitem[BP94]{BP94}
S.~Bullett and C.~Penrose.
\newblock Mating quadratic maps with the modular group.
\newblock {\em Invent. Math.}, 115:483--511, 1994.

\bibitem[BS79]{bowen_series}
R.~Bowen and C.~Series.
\newblock Markov maps associated with {F}uchsian groups.
\newblock {\em Inst. Hautes \'{E}tudes Sci. Publ. Math.}, 50:153--170, 1979.

\bibitem[Bul00]{B00}
S.~Bullett.
\newblock A Combination Theorem for Covering Correspondences and an Application to Mating Polynomial Maps with Kleinian Groups.
\newblock {\em Conform. Geom. Dyn.},  4(2000), 75-96.

\bibitem[CR80]{CR80}
E.M.~Coven and W.L.~Reddy,
\newblock Positively expansive maps of compact manifolds.
\newblock In {\em Global theory of dynamical systems} (Proc.\ Internat.\ Conf., Northwestern Univ., Evanston, IL., 1979), pp.~96--110, Lecture Notes in Mathematics, vol.\ 819, Springer, Berlin, 1980. 

\bibitem[Dav88]{David}
G.~David.
\newblock	Solutions de l'{\'e}quation de Beltrami avec $\|\mu\| = 1$.
\newblock	{\em Ann.\ Acad.\ Sci.\ Fenn.\ Ser.\ A I Math.}, 13:25--70, 1988. 

\bibitem[DS06]{DS06}
T.-C.~Dinh and N.~Sibony.
\newblock Distribution des valeurs de transformations m{\'e}romorphes et applications.
\newblock {\em Comment. Math. Helv.} 81:221--258, 2006.

\bibitem[Dou83]{douady-mating}
A.~Douady.
\newblock Syst{\`e}mes dynamiques holomorphes.
\newblock In {\em S{\'e}minaire Bourbaki}, volume 1982/83, pages 39--63.
Ast{\'e}risque, 105--106, Soc. Math. France, Paris, 1983.



\bibitem[Fat29]{Fatou29}
P.~Fatou.
\newblock Notice sur les travaux scientifiques de {M}. {P}. {F}atou.
\newblock Astronome titulaire de l'observatoire de Paris, 5--29, 1929,
\url{https://www.math.purdue.edu/~eremenko/dvi/fatou-b.pdf}.

\bibitem[Hub12]{hubbard-mating}
J.~Hubbard.
\newblock Matings and the other side of the dictionary.
\newblock {\em Ann. Fac. Sci. Toulouse Math. (6)}, 21(5):1139--1147, 2012.
         
\bibitem[JS00]{JS00}
P.~Jones and S.~Smirnov.
\newblock	Removability theorems for Sobolev functions and quasiconformal maps.
\newblock	{\em Ark.\ Mat.}, 38(2):263--279, 2000.

\bibitem[Kap01]{Kap01}
M.~Kapovich.
\newblock Hyperbolic manifolds and discrete groups.
\newblock Progress in Mathematics, 183. Birkh{\"a}user, Boston, MA, 2001.

\bibitem[LLMM23]{LLMM1}
S.-Y. Lee, M.~Lyubich, N.~G. Makarov, and S.~Mukherjee.
\newblock Dynamics of {S}chwarz reflections: the mating phenomena.
\newblock {\em Ann. Sci. {\'E}c. Norm. Sup{\'e}r. (4)}, 56:1825--1881, 2023.

\bibitem[LLM24]{LLM24}
Y.~Luo, M.~Lyubich, and S.~Mukherjee.
\newblock A general dynamical theory of Schwarz reflections, B-involutions, and algebraic correspondences.
\newblock \url{https://arxiv.org/abs/2408.00204}, 2024.

\bibitem[LM85]{LM85}
A.~Lubotzky and A.~R.~Magid.
\newblock Varieties of representations of finitely generated groups.
\newblock {\em Mem. Amer. Math. Soc.}, 58, no. 336, 1985.


\bibitem[LM97]{lyubich-minsky}
M.~Lyubich and Y.~Minsky.
\newblock Laminations in holomorphic dynamics.
\newblock {\em J. Differential Geom.}, 47:17--94, 1997.


\bibitem[LMMN25]{LMMN}
M.~Lyubich, S.~Merenkov, S.~Mukherjee, and D.~Ntalampekos.
\newblock David extension of circle homeomorphisms, welding, mating, and removability.
\newblock \emph{Mem. Amer. Math. Soc.}, vol. 313, no. 1588, pp. v+110, 2025.


\bibitem[Mar16]{Mar16}
A.~Marden.
\newblock Hyperbolic manifolds, an introduction in $2$ and $3$ dimensions.
\newblock Cambridge University Press, Cambridge, 2016.


\bibitem[McM94]{McM94}
C.~T.~McMullen.
\newblock Complex dynamics and renormalization.
\newblock Annals of Mathematics Studies, 135, Princeton University Press, Princeton, NJ, 1994.

\bibitem[McM95]{ctm-classification}
C.~T. McMullen.
\newblock The classification of conformal dynamical systems.
\newblock In R.~Bott, M.~Hopkins, A.~Jaffe, I.~Singer, D.~W. Stroock, and S.-T.
Yau, editors, {\em Current Developments in Mathematics}, pages 323-- 360.
International Press, 1995.

\bibitem[McM98]{ctm-renorm}
C.~T. McMullen.
\newblock Renormalization and 3-manifolds which fiber over the circle.
\newblock {\em Princeton University Press}, 1998.
	
\bibitem[Mil06]{Mil06}
J.~Milnor,
\newblock Dynamics in one complex variable, 3rd Ed.
\newblock Princeton Univ.\ Press, Princeton, NJ, 2006.
	
\bibitem[MM23a]{MM1}
M.~Mj and S.~Mukherjee.
\newblock Combining rational maps and {K}leinian groups via orbit equivalence.
\newblock \emph{Proc. Lond. Math. Soc (3)}, 126:1740--1809, 2023.

\bibitem[MM23b]{sullivan_survey}
M.~Mj and S.~Mukherjee.
\newblock The Sullivan dictionary and Bowen-Series maps.
\newblock {\em EMS Surv. Math. Sci.}, 10:179--221, 2023.

\bibitem[MS98]{MS98}
C.~T.~McMullen and D.~P.~Sullivan.
\newblock {Quasiconformal homeomorphisms and dynamics. III. The Teichm{\"u}ller space of a holomorphic dynamical system.}
\newblock {\em Adv. Math.}, 135:351--395, 1998.

\bibitem[Mum65]{Mum65}
D.~Mumford.
\newblock {\em Geometric invariant theory}.
\newblock Ergebnisse der Mathematik und ihrer Grenzgebiete, (N.F.), Band 34
Springer-Verlag, Berlin-New York, 1965. vi+145 pp.


\bibitem[PM12]{petersen-meyer-mating}
C.~L. Petersen and D.~Meyer.
\newblock On the notions of mating.
\newblock {\em Ann. Fac. Sci. Toulouse Math. (6)}, 21(5):839--876, 2012.

\bibitem[Pil03]{pilgrim}
K.~M. Pilgrim.
\newblock {\em Combinations of complex dynamical systems}.
\newblock Lecture Notes in Mathematics. Springer, 2003.

\bibitem[Ran86]{range}
M.~R.~Range.
\newblock Holomorphic functions and integral representations in several complex variables.
\newblock Graduate Texts in Mathematics, 108. Springer--Verlag, New York, 1986. 

\bibitem[Sak91]{Sak91}
M.~Sakai. 
\newblock Regularity of a boundary having a Schwarz function. 
\newblock {\em Acta Math.}, 166:263--297, 1991.

\bibitem[Sul85]{sullivan-dict}
D.~Sullivan.
\newblock Quasiconformal homeomorphisms and dynamics {I}: Solution of the
{F}atou-{J}ulia problem on wandering domains.
\newblock {\em Ann. of Math. (2)}, 122:401--418, 1985.
\end{thebibliography}
\end{document}